\theoremstyle{plain}
\newtheorem{pretheo}{Theorem}[section]
\newtheorem{preassu}[pretheo]{Assumption}
\newtheorem{precoro}[pretheo]{Corollary}
\newtheorem{predefi}[pretheo]{Definition}
\newtheorem{preexam}[pretheo]{Example}
\newtheorem{prelemm}[pretheo]{Lemma}
\newtheorem{preprop}[pretheo]{Proposition}
\newtheorem{prerema}[pretheo]{Remark}
\newenvironment{theo}{\begin{pretheo}}{\end{pretheo}}
\newenvironment{defi}{\begin{predefi}}{\end{predefi}}
\newenvironment{lemm}{\begin{prelemm}}{\end{prelemm}}
\newenvironment{prop}{\begin{preprop}}{\end{preprop}}
\newenvironment{rema}{\begin{prerema}\rm}{\end{prerema}}
\DeclareMathOperator{\di}{div}
\DeclareMathOperator{\Di}{Div}
\newcommand{\intd}{\,d}
\newcommand{\supp}{{\rm supp}\,}
\newcommand{\jump}[1]{\ensuremath{[\![#1]\!]}}
\newcommand{\loc}{{\rm loc}}
\newcommand{\pa}{\partial}
\newcommand{\wh}[1]{\widehat{#1}}
\newcommand{\wt}[1]{\widetilde{#1}}
\newcommand{\Hol}{{\rm Hol}\,}
\newcommand{\BBM}{\mathbb{M}}
\newcommand{\Ba}{\mathbf{a}}
\newcommand{\Bb}{\mathbf{b}}
\newcommand{\Bf}{\mathbf{f}}
\newcommand{\Bg}{\mathbf{g}}
\newcommand{\Bh}{\mathbf{h}}
\newcommand{\Bk}{\mathbf{k}}
\newcommand{\Bm}{\mathbf{m}}
\newcommand{\Bn}{\mathbf{n}}
\newcommand{\Bu}{\mathbf{u}}
\newcommand{\Bv}{\mathbf{v}}
\newcommand{\Bw}{\mathbf{w}}
\newcommand{\BA}{\mathbf{A}}
\newcommand{\BB}{\mathbf{B}}
\newcommand{\BC}{\mathbf{C}}
\newcommand{\BD}{\mathbf{D}}
\newcommand{\BE}{\mathbf{E}}
\newcommand{\BF}{\mathbf{F}}
\newcommand{\BG}{\mathbf{G}}
\newcommand{\BH}{\mathbf{H}}
\newcommand{\BI}{\mathbf{I}}
\newcommand{\BJ}{\mathbf{J}}
\newcommand{\BK}{\mathbf{K}}
\newcommand{\BL}{\mathbf{L}}
\newcommand{\BM}{\mathbf{M}}
\newcommand{\BN}{\mathbf{N}}
\newcommand{\BP}{\mathbf{P}}
\newcommand{\BR}{\mathbf{R}}
\newcommand{\BS}{\mathbf{S}}
\newcommand{\BT}{\mathbf{T}}
\newcommand{\BU}{\mathbf{U}}
\newcommand{\BV}{\mathbf{V}}
\newcommand{\BW}{\mathbf{W}}
\newcommand{\CA}{\mathcal{A}}
\newcommand{\CB}{\mathcal{B}}
\newcommand{\CC}{\mathcal{C}}
\newcommand{\CD}{\mathcal{D}}
\newcommand{\CE}{\mathcal{E}}
\newcommand{\CF}{\mathcal{F}}
\newcommand{\CG}{\mathcal{G}}
\newcommand{\CH}{\mathcal{H}}
\newcommand{\CI}{\mathcal{I}}
\newcommand{\CJ}{\mathcal{J}}
\newcommand{\CK}{\mathcal{K}}
\newcommand{\CL}{\mathcal{L}}
\newcommand{\CM}{\mathcal{M}}
\newcommand{\CN}{\mathcal{N}}
\newcommand{\CP}{\mathcal{P}}
\newcommand{\CR}{\mathcal{R}}
\newcommand{\CS}{\mathcal{S}}
\newcommand{\CT}{\mathcal{T}}
\newcommand{\CU}{\mathcal{U}}
\newcommand{\CV}{\mathcal{V}}
\newcommand{\CW}{\mathcal{W}}
\newcommand{\CX}{\mathcal{X}}
\newcommand{\CY}{\mathcal{Y}}
\newcommand{\CZ}{\mathcal{Z}}
\newcommand{\bdry}{{\BR_0^N}}
\newcommand{\dws}{{\dot{\BR}^N}}
\newcommand{\lhs}{{\BR_-^N}}
\newcommand{\uhs}{{\BR_+^N}}
\newcommand{\tws}{{\BR^{N-1}}}
\newcommand{\ws}{{\BR^N}}
\newcommand{\al}{\alpha}
\newcommand{\ga}{\gamma}
\newcommand{\de}{\delta}
\newcommand{\ep}{\varepsilon}
\newcommand{\te}{\theta}
\newcommand{\ka}{\kappa}
\newcommand{\la}{\lambda}
\newcommand{\si}{\sigma}
\newcommand{\ph}{\varphi}
\newcommand{\om}{\omega}
\newcommand{\Ga}{\Gamma}
\newcommand{\De}{\Delta}
\newcommand{\La}{\Lambda}
\newcommand{\Si}{\Sigma}
\newcommand{\Om}{\Omega}
\numberwithin{equation}{section}
\begin{document}
\title[Two-phase Stokes resolvent equations]
{On the $\CR$-boundedness of solution operator families for two-phase Stokes resolvent equations}


\author[Sri Maryani]{Sri Maryani}
\address{Faculty of Mathematics and Natural Science, Department of Mathematics, Jenderal Soedirman University, Indonesia}
\email{sri.maryani@unsoed.ac.id}

\author[Hirokazu Saito]{Hirokazu Saito}
\address{Department of Pure and Applied Mathematics, Graduate School of Fundamental Science and Engineering,
Waseda University, 3-4-1 Ookubo, Shinjuku-ku, Tokyo, 169-8555, Japan}
\email{hsaito@aoni.waseda.jp}

\subjclass[2010]{Primary: 35Q30; Secondary: 76D05.}

\keywords{$\CR$-boundedness, uniform $W_r^{2-1/r}$ domain, two-phase problem,
Stokes equations, resolvent problem, maximal regularity, analytic semigroup}




\begin{abstract}
The aim of this paper is to show the existence of $\CR$-bounded solution operator families for two-phase Stokes resolvent equations in $\dot\Om=\Om_+\cup\Om_-$,
where $\Om_\pm$ are uniform $W_r^{2-1/r}$ domains of $N$-dimensional Euclidean space $\ws$ ($N\geq 2$, $N<r<\infty$).
More precisely, given a uniform $W_r^{2-1/r}$ domain $\Om$ with two boundaries $\Ga_\pm$ satisfying $\Ga_+\cap\Ga_-=\emptyset$,
we suppose that some hypersurface $\Ga$ divides $\Om$ into two sub-domains, that is,
there exist domains $\Om_\pm\subset\Om$ such that
$\Om_+\cap\Om_-=\emptyset$ and $\Om\setminus\Ga=\Om_+\cup\Om_-$,
where $\Ga\cap\Ga_+=\emptyset$, $\Ga\cap\Ga_-=\emptyset$, and the boundaries of $\Om_\pm$ consist of two parts $\Ga$ and $\Ga_\pm$, respectively.
The domains $\Om_\pm$ are filled with viscous, incompressible, and immiscible fluids with density $\rho_\pm$ and viscosity $\mu_\pm$, respectively.
Here $\rho_\pm$ are positive constants, while $\mu_\pm=\mu_\pm(x)$ are functions of $x\in\ws$.
On the boundaries $\Ga$, $\Ga_+$, and $\Ga_-$, we consider an interface condition, a free boundary condition, and the Dirichlet boundary condition, respectively.
We also show, by using the $\CR$-bounded solution operator families, some maximal $L_p\text{-}L_q$ regularity as well as
generation of analytic semigroup for a time-dependent problem associated with the two-phase Stokes resolvent equations.
This kind of problems arises in the mathematical study of the motion of two viscous, incompressible, and immiscible fluids with free surfaces.
The essential assumption of this paper is the unique solvability of
a weak elliptic transmission problem
for $\Bf\in L_q(\Om)^N$,
that is, it is assumed that the unique existence of solutions $\te\in\CW_q^1(\Om)$ to the variational problem:
$(\rho^{-1}\nabla\te,\nabla\ph)_{\dot\Om}=(\Bf,\nabla\ph)_{\Om}$
for any $\ph\in\CW_{q'}^1(\Om)$ with $1<q<\infty$ and $q'=q/(q-1)$,
where $\rho$ is defined by $\rho=\rho_+$ ($x\in\Om_+$), $\rho=\rho_-$ ($x\in\Om_-$)  and
$\CW_q^1(\Om)$ is a suitable Banach space endowed with norm $\|\cdot\|_{\CW_q^1(\Om)}:=\|\nabla\cdot\|_{L_q(\Om)}$.
Our assumption covers e.g. the following domains as $\Om$:
$\ws$, $\BR_\pm^N$, perturbed $\BR_\pm^N$, layers, perturbed layers, and bounded domains, 
where $\BR_+^N$ and $\BR_-^N$ are the open upper and lower half spaces, respectively.
\end{abstract}

\maketitle

\renewcommand{\thefootnote}{\fnsymbol{footnote}}


\section{Introduction}\label{sect:intro}
\subsection{Problem}
Let $\Om$ be a domain of $\ws$, $N \geq 2$, with two boundaries $\Ga_\pm$ satisfying $\Ga_+\cap\Ga_-=\emptyset$.
Suppose that some hypersurface $\Ga$ divides $\Om$ into two sub-domains,
that is, there exist domains $\Om_\pm\subset\Om$ such that $\Om_+\cap\Om_-=\emptyset$ and $\Om\setminus\Ga=\Om_+\cup\Om_-$,
where $\Ga\cap\Ga_+=\emptyset$, $\Ga\cap\Ga_-=\emptyset$, and the boundaries of $\Om_\pm$ consist of two parts $\Ga$ and $\Ga_\pm$, respectively.
Set $\dot\Om=\Om_+\cup\Om_-$ and $\Si_{\ep,\la_0} = \{\,\la\in\BC \mid |\arg\la|\leq\pi-\ep,\,|\la|\geq\la_0\,\}$
for $0<\ep<\pi/2$ and $\la_0>0$.
In this paper, we show the existence of $\CR$-bounded solution operator families for the following 
two-phase Stokes resolvent equations with resolvent parameter $\la$ varying in $\Si_{\ep,\la_0}$:
\begin{equation}\label{TSRP}
	\left\{\begin{aligned}
		\la\Bu -\rho^{-1}\Di\BT(\Bu,\te) &= \Bf, &\di\Bu&=g && \text{in $\dot\Om$,} \\
		\jump{\BT(\Bu,\te)\Bn} &= \jump{\Bh}, &\jump{\Bu}&=0 && \text{on $\Ga$,} \\
		\BT(\Bu,\te)\Bn_+ &= \Bk && && \text{on $\Ga_+$,} \\
		\Bu &= 0 && && \text{on $\Ga_-$.} 
	\end{aligned}\right.
\end{equation}

Here the unknowns $\Bu = (u_1(x),\dots,u_N(x))^T\footnote[2]{$\BM^T$ denotes the transposed $\BM$.}$
and $\te=\te(x)$ are an $N$-vector function and a scalar function, respectively,
while the right members $\Bf=(f_1(x),\dots,f_N(x))^T$, $g=g(x)$, $\Bh=(h_1(x),\dots,h_N(x))^T$, and $\Bk=(k_1(x),\dots,k_N(x))^T$
are given functions.
Let $\rho_\pm$ be positive constants and 
$\mu_\pm=\mu_\pm(x)$ scalar functions defined on $\ws$,
and let $\chi_D$ be the indicator function of $D\subset\ws$.
Then $\rho=\rho_+\chi_{\Om_+}+\rho_-\chi_{\Om_-}$, $\mu=\mu_+\chi_{\Om_+}+\mu_-\chi_{\Om_-}$, and
$\BT(\Bu,\te)=\mu\BD(\Bu)-\te\BI$, where
$\BI$ is the $N\times N$ identity matrix and
$\BD(\Bu)$ is the doubled deformation tensor,
that is, the $(i,j)$-entry $D_{ij}(\Bu)$ of $\BD(\Bu)$ is given by $D_{ij}(\Bu)=\pa_i u_j + \pa_j u_i$
for $i,j=1,\dots,N$ and $\pa_i = \pa/\pa x_i$.
In addition, $\Bn$ denotes on $\Ga$ a unit normal vector, pointing from $\Om_+$ to $\Om_-$,
while $\Bn_+$ the unit outward normal vector on $\Ga_+$.
For any function $f$ defined on $\dot{\Om}$,
$\jump{f}$ denotes a jump of $f$ across the interface $\Ga$ as follows:
\begin{equation*}
	\jump{f}
		= \jump{f}(x)
		= \lim_{y\to x,\,y\in\Om_+}f(y) - \lim_{y\to x,\,y\in\Om_-}f(y) \quad \text{($x\in\Ga$)}.
\end{equation*}

Here and subsequently, we use the following symbols for differentiations:

Let $f=f(x)$, $\Bg=(g_1(x),\dots,g_N(x))^T$,
and $\BM=(M_{ij}(x))$ be a scalar, an $N$-vector, and an $N\times N$-matrix function defined on a domain of $\ws$, respectively,
and then
\begin{align*}
	&\nabla f = (\pa_1 f(x),\dots, \pa_N f(x))^T, \quad
	\Delta f = \sum_{j=1}^N\pa_j^2 f(x), \quad \De\Bg = (\De g_1(x),\dots,\De g_N(x))^T, \displaybreak[0] \\
	&\di\Bg = \sum_{j=1}^N\pa_j g_j(x), \quad
	\nabla^2 \Bg = \{\,\pa_i\pa_j g_k(x) \mid i,j,k=1,\dots,N\,\}, \displaybreak[0] \\ 
	&\nabla\Bg =
		\begin{pmatrix}
			\pa_1 g_1(x) & \dots & \pa_N g_1(x) \\
			\vdots & \ddots & \vdots \\
			\pa_1 g_N(x) & \dots & \pa_N g_N(x)
		\end{pmatrix}, \quad
	\Di\BM = \left(\sum_{j=1}^N \pa_j M_{1j}(x),\dots,\sum_{j=1}^N \pa_j M_{Nj}(x)\right)^T.
\end{align*}

Problem \eqref{TSRP} arises from a linearized system of some two-phase problem
of the Navier-Stokes equations for viscous, incompressible, and immiscible fluids without taking surface tension into account.
There are a lot of studies of two-phase problems for the Navier-Stokes equations.
To see the history of study briefly,
we restrict ourselves to the case where the two fluids are both viscous, incompressible, and immiscible in the following.
Such a situation was treated in several function spaces as follows:

{\it $L_2$-in-time and $L_2$-in-space setting.}
Denisova \cite{Denisova90, Denisova94} treated the motion of a drop $\Om_{+t}$,
which is the region occupied by the drop at time $t>0$,
in another liquid $\Om_{-t}=\BR^3\setminus\overline{\Om_{+t}}$.
More precisely, \cite{Denisova90} showed some estimates of solutions for linearized problems and
\cite{Denisova94}  an unique existence theorem local in time for the two-phase problem
describing the aforementioned situation with or without surface tension.
In addition, Denisova \cite{Denisova14} proved the unique existence of global-in-time solutions for
small initial data and its exponential stability in the case where $\Om_{-t}$ is bounded and surface tension does not work.
Concerning non-homogeneous incompressible fluids,
Tanaka \cite{Tanaka93} showed the unique existence of global-in-time solutions for small initial data
when $\Om_{- t}$ is bounded, but surface tension is taken into account.

{\it H\"older function spaces.}
A series of papers Denisova-Solonnikov \cite{DS91,DS95} and Denisova \cite{Denisova93} treated
the same motion as in \cite{Denisova90,Denisova94} mentioned above.
Especially, \cite{DS91} and \cite{Denisova93} established estimates of solutions for some linearized problems,
and \cite{DS95} proved an unique existence theorem local in time for the two-phase problem with surface tension.
On the other hand, the unique existence of global-in-time solutions for small initial data was proved by Denisova \cite{Denisova07} without surface tension
and by Denisova-Solonnikov \cite{DS11} with surface tension in the case where $\Om_{-t}$ is bounded.
Furthermore, there are other topics Denisova \cite{Denisova05} and  Denisova-Ne{\v c}asov\'a \cite{DN08},
which consider thermocapillary convection and Oberbeck-Boussinesq approximation, respectively.

{\it $L_p$-in-time and $L_p$-in-space setting.}
Pr\"uss and Simonett \cite{PS10,PS10b,PS11} treated a situation that
two fluids occupy $\Om_{\pm t}=\{(x',x_N) \mid x'\in\BR^{N-1},\, \pm (x_N-h(x',t))>0\}$, respectively, where
$h(x',t)$ is an unknown scalar function describing
the interface $\Ga_t=\{(x',x_N) \mid x'\in\tws,\,x_N=h(x',t)\}$ of the fluids.
\cite{PS10b} and \cite{PS11} proved the local solvability of the two-phase problem with surface tension
and with both surface tension and gravity, respectively, for small initial data.
On the other hand, \cite{PS10} pointed out that
the Rayleigh-Taylor instability occurs if gravity works and
if the fluid occupying $\Om_{+t}$ is heavier than the other one .
Furthermore, Hieber and Saito \cite{HS1} extended the results of the Newtonian case of \cite{PS10b,PS11} to a generalized Newtonian one.
K\"ohne, Pr\"uss, and Wilke \cite{KPW13} showed the local solvability and the global solvability in the case where
$\Om_{\pm t}$ are bounded and surface tension is taken into account.

{\it $L_p$-in-time and $L_q$-in-space setting.}
Shibata-Shimizu \cite{SS11b} showed a maximal $L_p\text{-}L_q$ regularity theorem for
a linearized system of the two-phase problem considered in \cite{PS10, PS11} mentioned above.

This paper is a continuation of Shibata-Shimizu \cite{SS11b}.
Our aim is in the present paper to prove the existence of $\CR$-bounded solution operator families of \eqref{TSRP}
for $\dot\Om=\Om_+\cup\Om_-$ with uniform $W_r^{2-1/r}$ domains $\Om_\pm$ ($N<r<\infty$), which is introduced in Definition \ref{defi:uni} below.
In addition, the $\CR$-bounded solution operator families enable us to show generation of analytic semigroup and
some maximal $L_p\text{-}L_q$ regularity theorem for a time-dependent problem associated with \eqref{TSRP},
which are provided in Subsection \ref{sub:anal} and Subsection \ref{sub:max}, respectively.
We want to emphasize that the maximal $L_p\text{-}L_q$ regularity theorem extends \cite{SS11b} to uniform $W_r^{2-1/r}$ domains and to variable viscosities.

The strategy of this paper follows Shibata \cite{Shibata14}.
We extend his method for one-phase problem to one for two-phase problem.
For example, a two-phase version of the weak Dirichlet-Neumann problem
(it is called a weak elliptic transmission problem in the present paper)
introduced in Definition \ref{defi:weakDN} below,
which plays an important role in this paper, and especially in derivation of
reduced Stokes resolvent equations (cf. Subsection \ref{sub:reduced} below) and in  Lemma \ref{lemm:K} below.
One of  the main advantage of the reduced equations is that we can eliminate the divergence equation: $\di\Bu = g$ in $\dot\Om$,
which is difficult to treat in localized problems, from the problem \eqref{TSRP}.
On the other hand, Lemma \ref{lemm:K} enable us to control localized pressure term.
There however is a remark on Shibata's paper \cite{Shibata14}.
It seems to be difficult to obtain \cite[Theorem 3.1]{Shibata14} from \cite[Theorem 3.4]{Shibata14}
and to obtain \cite[Theorem 3.8]{Shibata14} from \cite[Theorem 3.10]{Shibata14},
because the $\CR$-boundedness of $\la\mathfrak{g}_D(\la)$, $\la\mathfrak{g}_N(\la)$ was not proved
in his paper (cf. \cite[Proof of Theorem 3.1, Proof of Theorem 3.4]{Shibata14}).
We essentially need the $\CR$-boundedness of such operators since the right members $\wt f$ for (3,7),  (3.20)
of \cite{Shibata14} contain $\la V_F(g)$, $\la V_D(g)$, respectively. 
Natural spaces for ranges of the operators $\la\mathfrak{g}_D(\la)$, $\la\mathfrak{g}_N(\la)$ are given by negative Sobolev spaces,
which is main difficulty to modify his proof.
To overcome this difficulty,  we introduce in this paper Proposition \ref{prop:w-w},
which allows us to avoid such negative spaces. 
Following the strategy of Proposition \ref{prop:w-w}, we can also complete his results.


\subsection{Notation and main results}\label{sub:1-2}
We first state notation used throughout this paper.

Let $\BN$ be the set of all natural numbers and $\BN_0=\BN\cup\{0\}$.
For any multi-index $\al=(\al_1,\dots,\al_N)\in\BN_0^N$,
we set $D^\al f =\pa_1^{\al_1}\dots\pa_N^{\al_N}f$.
Let $G$ be an open set of $\BR^N$. Then $L_q(G)$ and $W_q^m(G)$ with $m\in\BN$ denote
the usual $\BK$-valued Lebesgue space and Sobolev space on $G$ for $\BK=\BR$ or $\BK=\BC$,
while $\|\cdot\|_{L_q(G)}$ and $\|\cdot\|_{W_q^m(G)}$ their norms, respectively.
Here we set $W_q^0(G)=L_q(G)$.
In addition, $W_q^s(G)$ with $s\in(0,\infty)\setminus\BN$ denotes the $\BK$-valued Sobolev-Slobodezki space endowed with norm $\|\cdot\|_{W_q^s(G)}$,
and also $C_0^\infty(G)$ the function space of all $C^\infty$ functions $f:G\to\BK$ such that $\supp f$ is compact and $\supp f\subset G$.

For two Banach spaces $X$ and $Y$, $\CL(X,Y)$ is the set of all bounded linear operators from $X$ to $Y$,
and $\CL(X)$ the abbreviation of $\CL(X,X)$.
Let $U$ be a domain of $\BC$, and then $\Hol(U,\CL(X,Y))$ stands for the set of all $\CL(X,Y)$-valued holomorphic functions defined on $U$.

For $d\in\BN$ with $d\geq 2$, $X^d$ denotes the $d$-product space of $X$.
Let $\|\cdot\|_X$ be a norm of $X$, while $\|\cdot\|_X$ also denotes the norm of the product space $X^d$ for short,
that is, $\|\Bf\|_X = \sum_{j=1}^d\|f_j\|_X$ for $\Bf = (f_1,\dots,f_d)\in X^d$.

Let $\Ba=(a_1,\dots,a_N)^T$ and $\Bb=(b_1,\dots,b_N)^T$, and then we write $\Ba\cdot\Bb=<\Ba,\Bb>=\sum_{j=1}^N a_j b_j$
and $\Ba\otimes\Bb=(a_ib_j)$ that is an $N\times N$ matrix with the $(i,j)$-entry $a_ib_j$.
On the other hand, for any vector functions $\Bu$, $\Bv$ on $G$,
we set $(\Bu,\Bv)_G =\int_G \Bu\cdot\Bv\intd x$ and 
$(\Bu,\Bv)_{\pa G} = \int_{\pa G}\Bu\cdot\Bv\intd \si$,
where $\pa G$ is the boundary of $G$ and $d\si$ the surface element of $\pa G$.

Given $1<q<\infty$, we set $q'=q/(q-1)$.
Let $L_{q,\loc}(\overline{G})$ be the vector space of all measurable functions $f$: $G\to \BK$ such that
 $f\in L_q(G\cap B)$ for any ball $B$ of $\ws$.
We define a homogeneous space $\wh{W}_q^1(G)$ by
$\wh{W}_q^1(G) = \{f\in L_{q,\loc}(\overline{G}) \mid \nabla f \in L_q(G)^N\}$ with norm $\|\cdot\|_{\wh{W}_q^1(G)}:=\|\nabla\cdot\|_{L_q(G)}$,
where we have to identify two elements differing by a constant.
In addition, let $\wh{W}_{q,0}^1(G)$ and $W_{q,0}^1(G)$ be Banach spaces defined by
$X_{q,0}^1(G) = \{f\in X_q^1(G) \mid \text{$f=0$ on $\pa G$}\}$ $(X\in\{\wh{W},W\})$
with norms $\|\cdot\|_{\wh{W}_{q,0}^1(G)}:=\|\nabla\cdot\|_{L_q(G)}$ and $\|\cdot\|_{W_{q,0}^1(G)}:=\|\cdot\|_{W_q^1(G)}$, respectively.

Throughout this paper, the letter $C$ denotes generic constants and $C_{a,b,c,\dots}$ means that
the constant depends on the quantities $a$, $b$, $c$, \dots.
The values of constants $C$ and $C_{a,b,c,\dots}$ may change from line to line.


Secondly, we show some definitions.
Uniform $W_r^{2-1/r}$ domains are defined as follows:

\begin{defi}\label{defi:uni}
Let $1<r<\infty$ and $D$ be a domain of $\ws$ with boundary $\pa D$.
We say that $D$ is a uniform $W_r^{2-1/r}$ domain,
if there exist positive constants $\al$, $\beta$, and $K$ such that
for any $x_0=(x_{01},\dots,x_{0N})\in\pa D$
there are a coordinate number $j$ and a $W_r^{2-1/r}$ function $h(x')$
$(x'=(x_1,\dots,\wh{x}_j,\dots,x_N))$ defined on $B_\al'(x_0')$,
with $x_0'=(x_{01},\dots,\wh{x}_{0j},\dots,x_{0N})$ and
$\|h\|_{W_r^{2-1/r}(B_\al'(x_0'))}\leq K$, such that
\begin{align*}
	D\cap B_\beta(x_0) &= \{x\in\ws \mid x_j>h(x'),\, x'\in B_\al'(x_0')\} \cap B_\beta(x_0), \displaybreak[0] \\
	\pa D \cap B_\beta(x_0) &= \{x \in \ws \mid x_j=h(x'),\, x'\in B_\al'(x_0')\} \cap B_\beta(x_0).
\end{align*}
Here $(x_1,\dots,\wh{x}_j,\dots,x_N)=(x_1,\dots,x_{j-1},x_{j+1},\dots,x_N)$,
$B_\al'(x_0')=\{x'\in\tws \mid |x'-x_0'|<\al\}$, and
$B_\beta(x_0) = \{x\in\ws \mid |x-x_0|<\beta\}$.
\end{defi}

We next introduce the definition of the $\CR$-boundedness of operator families.

\begin{defi}\label{defi:R}
Let $X$ and $Y$ be two Banach spaces.
A family of operators $\CT\subset\CL(X,Y)$ is called $\CR$-bounded on $\CL(X,Y)$,
if there exist constants $C>0$ and $p\in[1,\infty)$ such that the following assertion holds:
For each natural number $n$, $\{T_j\}_{j=1}^n\subset \CT$, $\{f_j\}_{j=1}^n\subset X$ and for all sequences $\{r_j(u)\}_{j=1}^n$
of independent, symmetric, $\{-1,1\}$-valued random variables on $[0,1]$,
there holds the inequality:
\begin{equation*}
	\Big(\int_0^1\Big\|\sum_{j=1}^n r_j(u)T_j f_j\Big\|_Y^p\intd u\Big)^{1/p}
		\leq C\Big(\int_0^1\Big\|\sum_{j=1}^n r_j(u)f_j\Big\|_X^p\intd u\Big)^{1/p}.
\end{equation*}
The smallest such $C$ is called $\CR$-bound of $\CT$ on $\CL(X,Y)$,
which is denoted by $\CR_{\CL(X,Y)}$.
\end{defi}

\begin{rema}
The constant $C$ in Definition \ref{defi:R} depends on $p$.
On the other hand, it is well-known that $\CT$ is $\CR$-bounded for any $p\in[1,\infty)$, provided that $\CT$ is $\CR$-bounded for some $p\in[1,\infty)$.
This fact follows from Kahane's inequality (cf. \cite[Theorem 2.4]{KW04}).
\end{rema}

Furthermore, we introduce a weak elliptic transmission problem.
In the present paper, $\Ga_+=\emptyset$ or $\Ga_-=\emptyset$ are admissible,
but note that $\Ga\neq\emptyset$.
Let $W_{q,\Ga_+}^1(\Om)$ and $\wh{W}_{q,\Ga_+}^1(\Om)$ be Banach spaces defined by
\begin{equation*}
	X_{q,\Ga_+}^1(\Om) =
		\left\{\begin{aligned}
			&\{f\in X_q^1(\Om) \mid f=0 \text{ on $\Ga_+$}\} && \text{if $\Ga_+\neq \emptyset$,} \\
			&X_q^1(\Om) && \text{if $\Ga_+=\emptyset$}
		\end{aligned}\right.
\end{equation*}
for $X\in\{W,\wh{W}\}$, and their norms are given by
	$\|\cdot\|_{W_{q,\Ga_+}^1(\Om)} = \|\cdot\|_{W_q^1(\Om)}$ and $\|\cdot\|_{\wh{W}_{q,\Ga_+}^1(\Om)} = \|\nabla \cdot\|_{L_q(\Om)}$, respectively.
The unique solvability of the weak elliptic transmission problem is defined in the following.
\begin{defi}\label{defi:weakDN}
	Let $1<q<\infty$ and $q'=q/(q-1)$.
	Let 	$\CW_q^1(\Om)$ be a closed subspace of $\wh{W}_{q,\Ga_+}^1(\Om)$,
	and suppose that $W_{q,\Ga_+}^1(\Om)$ is dense in $\CW_q^1(\Om)$.
	Set $\rho=\rho_+\chi_{\Om_+}+\rho_-\chi_{\Om_-}$ for positive constants $\rho_\pm$.
	Then we say that the weak elliptic transmission problem is uniquely solvable on $\CW_q^1(\Om)$ for $\rho_\pm$
	if the following assertion holds:
	For any $\Bf \in L_q(\Om)^N$, there is a unique $\te\in \CW_q^1(\Om)$ satisfying the variational equation:
	\begin{equation}\label{151214}
		(\rho^{-1}\nabla\te,\nabla\ph)_{\dot\Om} = (\Bf,\nabla\ph)_{\Om} \quad
		\text{for all $\ph\in\CW_{q'}^1(\Om)$,}
	\end{equation}
	which possesses the estimate: $\|\nabla\te\|_{L_q(\Om)}\leq C\|\Bf\|_{L_q(\Om)}$ with a positive constant $C$
	independent of $\te$, $\ph$, and $\Bf$.	
\end{defi}

\begin{rema}\label{rema:weakDN}
\begin{enumerate}[(1)]
	\item\label{rema:weakDN4}
		Let $1<q<\infty$, $q'=q/(q-1)$, and let the weak elliptic transmission problem be uniquely solvable on $\CW_q^1(\Om)$ for $\rho_+=\rho_-=1$.
		We define $J_q(\Om)$ and $G_q(\Om)$ by
		\begin{align*}
			&J_q(\Om) = \{\Bf\in L_q(\Om)^N \mid (\Bf,\nabla\ph)_{\Om} =0 \quad \text{for all $\ph\in \CW_{q'}^1(\Om)$}\}, \\
			&G_q(\Om) = \{\Bf\in L_q(\Om)^N \mid \Bf = \nabla\te \quad \text{for some $\te\in\CW_q^1(\Om) $} \}.
		\end{align*}
		Then, by the standard proof, the so-called {\it Helmholtz decomposition}: $L_q(\Om)^N = J_q(\Om)\oplus G_q(\Om)$ holds.
	\item\label{rema:weakDN3}
		In applications, we choose $\CW_q^1(\Om)$ in such a way that
		the weak elliptic transmission problem is uniquely solvable for $\rho_\pm$.
		Typical examples are as follows:
		$\CW_q^1(\ws)=\wh{W}_q^1(\ws)$;
		$\CW_q^1(\uhs)=\wh{W}_q^1(\uhs)$ with $\Ga_+=\emptyset$ and $\Ga_-=\BR_0^N=\{(x',x_N) \mid x'\in\tws, x_N=0\}$,
		and $\CW_q^1(\Om)$ is analogously defined by $\wh{W}_q^1(\Om)$ when $\Om$ is a perturbed $\uhs$;
		$\CW_q^1(\lhs)=\wh{W}_{q,\Ga_+}^1(\lhs)$ with $\Ga_+=\BR_0^N$ and $\Ga_-=\emptyset$,
		and $\CW_q^1(\Om)$ is analogously defined by $\wh{W}_{q,\Ga_+}^1(\Om)$ when $\Om$ is a perturbed $\lhs$;
		$\CW_q^1(\Om)=\wh{W}_{q,\Ga_+}^1(\Om)$ when $\Om$ is a bounded domain, a layer, or a perturbed layer. 
		We refer e.g. to \cite[Appendix A.1]{KPW13} for the treatment of weak elliptic transmission problems.
	\item\label{rema:weakDN2}
		We set $W_q^1(\dot\Om)+\CW_q^1(\Om)
		=\{\te_1+\te_2 \mid \te_1\in W_q^1(\dot\Om),\,\te_2\in\CW_q^1(\Om)\}$.
		Suppose that the weak elliptic transmission problem is uniquely solvable on $\CW_q^1(\Om)$ for $\rho_\pm$.
		Then, for any $\al\in L_q(\dot\Om)^N$, $\beta\in W_q^{1-1/q}(\Ga)$, and $\ga\in W_q^{1-1/q}(\Ga_+)$,
		there exists a unique $\te\in W_q^1(\dot\Om)+\CW_q^1(\Om)$ satisfying the weak problem:
		\begin{equation}\label{160211_5}
			(\rho^{-1}\nabla\te,\nabla\ph)_{\dot\Om} = (\al,\nabla\ph)_{\dot\Om} \quad
			\text{for all $\ph\in\CW_q^1(\Om)$,} \quad
			\jump{\te} = \beta \quad \text{on $\Ga$,} \quad \te = \ga \quad \text{on $\Ga_+$,}
		\end{equation}
		which possesses the estimate:
		\begin{equation*}
			\|\nabla\te\|_{L_q(\dot\Om)}
				\leq C \left(\|\al\|_{L_q(\dot\Om)}+\|\beta\|_{W_q^{1-1/q}(\Ga)}+\|\ga\|_{W_q^{1-1/q}(\Ga_+)}\right)
		\end{equation*}
		with some positive constant $C$ independent of $\al$, $\beta$, $\ga$, $\te$, and $\ph$.
		Thus, it is possible to define a linear operator
		$\CK:L_q(\dot\Om)^N\times W_q^{1-1/q}(\Ga)\times W_q^{1-1/q}(\Ga_+)\to W_q^1(\dot\Om)+\CW_q^1(\Om)$
		by $\CK(\al,\beta,\ga)=\te$ satisfying \eqref{160211_5}.
		If $\Ga_+=\emptyset$, then we denote $\CK(\al,\beta,\ga)$ by $\CK(\al,\beta,\emptyset)$ when $\Ga_-\neq\emptyset$
		and by $\CK(\al,\beta)$ when $\Ga_-=\emptyset$. 
\end{enumerate}
\end{rema}

We now state our main results.
To this end, we introduce a data space for the divergence equation:
$\di\Bu = g$ in $\dot\Om$ with boundary conditions: $\jump{\Bu}\cdot\Bn =0$ on $\Ga$ and $\Bu\cdot\Bn_-=0$ on $\Ga_-$,
where $\Bn_-$ is the unit outward normal vector on $\Ga_-$.
Let $\CW_q^{-1}(\Om)$ be the dual space of $\CW_{q'}^1(\Om)$ for $1<q<\infty$ and $q'=q/(q-1)$,
and let $\|\cdot\|_{\CW_q^{-1}(\Om)}$ and $<\cdot,\cdot>_\Om$ be its norm and the duality pairing between $\CW_q^{-1}(\Om)$ and $\CW_{q'}^1(\Om)$,
respectively.
Then we set
\begin{equation*}
	L_{q}(\Om)\cap \CW_q^{-1}(\Om)
		=\left\{g\in L_q(\Om) \mid \exists M>0 \text{ s.t. } |(g,\ph)_\Om|\leq M\|\nabla\ph\|_{L_{q'}(\Om)}
		\text{ for any $\ph\in W_{q',\Ga_+}^1(\Om)$}\right\}.
\end{equation*}
Let $g\in L_q(\Om)\cap \CW_{q}^{-1}(\Om)$, and thus $g$ can be extended uniquely to an element of $\CW_{q}^{-1}(\Om)$.
Such an extended $g$ is again denoted by $g$ for short.
We can see $g$ as a functional on $\{\nabla\te\mid\te\in\CW_{q'}^1(\Om)\}\subset L_{q'}(\Om)^N$,
which, combined with Hahn-Banach's theorem, furnishes that
there is a $\BG\in L_q(\Om)^N$ such that
$\|g\|_{\CW_q^{-1}(\Om)}=\|\BG\|_{L_q(\Om)}$ and $<g,\ph>_\Om = -(\BG,\nabla\ph)_\Om$ for all $\ph\in\CW_{q'}^1(\Om)$.
In what follows, $\BG$ is restricted to the functional on $\{\nabla\te\mid\te\in \CW_{q'}^1(\Om)\}$.
Let $\wt{L}_q(\Om)=L_q(\Om)^N/J_q(\Om)$, and let $[\BG]=\{\BG+\Bf \mid \Bf\in J_q(\Om)\}\in \wt{L}_q(\Om)$.
Then $g\mapsto[\BG]$ is well-defined,
so that we denote $[\BG]$ by $\CG(g)$.
Especially, we have, for $g\in L_q(\Om)\cap \CW_q^{-1}(\Om)$ and for any representative $\mathfrak{g}\in L_q(\Om)^N$ of $\CG(g)$,
\begin{equation}\label{DI}
	(g,\ph)_{\Om} = -(\mathfrak{g},\nabla\ph)_\Om \quad \text{for all $\ph\in W_{q',\Ga_+}^1(\Om)$}.
\end{equation}

Here we set $\BW_q^{-1}(\Om)=L_q(\Om)\cap\CW_q^{-1}(\Om)$. 
Then $W_q^1(\dot\Om)\cap\BW_q^{-1}(\Om)$ is a Banach space endowed with norm
$\|\cdot\|_{W_q^1(\dot\Om)\cap\BW_q^{-1}(\Om)}:=\|\cdot\|_{W_q^1(\dot\Om)} + \|\cdot\|_{\CW_q^{-1}(\Om)}$,
and the function space is characterized as the data space for the divergence equation above.
The following theorem presents the main result of this paper.



\begin{theo}\label{theo:main}
Let $1<q<\infty$, $0<\ep<\pi/2$, $N<r<\infty$, and $\max(q,q')\leq r$ with $q'=q/(q-1)$.
Let $\rho_\pm$ be positive constants.
Suppose that the following three conditions holds:
\begin{enumerate}[{\rm (a)}]
	\item
		$\Om_\pm$ are uniform $W_r^{2-1/r}$ domains;
	\item
		The weak elliptic transmission problem is uniquely solvable on $\CW_q^1(\Om)$ and $\CW_{q'}^1(\Om)$ for $\rho_\pm$;
	\item
		$\mu_\pm$ are real valued uniformly continuous functions defined on $\ws$ and
		there exist positive constants $\mu_{\pm1}$, $\mu_{\pm 2}$ such that
		\begin{equation*}
			\mu_{+1} \leq \mu_+(x) \leq \mu_{+2}, \quad
			\mu_{-1} \leq \mu_-(x) \leq \mu_{-2} \quad 
			\text{for any $x\in\ws$.}
		\end{equation*}
		In addition, $\mu_\pm\in W_{r,\loc}^1(\ws)$ and $\|\nabla\mu_\pm\|_{L_r(B)}\leq K_{r,\tau}$
		with some positive constant $K_{r,\tau}$ for any ball $B\subset\ws$ with radius $\tau$.
\end{enumerate}
\begin{enumerate}[$(1)$]
	\item {\bf Existence.}
		Set 
		\begin{align*}
			X_q =& \{(\Bf,g,\Bh,\Bk) \mid \Bf\in L_q(\dot\Om)^N,\,g\in W_q^1(\dot\Om)\cap \BW_q^{-1}(\Om),\,\Bh\in W_q^1(\dot\Om)^N,\,\Bk\in W_q^1(\Om_+)^N\}, \\
			\CX_q =& \{(F_1,\dots,F_{11})\mid 
				F_1,F_2,F_4,F_7\in L_q(\dot\Om)^N,\, F_3\in L_q(\dot\Om),\, F_5\in W_q^1(\dot\Om), \\ 
				&\quad F_6\in L_q(\dot\Om)^{N^2},\,F_8\in W_q^1(\dot\Om)^N,\,F_9\in L_q(\Om_+)^{N^2},\,F_{10}\in L_q(\Om_+)^N,\,F_{11}\in W_q^1(\Om_+)^N\}.
		\end{align*}
		Then there exists a constant $\la_0\geq 1$ and operator families:
		\begin{equation*}
			\BA(\la) \in \Hol(\Si_{\ep,\la_0}, \CL(\CX_q,W_q^2(\dot\Om)^N)), \quad
			\BP(\la) \in \Hol(\Si_{\ep,\la_0}, \CL(\CX_q,W_q^1(\dot\Om)+\CW_q^1(\Om)))
		\end{equation*}
		such that, for any $\la\in\Si_{\ep,\la_0}$ and for any $(\Bf,g,\Bh,\Bk)\in X_q$ and $\mathfrak{g}\in\CG(g)$,
		$\Bu = \BA(\la)F_\la(\Bf,g,\mathfrak{g},\Bh,\Bk)$ and $\te=\BP(\la)F_\la(\Bf,g,\mathfrak{g},\Bh,\Bk)$
		are solutions to \eqref{TSRP}, and furthermore,
		\begin{align}
			\CR_{\CL(\CX_q,L_q(\dot\Om)^{\wt N})}\Big(\Big\{\,\Big(\la\frac{d}{d\la}\Big)^l
				\left(R_\la\BA(\la)\right)\mid \la\in\Si_{\ep,\la_0}\,\Big\}\Big) &\leq \ga_0 \quad (l=0,1), \label{160124_1} \\
			\CR_{\CL(\CX_q,L_q(\dot\Om)^N)}\Big(\Big\{\,\Big(\la\frac{d}{d\la}\Big)^l
				\nabla\BP(\la)\mid \la\in\Si_{\ep,\la_0}\,\Big\}\Big) &\leq \ga_0 \quad (l=0,1) \label{160124_2}
		\end{align}
		for some positive constant $\ga_0$. Here we have set
		$\wt N = N^3+N^2 + N$, $R_\la\Bu = (\nabla^2\Bu,\la^{1/2}\nabla\Bu,\la\Bu)$, and
		\begin{equation*}
			F_\la(\Bf,g,\mathfrak{g},\Bh,\Bk)
				= (\Bf, \nabla g,\la^{1/2}g, \la\mathfrak{g},g,\nabla\Bh,\la^{1/2}\Bh,\Bh,\nabla\Bk,\la^{1/2}\Bk,\Bk).  
		\end{equation*} 
	\item {\bf Uniqueness.}
		There exists a $\la_0\geq 1$ such that
		if $\Bu\in W_q^2(\dot\Om)^N\cap J_q(\Om)$ and $\te\in W_q^1(\dot\Om)+\CW_q^1(\Om)$
		satisfies the homogeneous equations:
		\begin{align*}
			&\la\Bu-\rho^{-1}\Di\BT(\Bu,\te) =0 \quad \text{in $\dot\Om$,}\quad
			\jump{\BT(\Bu,\te)\Bn}=0, \quad \jump{\Bu}=0 \quad \text{on $\Ga$,} \\
			&\BT(\Bu,\te)\Bn_+ =0 \quad \text{on $\Ga_+$,} \quad
			\Bu=0 \quad \text{on $\Ga_-$}
		\end{align*}
		with $\la\in\Si_{\ep,\la_0}$, then $\Bu=0$ in $\dot\Om$.
\end{enumerate}
\end{theo}

\begin{rema}\label{rema:main}
The symbols $F_1$, $F_2$, $F_3$, $F_4$, $F_5$, $F_6$, $F_7$, $F_8$, $F_9$, $F_{10}$, and $F_{11}$
are variables corresponding to $\Bf$, $\nabla g$, $\la^{1/2}g$, $\la\mathfrak{g}$, $g$,
$\nabla\Bh$, $\la^{1/2}\Bh$, $\Bh$, $\nabla\Bk$, $\la^{1/2}\Bk$, and $\Bk$, respectively.  
The norm of space $\CX_q$ is given by
\begin{equation*}
	\|(F_1,\dots,F_{11})\|_{\CX_q} = \|(F_1,F_2,F_3,F_4,F_6,F_7)\|_{L_q(\dot\Om)}+\|(F_5,F_8)\|_{W_q^1(\dot\Om)}+
	\|(F_9,F_{10})\|_{L_q(\Om_+)} + \|F_{11}\|_{W_q^1(\Om_+)}.
\end{equation*}
\end{rema}

This paper is organized as follows:
The next section first tells us some equivalence of \eqref{TSRP} and
two-phase reduced Stokes resolvent equations, which are obtained by elimination of pressure term from \eqref{TSRP},
in Subsection \ref{sub:reduced} and Subsection \ref{sub:stokes}.
Secondly, we state our main result for the two-phase reduced Stokes resolvent equations in Subsection \ref{sub:main},
which, combined with what pointed out in Subsection \ref{sub:stokes}, allows us to conclude that Theorem \ref{theo:main} holds.
Thirdly, we state generation of analytic semigroup and some maximal $L_p\text{-}L_q$ regularity theorem
for two-phase problems of time-dependent Stokes equations in Subsection \ref{sub:anal} and Subsection \ref{sub:max}, respectively,
with help of Theorem \ref{theo:main} and the main result stated in Subsection \ref{sub:main}. 
Section \ref{sec:ws} proves
our main result for the two-phase reduced Stokes resolvent equations
in the case where $\dot\Om=\dws=\BR_+^N\cup\BR_-^N$,
$\BR_\pm^N=\{(x',x_N) \mid x'\in\tws,\,\pm x_N>0\}$, with constant viscosity coefficients.
Section \ref{sec:bent} proves our main result for the two-phase reduced Stokes resolvent equations  
with variable viscosity coefficients
when $\dot\Om$ is a perturbed $\dws$  by using results obtained in Section \ref{sec:ws}.
Section \ref{sec:proof} shows the main result stated in Subsection \ref{sub:main}
by using results obtained in Section \ref{sec:bent} together with some localization technique.

\section{Generation of analytic semigroup and maximal regularity}\label{sec:st-rst}
In this section, after introducing the Stokes operator in \eqref{op:stokes} below,
we consider the following initial-boundary value problem:
\begin{equation}\label{IBVP:1}
	\left\{\begin{aligned}
		\pa_t\Bv -\rho^{-1}\Di\BT(\Bv,\pi) &= \Bf, &\di\Bv&=g && \text{in $\dot\Om\times(0,\infty)$,} \\
		\jump{\BT(\Bv,\pi)\Bn} &= \jump{\Bh} &\jump{\Bv}&=0 && \text{on $\Ga\times(0,\infty)$,} \\
		\BT(\Bv,\pi)\Bn_+ &= \Bk && && \text{on $\Ga_+\times(0,\infty)$,} \\
		\Bv &= 0 && && \text{on $\Ga_-\times(0,\infty)$,}\\
		\Bv|_{t=0} &= \Bv_0 && && \text{in $\dot\Om$,}
	\end{aligned}\right.
\end{equation}
which is called the two-phase Stokes equations in this paper.
We discuss the generation of analytic semigroup associated with \eqref{IBVP:1} and
some maximal $L_p\text{-}L_q$ regularity property for \eqref{IBVP:1}.
To consider the generation of analytic semigroup, we have to formulate \eqref{IBVP:1} in the semigroup setting, that is,
we have to eliminate the pressure term from \eqref{IBVP:1}. 
Throughout this section,  for some $1<q<\infty$ and positive constants $\rho_\pm$,
we assume that
the weak elliptic transmission problem is uniquely solvable on $\CW_q^1(\Om)$ for $\rho_\pm$.
The assumption plays an essential role to eliminate the pressure term from \eqref{IBVP:1}. 

\subsection{Two-phase reduced Stokes resolvent equations}\label{sub:reduced}
Let $1<q<\infty$, $q'=q/(q-1)$, and $\Bu\in W_q^2(\dot\Om)^N$.
Set $K(\Bu)=\CK(\al,\beta,\ga)\in W_q^1(\dot{\Om})+\CW_q^1(\Om)$,
defined in Remark \ref{rema:weakDN} \eqref{rema:weakDN2}, with
\begin{equation}\label{abc}
	\al = \rho^{-1}\Di(\mu\BD(\Bu))-\nabla\di\Bu, \quad
	\beta= <\jump{\mu\BD(\Bu)\Bn},\Bn>-\jump{\di\Bu}, \quad
	\ga=<\mu\BD(\Bu)\Bn_+,\Bn_+>-\di\Bu.
\end{equation}
%
%
Then $\Bu\mapsto \nabla K(\Bu)$ is a bounded linear operator from $W_q^2(\dot\Om)^N$ to $L_q(\dot\Om)^N$
with $\|\nabla K(\Bu)\|_{L_q(\dot\Om)}\leq C\|\Bu\|_{W_q^2(\dot\Om)}$ for some positive constant $C$ independent of $\Bu$.
We consider the equations as follows:
\begin{equation}\label{redu-eq:1}
	\left\{\begin{aligned}
		\la\Bu-\rho^{-1}\Di\BT(\Bu,K(\Bu)) 
			&= \Bf && \text{in $\dot{\Om}$,} \\
		\jump{\BT(\Bu,K(\Bu))\Bn}
			&= \jump{\Bh} && \text{on $\Ga$,} \\
		\jump{\Bu} &= 0 && \text{on $\Ga$,} \\
		\BT(\Bu,K(\Bu))\Bn_+ &= \Bk && \text{on $\Ga_+$,} \\
		\Bu &= 0 && \text{on $\Ga_-$,}
	\end{aligned}\right.
\end{equation}
which is called the two-phase reduced Stokes resolvent equations.
In this subsection, we construct a solution to \eqref{redu-eq:1} on the assumption that \eqref{TSRP} is solvable.
To this end, we treat the following auxiliary problem:
\begin{equation}\label{weak:1-1}
			(\la u,\ph)_{\dot\Om}+(\nabla u, \nabla\ph)_{\dot\Om} =(\Bf,\nabla\ph)_{\dot\Om} \quad \text{for all $\ph\in W_{q',\Ga_+}^1(\Om)$,} \quad
			\jump{u} =g \quad \text{on $\Ga$,} \quad  u = h \quad \text{on $\Ga_+$,} 
\end{equation}
which is the weak elliptic transmission problem with resolvent parameter $\la$.
Employing the same argument as in the proof of our main result in the present paper, we can show the following proposition.

\begin{prop}\label{prop:weak1}
Let $0<\ep<\pi/2$, $1<q<\infty$, $N<r<\infty$, and $\max(q,q')\leq r$ with $q'=q/(q-1)$.
Suppose that $\Om_\pm$ are uniform $W_r^{2-1/r}$ domains.
Then there is a positive number $\la_0\geq 1$ such that,
for any $\la\in\Si_{\ep,\la_0}$, $\Bf\in L_q(\dot\Om)^N$, $g\in W_q^{1-1/q}(\Ga)$, and $h\in W_q^{1-1/q}(\Ga_+)$,
\eqref{weak:1-1} admit a unique solution $u\in W_q^1(\dot\Om)\cap\BW_q^{-1}(\Om)$.
\end{prop}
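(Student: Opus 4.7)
The plan is to mimic the three-step localization scheme that will be used for the reduced Stokes resolvent problem, but executed at the much simpler scalar transmission level. First I would reduce to homogeneous boundary data by lifting: choose $u_0 \in W_q^1(\dot\Om)$ with $\jump{u_0} = g$ on $\Ga$ and $u_0 = h$ on $\Ga_+$ using standard trace right inverses (the lift on $\Om_+$ covers $h$ on $\Ga_+$ and the jump datum $g$; on $\Om_-$ take zero extension near $\Ga$), with the estimate $\|u_0\|_{W_q^1(\dot\Om)} \le C(\|g\|_{W_q^{1-1/q}(\Ga)} + \|h\|_{W_q^{1-1/q}(\Ga_+)})$. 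Then the difference $v = u - u_0$ satisfies \eqref{weak:1-1} with homogeneous jump/Dirichlet data and modified right-hand side $\wt\Bf = \Bf + \nabla u_0$ together with an extra $L_q$ term from $\la u_0$; accommodating this extra term is why one is forced to introduce auxiliary data of the type $\la^{1/2} g$, $g$, $\nabla g$ similar to those appearing in $F_\la$ of Theorem \ref{theo:main}.

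Next I would handle the model problem $\dot\Om = \dws$ with flat interface $\BR_0^N$ and no outer boundaries by partial Fourier transform in $x' \in \tws$. Since the equation decouples between $\Om_+$ and $\Om_-$ modulo the transmission condition $\jump{u}=0$ (after the reduction above) and $\jump{\pa_N u - f_N} = 0$, one gets an explicit solution formula of the form $\wh u(\xi', x_N) = \int e^{-\sqrt{|\xi'|^2 + \la}\,|x_N - y_N|}(\cdots)\,dy_N$ whose symbols are classical $\CR$-bounded Fourier multipliers (via the Weis multiplier theorem and the machinery used in Enomoto--Shibata type arguments). The bent half-space/perturbed case is then treated by writing the interface as a graph $x_N = \phi(x')$ with small $W_r^{2-1/r}$ norm of $\phi$ and expanding; commutator errors carry a factor of $\|\nabla\phi\|_\infty$ and are absorbed by choosing $\la_0$ large, using $r > N$ so that Sobolev embedding gives $L_\infty$ control.

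Finally, a partition of unity subordinate to the atlas of the uniform $W_r^{2-1/r}$ structure glues local solutions into a global one; the constants $\al,\be,K$ in Definition \ref{defi:uni} give a uniform covering, and commutator terms $[\nabla, \chi_j]$ produce lower-order perturbations controlled by $\la^{-1/2}$, absorbable for large $\la_0$. This yields $u \in W_q^1(\dot\Om)$ together with the natural resolvent estimate $\|\la u\|_{L_q(\dot\Om)} + \|\la^{1/2}\nabla u\|_{L_q(\dot\Om)} \le C(\cdots)$.

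The main obstacle is showing the $\BW_q^{-1}(\Om)$-membership of $u$, since this is what truly distinguishes the proposition from a standard resolvent statement for a mixed elliptic transmission problem. The key observation, which the author seems to be highlighting when they remark that Proposition \ref{prop:w-w} ``allows us to avoid such negative spaces,'' is that one does not need to solve in a negative space a priori; instead, once $u \in W_q^1(\dot\Om) \subset L_q(\Om)$ is constructed, the weak equation in \eqref{weak:1-1}, read with a test function $\ph \in W_{q',\Ga_+}^1(\Om)$ (no jump across $\Ga$), gives
\begin{equation*}
|\la(u,\ph)_{\Om}| = |(\Bf - \nabla u, \nabla\ph)_{\dot\Om}| \le \bigl(\|\Bf\|_{L_q(\dot\Om)} + \|\nabla u\|_{L_q(\dot\Om)}\bigr)\|\nabla\ph\|_{L_{q'}(\Om)},
\end{equation*}
so $u$ defines a bounded functional on $W_{q',\Ga_+}^1(\Om)$ in the $\|\nabla\cdot\|_{L_{q'}}$-norm. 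By the density assumption in Definition \ref{defi:weakDN}, this functional extends uniquely to $\CW_{q'}^1(\Om)$, proving $u \in \BW_q^{-1}(\Om)$ with $\|u\|_{\CW_q^{-1}(\Om)} \le |\la|^{-1}(\|\Bf\|_{L_q(\dot\Om)} + \|\nabla u\|_{L_q(\dot\Om)})$. Uniqueness follows by subtracting two solutions, testing against the solution itself, and using positivity of the real part of $\la$ in $\Si_{\ep,\la_0}$ together with $\la_0 \ge 1$.
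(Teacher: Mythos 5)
Your overall architecture (flat model problem by partial Fourier transform, bent interface by perturbation, gluing by a partition of unity subordinate to the uniform $W_r^{2-1/r}$ structure) is the same route the paper intends when it says the proposition follows ``by the same argument as the main result,'' and your final observation is correct and is exactly the right way to get the $\BW_q^{-1}(\Om)$-membership: once $u\in W_q^1(\dot\Om)$ solves \eqref{weak:1-1}, the identity $\la(u,\ph)_{\Om}=(\Bf-\nabla u,\nabla\ph)_{\dot\Om}$ for $\ph\in W_{q',\Ga_+}^1(\Om)$ gives the required bound against $\|\nabla\ph\|_{L_{q'}(\Om)}$. However, your uniqueness argument has a genuine gap. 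If $u\in W_q^1(\dot\Om)$ solves the homogeneous problem, then $u\in W_{q,\Ga_+}^1(\Om)$, while the admissible test functions lie in $W_{q',\Ga_+}^1(\Om)$; for $q\neq 2$ you are not allowed to ``test against the solution itself,'' so the energy identity you invoke is not available. Moreover, even when $q=2$, ``positivity of the real part of $\la$'' is false on $\Si_{\ep,\la_0}$, which contains $\la$ with $|\arg\la|$ up to $\pi-\ep$ and hence with $\mathrm{Re}\,\la<0$; one must instead use the sector estimate $|\la a+b|\geq c_\ep(|\la|a+b)$ for $a,b\geq 0$, or, for general $q$, a duality argument: solve the same problem with exponent $q'$ (this is precisely why the hypothesis $\max(q,q')\leq r$ appears) and pair the homogeneous solution with that dual solution, as the paper does for the uniqueness parts of its main theorems.

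A second, smaller defect is the lifting step. After writing $u=v+u_0$ with an arbitrary trace lifting $u_0$, the right-hand side of the equation for $v$ contains $-\la(u_0,\ph)_{\dot\Om}$, which is \emph{not} of the divergence form $(\BF,\nabla\ph)_{\dot\Om}$ covered by \eqref{weak:1-1}; this is exactly the ``negative Sobolev space'' obstruction the authors emphasize in the introduction, and saying that one is ``forced to introduce auxiliary data of the type $\la^{1/2}g$, $g$, $\nabla g$'' does not resolve it. You must either enlarge the class of model problems to admit an additional $L_q$ forcing tested directly against $\ph$ (a resolvent problem with Neumann/transmission conditions, which is solvable but needs to be carried out), or, as in the proof of Proposition \ref{prop:w-w}, build the jump lifting as an explicit solution of $(\la-\De)\psi=0$ with $\jump{\psi}=g$ and vanishing conormal jump, so that no extra forcing appears at all. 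Finally, note that for this proposition only unique solvability is claimed, so the $\CR$-boundedness/Weis machinery you invoke, while harmless, is not needed.
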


We solve \eqref{redu-eq:1} by means of solutions to \eqref{TSRP}.
Given $\Bf\in L_q(\dot\Om)^N$, $\Bh\in W_q^1(\dot\Om)^N$, and $\Bk\in W_q^1(\Om_+)^N$,
we choose by Proposition \ref{prop:weak1} some $g$ in such a way that $g$ solves the weak problem:
\begin{align}
		&(\la g,\ph)_{\dot\Om} + (\nabla g,\nabla\ph)_{\dot\Om} = -(\Bf,\nabla\ph)_{\dot\Om} \quad \text{for all $\ph\in W_{q',\Ga_+}^1(\Om)$,}  \label{150601_1-1}\\
		&\jump{g} = <\jump{\Bh},\Bn> \quad \text{on $\Ga$,} \quad
		g=<\Bk,\Bn_+> \quad \text{on $\Ga_+$.} \label{150601_1-2}
\end{align}

Let $\Bu\in W_q^2(\dot\Om)^N$ and $\te\in W_q^1(\dot\Om) + \CW_q^1(\Om)$ be solutions to \eqref{TSRP} with $\Bf$, $g$, $\Bh$, and $\Bk$ as above.
Then, by the definition of $K(\Bu)$ and Gauss's divergence theorem together with $\jump{\Bu}=0$ on $\Ga$, $\Bu=0$ on $\Ga_-$, we see that
\begin{align*}
	(\Bf,\nabla\ph)_{\dot\Om} =& (\la\Bu-\nabla\di\Bu -\rho^{-1}\nabla K(\Bu)+\rho^{-1}\nabla\te,\nabla\ph)_{\dot\Om} \\
		=& -(\la g,\ph)_{\dot\Om} -(\nabla g,\nabla\ph)_{\dot\Om} + (\rho^{-1}\nabla(\te- K(\Bu)),\nabla\ph)_{\dot\Om}
\end{align*}
for any $\ph\in W_{q,\Ga_+}^1(\Om)$. This combined with \eqref{150601_1-1} and the denseness of $W_{q',\Ga_+}^1(\Om)$ in $\CW_{q'}^1(\Om)$ furnishes that
\begin{equation*}
	(\rho^{-1}\nabla(\te-K(\Bu)),\nabla\ph)_{\dot\Om} =0 \quad \text{for all $\ph\in \CW_{q'}^1(\Om)$.}
\end{equation*}
In addition, it holds that $\jump{K(\Bu)-\te}=0$ on $\Ga$ and $K(\Bu)-\te=0$ on $\Ga_+$,
since $g$ satisfies \eqref{150601_1-2} and
\begin{align*}
	<\jump{\Bh},\Bn> &= <\jump{\mu\BD(\Bu)\Bn},\Bn> - \jump{\te} =\jump{K(\Bu)-\te} + \jump{\di\Bu} 
		= \jump{K(\Bu)-\te} + \jump{g} \quad \text{on $\Ga$,} \\
	<\Bk,\Bn_+> &= <\mu\BD(\Bu)\Bn_+,\Bn_+> - \te = K(\Bu)-\te + \di\Bu  
		= K(\Bu)-\te + g \quad \text{on $\Ga_+$.} 
\end{align*}
Thus the unique solvability of the weak elliptic transmission problem implies $K(\Bu)=\te$,
which means that the solution $\Bu\in W_q^2(\dot\Om)^N$ of \eqref{TSRP} solves \eqref{redu-eq:1} for 
$\Bf\in L_q(\dot\Om)^N$, $\Bh\in W_q^1(\dot\Om)^N$, $\Bk\in W_q^1(\Om_+)^N$, and $g$ of \eqref{150601_1-1}-\eqref{150601_1-2}.

\subsection{Reduced Stokes implies Stokes}\label{sub:stokes}
In this subsection, we solve \eqref{TSRP} on the assumption that \eqref{redu-eq:1} is solvable.
Let $1<q<\infty$ and $q'=q/(q-1)$.
Given $\Bf\in L_q(\dot\Om)^N$, $\Bh\in W_q^1(\dot\Om)^N$, and $\Bk\in W_q^1(\Om_+)^N$,
let $\ka\in W_q^1(\dot\Om) + \CW_q^1(\Om)$ be a solution to the weak problem:
\begin{align*}
	&(\rho^{-1}\nabla\ka,\nabla\ph)_{\dot\Om} = (\Bf,\nabla\ph)_{\dot\Om} \quad \text{for all $\ph\in \CW_{q'}^1(\Om)$,} \\
	&\jump{\ka}=-<\jump{\Bh},\Bn> \quad \text{on $\Ga$,} \quad 
	\ka = -<\Bk,\Bn_+> \quad \text{on $\Ga_+$.}
\end{align*}
Then the system \eqref{TSRP} is reduced to 
\begin{equation*}
	\left\{\begin{aligned}
		\la\Bu -\rho^{-1}\Di\BT(\Bu,\te-\ka) &= \Bf -\rho^{-1}\nabla\ka, &\di\Bu&=g && \text{in $\dot\Om$,} \\
		\jump{\BT(\Bu,\te-\ka)\Bn} &= \jump{\Bh}-<\jump{\Bh},\Bn>\Bn, &\jump{\Bu}&=0 && \text{on $\Ga$,} \\
		\BT(\Bu,\te-\ka)\Bn_+ &= \Bk -<\Bk,\Bn_+>\Bn_+ && && \text{on $\Ga_+$,} \\
		\Bu &=0 && && \text{on $\Ga_-$.}
	\end{aligned}\right.
\end{equation*}
It thus suffices to consider \eqref{TSRP} under the condition that 
\begin{equation}\label{151117_1}
	(\Bf,\nabla\ph)_{\dot\Om} =0 \text{ for all $\ph\in \CW_{q'}^1(\Om)$,} \quad
	<\jump{\Bh},\Bn> = 0 \text{ on $\Ga$,} \quad
	<\Bk,\Bn_+> =0 \text{ on $\Ga_+$.}
\end{equation}

For $\BG=(G_1,G_2)\in L_q(\dot\Om)^N\times W_q^1(\dot\Om)$,
we set  $L(\BG)=L(G_1,G_2)=\CK(G_1-\nabla G_2,-\jump{G_2},-G_2)$
by $\CK$ of Remark \ref{rema:weakDN} \eqref{rema:weakDN2}.
Then $\BG\mapsto\nabla L(\BG)$ is a bounded linear operator from $L_q(\dot\Om)^N \times W_q^1(\dot\Om)$ to $L_q(\dot\Om)^N$.


Given $g\in W_q^1(\dot\Om)\cap\BW_q^{-1}(\Om)$, we choose a representative $\mathfrak{g}$ of $\CG(g)$.
For these $g$, $\mathfrak{g}$ and for $\Bf$, $\Bh$, $\Bk$ satisfying \eqref{151117_1},
let $\Bu\in W_q^2(\dot\Om)^N$ be a solution to the two-phase reduced Stokes resolvent equations as follows:
\begin{equation*}
	\left\{\begin{aligned}
		\la \Bu -\rho^{-1}\Di\BT(\Bu,K(\Bu)) &= \Bf + \rho^{-1}\nabla L(\la\mathfrak{g},g) && \text{in $\dot\Om$,} \\
		\jump{\BT(\Bu,K(\Bu))\Bn} &= \jump{\Bh} + \jump{g}\Bn && \text{on $\Ga$,} \\
		\jump{\Bu} &= 0 && \text{on $\Ga$,} \\
		\BT(\Bu,K(\Bu))\Bn_+ &= \Bk + g\Bn_+ && \text{on $\Ga_+$,} \\
		\Bu &= 0 && \text{on $\Ga_-$.}
	\end{aligned}\right.
\end{equation*}
Then, by \eqref{DI}, \eqref{151117_1} and by the definition of $K(\Bu)$, $L(\la\mathfrak{g},g)$, we have
\begin{align*}
	0 &= (\Bf,\nabla\ph)_{\dot\Om} = (\la\Bu-\rho^{-1}\Di(\mu\BD(\Bu))+\rho^{-1}\nabla K(\Bu)-\rho^{-1}\nabla L(\la\mathfrak{g},g),\nabla\ph)_{\dot\Om} \\
		&= (\la\Bu,\nabla\ph)_{\dot\Om} - (\nabla\di\Bu,\nabla\ph)_{\dot\Om} +(\la g,\ph)_{\dot\Om}+ (\nabla g,\nabla\ph)_{\dot\Om}
\end{align*}
for any $\ph\in W_{q,\Ga_+}^1(\Om)$,
which, combined with Gauss's divergence theorem together with $\jump{\Bu}=0$ on $\Ga$, $\Bu =0$ on $\Ga_-$, furnishes that
	$(\la(\di\Bu -g),\ph)_{\dot\Om} + (\nabla(\di\Bu-g),\nabla\ph)_{\dot\Om} =0$ for all $\ph\in W_{q',\Ga_+}^1(\Om)$.
In addition, we see, by \eqref{151117_1} and the definition of $K(\Bu)$, that
	$\jump{g} = <\jump{\mu\BD(\Bu)\Bn},\Bn> -\jump{K(\Bu)} = \jump{\di\Bu}$  on $\Ga$, 
	$g = <\mu\BD(\Bu)\Bn_+,\Bn_+> - K(\Bu) =\di\Bu$ on $\Ga_+$,
which implies that
	$\jump{\di\Bu -g} =0 $ on $\Ga$, $\di\Bu-g =0$  on $\Ga_+$.
Thus, by Proposition \ref{prop:weak1}, $\di\Bu=g$ in $\dot\Om$,
which means that $\Bu$ and $\te =K(\Bu)-L(\la\mathfrak{g},g)$ solves \eqref{TSRP}.
\subsection{$\CR$-bounded solution operator families of reduced Stokes}\label{sub:main}
According to what was pointed out in Subsection \ref{sub:reduced} and Subsection \ref{sub:stokes},
we consider the two-phase reduced Stokes resolvent equations \eqref{redu-eq:1} instead of \eqref{TSRP} from Section \ref{sec:ws} through Section \ref{sec:proof}.
More precisely, we prove the following theorem.
\begin{theo}\label{theo:redu1}
	Let $1<q<\infty$, $0<\ep<\pi/2$, $N<r<\infty$, and $\max(q,q')\leq r$ with $q'=q/(q-1)$.
	Let $\rho_\pm$ be positive constants.
	Suppose that $({\rm a})$, $({\rm b})$, and $({\rm c})$ stated in Theorem \ref{theo:main} hold.
	For any open set $G$ of $\ws$, let $X_{\CR,q}(G)$ and $\CX_{\CR,q}(G)$ be defined as
	\begin{align*}
		X_{\CR,q}(G) 
			=& \{(\Bf,\Bh,\Bk) \mid \Bf\in L_q(G)^N,\,\Bh \in W_q^1(G)^N,\,\Bk \in W_q^1(G\cap\Om_+)^N\}, \\
	\CX_{\CR,q}(G)
			=& \{(H_1,\dots,H_7) \mid H_1,H_3\in L_q(G)^N,\,H_2\in L_q(G)^{N^2}, \,H_4\in W_q^1(G)^N, \\
			&\quad H_5\in L_q(G\cap\Om_+)^{N^2},\,H_6\in L_q(G\cap\Om_+)^N,\,H_7\in W_q^1(G\cap\Om_+)^N\}.
	\end{align*}
	Then there exist a positive number $\la_0\geq 1$ and an operator family 
		$\BB(\la) \in \Hol (\Si_{\ep,\la_0},\CL(\CX_{\CR,q}(\dot\Om),W_q^2(\dot\Om)^N))$
	such that, for any $\la\in\Si_{\ep,\la_0}$ and $(\Bf,\Bh,\Bk)\in X_{\CR,q}(\dot\Om)$,
	$\Bu = \BB(\la)F_{\CR,\la}(\Bf,\Bh,\Bk)$ is a unique solution to \eqref{redu-eq:1}, and furthermore,
	\begin{equation}\label{160211_1}
		\CR_{\CL(\CX_{R,q}(\dot\Om),L_q(\dot\Om)^{\wt N})}
		\Big(\Big\{\Big(\la\frac{d}{d\la}\Big)^l \left(R_\la\BB(\la)\right)\mid \la\in\Si_{\ep,\la_0}\Big\}\Big)
		\leq \ga_0 \quad(l=0,1)
	\end{equation}
	for some positive constant $\ga_0$. Here we have set $\wt N = N^3+N^2+N$,
	$R_\la\Bu = (\nabla^2\Bu,\la^{1/2}\nabla\Bu,\la\Bu)$, and
	\begin{equation*}
		F_{\CR,\la}(\Bf,\Bh,\Bk) = (\Bf,\nabla\Bh,\la^{1/2}\Bh,\Bh,\nabla\Bk,\la^{1/2}\Bk,\Bk). 
	\end{equation*}
\end{theo}

\begin{rema}\label{rema:div}
\begin{enumerate}[(1)]
	\item\label{rema:div3}
		The symbols $H_1$, $H_2$, $H_3$, $H_4$, $H_5$, $H_6$, and $H_7$ are variables corresponding to
		$\Bf$, $\nabla\Bh$, $\la^{1/2}\Bh$, $\Bh$, $\nabla\Bk$, $\la^{1/2}\Bk$, and $\Bk$, respectively.
		The norm of space $\CX_{\CR,q}(\dot\Om)$ is given by
			$\|(H_1,\dots,H_7)\|_{\CX_{\CR,q}(\dot\Om)} = \|(H_1,H_2,H_3)\|_{L_q(\dot\Om)} + \|H_4\|_{W_q^1(\dot\Om)} + \|(H_5,H_6)\|_{L_q(\Om_+)} + \|H_7\|_{W_q^1(\Om_+)}$.
	\item\label{rema:div2}
		If $\Bu$ satisfies \eqref{redu-eq:1} with $\Bf \in J_q(\Om)$, $<\jump{\Bh},\Bn>=0$ on $\Ga$, $<\Bk,\Bn_+>=0$ on $\Ga_+$, and $\la\in\Si_{\ep,\la_0}$,
		then $\di\Bu=0$ in $\dot\Om$. 
		This fact can be obtained in the same manner as in Subsection \ref{sub:stokes} with $g=0$.
		It then holds that $\Bu$ belongs to $J_q(\Om)$ 
		by Gauss's divergence theorem together with $\jump{\Bu}=0$ on $\Ga$, $\Bu=0$ on $\Ga_-$.
		Here and subsequently, we can see $J_q(\Om)$ as a closed subspace of $L_q(\dot\Om)^N$, that is,
		$J_q(\Om)$ are regarded as Banach spaces endowed with $\|\cdot\|_{L_q(\dot\Om)}$.
\end{enumerate}
\end{rema}

At this point, we introduce several propositions used throughout this paper.
The following two propositions are fundamental properties of the $\CR$-boundedness
(cf. \cite[Proposition 3.4]{DHP03}, \cite[Remark 3.2. (4)]{DHP03}).


\begin{prop}\label{prop:R}
\begin{enumerate}[$(1)$]
	\item \label{prop:R1}
		Let $X$ and $Y$ be Banach spaces, and let $\CT$ and $\CS$ be $\CR$-bounded families in $\CL(X,Y)$.
		Then $\CT+\CS=\{T+S\mid T\in\CT, \enskip S\in\CS\}$ is also $\CR$-bounded in $\CL(X,Y)$ and
			$\CR_{\CL(X,Y)}(\CT+\CS) \leq \CR_{\CL(X,Y)}(\CT) + \CR_{\CL(X,Y)}(\CS)$.
	\item \label{prop:R2}
		Let $X$, $Y$, and $Z$ be Banach spaces, and let $\CT$ and $\CS$ be $\CR$-bounded families in
		$\CL(X,Y)$ and $\CL(Y,Z)$, respectively.
		Then $\CS\CT=\{ST \mid S \in\CS, \enskip T\in\CT\}$ is also $\CR$-bounded in $\CL(X,Z)$ and
			$\CR_{\CL(X,Z)}(\CS\CT)\leq\CR_{\CL(X,Y)}(\CT)\CR_{\CL(Y,Z)}(\CS)$.
\end{enumerate}
\end{prop}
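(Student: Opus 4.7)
Both assertions are direct consequences of Definition \ref{defi:R} combined with the linearity of Rademacher averaging. The plan is to fix once and for all an exponent $p\in[1,\infty)$ — permissible by Kahane's inequality, as noted in the remark following Definition \ref{defi:R} — and then to manipulate the $L_p([0,1];\cdot)$-norms appearing there directly. The only conceptual point worth noting is that the \emph{same} sequence of Rademacher variables $\{r_j\}$ and the \emph{same} test elements $\{f_j\}$ can be used simultaneously for both families being combined, which is what makes the bounds pull apart cleanly.

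For part (1), I would take arbitrary finite families $\{T_j\}_{j=1}^n\subset\CT$, $\{S_j\}_{j=1}^n\subset\CS$ and $\{f_j\}_{j=1}^n\subset X$, split
\[
\sum_{j=1}^n r_j(u)(T_j+S_j)f_j \;=\; \sum_{j=1}^n r_j(u)T_j f_j \;+\; \sum_{j=1}^n r_j(u)S_j f_j,
\]
apply the triangle inequality in $L_p([0,1];Y)$, and then invoke the $\CR$-boundedness of $\CT$ and of $\CS$ separately against the common sequence $\{f_j\}$. The two resulting terms share the right-hand side $\bigl(\int_0^1\bigl\|\sum_j r_j(u)f_j\bigr\|_X^p\,du\bigr)^{1/p}$, which immediately yields the additive $\CR$-bound.

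For part (2), the plan is to iterate. Given $\{T_j\}_{j=1}^n\subset\CT$, $\{S_j\}_{j=1}^n\subset\CS$ and $\{f_j\}_{j=1}^n\subset X$, I set $g_j:=T_j f_j\in Y$ and apply the $\CR$-boundedness of $\CS\subset\CL(Y,Z)$ to the pairs $(S_j,g_j)$, producing a factor $\CR_{\CL(Y,Z)}(\CS)$ multiplying $\bigl(\int_0^1\bigl\|\sum_j r_j(u)T_j f_j\bigr\|_Y^p\,du\bigr)^{1/p}$. A second application of the $\CR$-boundedness of $\CT$ to the latter quantity then yields a factor $\CR_{\CL(X,Y)}(\CT)$ times the Rademacher sum of $\{f_j\}$ in $X$, giving the product estimate.

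There is no substantive obstacle beyond this bookkeeping; the only subtlety is the implicit use of Kahane's inequality to guarantee that one exponent $p$ can be chosen simultaneously for both $\CT$ and $\CS$, so that the compositions of inequalities in part (2) are compatible.
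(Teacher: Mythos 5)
Your argument is correct and is precisely the standard proof of these two facts; the paper itself gives no proof but refers to \cite[Proposition 3.4, Remark 3.2 (4)]{DHP03}, where the same triangle-inequality argument for sums and the same iteration (applying the $\CR$-bound of $\CS$ to $g_j=T_jf_j$ and then that of $\CT$) for compositions is used. Your remark about fixing a single exponent $p$ via Kahane's inequality is also consistent with the remark following Definition \ref{defi:R}, so nothing is missing.
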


\begin{prop}\label{lemm:multi}
Let $1\leq q<\infty$.
Let $m(\la)$ be a bounded function defined on a subset $\La$ in the complex plane $\BC$,
and let $M_m(\la)$ be a multiplication operator with $m(\la)$ defined by $M_m(\la)f=m(\la)f$
for any $f\in L_q(G)$ with an open set $G$ of $\ws$. Then
	$\CR_{\CL(L_q(G))}(\{M_m(\la)\mid \la\in\La\}) \leq K_q^2\|m\|_{L_\infty(\La)}$,
where $K_q$ is a positive constant in Khintchine's inequality $($cf. also \cite[Theorem 2.4]{KW04}$)$.
\end{prop}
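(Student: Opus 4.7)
My plan is to apply the definition of $\CR$-boundedness directly with $p=q$, which is legitimate by the remark following Definition~\ref{defi:R}. Since $M_{m(\la)}$ acts as pointwise multiplication by a scalar, the Rademacher sum $\sum_{j=1}^n r_j(u)M_{m(\la_j)}f_j$ evaluated at $x\in G$ equals $\sum_{j=1}^n r_j(u)m(\la_j)f_j(x)$, so Fubini's theorem yields
\begin{equation*}
\int_0^1\Big\|\sum_{j=1}^n r_j(u)M_{m(\la_j)}f_j\Big\|_{L_q(G)}^q\intd u
= \int_G\int_0^1\Big|\sum_{j=1}^n r_j(u)m(\la_j)f_j(x)\Big|^q\intd u\intd x.
\end{equation*}

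Next I would perform a pointwise-in-$x$ application of Khintchine's inequality, invoking it in both directions. First, in the upper direction, the inner integral is bounded by $K_q^q\bigl(\sum_{j=1}^n |m(\la_j)|^2|f_j(x)|^2\bigr)^{q/2}$; estimating $|m(\la_j)|\leq\|m\|_{L_\infty(\La)}$ pulls the $L^\infty$ norm out of the sum, leaving $K_q^q\|m\|_{L_\infty(\La)}^q\bigl(\sum_{j=1}^n |f_j(x)|^2\bigr)^{q/2}$. Then, in the lower direction, this $\ell^2$ quantity is dominated by $K_q^q \int_0^1\bigl|\sum_{j=1}^n r_j(u)f_j(x)\bigr|^q\intd u$. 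Combining the two steps produces a total factor of $K_q^{2q}\|m\|_{L_\infty(\La)}^q$, and a second application of Fubini delivers the claimed $\CR$-bound $K_q^2\|m\|_{L_\infty(\La)}$ after taking $q$-th roots.

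The argument presents no genuine analytic obstacle: because the multiplication operators act entirely diagonally, the estimate reduces to an elementary scalar inequality at each $x\in G$. The only point requiring care is the double invocation of Khintchine's inequality, which is precisely what accounts for the factor $K_q^2$ rather than $K_q$ in the final bound.
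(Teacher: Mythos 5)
Your proof is correct and is exactly the standard argument this proposition rests on: the paper states the result without giving a proof (it only points to Khintchine's inequality via \cite[Theorem 2.4]{KW04}), and your pointwise-in-$x$ double application of Khintchine combined with Fubini, using $p=q$ in Definition \ref{defi:R}, is precisely what yields the constant $K_q^2\|m\|_{L_\infty(\La)}$. The only point worth making explicit is that Khintchine's inequality is being used for complex scalars (both $m(\la_j)$ and $f_j(x)$ may be complex), which is harmless since the complex case follows from the real one with the same type of constant.
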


The next one is used to estimate terms arising from uniform $W_r^{2-1/r}$ domains,
for example unit normal vectors $\Bn$, $\Bn_+$.

\begin{prop}\label{lemm:Shi13}
	Let $1\leq q\leq r <\infty$ and $N<r<\infty$.
	Suppose that $\Om_\pm$ are uniform $W_r^{2-1/r}$ domains.
	Then there exists a positive constant $C_{N,q,r}$ such that,
	for any $\si>0$, $a\in L_r(\dot\Om)$, and $b\in W_q^1(\dot\Om)$,
	it holds the estimate:
	\begin{equation*}
		\|ab\|_{L_q(\dot\Om)} \leq \si\|\nabla b\|_{L_q(\dot\Om)}
		 + C_{N,q,r}\left(\si^{-\frac{N}{r-N}}\|a\|_{L_r(\dot\Om)}^{\frac{r}{r-N}} + \|a\|_{L_r(\dot\Om)}\right)\|b\|_{L_q(\dot\Om)}.
	\end{equation*}
\end{prop}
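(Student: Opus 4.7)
The plan is to derive this Gagliardo--Nirenberg-type inequality by chaining Hölder's inequality, a Sobolev interpolation, and Young's inequality, handled on each sub-domain separately. Since $\dot\Om=\Om_+\cup\Om_-$ with $\Om_+\cap\Om_-=\emptyset$, all norms in the statement split as $\ell_q$ sums over $\Om_+$ and $\Om_-$, so it suffices to prove the estimate on a single uniform $W_r^{2-1/r}$ domain $\Om_\pm$ with a constant independent of which side is considered.

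On such a fixed domain, call it $D$, the first step is Hölder's inequality with the conjugate exponents $r$ and $s:=qr/(r-q)$ determined by $1/q=1/r+1/s$, yielding
\[
\|ab\|_{L_q(D)}\le \|a\|_{L_r(D)}\|b\|_{L_s(D)}
\]
(in the edge case $q=r$ one takes $s=\infty$ and invokes Morrey's embedding, valid since $r>N$). Second, I apply the Gagliardo--Nirenberg interpolation inequality on $D$,
\[
\|b\|_{L_s(D)}\le C\bigl(\|\nabla b\|_{L_q(D)}^{N/r}\|b\|_{L_q(D)}^{1-N/r}+\|b\|_{L_q(D)}\bigr),
\]
where the interpolation exponent $\te=N/r$ is the unique solution in $(0,1)$ of the scaling identity $1/s=(1-\te)/q+\te(1/q-1/N)$. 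Third, I apply Young's inequality with the pair $p=r/N$, $p'=r/(r-N)$ to the product $\|a\|_{L_r(D)}\cdot \|\nabla b\|_{L_q(D)}^{N/r}\cdot\|b\|_{L_q(D)}^{1-N/r}$, calibrating the scaling so that the gradient factor picks up weight $\si$. Using $p'(1-N/r)=1$, a short computation gives
\[
C\|a\|_{L_r(D)}\|\nabla b\|_{L_q(D)}^{N/r}\|b\|_{L_q(D)}^{1-N/r}\le \si\|\nabla b\|_{L_q(D)}+C\,\si^{-N/(r-N)}\|a\|_{L_r(D)}^{r/(r-N)}\|b\|_{L_q(D)}.
\]
Combining with the leftover term $C\|a\|_{L_r(D)}\|b\|_{L_q(D)}$ and summing over $\pm$ yields the claimed estimate on $\dot\Om$.

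The only non-routine point is justifying the Gagliardo--Nirenberg inequality on the uniform $W_r^{2-1/r}$ domains $\Om_\pm$ with a constant depending only on $N$, $q$, $r$, and on the uniformity parameters $\al,\be,K$ of Definition \ref{defi:uni}. I would handle this by constructing a uniformly bounded extension operator $E:W_q^1(\Om_\pm)\to W_q^1(\ws)$ via a locally finite partition of unity subordinate to the covering $\{B_\be(x_0)\}_{x_0\in\pa\Om_\pm}$, local flattening of $\pa\Om_\pm$ by the graph maps $h$ of Definition \ref{defi:uni} (whose Lipschitz constants are controlled, since $r>N$ gives the embedding $W_r^{2-1/r}\hookrightarrow C^1$ with bound depending only on $K$), and the classical one-sided reflection extension across a flat hyperplane. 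Once this uniform extension is in place, the Gagliardo--Nirenberg inequality on $\ws$ pulls back to each $\Om_\pm$ with a constant of the stated form, and the remainder of the argument is the routine chain of inequalities described above.
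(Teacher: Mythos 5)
Your proposal is correct and follows essentially the same route as the paper: a uniformly bounded extension of $b$ from each $\Om_\pm$ to $\ws$ (the paper cites \cite[Proposition 5.22]{AF03} rather than building the operator by hand), the Gagliardo--Nirenberg/Sobolev interpolation with exponent $N(1/q-1/s)=N/r$, and then H\"older plus Young's inequality with exponents $r/N$ and $r/(r-N)$, exactly as in \cite[Lemma 2.4]{Shibata13} which the paper invokes for the final chain. The only cosmetic difference is that you run the whole argument on each $\Om_\pm$ and sum at the end (which forces a harmless rescaling of $\si$ by a factor depending only on $q$), whereas the paper first assembles the interpolation inequality on $\dot\Om$ via a discrete H\"older step and then concludes.
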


\begin{proof}
We first show the following inequality: For $q< s \leq \infty $ and $N(1/q-1/s)<1$,
\begin{equation}\label{160203_5}
	\|u\|_{L_s(\dot\Om)}\leq C_{N,q,r,s}\Big(\|\nabla u\|_{L_q(\dot\Om)}^{N\left(\frac{1}{q}-\frac{1}{s}\right)}
	\|u\|_{L_q(\dot\Om)}^{1-N\left(\frac{1}{q}-\frac{1}{s}\right)} + \|u\|_{L_q(\dot\Om)}\Big)
	\quad \text{for any $u\in W_q^1(\dot\Om)$}
\end{equation} 
with some positive constant $C_{N,q,r,s}$ independent of $u$.
To this end, let $E_\pm$ be  extension operators for $\Om_\pm$, introduced in \cite[Proposition 5.22]{AF03}\footnote[2]{
The book \cite[Proposition 5.22]{AF03} only considered bounded boundary,
but we can extend the result to uniform $W_r^{2-1/r}$ domains as mentioned in \cite[Remark 5.23 (1)]{AF03}.},
that is, 
	$\|E_\pm u_\pm\|_{W_q^l(\ws)} \leq C_{N,q,r}\|u_\pm\|_{W_q^l(\Om_\pm)}$ for $l=0,1$ and for any $u_\pm\in W_q^l(\Om_\pm)$,
respectively. These inequalities combined with Sobolev embedding inequality:
\begin{equation*}
	\|f\|_{L_s(\ws)} \leq C_{N,q,s}\|\nabla f\|_{L_q(\ws)}^{N\left(\frac{1}{q}-\frac{1}{s}\right)}\|f\|_{L_q(\ws)}^{1-N\left(\frac{1}{q}-\frac{1}{s}\right)}
\end{equation*}
with $q<s\leq \infty$ and $N(1/q-1/s)<1$ yield that
\begin{align*}
	&\|u_\pm\|_{L_s(\Om_\pm)} \leq \|E_\pm u_\pm\|_{L_s(\ws)}
	\leq C_{N,q,s}\|\nabla E_\pm u_\pm \|_{L_q(\ws)}^{N\left(\frac{1}{q}-\frac{1}{s}\right)}\|E_\pm u_\pm\|_{L_q(\ws)}^{1-N\left(\frac{1}{q}-\frac{1}{s}\right)} \\
	&\leq C_{N,q,r,s}\|u_\pm\|_{W_q^1(\Om_\pm)}^{N\left(\frac{1}{q}-\frac{1}{s}\right)}\|u_\pm\|_{L_q(\Om_\pm)}^{1-N\left(\frac{1}{q}-\frac{1}{s}\right)}, \nonumber 
\end{align*}
respectively. Let $u_\pm = u\chi_{\BR_\pm^N}$ for $u\in W_q^1(\dot\Om)$.
Then we have
\begin{align*}
	&\|u\|_{L_s(\dot\Om)}^q \leq 2^{q}(\|u_+\|_{L_s(\Om_+)}^{q}+\|u_-\|_{L_s(\Om_-)}^{q}) 
		\leq C_{N,q,r,s}(\|u_+\|_{W_q^1(\Om_+)}^{q/p_1}\|u_+\|_{L_q(\Om_+)}^{q/p_2}
		+\|u_-\|_{W_q^1(\Om_-)}^{q/p_1}\|u_-\|_{L_q(\Om_-)}^{q/p_2}),
\end{align*}
where we have set $1/p_1 = N(1/q-1/s)$ and $1/p_2 = 1- N(1/q-1/s)$.
We combine the last inequality with H\"older's inequality:
	$a_+ b_+ + a_-b_- \leq (a_+^{p_1} + a_-^{p_1})^{1/p_1}(b_+^{p_2} + b_-^{p_2})^{1/p_2}$
for $a_\pm = \|u_\pm\|_{W_q^1(\Om_\pm)}^{q/p_1}$ and $b_\pm = \|u_\pm\|_{L_q(\Om_\pm)}^{q/p_2}$, respectively, in order to obtain
\begin{align*}
	\|u\|_{L_s(\dot\Om)} \leq C_{N,q,r,s}\|u\|_{W_q^1(\dot\Om)}^{N\left(\frac{1}{q}-\frac{1}{s}\right)}
	\|u\|_{L_q(\dot\Om)}^{1-N\left(\frac{1}{q}-\frac{1}{s}\right)},
\end{align*}
which implies \eqref{160203_5}.
The required estimate of Proposition \ref{lemm:Shi13} follows from \eqref{160203_5}
in the same manner as in the proof of \cite[Lemma 2.4]{Shibata13}.
\end{proof}

We devote the last part of this subsection to the proof of Theorem \ref{theo:main}.

\begin{proof}[Proof of Theorem \ref{theo:main}]
We prove Theorem \ref{theo:main} under the assumption that Theorem \ref{theo:redu1} holds.

\noindent{\bf Step 1: Proof of \eqref{160124_1}, \eqref{160124_2}.}
It will be shown in Remark \ref{rema:normal} of Section \ref{sec:proof} below that
the unit normals $\Bn$, $\Bn_+$ can be regards as vector functions defined on $\ws$
and that, for any $f\in L_q(\dot\Om)$ and $g\in W_q^1(\dot\Om)$,
\begin{equation}\label{normal:est}
	\|f\nu\|_{L_q(\dot\Om)} \leq C\|f\|_{L_q(\dot\Om)}, \quad
	\|g\nabla\nu\|_{L_q(\dot\Om)} \leq C\|g\|_{W_q^1(\dot\Om)}, \quad 
	\|g\nu\|_{W_q^1(\dot\Om)} \leq C\|g\|_{W_q^1(\dot\Om)}
\end{equation}
with $\nu\in\{\Bn,\Bn_+\}$ and with some positive constant $C$.


Let $(\Bf,g,\Bh,\Bk)\in X_q$ and $\mathfrak{g}\in\CG(g)$.
Suppose that $(\Bf,\Bh,\Bk)$ satisfy \eqref{151117_1}\footnote[2]{As was discussed in Subsection \ref{sub:stokes},
it suffices to consider $(\Bf,\Bh,\Bk)$ satisfying \eqref{151117_1}.
In fact, we can extend it to any $(\Bf,\Bh,\Bk)\in X_{\CR,q}(\dot\Om)$, similarly to the proof of Step 1,
with the help of $\ka$ used in Subsection \ref{sub:stokes}.}.
Then, in view of Subsection \ref{sub:stokes} and Theorem \ref{theo:redu1}, we set
\begin{equation*}
	\Bu = \BB(\la)F_{\CR,\la}(\Bf+\rho^{-1}\nabla L (\la\mathfrak{g},g),\Bh+g\Bn,\Bk+g\Bn_+), \quad
	\te = K(\Bu) - L(\la\mathfrak{g},g)
\end{equation*}
to see that $(\Bu,\te)$ solves the problem \eqref{TSRP}. Here,
\begin{align*}
	&F_{\CR,\la}(\Bf+\rho^{-1}\nabla L (\la\mathfrak{g},g),\Bh+g\Bn,\Bk+g\Bn_+) 
	=\Big(\Bf+\rho^{-1}\nabla L (\la\mathfrak{g},g), \nabla\Bh+\nabla g\otimes\Bn+g\nabla\Bn, \\
	&\la^{1/2}\Bh+\la^{1/2}g\Bn,
	\Bh+g\Bn,\nabla\Bk + \nabla g\otimes\Bn_+ + g\nabla\Bn_+,\la^{1/2}\Bk+\la^{1/2}g\Bn_+,\Bk+g\Bn_+\Big).
\end{align*}
Thus, recalling Remark \ref{rema:main}, we define $\BA(\la)\BF$, $\BP(\la)\BF$ with $\BF=(F_1,\dots,F_{11})$ as follows:
\begin{align*}
	\BA(\la)\BF =& \BB(\la)\Big(F_1 + \rho^{-1}\nabla L(F_4,F_5),F_6+ F_2\otimes\Bn + F_5\nabla\Bn,F_7 + F_3\Bn, \\
		&\quad F_8+F_5\Bn, F_9 + F_2\otimes\Bn_+ + F_5\nabla\Bn_+, F_{10} + F_3\Bn_+, F_{11} + F_{5}\Bn_+\Big), \\
	\BP(\la)\BF =& K(\BA(\la)\BF) - L(F_4,F_5),
\end{align*}
which furnishes that $(\Bu,\te)=(\BA(\la)F_\la(\Bf,g,\mathfrak{g},\Bh,\Bk),\BP(\la)F_\la(\Bf,g,\mathfrak{g},\Bh,\Bk))$.

From now on, we show the estimates \eqref{160124_1}, \eqref{160124_2}.
By Theorem \ref{theo:redu1}, Proposition \ref{prop:R}, and \eqref{normal:est}, we easily have \eqref{160124_1}. 
To prove \eqref{160124_2}, we check the definition of $\CR$-boundedness. 
Let $n\in\BN$, $\{\la_j\}_{j=1}^n\subset\Si_{\ep,\la_0}$, and $\{\BF_j\}_{j=1}^n\subset\CX_{q}$ with $\BF_j = (F_{1j},\dots,F_{11j})$.
Since $\{\la(d/d\la)\}^l\nabla K(\BA(\la)\BF) = \nabla K(\{\la (d/d\la)\}^l\BA(\la)\BF)$ $(l=0,1)$, 
we have, by Proposition \ref{lemm:multi} and \eqref{160124_1},
\begin{align*}
	&\int_0^1\Big\|\sum_{j=1}^n r_j(u)\Big[\Big(\la\frac{d}{d\la}\Big)^l\nabla\BP(\la)\Big]_{\la=\la_j}\BF_j\Big\|_{L_q(\dot\Om)}^q\intd u \displaybreak[0] \\
	&\leq C_{\ga_0}\Big(\int_0^1\Big\|\sum_{j=1}^n r_j(u)\Big[\Big(\la\frac{d}{d\la}\Big)^l\BA(\la)\Big]_{\la=\la_j}\BF_j\Big\|_{W_q^2(\dot\Om)}^q\intd u
		+ \int_0^1\Big\|\sum_{j=1}^n r_j(u)(F_{4j},F_{5j})\Big\|_{L_q(\dot\Om)^N\times W_q^1(\dot\Om)}^q\intd u\Big) \displaybreak[0] \\
	&\leq C_{\ga_0}\Big\{\big(\la_0^{-q}+\la_0^{-q/2}+1\big)\int_0^1\Big\|\sum_{j=1}^n r_j(u)\BF_j\Big\|_{\CX_q}^q\intd u
		+ \int_0^1\Big\|\sum_{j=1}^n r_j(u)\BF_j\Big\|_{\CX_q}^q\intd u\Big\} \displaybreak[0] \\
	&\leq C_{\ga_0,\la_0}\int_0^1\Big\|\sum_{j=1}^n r_j(u)\BF_j\Big\|_{\CX_q}^q\intd u,
\end{align*}
which furnishes \eqref{160124_2}.

\noindent{\bf Step 2: Uniqueness.}
Let $\Bu\in W_q^2(\dot\Om)^N\cap J_q(\Om)$ and $\te=\te_1+\te_2\in W_q^1(\dot\Om)+\CW_q^1(\Om)$
satisfy 
\begin{equation}\label{150803_4}
	\left\{\begin{aligned}
		\la\Bu-\rho^{-1}\Di\BT(\Bu,\te) &= 0 && \text{in $\dot\Om$,} \\
		\jump{\BT(\Bu,\te)\Bn}&=0 && \text{on $\Ga$,} \\
		\jump{\Bu}&=0  && \text{on $\Ga$,} \\
		\BT(\Bu,\te)\Bn_+ &= 0 && \text{on $\Ga_+$,} \\
		\Bu &=0 && \text{on $\Ga_-$.}
	\end{aligned}\right.
\end{equation}
We prove that $\Bu=0$ in $\dot\Om$, which leads to the uniqueness of  \eqref{TSRP}.
To this end, it suffices to show that
\begin{equation}\label{151217_1}
	(\rho\Bu,\psi)_{\dot\Om}=0 \quad \text{for any $\psi\in C_0^\infty(\dot\Om)^N$}
\end{equation}
in what follows. In fact, it holds that $\Bu=0$ in $\Om_\pm$ if we choose $\psi\in C_0^\infty(\Om_\pm)^N$ in \eqref{151217_1}, respectively.

The assumption (b), stated in Theorem \ref{theo:main}, allows us to choose a $\ka\in \CW_{q'}^1(\Om)$ satisfying
\begin{equation*}
	(\rho^{-1}\nabla\ka,\nabla\ph)_{\dot\Om}
		= (\psi,\nabla\ph)_{\dot\Om} \quad \text{for any $\ph\in \CW_{q}^1(\Om)$.}
\end{equation*}
In addition, since the two-phase reduced Stokes resolvent equations \eqref{redu-eq:1} is solvable for $q'=q/(q-1)$,
we have a solution $\Bv\in W_{q'}^2(\dot\Om)^N$ to the equations:
\begin{equation*}
	\left\{\begin{aligned}
		\la\Bv - \rho^{-1}\Di\BT(\Bv,K(\Bv)) &= \psi-\rho^{-1}\nabla\ka && \text{in $\dot\Om$,} \\
		\jump{\BT(\Bv,K(\Bv))\Bn} &= 0 && \text{on $\Ga$,} \\
		\jump{\Bv}&=0 && \text{on $\Ga$,} \\
		\BT(\Bv,K(\Bv))\Bn_+ &= 0 && \text{on $\Ga_+$,} \\
		\Bv&=0 && \text{on $\Ga_-$.}
	\end{aligned}\right.
\end{equation*}
Then $\psi-\rho^{-1}\nabla\ka\in J_{q'}(\Om)$ implies that $\Bv\in J_{q'}(\Om)$ as was discussed in Remark \ref{rema:div} \eqref{rema:div2}.
Setting $K(\Bv)=w_1+w_2\in W_{q'}^1(\dot\Om)+\CW_{q'}^1(\Om)$,
we have, by Gauss's divergence theorem, $(\Bu,\nabla \ka)_{\dot\Om}=0$, and $(\Bu,\nabla w_2)_{\dot\Om}=0$,
\begin{align}\label{151009_6}
	&(\rho\Bu,\psi)_{\dot\Om} 
		=(\rho\Bu,\la\Bv-\rho^{-1}\Di\BT(\Bv,w_1+w_2+\ka))_{\dot\Om} \\
		&= \la(\rho\Bu,\Bv)_{\dot\Om} -(\Bu,\Di(\mu\BD(\Bv)))_{\dot\Om} + (\Bu,\nabla w_1)_{\dot\Om} \nonumber \\
	& = \la(\rho\Bu,\Bv)_{\dot\Om} +(\BD(\Bu),\mu\BD(\Bv))_{\dot\Om} -(\Bu,\jump{\mu\BD(\Bv)\Bn})_\Ga - (\Bu,\mu\BD(\Bv)\Bn_+)_{\Ga_+} \nonumber \\
		& -(\di\Bu,w_1)_{\dot\Om} + (\Bu,\jump{w_1\Bn})_{\Ga} + (\Bu,w_1\Bn_+)_{\Ga_+}. \nonumber  
\end{align}
Noting that $\jump{w_2}=0$ on $\Ga$ and $w_2=0$ on $\Ga_+$, we see that
$\jump{\mu\BD(\Bv)\Bn-w_1\Bn}=\jump{\mu\BD(\Bv)\Bn-K(\Bv)\Bn}=0$ on $\Ga$ and 
$\mu\BD(\Bv)\Bn-w_1\Bn=\mu\BD(\Bv)-K(\Bv)\Bn=0$ on $\Ga_+$.
In addition, it holds that $\di\Bu=0$ in $\dot\Om$, since
\begin{equation*}
	0= - (\Bu,\nabla\ph)_{\dot\Om} = (\di\Bu,\ph)_{\dot\Om} \quad \text{for any $\ph\in C_0^\infty(\dot\Om)$,}
\end{equation*}
where we have used $\Bu\in J_q(\Om)$ and the relation $C_0^\infty(\dot\Om)\subset\CW_{q'}^1(\Om)$.
Hence, \eqref{151009_6} implies that
\begin{equation}\label{151009_5}
	(\rho\Bu,\psi)_{\dot\Om} = \la(\rho\Bu,\Bv)_{\dot\Om}+(\BD(\Bu),\mu\BD(\Bv))_{\dot\Om}.
\end{equation}

On the other hand, it holds by the first equation of \eqref{150803_4} that $\la\rho\Bu-\Di\BT(\Bu,\te)=0$ in $\dot\Om$, which,
combined with Gauss's divergence theorem, furnishes that
\begin{align*}
	 0 &= (\la\rho\Bu-\Di\BT(\Bu,\te),\Bv)_{\dot\Om} \\
		&=\la(\rho\Bu,\Bv)_{\dot\Om} + (\mu\BD(\Bu),\BD(\Bv))_{\dot\Om}-(\jump{\mu\BD(\Bu)\Bn},\Bv)_{\Ga}
			-(\mu\BD(\Bu)\Bn_+,\Bv)_{\Ga_+} \\
			&-(\te_1,\di\Bv)_{\dot\Om} + (\jump{\te_1\Bn},\Bv)_\Ga + (\te_1\Bn_+,\Bv)_{\Ga_+} 
\end{align*} 
since $(\nabla\te_2,\Bv)_{\dot\Om}=0$ by $\Bv\in J_{q'}(\Om)$.
We thus obtain $\la(\rho\Bu,\Bv)_{\dot\Om}+(\mu\BD(\Bu),\BD(\Bv))_{\dot\Om}=0$ in the same manner as we have obtained \eqref{151009_5} from \eqref{151009_6}.
The last identity combined with \eqref{151009_5} implies \eqref{151217_1},
which completes the proof of the uniqueness.
\end{proof}

\subsection{Generation of analytic semigroup}\label{sub:anal} 
In this and the next subsection, we discuss time-dependent problems.
We now consider the following initial-boundary value problem:
\begin{equation}\label{IBVP:2}
	\left\{\begin{aligned}
		\pa_t\Bu -\rho^{-1}\Di\BT(\Bu,K(\Bu)) &= 0 && \text{in $\dot\Om\times(0,\infty)$,} \\
		\jump{\BT(\Bu,K(\Bu))\Bn} &= 0 && \text{on $\Ga\times(0,\infty)$,} \\
		\jump{\Bu}&= 0 && \text{on $\Ga\times(0,\infty)$,} \\
		\BT(\Bu,K(\Bu))\Bn_+ &= 0 && \text{on $\Ga_+\times(0,\infty)$,} \\
		\Bu &= 0 && \text{on $\Ga_-\times(0,\infty)$,} \\
		\Bu |_{t=0} &= \Bu_0 && \text{in $\dot\Om$.}
	\end{aligned}\right.
\end{equation}

To discuss the generation of analytic semigroup associated with \eqref{IBVP:2},
we formulate \eqref{IBVP:2} in the semigroup setting.
For this purpose, we introduce the Stokes operator $\CA$ and its domain $\CD_q(\CA)$ as follows:
\begin{align}\label{op:stokes}
	\CD_q(\CA) =& \{\Bu \in W_q^2(\dot\Om)^N\cap J_q(\Om)\mid \jump{\CT_\Bn(\mu\BD(\Bu)\Bn)}=0 \quad \text{on $\Ga$,} \\ 
			& \quad \jump{\Bu}=0 \quad \text{on $\Ga$,} \quad
			\CT_{\Bn_+}(\mu\BD(\Bu)\Bn_+) = 0 \quad \text{on $\Ga_+$,} \quad
			\Bu=0 \quad \text{on $\Ga_-$}\}, \nonumber \\
	\CA\Bu =& \rho^{-1}\Di\BT(\Bu,K(\Bu)) \quad \text{for $\Bu\in \CD_q(\CA)$}, \nonumber
\end{align} 
where we have set
$\CT_\Bn \Bf = \Bf - <\Bf,\Bn>\Bn$ and $\CT_{\Bn_+}\Bf = \Bf - <\Bf,\Bn_+>\Bn_+$
that are the tangential parts of $N$-vector $\Bf$ with respect to $\Bn$ and $\Bn_+$, respectively.
Then it is possible to rewrite \eqref{IBVP:2} as follows:
\begin{equation*}\label{150828_1}
	\pa_t\Bu -\CA\Bu = 0 \quad (t>0), \quad \Bu|_{t=0}=\Bu_0.
\end{equation*}
By Theorem \ref{theo:redu1}, the resolvent set $\rho(\CA)$ of $\CA$ contains $\Si_{\ep,\la_0}$.
In addition, denoting the resolvent operator of $\CA$ by $(\la-\CA)^{-1}$ and noting Remark \ref{rema:div} \eqref{rema:div2},
we see that, for any $\la\in\Si_{\ep,\la_0}$ and $\Bf\in J_q(\Om)$, $(\la-\CA)^{-1}\Bf=\BB(\la)(\Bf,0,0,0,0)\in J_q(\Om)$.
Since the $\CR$-boundedness of $\BB(\la)$ implies the usual boundedness, it holds that
\begin{equation*}
	\|(\la-\CA)^{-1}\|_{\CL(J_q(\Om))} \leq \frac{M_{\ep,\la_0}}{|\la|} \quad (\la\in\Si_{\ep,\la_0})
\end{equation*}
with some positive constant $M_{\ep,\la_0}$.
By this resolvent estimate, we have the following theorem.
\begin{theo}\label{theo:anal}
	Let $1<q<\infty$, $N<r<\infty$, and $\max(q',q)\leq r$ with $q' = q/(q-1)$.
	Let $\rho_\pm$ be positive constants.
	Suppose that the conditions $({\rm a})$, $({\rm b})$, and $({\rm c})$ stated in Theorem \ref{theo:main} hold.
	Then the Stokes operator $\CA$ generates a $C_0$-semigroup $\{T(t)\}_{t\geq0}$ on $J_q(\Om)$,
	which is analytic.
\end{theo}

\subsection{Maximal $L_p\text{-}L_q$ regularity}\label{sub:max}
Since the system \eqref{IBVP:1} is linear,
we consider the following two problems:
\begin{align}
	&\left\{\begin{aligned}
		\pa_t\Bu -\rho^{-1}\Di\BT(\Bu,\te) &= 0, &\di\Bu&=0 && \text{in $\dot\Om\times(0,\infty)$,} \\
		\jump{\BT(\Bu,\te)\Bn}&=0, &\jump{\Bu}&=0 && \text{on $\Ga\times(0,\infty)$,} \\
		\BT(\Bu,\te)\Bn_+ &= 0 && && \text{on $\Ga_+\times(0,\infty)$,} \\
		\Bu &= 0 && && \text{on $\Ga_-\times(0,\infty)$,} \\
		\Bu|_{t=0} &= \Bu_0 && && \text{in $\dot\Om$,}
	\end{aligned}\right.\label{150803_1} \displaybreak[0] \\
	&\left\{\begin{aligned}
		\pa_t\Bu -\rho^{-1}\Di\BT(\Bu,\te)&=\Bf, &\di\Bu &=g && \text{in $\dot\Om\times(0,\infty)$,} \\
		\jump{\BT(\Bu,\te)\Bn}&=\jump{\Bh}, &\jump{\Bu}&=0 && \text{on $\Ga\times(0,\infty)$,} \\
		\BT(\Bu,\te)\Bn_+&=\Bk && && \text{on $\Ga_+\times(0,\infty)$,} \\
		\Bu &= 0 && && \text{on $\Ga_-\times(0,\infty)$,} \\
		\Bu|_{t=0} &= 0 && && \text{in $\dot\Om$.}
	\end{aligned}\right.\label{150803_2}
\end{align}

To state maximal regularity theorems for \eqref{150803_1} and \eqref{150803_2},
we introduce several function spaces. For a Banach space $X$,
we denote the usual Lebesgue space and Sobolev space of $X$-valued functions defined on time interval $I$
by $L_p(I,X)$ and $W_p^m(I,X)$ with $m\in\BN$,
and their associated norms by $\|\cdot\|_{L_p(I,X)}$ and $\|\cdot\|_{W_p^m(I,X)}$, respectively.
We set for $\ga>0$
\begin{align*}
	&L_{p,\ga}(I,X) =
		\{f:I\to X \mid e^{-\ga t}f\in L_p(I,X)\}, \quad 
	L_{p,0,\ga}(\BR,X) =
		\{f\in L_{p,\ga}(\BR,X) \mid f(t)=0 \enskip \text{for $t<0$}\}, \displaybreak[0] \\
	&W_{p,\ga}^m(I,X) =
		\{f\in L_{p,\ga}(I,X) \mid e^{-\ga t}\pa_t^j f(t)\in L_p(I,X) \enskip (j=1,\dots,m)\}, \displaybreak[0] \\
	&W_{p,0,\ga}^m(\BR,X) =
		W_{p,\ga}^m(\BR,X) \cap L_{p,0,\ga}(\BR,X). 
\end{align*}
Let $\CL$, $\CL^{-1}$, $\CF$, and $\CF^{-1}$ denote the Laplace transform,
the Laplace inverse transform, the Fourier transform,
and the Fourier inverse transform, which are denoted by
\begin{align*}
	\CL[f](\la) &=\int_{-\infty}^\infty e^{-\la t}f(t)\intd t,
	&\CL^{-1}[g](t) &= \frac{1}{2\pi}\int_{-\infty}^\infty e^{\la t}g(\la)\intd\tau \quad (\la=\ga+i\tau), \displaybreak[0] \\
	\CF[f](\tau) &= \int_{-\infty}^\infty e^{-i\tau t}f(t)\intd t,
	&\CF^{-1}[g](t) &=\frac{1}{2\pi}\int_{-\infty}^\infty e^{i\tau t}g(\tau)\intd \tau.
\end{align*}
Note that $\CL[f](\la)=\CF[e^{-\ga t}f(t)](\tau)$ and $\CL^{-1}[g](t)=e^{\ga t}\CF^{-1}[g(\ga+i\tau)](t)$.
For any real number $s\geq 0$, let $H_{p,\ga}^s(\BR,X)$ be the Bessel potential space of order $s$ defined by
\begin{equation*}
	H_{p,\ga}^s(\BR,X) = 
		\{f\in L_{p,\ga}(\BR,X) \mid e^{-\ga t}(\La_\ga^s f)(t)\in L_p(\BR,X)\}, \quad
		(\La_\ga^s f)(t)=\CL^{-1}[\la^s\CL[f]](t).
\end{equation*}
We also set 	$H_{p,0,\ga}^s(\BR,X)=\{f\in H_{p,\ga}^s(\BR,X) \mid f(t)=0 \enskip \text{for $t<0$}\}$.
For solutions of problems \eqref{150803_1} and \eqref{150803_2},
$W_{q,p,\ga}^{2,1}(\dot\Om\times(0,\infty))$ and $W_{q,p,0,\ga}^{2,1}(\dot\Om\times\BR)$ are defined by
\begin{align*}
	&W_{q,p,\ga}^{2,1}(\dot\Om\times(0,\infty)) =
		W_{p,\ga}^1((0,\infty),L_q(\dot\Om)^N) \cap L_{p,\ga}((0,\infty),W_q^2(\dot\Om)^N), \displaybreak[0] \\
	&W_{q,p,0,\ga}^{2,1}(\dot\Om\times\BR) =
		W_{p,0,\ga}^1(\BR,L_q(\dot\Om)^N) \cap L_{p,0,\ga}(\BR,W_q^2(\dot\Om)^N).
\end{align*}

First we discuss a maximal $L_p\text{-}L_q$ regularity theorem for \eqref{150803_1}.
Setting $\Bu(t)=T(t)\Bu_0$ and $\te(t)=K(\Bu(t))$,
we see that $\di\Bu(t)=0$ in $\dot\Om$ for $t>0$ by $\Bu(t)\in J_q(\Om)$,  
and thus $\Bu(t)$ and $\te(t)$ satisfy \eqref{150803_1}.
Since $\{T(t)\}_{t\geq0}$ is analytic, we have, for some $\la_0\geq1$ and for any $t>0$,
\begin{equation*}
	\begin{aligned}
		\|T(t)\Bu_0\|_{J_q(\Om)} &\leq
			C_{q,\la_0} e^{\la_0 t}\|\Bu_0\|_{J_q(\Om)} && \text{for $\Bu_0\in J_q(\Om)$,} \\
		\|\pa_t T(t)\Bu_0\|_{J_q(\Om)} &\leq
			C_{q,\la_0} t^{-1}e^{\la_0 t}\|\Bu_0\|_{J_q(\Om)} && \text{for $\Bu_0\in J_q(\Om)$,} \\
		\|\pa_t T(t)\Bu_0\|_{J_q(\Om)} & \leq
			C_{q,\la_0} e^{\la_0 t}\|\Bu_0\|_{\CD_q(\CA)} && \text{for $\Bu_0\in \CD_q(\CA)$}
	\end{aligned}
\end{equation*}
with some positive constant $C_{q,\la_0}$.
We then obtain in the same manner as in \cite[Theorem 3.9]{SS08}
\begin{equation*}
	\|e^{-2\la_0 t}(\pa_t\Bu,\Bu,\nabla\Bu,\nabla^2\Bu)\|_{L_p((0,\infty),L_q(\dot\Om))}  
	\leq C_{p,q,\la_0}\|\Bu_0\|_{\CD_{q,p}^{2(1-1/p)}(\dot\Om)}
\end{equation*}
for some positive constant $C_{p,q,\la_0}$ with $1<p,q<\infty$,
where we have set $\CD_{q,p}^{2(1-1/p)}(\dot\Om) = (J_q(\Om),\CD_q(\CA))_{1-1/p,p}$
with real interpolation functor $(\cdot,\cdot)_{\te,p}$ ($0<\te<1$, $1<p<\infty$).
Then, the following theorem holds.

\begin{theo}\label{theo:max1}
	Let $1<p,q<\infty$, $N<r<\infty$, and $\max(q,q')\leq r$ with $q'=q/(q-1)$.
	Let $\rho_\pm$ be positive constants.
	Suppose that the conditions $({\rm a})$, $({\rm b})$, and $({\rm c})$ stated in Theorem \ref{theo:main} hold.
	Then we have the following two assertions: 
	\begin{enumerate}[$(1)$]
		\item 
			There exists a positive constant $\ga_0\geq1$ such that, for any $\Bu_0\in \CD_{q,p}^{2(1-1/p)}(\dot\Om)$,
			the problem \eqref{150803_1} admits a unique solution
				$(\Bu,\te)\in W_{q,p,\ga_0}^{2,1}(\dot\Om\times(0,\infty))\times L_{p,\ga_0}((0,\infty),W_q^1(\dot\Om)+\CW_q^1(\Om))$,
			which satisfies 
			\begin{align*}
				&\|e^{-\ga_0 t}(\pa_t\Bu,\Bu,\nabla\Bu,\nabla^2\Bu)\|_{L_p((0,\infty),L_q(\dot\Om))}  
				 + \|e^{-\ga_0 t}\nabla\te\|_{L_p((0,\infty),L_q(\dot\Om))}
				\leq C_{p,q,\ga_0}\|\Bu_0\|_{\CD_{q,p}^{2(1-1/p)}(\dot\Om)}
			\end{align*}
			with some positive constant $C_{p,q,\ga_0}$.
		\item
			There exists a positive constant $\ga_0\geq1$ such that, for any
			\begin{align*}
				&\Bf \in L_{p,0,\ga_0}(\BR,L_q(\dot\Om)^N), \quad
				g\in H_{p,0,\ga_0}^{1/2}(\BR,L_q(\dot\Om)^N) \cap L_{p,0,\ga_0}(\BR,W_q^1(\dot\Om)\cap \BW_q^{-1}(\Om))), \displaybreak[0] \\ 
				&\Bh \in H_{p,0,\ga_0}^{1/2}(\BR,L_q(\dot\Om)^N) \cap L_{p,0,\ga_0}(\BR,W_q^1(\dot\Om)^N), \displaybreak[0] \\
				&\Bk \in H_{p,0,\ga_0}^{1/2}(\BR,L_q(\Om_+)^N) \cap W_{p,0,\ga_0}^1(\BR,L_q(\Om_+)^N)
			\end{align*}
			and for any representative $\mathfrak{g}\in W_{p,0,\ga_0}^1(\BR,L_q(\dot\Om)^N)$ of $\CG(g)$,
			the problem \eqref{150803_2} a unique solution
			\begin{equation*}
				(\Bu,\te) \in W_{q,p,0,\ga_0}^{2,1}(\dot\Om\times\BR)\times L_{p,0,\ga_0}(\BR,W_q^1(\dot\Om)+\CW_{q}^1(\Om)),
			\end{equation*}
			which possesses the estimate:
			\begin{align}\label{151009_2}
				&\|e^{-\ga_0 t}(\pa_t\Bu,\Bu,\La_{\ga_0}^{1/2}\nabla\Bu,\nabla^2\Bu)\|_{L_p(\BR,L_q(\dot\Om))}
				+ \|e^{-\ga_0 t}\nabla \te\|_{L_p(\BR,L_q(\dot\Om))} 
				\leq C_{p,q,\ga_0}\CN_{p,q,\ga_0}(\Bf,g,\mathfrak{g},\Bh,\Bk) 
			\end{align}
			for some positive constant $C_{p,q,\ga_0}$ with
			\begin{align*}
				\CN_{p,q,\ga_0}(\Bf,g,\mathfrak{g},\Bh,\Bk) 
				 = &
					\|e^{-\ga_0 t}(\Bf,\nabla g,\La_{\ga_0}^{1/2}g,\pa_t\mathfrak{g}, \nabla\Bh,
					\La_{\ga_0}^{1/2}\Bh)\|_{L_p(\BR,L_q(\dot\Om))} +\|e^{-\ga_ 0 t}(g,\Bh)\|_{L_p(\BR,W_q^1(\dot\Om))} \\
					& +\|e^{-\ga_0 t}(\nabla\Bk,\La_{\ga_0}^{1/2}\Bk)\|_{L_p(\BR,L_q(\Om_+))} + \|e^{-\ga_0 t}\Bk\|_{L_p(\BR,W_q^1(\Om_+))}.
			\end{align*}
			In addition, if $g=0$, $\Bh=0$, and $\Bk=0$, then
			\begin{equation}\label{151009_1}
				\ga\|e^{-\ga t}\Bu\|_{L_p(\BR,L_q(\dot\Om))} \leq C_{p,q,\ga_0}\|e^{-\ga_0 t}\Bf\|_{L_p(\BR,L_q(\dot\Om))}
				\quad \text{for any $\ga\geq\ga_0$.}
			\end{equation}
	\end{enumerate}
\end{theo}

\begin{proof}
We prove the assertion (2).
Smooth functions having compact supports with respect to time variable are dense in the spaces for
$\Bf$, $g$, $\mathfrak{g}$, $\Bh$, and $\Bk$, so that
we may assume that $\Bf$, $g$, $\mathfrak{g}$, $\Bh$, and $\Bk$ are smooth and supported compactly with respect to time variable.
Applying the Laplace transform with respect to time $t\in\BR$ to \eqref{150803_2}, we have
\begin{equation*}
	\left\{\begin{aligned}
		\la\Bv -\rho^{-1}\Di\BT(\Bv,\pi) &= \CL[\Bf](\la), &\di\Bv&=\CL[g](\la) && \text{in $\dot\Om$,} \\
		\jump{\BT(\Bv,\pi)\Bn}&=\CL[\Bh](\la) &\jump{\Bv}&=0 && \text{on $\Ga$,} \\
		\BT(\Bv,\pi)\Bn_+&=\CL[\Bk](\la) && && \text{on $\Ga_+$,} \\
		\Bv &= 0 && && \text{on $\Ga_-$.}
	\end{aligned}\right.
\end{equation*}
On the other hand, we observe that
	$(\CL[g](\la),\ph)_\Om = -(\CL[\mathfrak{g}](\la),\nabla\ph)_\Om$ for all $\ph \in W_{q',\Ga_+}^1(\Om)$,
because $(g(t),\ph)_\Om = - (\mathfrak{g}(t),\nabla\ph)_\Om$ for $t\in\BR$ by \eqref{DI}.
This implies that $\CL[g](\la)\in\BW_q^{-1}(\Om)$ and $\CL[\mathfrak{g}](\la)\in \CG(\CL[g](\la))$, so that we define, in view of Theorem \ref{theo:main}, $\Bu$ and $\te$ by
\begin{equation*}
	\Bu = \CL^{-1}\left[\BA(\la)F_\la\left(\CL[\Bf],\CL[g],\CL[\mathfrak{g}],\CL[\Bh],\CL[\Bk]\right)\right], \quad
	\te = \CL^{-1}\left[\BP(\la)F_\la\left(\CL[\Bf],\CL[g],\CL[\mathfrak{g}],\CL[\Bh],\CL[\Bk]\right)\right].
\end{equation*}
Since we assume that $\Bf$, $g$, $\mathfrak{g}$, $\Bh$, and $\Bk$ are supported compactly,
it holds that $\CL[\Bf]$, $\CL[g]$, $\CL[\mathfrak{g}]$, $\CL[\Bh]$, and $\CL[\Bk]$ are holomorphic functions with respect to $\la$.
Thus $\Bu$ and $\te$ are defined independently of $\ga\geq\ga_0$ for $\la=\ga+i\tau$,
where $\ga_0$ is a positive number greater than $\la_0$ stated in Theorem \ref{theo:main}.
Then, 
\begin{align}\label{151216_1}
	&e^{-\ga_0 t}\left(\pa_t\Bu,\La_{\ga_0}^{1/2}\nabla\Bu,\nabla^2\Bu\right)
		= \CF^{-1}\left[R_{\mu_0}\BA(\mu_0)\CF[e^{-\ga_0 t}\BF]\right], \displaybreak[0] \\
	&e^{-\ga_0 t}\Bu
		= \CF^{-1}\left[\mu_0^{-1}\left(\mu_0\BA(\mu_0)\right)\CF[e^{-\ga_0 t}\BF]\right], \quad
	e^{-\ga_0 t}\nabla\te
		= \CF^{-1}\left[\nabla\BP(\mu_0)\CF[e^{-\ga_0 t}\BF]\right] \nonumber
\end{align}
with $\mu_0=\ga_0+i\tau$ and 
	$\BF= (\Bf,\nabla g,\La_{\ga_0}^{1/2}g,\pa_t\mathfrak{g},g,\nabla\Bh,\La_{\ga_0}^{1/2}\Bh,\Bh,\nabla\Bk,\La_{\ga_0}^{1/2}\Bk,\Bk)$,
which, combined with Weis's operator valued Fourier multiplier theorem (cf. \cite[Theorem 3.4]{Weis01})
together with Theorem \ref{theo:main} and Proposition \ref{lemm:multi}, 
allows us to conclude that the estimate \eqref{151009_2} holds.



Analogously, we can obtain the estimate \eqref{151009_1} if $\mu_0$ is replaced by $\la=\ga+i\tau$ $(\ga\geq\ga_0)$ in the second formula of \eqref{151216_1}.
Finally, \eqref{151009_1} combined with the argumentation used in \cite[Section 7]{Saito15b} furnishes that $\Bu(t)=0$, $\te(t)=0$ for $t<0$ and the uniqueness holds.
This completes the proof of Theorem \ref{theo:max1}.
\end{proof}

\section{Two-phase reduced Stokes resolvent equations in $\dws$}\label{sec:ws}
In this section, we discuss $\CR$-bounded solution operator families to the two-phase reduced Stokes resolvent equations
with an interface condition in $\dws=\BR_+^N\cup\BR_-^N$, 
that is, we consider the following resolvent problem with resolvent parameter $\la$
varying in $\Si_\ep=\{\la\in\BC\setminus\{0\}\mid |\arg\la|<\pi-\ep\}$:
\begin{equation}\label{reduce:1}
	\left\{\begin{aligned}
		\la\Bu-\rho^{-1}\Di\BT(\Bu,K_I(\Bu)) &= \Bf && \text{in $\dws$,} \\
		\jump{\BT(\Bu,K_I(\Bu))\Bn_0} &= \jump{\Bh} && \text{on $\BR_0^N$,} \\
		\jump{\Bu} &= 0 && \text{on $\BR_0^N$,}
	\end{aligned}\right.
\end{equation}
where $\Bn_0=(0,\dots,0,-1)^T$ 
and $\BT(\Bu,K_I(\Bu))=\mu\BD(\Bu)-K_I(\Bu)\BI$.
Here $\rho=\rho_+\chi_{\uhs}+\rho_-\chi_{\lhs}$ for positive constants $\rho_\pm$,
and suppose that
\begin{enumerate}[(d)]
	\item
		viscosity coefficient $\mu$ is given by $\mu=\mu_+\chi_{\uhs}+\mu_-\chi_{\lhs}$
		for positive constants $\mu_\pm$ satisfying $\mu_{\pm1}\leq \mu_\pm\leq \mu_{\pm2}$, respectively,
		where $\mu_{\pm1}$ and $\mu_{\pm2}$ are the same constants as in Theorem \ref{theo:main}.
\end{enumerate}
Furthermore, for $1<q<\infty$ and $q'=q/(q-1)$,
let $K_I(\Bu)$ be defined by $K_I(\Bu)=\CK(\al,\beta)$ with
\begin{equation*}
	\al = \rho^{-1}\Di(\mu\BD(\Bu))-\nabla\di\Bu, \quad \beta = <\jump{\mu\BD(\Bu)\Bn_0},\Bn_0>-\jump{\di\Bu} \quad \text{for $\Bu\in W_q^2(\dws)^N$,}
\end{equation*}
where $\CK(\al,\beta)$ is given in Remark \ref{rema:weakDN} \eqref{rema:weakDN2} with $\dot\Om=\dws$,
that is, $K_I(\Bu)$ is the unique solution to
\begin{align}\label{160125_1}
	&(\rho^{-1}\nabla K_I(\Bu),\nabla\ph)_{\dws}
		= (\rho^{-1}\Di(\mu\BD(\Bu))-\nabla\di\Bu,\nabla\ph)_{\dws} \quad
	\text{for all $\ph\in \wh{W}_{q'}^1(\ws)$,} \\
	&\jump{K_I(\Bu)} = <\jump{\mu\BD(\Bu)\Bn_0},\Bn_0>-\jump{\di\Bu} \quad \text{on $\BR_0^N$.} \label{160125_1-2}
\end{align}
Especially, we know that $\|\nabla K_I(\Bu)\|_{L_q(\dws)}\leq \ga_0 \|\nabla\Bu\|_{W_q^1(\dws)}$.
Here and hereafter, $\ga_0$ denotes a generic constant depending solely on
$N$, $q$, $\rho_+$, $\rho_-$, $\mu_{+1}$, $\mu_{+2}$, $\mu_{-1}$, and $\mu_{-2}$.

We will prove the following theorem in this section.
\begin{theo}\label{theo:ws}
	Let $1<q<\infty$, $0<\ep<\pi/2$, and $\rho_\pm$ be positive constants.  
	Suppose that the condition $({\rm d})$ holds.
	For any open set $G$ of $\ws$, let $Y_{\CR,q}(G)$ and $\CY_{\CR,q}(G)$ be defined as 
	\begin{align*}
		Y_{\CR,q}(G) 
			&= \{(\Bf,\Bh) \mid \Bf\in L_q(G)^N,\,\Bh\in W_q^1(G)^N\}, \\
		\CY_{\CR,q}(G)
			&= \{(H_1,H_2,H_3)\mid H_1,H_3\in L_q(G)^N,\,H_2\in L_q(G)^{N^2}\}.\nonumber
	\end{align*}
	Then there exists an operator family 
		$\BS_I(\la) \in \Hol(\Si_\ep,\CL(\CY_{\CR,q}(\dws),W_q^2(\dws)^N))$
	such that, for any $\la\in\Si_\ep$ and $(\Bf,\Bh)\in Y_{\CR,q}(\dws)$,
	$\Bu = \BS_I(\la)G_{\CR,\la}(\Bf,\Bh)$ is a unique solution to the problem \eqref{reduce:1}, and furthermore,
	\begin{equation*}
		\CR_{\CL(\CY_{\CR,q}(\dws),L_q(\dws)^{\wt{N}})}
		\Big(\Big\{\Big(\la \frac{d}{d\la}\Big)^l\left(R_\la \BS_I(\la)\right)\mid\la\in\Si_\ep\Big\}\Big) \leq \ga_1
		\quad(l=0,1).
	\end{equation*} 
	Here and subsequently, we set $\wt N = N^3+N^2+N$, $R_\la\Bu = (\nabla^2\Bu,\la^{1/2}\nabla\Bu,\la\Bu)$,
	$G_{\CR,\la}(\Bf,\Bh)=(\Bf,\nabla\Bh,\la^{1/2}\Bh)$
	and $\ga_1$ denotes a constant depending solely on $N$, $q$, $\ep$, $\rho_+$, $\rho_-$, $\mu_{+1}$, $\mu_{+2}$, $\mu_{-1}$, and $\mu_{-2}$.
\end{theo}

In view of Subsection \ref{sub:reduced},
it is sufficient to consider the two-phase Stokes resolvent equations in $\dws$:
\begin{equation}\label{150608_1}
	\left\{\begin{aligned}
		\la\rho\Bu - \Di(\mu\BD(\Bu))+\nabla\te &= \rho\Bf && \text{in $\dws$,} \\
		\di\Bu &= g && \text{in $\dws$,} \\
		\jump{(\mu\BD(\Bu)-\te\BI)\Bn_0} &= \jump{\Bh} && \text{on $\BR_0^N$,} \\
		\jump{\Bu} &= 0 && \text{on $\BR_0^N$.}
	\end{aligned}\right.
\end{equation}
Here, the Fourier transform $\CF$ and its inverse formula $\CF^{-1}$ are defined by
\begin{equation}\label{Fourier}
	\CF[f](\xi) = \int_\ws e^{-ix\cdot\xi}f(x)\intd x, \quad
	\CF^{-1}[g(\xi)](x) = \frac{1}{(2\pi)^N}\int_{\ws}e^{ix\cdot\xi}g(\xi)\intd \xi,
\end{equation}
respectively. We first consider the divergence equation: $\di\Bu = g$ in $\dws$.

\begin{lemm}\label{lemm:div}
	Let $1<q<\infty$. For $g\in W_q^1(\dws)\cap\BW_q^{-1}(\ws)$, we set
\begin{equation}\label{sol:div}
	V(g) = (V_1(g),\dots,V_N(g))^T, \quad V_j(g) = -\CF^{-1}\left[\frac{i\xi_j}{|\xi|^2}\CF[g](\xi)\right](x)\quad (j=1,\dots,N).
\end{equation}
Then $V(g)\in W_q^1(\ws)^N\cap W_q^2(\dws)^N$ and $\Bu=V(g)$ solves the divergence equation: $\di\Bu = g$ in $\dws$. 
In addition, there are operators
\begin{equation*}
	V^1 \in \CL(L_q(\dws)^N, L_q(\dws)^{N^3}), \quad
	V^2 \in \CL(L_q(\dws), L_q(\dws)^{N^2}), \quad
	V^3 \in \CL(\wh W_q^{-1}(\ws),L_q(\dws)^N)
\end{equation*}
such that $R_\la V(g) = (V^1(\nabla g),V^2(\la^{1/2}g), V^3(\la g))$,
where the dual space of $\wh{W}_{q'}^1(\ws)$ with $q'=q/(q-1)$ is written by
$\wh W_q^{-1}(\ws)$ endowed with norm $\|\cdot\|_{\wh{W}_q^{-1}(\ws)}$. 
\end{lemm}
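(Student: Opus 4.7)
The plan is to define $V(g)$ directly by the Fourier multiplier formula, observe that the multiplier is a Riesz-type symbol, verify the divergence identity at the symbol level, and then read off the three operators $V^1,V^2,V^3$ from the fact that every relevant derivative of $V(g)$ reduces to a Mikhlin-type $0$-homogeneous multiplier acting on a suitable component of $g$ or $\nabla g$. Since $\BW_q^{-1}(\ws)\subset L_q(\ws)$, the formula $\CF[V_j(g)]=-i\xi_j|\xi|^{-2}\CF[g]$ is meaningful as a tempered distribution, and the identity $\di V(g)=g$ follows from the symbol computation $\sum_j (i\xi_j)(-i\xi_j|\xi|^{-2})=1$.

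For the easy operators, $\pa_k V_j(g)=\CF^{-1}[\xi_k\xi_j|\xi|^{-2}\CF[g]]$ has a smooth bounded symbol of homogeneity zero, so the Mikhlin multiplier theorem gives $\|\nabla V(h)\|_{L_q(\ws)}\leq C\|h\|_{L_q(\ws)}$ and defines $V^2\in\CL(L_q(\dws),L_q(\dws)^{N^2})$ by $V^2(h)_{jk}=\pa_k V_j(h)$, yielding $\la^{1/2}\nabla V(g)=V^2(\la^{1/2}g)$. For $V^3$ I would exploit that every $\tilde g\in\wh W_q^{-1}(\ws)$ admits a representative $\wt\BG\in L_q(\ws)^N$ with $\langle \tilde g,\ph\rangle_{\ws}=-(\wt\BG,\nabla\ph)_{\ws}$ for all $\ph\in\wh W_{q'}^1(\ws)$, i.e.\ $\tilde g=-\di\wt\BG$ distributionally and $\|\wt\BG\|_{L_q(\ws)}\sim\|\tilde g\|_{\wh W_q^{-1}(\ws)}$. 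Substituting into the defining formula gives $V_j(\tilde g)=-\sum_k\CF^{-1}[\xi_j\xi_k|\xi|^{-2}\CF[\wt G_k]]$, a bounded Mikhlin operator on $L_q(\ws)$, and independence of the chosen representative follows because if $\BH\in J_q(\ws)$ then $\di\BH=0$ distributionally, so $\xi\cdot\CF[\BH]=0$ and the multiplier annihilates $\BH$.

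The main obstacle is the construction of $V^1$, since naively pushing a derivative onto $g$ in the normal direction would produce a Dirac term $\jump{g}\delta_{\bdry}$ on the interface that does not lie in $L_q$. I would circumvent this by exploiting the fact that tangential derivatives of $g$ are globally $L_q(\ws)$ functions: whenever one of the outer indices $l$ or $k$ lies in $\{1,\dots,N-1\}$, the identity $\CF[\pa_l g]=i\xi_l\CF[g]$ holds pointwise and
\begin{equation*}
	\pa_l\pa_k V_j(g)=\CF^{-1}\bigl[\xi_k\xi_j|\xi|^{-2}\CF[\pa_l g]\bigr]
\end{equation*}
is Mikhlin. The remaining case $\pa_N^2 V_j(g)$ splits in two: for $j<N$ I would swap roles and put $i\xi_j$ on $g$, writing $\pa_N^2V_j(g)=\CF^{-1}[\xi_N^2|\xi|^{-2}\CF[\pa_j g]]$; the genuinely delicate case $\pa_N^2V_N(g)$ is handled by differentiating the divergence identity $\pa_N V_N(g)=g-\sum_{m<N}\pa_m V_m(g)$ once more in $x_N$, which gives, after restriction to the open set $\dws$,
\begin{equation*}
	\pa_N^2V_N(g)\big|_{\dws}=(\pa_N g)\big|_{\dws}-\sum_{m<N}\CF^{-1}\bigl[\xi_N\xi_m|\xi|^{-2}\CF[\pa_m g]\bigr]\big|_{\dws},
\end{equation*}
where every $\pa_m g$ with $m<N$ is a global $L_q(\ws)$ function. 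The interface Dirac contributions disappear because the equation is restricted to $\dws$, and the right-hand side depends only on the piecewise gradient $\nabla g\in L_q(\dws)^N$. Assembling these componentwise expressions defines a bounded $V^1\in\CL(L_q(\dws)^N,L_q(\dws)^{N^3})$ with $\nabla^2V(g)=V^1(\nabla g)$, and since $V^1,V^2,V^3$ are all $\la$-independent, the required factorisation $R_\la V(g)=(V^1(\nabla g),V^2(\la^{1/2}g),V^3(\la g))$ is automatic.
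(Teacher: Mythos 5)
Your proposal is correct and follows essentially the same route as the paper: Mikhlin's multiplier theorem handles the first derivatives and all second derivatives carrying a tangential index, the divergence identity $\di V(g)=g$ is used to express the remaining $\pa_N^2$ components in $\dws$ through the piecewise gradient of $g$ (thus avoiding the interface Dirac term), and a duality argument in $\wh{W}_q^{-1}(\ws)$ controls $V(g)$ itself. The only minor variation is in the construction of $V^3$: you realize $g=-\di\wt\BG$ through a Hahn--Banach representative $\wt\BG\in L_q(\ws)^N$ and then compose Riesz-type multipliers, whereas the paper estimates $(V(g),\ph)_{\ws}$ directly by $\|g\|_{\wh{W}_q^{-1}(\ws)}\|\nabla\CF\left[|\xi|^{-2}i\xi\cdot\CF^{-1}[\ph]\right]\|_{L_{q'}(\ws)}$; the two arguments are equivalent, yours being slightly more explicit about $V^3$ as an operator on all of $\wh{W}_q^{-1}(\ws)$.
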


\begin{proof}
It is clear that $\Bu=V(g)$ solves the divergence equation: $\di\Bu=g$ in $\dws$
and that by the Fourier multiplier theorem of Mikhlin (cf. \cite[Appendix, Theorem 2]{Mikhlin65})
\begin{equation*}
	\|\nabla V(g)\|_{L_q(\ws)}\leq \ga_0\|g\|_{L_q(\ws)}, \quad
	\|\pa_k\nabla V(g)\|_{L_q(\ws)} \leq \ga_0\|\pa_k g\|_{L_q(\ws)} \quad (k=1,\dots,N-1).
\end{equation*}
Since $\di V(g)=g$ in $\dws$, it holds that $\pa_N^2 V(g)=\pa_N g-\pa_N\sum_{k=1}^{N-1}\pa_k V(g)$ in $\dws$, 
which, combined with the last inequalities, furnishes that
	$\|\pa_N^2 V(g)\|_{L_q(\dws)} \leq \ga_0\|\nabla g\|_{L_q(\dws)}$.

Next we estimate $V(g)$. Let $\ph\in C_0^\infty(\ws)^N$, and then
	$(V(g),\ph)_{\ws}
		= -(g,\CF[|\xi|^{-2}i\xi\cdot\CF^{-1}[\ph](\xi)])_\ws$.
The Fourier multiplier theorem again yields that
\begin{equation*}
	|(V(g),\ph)_{\ws}|
		\leq \|g\|_{\wh W_q^{-1}(\ws)}\left\|\nabla\CF\left[\frac{i\xi\cdot\CF^{-1}[\ph](\xi)}{|\xi|^2}\right]\right\|_{L_{q'}(\ws)}
		\leq \ga_0\|g\|_{\wh W_q^{-1}(\ws)}\|\ph\|_{L_{q'}(\ws)},
\end{equation*}
which implies that $\|V(g)\|_{L_q(\ws)}\leq \ga_0\|g\|_{\wh W_q^{-1}(\ws)}$.
We thus see that $V(g)\in W_q^1(\ws)^N\cap W_q^2(\dws)^N$ and the existence of operators $V^i$ $(i=1,2,3)$.
This completes the proof of the lemma.
\end{proof}

Note that $\jump{V(g)}=0$ on $\bdry$ since $V(g)\in W_q^1(\ws)^N$ by Lemma \ref{lemm:div}.
Setting $\Bu = V(g)+\Bv$ in \eqref{150608_1} and noting $\Di(\mu\BD(\Bv))=\mu\De\Bv$ by the condition (d) and by $\di\Bv=0$ in $\dws$,
we have
\begin{equation}\label{150608_2}
	\left\{\begin{aligned}
		\rho\la\Bv-\mu\De\Bv+\nabla\te &= \wt\Bf && \text{in $\dws$,} \\
		\di\Bv &= 0 && \text{in $\dws$,} \\
		\jump{(\mu\BD(\Bv)-\te\BI)\Bn_0} &= \jump{\wt\Bh} && \text{on $\BR_0^N$,} \\
		\jump{\Bv} &= 0 && \text{on $\BR_0^N$,}
	\end{aligned}\right.
\end{equation}
where
	$\wt\Bf = \rho\Bf-\rho\la V(g)+\Di(\mu\BD(V(g)))$ and $\wt\Bh = \Bh - \mu\BD(V(g))\Bn_0$.

The following theorem was essentially proved in \cite[Theorem 1.1, Theorem 1.2]{SS11b},
but we again show them here from viewpoint of the existence of $\CR$-bounded solution operator families of \eqref{150608_2}.

\begin{theo}\label{theo:ws2}
	Let $1<q<\infty$, $0<\ep<\pi/2$, and $\rho_\pm$ be positive constants. Suppose that the condition $({\rm d})$ holds.
	Then there exists an operator family 
		$\CS_I(\la) \in \Hol(\Si_\ep,\CL(\CY_{\CR,q}(\dws),W_q^2(\dws)^N))$
	such that, for any $\la\in\Si_\ep$ and $(\wt\Bf,\wt\Bh)\in Y_{\CR,q}(\dws)$,
	$\Bv=\CS_I(\la)G_{\CR,\la}(\wt\Bf,\wt\Bh)$ is a unique solution to the problem \eqref{150608_2}
	with some pressure term $\te$. In addition, 
	\begin{equation*}
		\CR_{\CL(\CY_{\CR,q}(\dws),L_q(\dws)^{\wt{N}})}
		\Big(\Big\{\Big(\la\frac{d}{d\la}\Big)^l\left(R_\la\CS_I(\la)\right)\mid \la\in\Si_\ep\Big\}\Big)\leq \ga_1
		\quad(l=0,1).
	\end{equation*}
\end{theo}

\begin{proof}
{\bf Step 1: Reduction to $\wt\Bf=0$.}
We first reduce \eqref{150608_2} to the case $\wt\Bf=0$.
To this end, we consider problems in $\ws$ as follows: 
\begin{align*}
	\rho_+\la\psi_+-\mu_+\De\psi_+ + \nabla \ph_+ &= \wt\Bf,\quad \di\psi_+=0 \quad \text{in $\ws$,} \\
	\rho_-\la\psi_--\mu_-\De\psi_-+\nabla\ph_- &= \wt\Bf, \quad \di\psi_-=0 \quad \text{in $\ws$.} \nonumber
\end{align*}
Then we have the following solution formulas (cf. \cite[Section 2]{SS11b}):
\begin{equation*}
	\psi_\pm =
		\CA_{\pm}(\la)\wt\Bf:=
		\CF^{-1}\left[\frac{\CF[\wt\Bf](\xi)-|\xi|^{-2}\xi<\xi,\CF[\wt\Bf](\xi)>}{\rho_\pm\la+\mu_\pm|\xi|^2}\right](x), \quad
	\ph_\pm =
		-\CF^{-1}\left[\frac{<i\xi,\CF[\wt\Bf](\xi)>}{|\xi|^2}\right](x).
\end{equation*}
By \cite[Theorem 3.3, proof of Theorem 3.2]{ES13}, 
\begin{equation}\label{151010_1}
	\CR_{\CL(L_q(\ws)^N,L_q(\ws)^{\wt N})}
	\Big(\Big\{\Big(\la\frac{d}{d\la}\Big)^l\left(R_\la\CA_\pm(\la)\right)\mid\la\in\Si_\ep\Big\}\Big)\leq \ga_1 \quad (l=0,1).
\end{equation} 
Here, we set
\begin{equation}\label{sol:ws}
	\psi=\CA(\la)\wt\Bf:=(\CA_+(\la)\wt\Bf)\chi_{\uhs} + (\CA_-(\la)\wt\Bf)\chi_{\lhs}, \quad
	\ph = \ph_+\chi_{\uhs} + \ph_-\chi_{\lhs}.
\end{equation}
Note that $\jump{\ph}=0$ on $\BR_0^N$ and that 
$D^\al \CA(\la)\wt\Bf = (D^\al\CA_+(\la)\wt\Bf)\chi_{\uhs}+(D^\al\CA_-(\la)\wt\Bf)\chi_{\lhs}$ in $\dws$
for any multi-index $\al\in\BN_0^N$ with $|\al|\leq 2$.
Thus, by \eqref{151010_1}, Proposition \ref{prop:R}, 
and the definition of $\CR$-boundedness (cf. Definition \ref{defi:R}),
\begin{equation}\label{151010_3}
	\CR_{\CL(L_q(\dws)^N,L_q(\dws)^{\wt N})}
	\Big(\Big\{\Big(\la\frac{d}{d\la}\Big)^l\left(R_\la\CA(\la)\right)\mid\la\in\Si_\ep\Big\}\Big)\leq \ga_1 \quad (l=0,1),
\end{equation}
and also setting $\Bv=\CA(\la)\wt\Bf+\Bw$ and $\te = \ph+\ka$ in \eqref{150608_2} yields that
\begin{equation*}
	\left\{\begin{aligned}
		\rho\la\Bw -\mu\De\Bw+\nabla\ka &= 0 && \text{in $\dws$,} \\
		\di\Bw &= 0 && \text{in $\dws$,} \\
		\jump{(\mu\BD(\Bw)-\ka\BI)\Bn_0} &= \jump{\wt\Bh}-\jump{\mu\BD(\CA(\la)\wt\Bf)\Bn_0} && \text{on $\BR_0^N$,} \\
		\jump{\Bw} &= -\jump{\CA(\la)\wt\Bf} && \text{on $\BR_0^N$.}
	\end{aligned}\right.
\end{equation*}
To analyze this system, it is enough to consider the equations:
\begin{equation}\label{150608_5}
	\left\{\begin{aligned}
		\rho\la\Bu -\mu\De\Bu + \nabla\te &= 0 && \text{in $\dws$,} \\
		\di\Bu &= 0 && \text{in $\dws$,} \\
		\jump{(\mu\BD(\Bu)-\te\BI)\Bn_0} &= \jump{\Bh} && \text{on $\BR_0^N$,} \\
		\jump{\Bu} &= \jump{\Bk} && \text{on $\BR_0^N$}
	\end{aligned}\right.
\end{equation}
for given $\Bh = (h_1,\dots,h_N)^T\in W_q^1(\dws)^N$ and $\Bk = (k_1,\dots,k_N)^T\in W_q^2(\dws)^N$ with $k_N = -\psi_N$,
where $\psi_N$ is the $N$th component of $\psi$ defined as \eqref{sol:ws}.

\noindent{\bf Step 2: Solution formulas of \eqref{150608_5}.}
We rewrite \eqref{150608_5} as follows:
\begin{equation}\label{150608_6}
	\left\{\begin{aligned}
		\rho_\pm \la \Bu_\pm -\mu_\pm\De\Bu_\pm +\nabla\te_\pm &= 0 && \text{in $\BR_\pm^N$,} \\
		\di\Bu_\pm &=0 && \text{in $\BR_\pm^N$,} \\
		\jump{\mu(\pa_N u_j + \pa_j u_N)} &= - \jump{h_j} && \text{on $\BR_0^N$} \\
		\jump{2\mu \pa_N u_N-\te} &= -\jump{h_N} && \text{on $\BR_0^N$,} \\
		\jump{u_J} &= \jump{k_J} && \text{on $\BR_0^N$,}
	\end{aligned}\right.
\end{equation}
where $\Bu=(u_1,\dots,u_N)^T$, $\Bu_\pm = \Bu\chi_{\BR_\pm^N}$, and $\te_\pm = \te\chi_{\BR_\pm^N}$.
Here and subsequently, $j$ and $J$ run from $1$ to $N-1$ and $1$ to $N$, respectively, and we set
	$y' = (y_1,\dots,y_{N-1})$ for $y=(y_1,\dots,y_N)$.

Let $\wh{f}(\xi',x_N)$ and $\CF_{\xi'}^{-1}[g(\xi',x_N)](x')$
be the partial Fourier transform with respect to $x'$ and its inverse formula defined by
\begin{equation*}
	\wh{f}(\xi',x_N) = \int_{\BR^{N-1}}e^{-ix'\cdot\xi'}f(x',x_N)\intd x',\quad
	\CF_{\xi'}^{-1}[g(\xi',x_N)](x') = \frac{1}{(2\pi)^{N-1}}\int_{\BR^{N-1}}e^{ix'\cdot\xi'}g(\xi',x_N)\intd\xi'.
\end{equation*}
Apply the partial Fourier transform to \eqref{150608_6}, and we have
\begin{align}
	(\rho_\pm\la+\mu_\pm|\xi'|^2-\mu_\pm \pa_N^2)\wh{u}_{\pm j}(\xi',x_N) + i\xi_j\wh{\te}_\pm(\xi',x_N) &= 0, \quad \pm x_N>0, \label{ode:1} \displaybreak[0] \\
	(\rho_\pm\la+\mu_\pm|\xi'|^2-\mu_\pm \pa_N^2)\wh{u}_{\pm N}(\xi',x_N)+\pa_N\wh{\te}_\pm(\xi',x_N)&=0, \quad \pm x_N>0, \label{ode:2} \displaybreak[0] \\
	\sum_{j=1}^{N-1}i\xi_j\wh{u}_{\pm j}(\xi',x_N) + \pa_N\wh{u}_{\pm N}(\xi',x_N) &= 0, \quad \pm x_N>0, \label{ode:3} \displaybreak[0] \\
	\jump{\mu(i\xi_j\wh{u}_N+\pa_N\wh{u}_j)}(\xi',0) &= -\jump{\wh{h}_j}(\xi',0), \label{ode:4} \displaybreak[0] \\
	\jump{2\mu \pa_N\wh{u}_N-\wh{\te}}(\xi',0) &= -\jump{\wh{h}_N}(\xi',0), \label{ode:5} \displaybreak[0] \\
	\jump{\wh{u}_J}(\xi',0) &= \jump{\wh{k}_J}(\xi',0) \label{ode:6}.
\end{align}
Set
	$A=|\xi'|$ and $B_\pm = \sqrt{(\rho_\pm/\mu_\pm)\la+|\xi'|^2}$.
By \eqref{ode:1}-\eqref{ode:3}, we have $(\pa_N^2 - A^2)\wh{\te}_\pm (\xi',x_N)=0$ for $\pm x_N>0$,
and applying $\pa_N^2-A^2$ to \eqref{ode:1} and \eqref{ode:2} yields that
$(\rho_\pm\la+\mu_\pm|\xi'|^2-\mu_\pm \pa_N^2)(\pa_N^2-A^2)\wh{u}_{\pm J}(\xi',x_N)=0$
for $\pm x_N>0$.
Thus, we will look for solutions to \eqref{ode:1}-\eqref{ode:6} of the forms:
\begin{equation*}
	\wh{u}_{\pm J}(\xi',x_N) = \al_{\pm J}(e^{\mp Ax_N}-e^{\mp B_{\pm}x_N}) + \beta_{\pm J}e^{\mp B_\pm x_N},
	\quad \wh{\te}_\pm(\xi',x_N) = \ga_\pm e^{\mp Ax_N} \enskip(\pm x_N>0).
\end{equation*}
Inserting the above formulas into \eqref{ode:1}-\eqref{ode:6}, we have the following relations:
\begin{align}
	&\mu_\pm(B_\pm^2-A^2)\al_{\pm j}+i\xi_j\ga_\pm =0, \label{ode:7} \displaybreak[0] \\
	&\mu_\pm(B_\pm^2-A^2)\al_{\pm N}\mp A\ga_\pm =0 \label{ode:8} \displaybreak[0] \\
	&i\xi'\cdot\al_\pm'\mp A\al_{\pm N}=0, \quad
	-i\xi'\cdot\al_\pm'+i\xi'\cdot\beta_\pm'\pm B_\pm\al_{\pm N}\mp B_\pm\beta_{\pm N} =0, \label{ode:9} \displaybreak[0] \\
	&\mu_+\left\{i\xi_j\beta_{+N}+(-A+B_+)\al_{+j}-B_+\beta_{+j}\right\}
		-\mu_-\left\{i\xi_j\beta_{-N}+(A-B_-)\al_{-j}+B_-\beta_{-j}\right\} = -\jump{\wh{h}_j}(\xi',0), \label{ode:10} \displaybreak[0] \\
	&\left[2\mu_+\left\{(-A+B_+)\al_{+N}-B_+\beta_{+N}\right\}-\ga_+\right] -
		\left[2\mu_-\left\{(A-B_-)\al_{-N}+B_-\beta_{-N}\right\}-\ga_-\right] = -\jump{\wh{h}_N}(\xi',0), \label{ode:11} \displaybreak[0] \\
	&\beta_{+J}-\beta_{-J} = \jump{\wh{k}_J}(\xi',0), \label{ode:12}
\end{align}
where we have set $\al_\pm=(\al_{\pm 1},\dots,\al_{\pm N})$ and $\beta_\pm =(\beta_{\pm 1},\dots,\beta_{\pm N})$.

From now on, we solve the equations \eqref{ode:7}-\eqref{ode:12}.
First, we write $i\xi'\cdot\al_\pm'$, $\al_{\pm N}$, and $\ga_\pm$ by using $i\xi'\cdot\beta_\pm'$ and $\beta_{\pm N}$.
By \eqref{ode:9}, we have
\begin{equation}\label{alpha}
	\al_{\pm N} = \pm\frac{-i\xi'\cdot\beta_\pm'\pm B_\pm\beta_{\pm N}}{B_\pm -A}, \quad
	i\xi'\cdot\al_\pm' = \frac{A(-i\xi'\cdot\beta_\pm'\pm B_\pm\beta_{\pm N})}{B_\pm-A},
\end{equation}
which, combined with \eqref{ode:8}, furnishes that
\begin{equation}\label{gamma}
	\ga_\pm = \frac{\mu_\pm(B_\pm +A)}{A}(-i\xi'\cdot\beta_\pm'\pm B_\pm\beta_{\pm N}).
\end{equation}

Next, we give exact formulas of $\al_{\pm J}$ and $\beta_{\pm J}$.
By \eqref{ode:10} and \eqref{alpha},
\begin{align}\label{ode:13}
	&\mu_+\left\{-(B_++A)i\xi'\cdot\beta_+'+A(B_+-A)\beta_{+N}\right\} \\
		&\quad  -\mu_-\left\{(B_-+A)i\xi'\cdot\beta_-' +A(B_- -A)\beta_{-N}\right\}=-i\xi'\cdot\jump{\wh{\Bh}'}(\xi',0). \nonumber
\end{align}
In addition, by \eqref{ode:11}, \eqref{alpha}, and \eqref{gamma},
\begin{align}\label{ode:14}
	&\mu_+\left\{(B_+-A)i\xi'\cdot\beta_+' -B_+(B_++A)\beta_{+N}\right\} \\
		&\quad -\mu_-\left\{(B_--A)i\xi'\cdot\beta_-+B_-(B_-+A)\beta_{-N}\right\} = -A\jump{\wh{h}_N}(\xi',0). \nonumber
\end{align}
It holds by \eqref{ode:12} that
\begin{equation*}
	i\xi'\cdot\beta_-' = i\xi'\cdot\beta_+'-i\xi'\cdot\jump{\wh{\Bk}'}(\xi',0), \quad
	\beta_{-N} = \beta_{+N} -\jump{\wh{k}_N}(\xi',0),
\end{equation*}
which, inserted into \eqref{ode:13} and \eqref{ode:14}, furnishes that
\begin{align*}
	\left\{\mu_+(B_++A)+\mu_-(B_-+A)\right\}i\xi'\cdot\beta_+' 
		+\left\{-\mu_+A(B_+-A)+\mu_-A(B_--A)\right\}\beta_{+N} &= P(\Bh,\Bk), \\
	\left\{-\mu_+(B_+-A)+\mu_-(B_--A)\right\}i\xi'\cdot\beta_+'
		+\left\{\mu_+B_+(B_++A)+\mu_-B_-(B_-+A)\right\}\beta_{+N} &= Q(\Bh,\Bk),
\end{align*}
where
\begin{align*}
	P(\Bh,\Bk)&=i\xi'\cdot\jump{\wh{\Bh}'}(\xi',0)+\mu_-(B_-+A)i\xi'\cdot\jump{\wh{\Bk}'}(\xi',0) +\mu_-A(B_--A)\jump{\wh{k}_N}(\xi',0), \\
	Q(\Bh,\Bk)&=A\jump{\wh{h}_N}(\xi',0)+\mu_-(B_--A)i\xi'\cdot\jump{\wh{\Bk}'}(\xi',0)+\mu_-B_-(B_-+A)\jump{\wh{k}_N}(\xi',0). \nonumber
\end{align*}
We often denote $P(\Bh,\Bk)$ and $Q(\Bh,\Bk)$ by $P$ and $Q$ for short in the following.
Let
\begin{equation*}
	\BL=
	\begin{pmatrix}
		\mu_+(B_++A)+\mu_-(B_-+A) & -\mu_+A(B_+-A)+\mu_-A(B_--A) \\
		-\mu_+(B_+-A)+\mu_-(B_--A) & \mu_+B_+(B_++A)+\mu_-B_-(B_-+A)
	\end{pmatrix},
\end{equation*}
and then
\begin{align*}
	&\det \BL =-(\mu_+-\mu_-)^2A^3
		+\left\{(3\mu_+-\mu_-)\mu_+B_++(3\mu_--\mu_+)\mu_-B_-\right\}A^2 \\
		&\quad+\left\{(\mu_+B_++\mu_-B_-)^2+\mu_+\mu_-(B_++B_-)^2\right\}A
		+(\mu_+B_++\mu_-B_-)(\mu_+B_+^2+\mu_-B_-^2).
\end{align*}
The inverse matrix $\BL^{-1}$ of $\BL$ is given by
\begin{equation*}
	\BL^{-1}=\frac{1}{\det\BL}
		\begin{pmatrix}
			L_{11} & L_{12} \\
			L_{21} & L_{22}
		\end{pmatrix}
\end{equation*}
with
\begin{align}\label{L_ij}
	L_{11} &= \mu_+B_+(B_++A)+\mu_-B_-(B_-+A),
	&L_{12} &= \mu_+A(B_+-A)-\mu_-A(B_--A), \displaybreak[0] \\
	L_{21} &= \mu_+(B_+-A)-\mu_-(B_--A),
	&L_{22} &= \mu_+(B_++A)+\mu_-(B_-+A). \nonumber
\end{align}
Thus we have
\begin{align}\label{beta}
	&i\xi'\cdot\beta_+' = \frac{1}{\det\BL}\left(L_{11}P+L_{12}Q\right),\quad \beta_{+N} = \frac{1}{\det\BL}\left(L_{21}P+L_{22}Q\right), \\
	&i\xi'\cdot\beta_-' = \frac{1}{\det\BL}\left(L_{11}P+L_{12}Q\right)-i\xi'\cdot\jump{\wh{\Bk}'}(\xi',0), \quad 
	\beta_{-N} = \frac{1}{\det\BL}\left(L_{21}P+L_{22}Q\right)-\jump{\wh{k}_N}(\xi',0). \nonumber
\end{align}
These relations yields that
\begin{align*}
	\CF_+(\Bh,\Bk):=&-i\xi'\cdot\beta_+'+B_+\beta_{+N} =
		-\frac{1}{\det\BL}\left\{(L_{11}-B_+L_{21})P+(L_{12}-B_+L_{22})Q\right\}, \\
	\CF_-(\Bh,\Bk):=&-i\xi'\cdot\beta_-'-B_-\beta_{-N} \nonumber \\
		=& -\frac{1}{\det\BL}\left\{(L_{11}+B_-L_{21})P+(L_{12}+B_-L_{22})Q\right\}
		+i\xi'\cdot\jump{\wh{\Bk}'}(\xi',0) + B_-\jump{\wh{k}_N}(\xi',0), \nonumber
\end{align*}
which, inserted into \eqref{alpha} and \eqref{gamma}, furnishes that
\begin{equation}\label{150609_2}
	\al_{\pm N} = \pm \frac{\CF_{\pm}(\Bh,\Bk)}{B_\pm-A}, \quad \ga_\pm = \frac{\mu_\pm(B_\pm+A)\CF_{\pm}(\Bh,\Bk)}{A}.
\end{equation}
By \eqref{ode:7} and \eqref{150609_2}, we have
\begin{equation}\label{150609_3}
	\al_{\pm j} = -\frac{i\xi_j\CF_\pm(\Bh,\Bk)}{A(B_\pm-A)},
\end{equation}
and furthermore, by \eqref{ode:10} and \eqref{ode:12},
\begin{align*}
	&\mu_+B_+\beta_{+j}+\mu_-B_-\beta_{-j} =\jump{\wh{h}_j}(\xi',0) + \mu_-i\xi_j\jump{\wh{k}_N}(\xi',0) \\
		&\quad +
		\frac{(\mu_+-\mu_-)i\xi_j}{\det\BL}\left(L_{21}P+L_{22}Q\right)
		-\frac{i\xi_j}{A}\left(\mu_+\CF_+(\Bh,\Bk)+\mu_-\CF_-(\Bh,\Bk)\right), \\
	&\beta_{+j}-\beta_{-j} = \jump{\wh{k}_j}(\xi',0).
\end{align*}
The last relations imply that
\begin{align}\label{150609_4}
	\beta_{\pm j}=&
		\frac{1}{\mu_+B_++\mu_-B_-}\left(\jump{\wh{h}_j}(\xi',0) + \mu_-i\xi_j\jump{\wh{k}_N}(\xi',0)+
		\frac{(\mu_+-\mu_-)i\xi_j}{\det\BL}\left(L_{21}P+L_{22}Q\right)\right. \\
		&\left.-\frac{i\xi_j}{A}\left(\mu_+\CF_+(\Bh,\Bk)+\mu_-\CF_-(\Bh,\Bk)\right)
		\pm\mu_{\mp}B_\mp\jump{\wh{k}_j}(\xi',0)\right). \nonumber
\end{align}
By the symbols \eqref{beta}-\eqref{150609_4}, we can give solution formulas of \eqref{150608_5} as follows:
\begin{align}\label{sol-formula}
	u_{\pm J}(x) &= -\CF_{\xi'}^{-1}[\al_{\pm J}(B_\pm-A)\CM_\pm (\pm x_N)](x') + \CF_{\xi'}^{-1}[\beta_{\pm J}e^{\mp B_\pm x_N}](x'), \\
	\te_\pm(x) &= \CF_{\xi'}^{-1}[\ga_\pm e^{\mp A x_N}](x'), \quad
	\CM_\pm(a) = \frac{e^{- B_\pm a}-e^{- A a}}{B_\pm-A}. \nonumber
\end{align}

\noindent{\bf Step 3: Construction of solution operators for \eqref{sol-formula}.}
Setting
\begin{align*}
	&P'(\Bh,\Bk') = i\xi'\cdot\jump{\wh{\Bh}'}(\xi',0)+\mu_-(B_-+A)i\xi'\cdot\jump{\wh{\Bk}'}(\xi',0), \displaybreak[0] \\
	&Q'(\Bh,\Bk')= A\jump{\wh{h}_N}(\xi',0)+\mu_-(B_--A)i\xi'\cdot\jump{\wh{\Bk}'}(\xi',0), \displaybreak[0] \\
	&P_N(k_N) =\mu_-A(B_--A)\jump{\wh{k}_N}(\xi',0), \quad Q_N(k_N) = \mu_-B_-(B_-+A)\jump{\wh{k}_N}(\xi',0), \displaybreak[0] \\
	&\CF_+'(\Bh,\Bk') = -\frac{1}{\det\BL}
		\left\{(L_{11}-B_+L_{21})P'(\Bh,\Bk')+(L_{12}-B_+L_{22})Q'(\Bh,\Bk')\right\}, \displaybreak[0] \\
	&\CF_-'(\Bh,\Bk') = -\frac{1}{\det\BL}\left\{(L_{11}+B_-L_{21})P'(\Bh,\Bk')+(L_{12}+B_-L_{22})Q'(\Bh,\Bk')\right\}
		+i\xi'\cdot\jump{\wh{\Bk}'}(\xi',0), \displaybreak[0] \\
	&\CF_{+N}(k_N) = -
		\frac{\mu_-\jump{\wh{k}_N}(\xi',0)}{\det\BL}
		\left\{A(B_--A)(L_{11}-B_+L_{21})+B_-(B_-+A)(L_{12}-B_+ L_{22})\right\}, \displaybreak[0] \\
	&\CF_{-N}(k_N) = -\frac{\mu_-\jump{\wh{k}_N}(\xi',0)}{\det\BL}
		\left\{A(B_--A)(L_{11}+B_-L_{21})+B_-(B_-+A)(L_{12}+B_-L_{22})\right\}
		+B_-\jump{\wh{k}_N}(\xi',0),
\end{align*}
we see that
\begin{align*}
	&P(\Bh,\Bk) = P'(\Bh,\Bk') + P_N(k_N), \quad Q(\Bh,\Bk) = Q'(\Bh,\Bk') + Q_N(k_N), \displaybreak[0] \\
	&\CF_+(\Bh,\Bk) = \CF_+'(\Bh,\Bk') + \CF_{+N}(k_N), \quad \CF_-(\Bh,\Bk) = \CF_-'(\Bh,\Bk') + \CF_{-N}(k_N).
\end{align*}
We also define operators $S_{\pm J}(\la)$ and $T_{\pm J}(\la)$ by
\begin{align}\label{151012_1}
	&S_{\pm j}(\la)(\Bh,\Bk') = 
		\CF_{\xi'}^{-1}\left[\frac{e^{\mp B_\pm x_N}}{\mu_+B_++\mu_-B_-}\jump{\wh{h}_j}(\xi',0)\right](x') 
		\pm\CF_{\xi'}^{-1}\left[\frac{\mu_\mp B_\mp e^{\mp B_\pm x_N}}{\mu_+B_+ + \mu_-B_-}\jump{\wh{k}_j}(\xi',0)\right](x') \displaybreak[0] \\
		&\quad+ \CF_{\xi'}^{-1}\left[\left(\frac{i\xi_j}{A}\right)\frac{\CF_{\pm}'(\Bh,\Bk')}{A}A\CM_\pm(\pm x_N)\right](x') \nonumber  \displaybreak[0] \\
		&\quad+(\mu_+-\mu_-)\CF_{\xi'}^{-1}\left[\left(\frac{i\xi_j}{\mu_+B_++\mu_-B_-}\right)
			\frac{L_{21}P'(\Bh,\Bk')+L_{22}Q'(\Bh,\Bk')}{A\det\BL}
			A e^{\mp B_\pm x_N}\right](x') \nonumber \displaybreak[0] \\
		&\quad-\CF_{\xi'}^{-1}\left[\left(\frac{i\xi_j}{A}\right)
			\frac{\mu_+\CF_+'(\Bh,\Bk')+\mu_-\CF_-'(\Bh,\Bk')}{(\mu_+B_++\mu_-B_-)A}
			A e^{\mp B_\pm x_N}\right](x'), \nonumber \displaybreak[0] \\
	&S_{\pm N}(\la)(\Bh,\Bk') = \mp \CF_{\xi'}^{-1}
		\left[\frac{\CF_{\pm}'(\Bh,\Bk')}{A}A\CM_{\pm}(\pm x_N)\right](x') \nonumber \displaybreak[0] \\
		&\quad+\CF_{\xi'}^{-1}\left[\frac{L_{21}P'(\Bh,\Bk')+L_{22}Q'(\Bh,\Bk')}{A\det\BL} Ae^{\mp B_\pm x_N}\right](x'), \nonumber \displaybreak[0] \\
	&T_{\pm j}(\la)k_N = \CF_{\xi'}^{-1}\left[\left(\frac{i\xi_j}{A}\right)\frac{\CF_{\pm N}(k_N)}{A}A\CM_{\pm}(\pm x_N)\right](x') \nonumber \displaybreak[0] \\
	&\quad + \CF_{\xi'}^{-1}\left[\frac{\mu_- i\xi_j}{\mu_+B_+ + \mu_-B_-}e^{\mp B_\pm x_N}\jump{\wh{k}_N}(\xi',0)\right](x') \nonumber \displaybreak[0] \\
	&\quad + (\mu_+-\mu_-)\CF_{\xi'}^{-1}\left[\left(\frac{i\xi_j}{\mu_+B_+ + \mu_- B_-}\right)
		\frac{L_{21}P_N(k_N)+L_{22}Q_N(k_N)}{A\det\BL}Ae^{\mp B_\pm x_N}\right](x') \nonumber \displaybreak[0] \\
	&\quad -\CF_{\xi'}^{-1}\left[\left(\frac{i\xi_j}{A}\right)
		\frac{\mu_+\CF_{+N}(k_N)+\mu_-\CF_{-N}(k_N)}{(\mu_+B_+ + \mu_- B_-)A}Ae^{\mp B_\pm x_N}\right](x'), \nonumber \displaybreak[0] \\
	&T_{\pm N}(\la)k_N = \mp \CF_{\xi'}^{-1}\left[\frac{\CF_{\pm N}(k_N)}{A}A\CM_\pm(\pm x_N)\right](x')
		 +\CF_{\xi'}^{-1}\left[\frac{L_{21}P_N(k_N)+L_{22}Q_N(k_N)}{A\det\BL}Ae^{\mp B_\pm x_N}\right](x') \nonumber \displaybreak[0] \\
	&\quad +\left(\frac{\pm 1-1}{2}\right)\CF_{\xi'}^{-1}\left[e^{B_- x_N}\jump{\wh{k}_N}(\xi',0)\right](x'). \nonumber 
\end{align}
Then $u_{\pm J}=S_{\pm J}(\la)(\Bh,\Bk')+T_{\pm J}(\la)k_N$.

\noindent{\bf Step 4: $\CR$-boundedness of solution operator families \eqref{151012_1}.}
We show the $\CR$-boundedness of the operator families \eqref{151012_1}.
To this end, we introduce two classes of multipliers.
Let $0<\ep<\pi/2$ and $\ga_0\geq0$,
and let $m(\xi',\la)$ be a function defined on $(\BR^{N-1}\setminus\{0\})\times \Si_{\ep,\ga_0}$,
which is infinitely many times differentiable with respect to $\xi'\in\BR^{N-1}\setminus\{0\}$
and is holomorphic with respect to $\la\in\Si_{\ep,\ga_0}$.
Here we have set $\Si_{\ep,0}=\Si_\ep$.
If there exists a real number $s$ such that,
for any multi-index $\al'=(\al_1,\dots,\al_{N-1})\in\BN_0^{N-1}$
and $(\xi',\la)\in(\BR^{N-1}\setminus\{0\})\times\Si_{\ep,\ga_0}$, there hold the estimates:
\begin{equation*}
	|D_{\xi'}^{\al'}m(\xi',\la)|
		\leq C_{s,\al',\ep,\ga_0}(|\la|^{1/2}+A)^{s-|\al'|}, \quad
	\left|D_{\xi'}^{\al'}\left(\la\frac{d}{d\la}m(\xi',\la)\right)\right|
		\leq C_{s,\al',\ep,\ga_0}(|\la|^{1/2}+A)^{s-|\al'|}
\end{equation*}
with some positive constant $C_{s,\al',\ep,\ga_0}$,
then $m(\xi',\la)$ is called a multiplier of order $s$ with type $1$.
If there exists a real number $s$ such that, for any multi-index $\al'=(\al_1,\dots,\al_{N-1})\in\BN_0^{N-1}$
and $(\xi',\la)\in(\BR^{N-1}\setminus\{0\})\times\Si_{\ep,\ga_0}$,
there holds the estimates:
\begin{equation*}
	|D_{\xi'}^{\al'}m(\xi',\la)| \leq C_{s,\al',\ep,\ga_0}(|\la|^{1/2}+A)^s A^{-|\al'|}, \quad
	\left|D_{\xi'}^{\al'}\left(\la\frac{d}{d\la}m(\xi',\la)\right)\right|\leq C_{s,\al',\ep,\ga_0}(|\la|^{1/2}+A)^s A^{-|\al'|}
\end{equation*}
with some positive constant $C_{s,\al',\ep,\ga_0}$,
then $m(\xi',\la)$ is called a multiplier of order $s$ with type $2$.
In what follows, we denote the set of all multipliers defined on $(\BR^{N-1}\setminus\{0\})\times\Si_{\ep,\ga_0}$
of order $s$ with type $l$ ($l=1,2$) by $\BBM_{s,l,\ep,\ga_0}$.
We here give typical examples of multiplies as follows:
the Riesz kernel $\xi_j/|\xi'|$ ($j=1,\dots,N-1$) is a multiplier of order $0$ with type $2$.
Functions $\xi_j$ and $\la^{1/2}$ are multiplies of order $1$ with type $1$.
We also introduce the following two fundamental lemmas (cf. \cite[Lemma 4.6, Lemma 4.8]{SS11b}). 

\begin{lemm}\label{lemm:1}
Let $s_1,s_2\in\BR$, $0<\ep<\pi/2$, and $\ga_0\geq 0$.
\begin{enumerate}[$(1)$]
\item
	Given $m_i\in\BBM_{s_i,1,\ep,\ga_0}$ $(i=1,2)$, we have $m_1m_2\in\BBM_{s_1+s_2,1,\ep,\ga_0}$.
\item
	Given $l_i\in\BBM_{s_i,i,\ep,\ga_0}$ $(i=1,2)$, we have $l_1l_2\in\BBM_{s_1+s_2,2,\ep,\ga_0}$.
\item
	Given $n_i\in\BBM_{s_i,2,\ep,\ga_0}$ $(i=1,2)$, we have $n_1n_2\in\BBM_{s_1+s_2,2,\ep,\ga_0}$.
\end{enumerate}
\end{lemm}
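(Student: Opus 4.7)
The plan is to prove all three assertions by a direct application of the Leibniz rule together with the elementary observation that $A \leq |\la|^{1/2}+A$ for $(\xi',\la)\in(\BR^{N-1}\setminus\{0\})\times\Si_{\ep,\ga_0}$, which gives $(|\la|^{1/2}+A)^{-k}\leq A^{-k}$ for every $k\geq 0$. The holomorphy of $m_1 m_2$, $l_1 l_2$, $n_1 n_2$ in $\la\in\Si_{\ep,\ga_0}$ and the $C^\infty$-dependence on $\xi'\in\BR^{N-1}\setminus\{0\}$ are inherited from the factors, so only the pointwise bounds on derivatives need to be verified.

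For assertion (1), I would expand $D_{\xi'}^{\al'}(m_1 m_2) = \sum_{\beta'\leq\al'}\binom{\al'}{\beta'}(D_{\xi'}^{\beta'}m_1)(D_{\xi'}^{\al'-\beta'}m_2)$ and apply the type-1 bound to each factor to obtain a majorant $C(|\la|^{1/2}+A)^{(s_1-|\beta'|)+(s_2-|\al'-\beta'|)}=C(|\la|^{1/2}+A)^{s_1+s_2-|\al'|}$. The derivative in $\la$ is handled via $\la\frac{d}{d\la}(m_1 m_2)=(\la\frac{d}{d\la}m_1)m_2+m_1(\la\frac{d}{d\la}m_2)$, followed by the same Leibniz-type expansion; each summand again satisfies the type-1 bound of order $s_1+s_2$.

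For assertion (2), the same Leibniz expansion yields terms of the form $(D_{\xi'}^{\beta'}l_1)(D_{\xi'}^{\al'-\beta'}l_2)$, which are controlled by $C(|\la|^{1/2}+A)^{s_1-|\beta'|}\cdot(|\la|^{1/2}+A)^{s_2}A^{-|\al'-\beta'|}$. Using $(|\la|^{1/2}+A)^{-|\beta'|}\leq A^{-|\beta'|}$, this is at most $C(|\la|^{1/2}+A)^{s_1+s_2}A^{-|\al'|}$, which is exactly the type-2 bound of order $s_1+s_2$. Assertion (3) is similar: the two type-2 bounds combine to give $C(|\la|^{1/2}+A)^{s_1+s_2}A^{-|\beta'|-|\al'-\beta'|}$, and $|\beta'|+|\al'-\beta'|=|\al'|$ finishes the estimate. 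In both cases the $\la\frac{d}{d\la}$ estimate is obtained by the product rule in $\la$ and an identical argument.

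There is no real obstacle; the only point requiring a small observation is the step in (2) that converts a type-1 factor of the form $(|\la|^{1/2}+A)^{-|\beta'|}$ into the $A^{-|\beta'|}$ needed by the type-2 target, and this is immediate from $A\leq|\la|^{1/2}+A$. The whole argument is essentially the standard symbol-class calculus specialized to the two scales $|\la|^{1/2}+A$ and $A$.
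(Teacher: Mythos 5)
Your argument is correct: the Leibniz expansion in $\xi'$, the product rule for $\la\frac{d}{d\la}$, and the elementary inequality $(|\la|^{1/2}+A)^{-k}\leq A^{-k}$ for $k\geq 0$ are exactly what is needed, and the holomorphy/smoothness of the products is indeed inherited from the factors. Note that the paper does not prove this lemma at all but cites it from Shibata--Shimizu \cite[Lemma 4.6, Lemma 4.8]{SS11b}; your proof is the standard symbol-class calculus that underlies that reference, so there is nothing to add.
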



\begin{lemm}\label{lemm:2}
Let $s\in\BR$ and $0<\ep<\pi/2$. Then the following assertions hold:
\begin{enumerate}[$(1)$]
\item
$B_\pm^s\in\BBM_{s,1,\ep,0}$, $(A+B_\pm)^s\in\BBM_{s,2,\ep,0}$, and $(\det\BL)^s\in\BBM_{3s,2,\ep,0}$.
\item
$A^s\in\BBM_{s,2,\ep,0}$, provided that $s\geq0$.
\item
For real numbers $a$, $b$ satisfying $a+b>0$, we have $(aB_+ + bB_-)^s\in\BBM_{s,1,\ep,0}$.
\item
$L_{11}$, $L_{12}$, $L_{21}$, and $L_{22}$ defined as \eqref{L_ij} satisfy
$L_{11}, L_{12}\in \BBM_{2,2,\ep,0}$ and $L_{21}, L_{22}\in\BBM_{1,2,\ep,0}$.
\end{enumerate}
\end{lemm}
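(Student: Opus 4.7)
The whole lemma reduces to pointwise derivative estimates on the symbols $A, B_\pm, L_{ij}, \det\BL$, so the strategy is to establish a few key scalar bounds and then combine them through Lemma~\ref{lemm:1}. The starting point is the elementary sector estimate
\[
|B_\pm|\ \geq\ c(\ep)\bigl(|\la|^{1/2}+A\bigr)\qquad((\xi',\la)\in(\BR^{N-1}\setminus\{0\})\times\Si_\ep),
\]
obtained by noting that for $|\arg\la|\le\pi-\ep$ one has $|(\rho_\pm/\mu_\pm)\la+A^2|\ge c(\ep)(|\la|+A^2)$ and taking a square root in the cut plane. This immediately gives the uniform upper/lower comparison $|B_\pm|\sim|\la|^{1/2}+A$ used everywhere below.

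For (1) I will verify by induction on $|\al'|$ the basic estimate
\[
|D_{\xi'}^{\al'} B_\pm|\ \leq\ C_{\al'}\bigl(|\la|^{1/2}+A\bigr)^{1-|\al'|},\qquad
\Bigl|D_{\xi'}^{\al'}\!\Bigl(\la\tfrac{d}{d\la}B_\pm\Bigr)\Bigr|\ \leq\ C_{\al'}\bigl(|\la|^{1/2}+A\bigr)^{1-|\al'|},
\]
starting from $\pa_{\xi_j}B_\pm=\xi_j/B_\pm$ and $\la(d/d\la)B_\pm=(\rho_\pm/\mu_\pm)\la/(2B_\pm)$, then differentiating the recursion using the sector bound on $|B_\pm|$. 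A Fa\`a~di~Bruno expansion of $B_\pm^s$ and of $(A+B_\pm)^s$ then yields the claimed orders; the $A$ in the latter contributes factors $A^{-|\al'|}$, which forces the type to drop from $1$ to $2$. For $(\det\BL)^s$, I will first prove the crucial coercivity
\[
|\det\BL|\ \geq\ c\bigl(|\la|^{1/2}+A\bigr)^3,
\]
after which the Fa\`a~di~Bruno argument combined with Lemma~\ref{lemm:1} gives the order $3s$ with type $2$.

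Parts (2), (3), (4) are then routine. For (2), $\pa_{\xi_j}A=\xi_j/A$ gives $|D_{\xi'}^{\al'}A|\le C_{\al'}A^{1-|\al'|}$; since $s\ge 0$ we can bound $A^s\le(|\la|^{1/2}+A)^s$ and the derivative factor yields $A^{-|\al'|}$, i.e.\ the type 2 bound. For (3), the lower bound $|aB_++bB_-|\ge c(\ep,a,b)(|\la|^{1/2}+A)$ follows from the fact that $\mathrm{Re}\,B_\pm\ge c(\ep)(|\la|^{1/2}+A)$ in the sector (again from $|\arg(\rho_\pm\la/\mu_\pm+A^2)|\le\pi-\ep$), so $\mathrm{Re}(aB_++bB_-)\ge c(a+b)(|\la|^{1/2}+A)$; then the same induction scheme as in (1) applies since $aB_++bB_-$ has the same derivative structure as a single $B_\pm$. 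For (4), one expands $L_{ij}$ in the expressions of \eqref{L_ij} and applies Lemma~\ref{lemm:1} together with parts (1)--(2): $B_\pm(B_\pm+A)$ is a product of a type-1 order-1 factor and a type-2 order-1 factor, hence type-2 order-2, and so on.

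\textbf{Main obstacle.} The only nontrivial analytic step is the lower bound $|\det\BL|\ge c(|\la|^{1/2}+A)^3$ uniformly on $\Si_\ep$. I would prove it by a homogeneity-and-compactness argument: writing $\tau=|\la|^{1/2}$ and substituting $A=\tau\, a$, $B_\pm=\tau\, b_\pm(a,\arg\la)$, the quantity $\tau^{-3}\det\BL$ becomes a continuous function on the compact set $\{a\ge 0,\ |\arg\la|\le\pi-\ep\}\cup\{a\in[0,\infty],\ \la=0\}$ (after suitable compactification in $a$) whose vanishing would force $B_+=B_-=A=0$ simultaneously, contradicting the sector estimate on $B_\pm$. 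Alternatively, one can group the four summands in $\det\BL$ and, using $\mathrm{Re}\,B_\pm\ge c(\ep)(|\la|^{1/2}+A)$ together with the positivity of $\mu_\pm$, show directly that the real part of $\det\BL$ is bounded below by $c(|\la|^{1/2}+A)^3$ (after absorbing the single negative $-(\mu_+-\mu_-)^2 A^3$ term into the larger positive $A^3$-contribution coming from the last summand in the expansion). Once this coercivity is in hand, Lemma~\ref{lemm:1} disposes of the remaining derivative bookkeeping.
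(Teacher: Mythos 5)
Two framing remarks first: the paper does not prove this lemma at all — it is quoted from \cite[Lemma 4.6, Lemma 4.8]{SS11b} — so there is no in-paper argument to match; and the routine parts of your plan ((1)--(4): the sector bound $|B_\pm|\sim|\la|^{1/2}+A$, $\mathrm{Re}\,B_\pm\geq c(\ep)(|\la|^{1/2}+A)$, the induction/Fa\`a di Bruno bookkeeping, and the reduction of (4) to Lemma \ref{lemm:1}) are fine as far as they go. The problem is that the whole lemma hinges on the one non-routine ingredient you yourself flag, the coercivity $|\det\BL|\geq c(|\la|^{1/2}+A)^3$ on $\Si_\ep$, and neither of your two proposed arguments for it is sound as stated.

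The ``direct real part'' route is simply false: $\mathrm{Re}\det\BL$ is \emph{not} bounded below by $c(|\la|^{1/2}+A)^3$, and can in fact be negative. Already in the symmetric case $\mu_+=\mu_-=\mu$, $\rho_+=\rho_-$, one computes from the displayed expansion that $\det\BL=4\mu^2B(A+B)^2$ with $B=B_\pm$; taking $\arg\la=\pi-\ep$ with $|\la|\gg A^2$ gives $\arg\bigl(B(A+B)^2\bigr)\approx\tfrac32(\pi-\ep)$, so for $\ep<\pi/3$ the real part is negative of size comparable to $|\la|^{3/2}$. (Relatedly, coefficients such as $(3\mu_+-\mu_-)\mu_+$ in the $A^2$-term need not be positive, so a term-by-term absorption of $-(\mu_+-\mu_-)^2A^3$ cannot work either.) The homogeneity-and-compactness route is the right shape (and is how such Lopatinskii bounds are actually proved), but your justification that a zero of the rescaled determinant ``would force $B_+=B_-=A=0$'' is not an argument: a sum of nonzero complex terms can vanish, and the sector estimate on $B_\pm$ alone does not exclude zeros. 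What must be shown is precisely that $\det\BL\neq0$ for every $(\xi',\la)\neq(0,0)$ with $\la$ in the closed sector — the Lopatinskii--Shapiro condition for the two-phase interface problem — which requires a genuine computation or a uniqueness/energy argument for the associated ODE system on the half-lines (this is the content of the cited estimate in \cite{SS11b}). The boundary cases you can indeed check by hand ($\la=0$ gives $4(\mu_++\mu_-)^2A^3$; $A=0$ gives $(\mu_+B_++\mu_-B_-)(\rho_++\rho_-)\la$ with $\mathrm{Re}(\mu_+B_++\mu_-B_-)>0$), but the interior non-vanishing is the actual content and is missing; until it is supplied, the lemma — in particular the statements about $(\det\BL)^s$ for $s<0$ that are used throughout Section \ref{sec:ws} — is not proved.
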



We start with the following lemma to show the $\CR$-boundedness of the operators $S_{\pm J}(\la)$, $T_{\pm J}(\la)$.

\begin{lemm}\label{lemm:Rbound1}
	Let $0<\ep<\pi/2$, $\ga_0\geq0$, and $1<q<\infty$. Given multiplies
	\begin{equation*}
		m_1\in\BBM_{-1,1,\ep,\ga_0}, \quad m_2\in \BBM_{-2,2,\ep,\ga_0}, \quad
		m_3\in\BBM_{-1,2,\ep,\ga_0}, \quad m_4 \in \BBM_{0,1,\ep,\ga_0}, \quad m_5\in \BBM_{0,2,\ep,\ga_0},
	\end{equation*}
 	we define operators $K_{\pm i}(\la)$ on $W_q^1(\dws)$ and $L_{\pm i}(\la)$ on $W_q^2(\dws)$ $(i=1,2,3)$ by the formulas:
	\begin{align*}
		[K_{\pm1}(\la)f](x)
			&= \CF_{\xi'}^{-1}\left[m_1(\xi',\la)e^{\mp B_\pm x_N}\jump{\wh f}(\xi',0)\right](x'), \displaybreak[0] \\
		[K_{\pm2}(\la)f](x)
			&= \CF_{\xi'}^{-1}\left[m_2(\xi',\la) A e^{\mp B_\pm x_N}\jump{\wh f}(\xi',0)\right](x'), \displaybreak[0] \\
		[K_{\pm3}(\la)f](x)
			&= \CF_{\xi'}^{-1}\left[m_3(\xi',\la) A \CM_\pm(\pm x_N)\jump{\wh f}(\xi',0)\right](x'), \displaybreak[0] \\
		[L_{\pm 1}(\la)g](x)
			&= \CF_{\xi'}^{-1}\left[m_4(\xi',\la) e^{\mp B_\pm x_N}\jump{\wh g}(\xi',0)\right](x'), \displaybreak[0] \\
		[L_{\pm2}(\la)g](x)
			&= \CF_{\xi'}^{-1}\left[m_3(\xi',\la) A e^{\mp B_\pm x_N}\jump{\wh g}(\xi',0)\right](x'), \displaybreak[0] \\
		[L_{\pm3}(\la)g](x)
			&= \CF_{\xi'}^{-1}\left[m_5(\xi',\la) A \CM_\pm(\pm x_N)\jump{\wh g}(\xi',0)\right](x')
	\end{align*}
	for $\pm x_N>0$ and $\la\in\Si_{\ep,\ga_0}$.
	Then there exist operator families $\wt K_{\pm i}(\la)$, $\wt{L}_{\pm i}(\la)$ with
	\begin{equation*}
		\wt K_{\pm i}(\la) \in \Hol(\Si_{\ep,\ga_0},\CL(L_q(\dws)^{N+1},W_q^2(\BR_\pm^N))), \quad 
		\wt L_{\pm i}(\la) \in \Hol(\Si_{\ep,\ga_0},\CL(L_q(\dws)^{N^2+N+1},W_q^2(\BR_\pm^N)))
	\end{equation*}
	such that
		$K_{\pm i}(\la)f = \wt {K}_{\pm i}(\la)(\nabla f,\la^{1/2}f)$, 
		$L_{\pm j}(\la)g = \wt{L}_{\pm i}(\la)(\nabla^2 g,\la^{1/2}\nabla g,\la g)$, and
	\begin{align*}
		\CR_{\CL(L_q(\dws)^{N+1},L_q(\BR_\pm^N)^{N^2+N+1})}
			\Big(\Big\{\Big(\la\frac{d}{d\la}\Big)^l\Big(R_\la\wt K_{\pm i}(\la)\Big)
			\mid \la\in \Si_{\ep,\ga_0}\Big\}\Big) &\leq \ga_1, \\
		\CR_{\CL(L_q(\dws)^{N^2+N+1},L_q(\BR_\pm^N)^{N^2+N+1})}
			\Big(\Big\{\Big(\la\frac{d}{d\la}\Big)^l\Big(R_\la\wt L_{\pm i}(\la)\Big)
			\mid \la\in \Si_{\ep,\ga_0}\Big\}\Big) &\leq \ga_1
	\end{align*}
	for $l=0,1$ and $i=1,2,3$.
\end{lemm}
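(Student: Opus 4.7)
The plan is to apply the standard Volevich-type trick of rewriting the boundary traces $\jump{\wh{f}}(\xi',0)$ and $\jump{\wh{g}}(\xi',0)$ as integrals over $\dws$ of normal derivatives, and then to invoke the $\CR$-boundedness theorems for Fourier-multiplier integral operators of exponential-decay type developed in \cite[Section 4]{SS11b} (or equivalently in Enomoto-Shibata). Specifically, for $f\in W_q^1(\dws)$ write
\begin{equation*}
\jump{\wh f}(\xi',0) = -\int_0^\infty \pa_N\wh f(\xi',y_N)\intd y_N - \int_{-\infty}^0 \pa_N\wh f(\xi',y_N)\intd y_N,
\end{equation*}
so that, e.g.,
\begin{equation*}
[K_{+1}(\la)f](x) = -\sum_{\sigma=\pm}\int_{\sigma\BR_+}\CF_{\xi'}^{-1}\bigl[m_1(\xi',\la)\,e^{-B_+ x_N}\,\wh{\pa_N f}(\xi',y_N)\bigr](x')\intd y_N,
\end{equation*}
and analogous formulas hold for $K_{\pm 2}, K_{\pm 3}$. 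For the $L_{\pm i}$ we apply the same identity to $g\in W_q^2(\dws)$, so that the integrand contains $\pa_N g$; the fact that $m_4\in\BBM_{0,1,\ep,\ga_0}$ is exactly one order higher than $m_1$ is balanced by the extra regularity of $g$.

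Next, to produce $R_\la$ applied to $\wt K_{\pm i}(\la)$ and $\wt L_{\pm i}(\la)$, one differentiates under the integral. Derivatives in tangential directions pull down factors $i\xi_j$ (each in $\BBM_{1,1,\ep,\ga_0}$), normal derivatives of the exponential produce $\mp B_\pm$ (also in $\BBM_{1,1,\ep,\ga_0}$), and the multiplication by $\la$ can be replaced, using $\la=\mu_\pm\rho_\pm^{-1}(B_\pm^2-A^2)$, by second-order symbols in $A,B_\pm$; while $\la^{1/2}\in\BBM_{1,1,\ep,\ga_0}$ directly. Combining these with Lemma \ref{lemm:1} and Lemma \ref{lemm:2}, every symbol that appears in $R_\la K_{\pm i}(\la)$ and $R_\la L_{\pm i}(\la)$ is exhibited as a multiplier in $\BBM_{1,1,\ep,\ga_0}$ or $\BBM_{1,2,\ep,\ga_0}$, and the $\la(d/d\la)$-derivative preserves the class by definition of the multiplier classes.

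The operators $K_{\pm 3}$ and $L_{\pm 3}$, which contain $A\CM_\pm(\pm x_N)$, are reduced to the pure-exponential case by writing
\begin{equation*}
\CM_\pm(a) = -\int_0^1 a\,e^{-[tA+(1-t)B_\pm]\,a}\intd t,
\end{equation*}
so that $A\CM_\pm(\pm x_N)$ becomes an integral in $t\in[0,1]$ of a kernel of the form $(\pm x_N)A\,e^{-[tA+(1-t)B_\pm](\pm x_N)}$. Since $tA+(1-t)B_\pm$ belongs to $\BBM_{1,1,\ep,\ga_0}$ uniformly in $t\in[0,1]$ by Lemma \ref{lemm:2}, the same kernel estimates apply with $B_\pm$ replaced by $tA+(1-t)B_\pm$, and the $\CR$-bound survives integration in $t$ via the triangle/Minkowski inequality for Rademacher sums.

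The final step is to quote the key $\CR$-boundedness lemma for integral operators whose symbols lie in $\BBM_{0,1,\ep,\ga_0}$ or $\BBM_{0,2,\ep,\ga_0}$ combined with an exponential factor $e^{-B_\pm(x_N+y_N)}$ (cf.~\cite[Lemma 5.4, Lemma 5.6]{SS11b}); multiplication by the remaining order-$1$ factor (either an $i\xi_j$, a $B_\pm$, or $\la^{1/2}$) contributes a further $\CR$-bounded symbol by Proposition \ref{lemm:multi} and Proposition \ref{prop:R}. Assembling all components yields the claimed $\CR$-bounds for $R_\la\wt K_{\pm i}$ and $R_\la\wt L_{\pm i}$; holomorphy in $\la\in\Si_{\ep,\ga_0}$ is a direct consequence of the holomorphy of the symbols. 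The principal bookkeeping difficulty---and the only subtle point---is handling the $\CM_\pm$ terms uniformly in $t\in[0,1]$; all remaining reductions are mechanical applications of the multiplier calculus from Lemma \ref{lemm:1} and Lemma \ref{lemm:2}.
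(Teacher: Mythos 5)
Your strategy---a Volevich-type reduction of the trace followed by the multiplier calculus of Lemma \ref{lemm:1}, Lemma \ref{lemm:2} and the kernel lemmas of Shibata--Shimizu---is exactly the route behind this lemma (the paper's own proof is a one-line citation of \cite[Lemmas 5.1--5.4]{SS11b}, which are proved this way), but your pivotal displayed step does not work as written. From the identity $\jump{\wh f}(\xi',0)=-\int_0^\infty\pa_N\wh f\intd y_N-\int_{-\infty}^0\pa_N\wh f\intd y_N$ you obtain the kernel $m_1(\xi',\la)e^{\mp B_\pm x_N}\wh{\pa_Nf}(\xi',y_N)$, which has no decay at all in the $y_N$-variable: the resulting operator would require controlling $\int_{\BR}\pa_Nf(\cdot,y_N)\intd y_N$ in $L_q$, so it is not bounded on $L_q(\dws)^{N+1}$, and it is not of the form covered by the lemmas you invoke at the end, whose kernels are of the types $e^{-B_\pm(x_N+y_N)}$ or $A\CM_\pm(x_N+y_N)$. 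The correct Volevich trick places the exponential inside the $y_N$-derivative, e.g.
\begin{equation*}
e^{-B_+x_N}\wh f(\xi',0^+)=-\int_0^\infty\pa_{y_N}\Bigl(e^{-B_+(x_N+y_N)}\wh f(\xi',y_N)\Bigr)\intd y_N
=\int_0^\infty\Bigl(B_+\wh f(\xi',y_N)-\wh{\pa_Nf}(\xi',y_N)\Bigr)e^{-B_+(x_N+y_N)}\intd y_N,
\end{equation*}
exactly as in the proof of Lemma \ref{lemm:160118} of this paper. This produces the extra term $B_\pm e^{-B_\pm(x_N+y_N)}\wh f$, which must then be decomposed via $B_\pm^2=\rho_\pm\mu_\pm^{-1}\la+A^2$, i.e. $B_\pm\wh f=\la^{1/2}B_\pm^{-1}\,\wh{\la^{1/2}f}+\sum_{j=1}^{N-1}(-i\xi_j)B_\pm^{-1}\,\wh{\pa_jf}$, so that the operator genuinely acts on the data $(\nabla f,\la^{1/2}f)$ (and on $(\nabla^2g,\la^{1/2}\nabla g,\la g)$ for the $L_{\pm i}$, where one spends two derivatives). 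This decomposition is the missing idea; with it, the orders of $m_1,\dots,m_5$ are precisely what makes all combined symbols of order $\leq0$ and the quoted lemmas applicable.

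Two secondary points. First, $tA+(1-t)B_\pm$ is a multiplier of order $1$ with type $2$, not type $1$ (cf. Lemma \ref{lemm:2}); moreover its real part loses the $|\la|^{1/2}$-lower bound as $t\to1$, so the claimed uniformity in $t$ of the kernel estimates for $A\CM_\pm(\pm x_N)$ is not automatic---the cited lemmas of \cite{SS11b} treat the $\CM_\pm$-type kernels directly and avoid this detour. Second, holomorphy and the $\la\frac{d}{d\la}$-estimates indeed follow once the representation is corrected, since $\la\frac{d}{d\la}$ preserves the multiplier classes and commutes with the $y_N$-integration; but all of this hinges on first having kernels with joint $(x_N,y_N)$-decay.
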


\begin{proof}
It was essentially proved in \cite[Lemma 5.1, 5.2, 5.3, 5.4]{SS11b}.
\end{proof}

Lemma \ref{lemm:Rbound1} enables us to obtain the  following lemma. 

\begin{lemm}\label{lemm:Rbound2}
	Let $0<\ep<\pi/2$ and $1<q<\infty$.
	Given a multiplier $m_0\in\BBM_{0,2,\ep,0}$, we define operators $\CK_{\pm i}(\la)$
	on $W_q^1(\dws)^N\times W_q^2(\dws)^{N-1}$ $(i=1,2,3)$ by the formulas:
	\begin{align*}
		[\CK_{\pm 1}(\la)(\Bh,\Bk')](x)&=
			\CF_{\xi'}^{-1}\left[m_0(\xi',\la)\frac{L_{21}P'(\Bh,\Bk')+L_{22}Q'(\Bh,\Bk')}{A\det\BL}A e^{\mp B_\pm x_N}\right](x'), \displaybreak[0] \\
		[\CK_{\pm 2}(\la)(\Bh,\Bk')](x)&=
			\CF_{\xi'}^{-1}\left[m_0(\xi',\la)\frac{\CF_\pm'(\Bh,\Bk')}{A}A\CM_\pm (\pm x_N)\right](x'), \displaybreak[0] \\
		[\CK_{\pm 3}(\la)(\Bh,\Bk')](x)&=
			\CF_{\xi'}^{-1}\left[m_0(\xi',\la)\frac{\CF_\pm'(\Bh,\Bk')}{(\mu_+ B_+ + \mu_- B_-)A}Ae^{\mp B_\pm x_N}\right](x')
	\end{align*}
	for $\pm x_N>0$ and $\la\in\Si_\ep$.
	Then there exist operator families 
		$\wt\CK_{\pm i}(\la)\in\Hol(\Si_\ep,\CL(L_q(\dws)^\CN,W_q^2(\BR_\pm^N)))$
	such that
		$\CK_{\pm i}(\la)(\Bh,\Bk') = \wt\CK_{\pm i}(\la)(\nabla\Bh,\la^{1/2}\Bh,\nabla^2\Bk',\la^{1/2}\nabla\Bk',\la\Bk')$
	and
	\begin{equation*}
		\CR_{\CL(L_q(\dws)^{\CN},L_q(\BR_\pm^N)^{N^2+N+1})}
		\Big(\Big\{\Big(\la\frac{d}{d\la}\Big)^l\Big(R_\la\wt\CK_{\pm i}(\la)\Big)\mid \la\in\Si_\ep\Big\}\Big) \leq \ga_1
	\end{equation*}
	for $l=0,1$ and $i=1,2,3$, where $\CN=N^2+N+N^2(N-1)+N(N-1)+(N-1)$.
\end{lemm}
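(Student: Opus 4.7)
The plan is to decompose each $\CK_{\pm i}(\la)$ into a finite linear combination of operators having exactly the form $K_{\pm j}(\la)$ or $L_{\pm j}(\la)$ from Lemma~\ref{lemm:Rbound1}, applied to single components of $\Bh$ or $\Bk'$, and then to invoke Proposition~\ref{prop:R}~(1) to sum the resulting $\CR$-bounds.

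First I would expand $P'$, $Q'$, and thence $\CF_\pm'$ componentwise by writing
\begin{equation*}
P' = \sum_{j=1}^{N-1} i\xi_j \jump{\wh h_j}(\xi',0) + \mu_-(A+B_-) \sum_{j=1}^{N-1} i\xi_j \jump{\wh k_j}(\xi',0), \quad Q' = A\jump{\wh h_N}(\xi',0) + \mu_-(B_--A) \sum_{j=1}^{N-1} i\xi_j \jump{\wh k_j}(\xi',0),
\end{equation*}
and substitute into the defining formulas. After distributing, each $\CK_{\pm i}(\la)(\Bh,\Bk')$ becomes a finite sum of operators acting on a single $\jump{\wh h_J}(\xi',0)$ or $\jump{\wh k_J}(\xi',0)$, carrying either the exponential factor $A e^{\mp B_\pm x_N}$ (for $\CK_{\pm 1}$ and $\CK_{\pm 3}$) or $A\CM_\pm(\pm x_N)$ (for $\CK_{\pm 2}$). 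These structures match precisely the six operator templates in Lemma~\ref{lemm:Rbound1}.

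Second, I would verify term by term, using Lemma~\ref{lemm:1} and Lemma~\ref{lemm:2}, that the remaining symbol lies in the multiplier class demanded by Lemma~\ref{lemm:Rbound1}: $\BBM_{-2,2,\ep,0}$ for the $K_{\pm 2}$-type contributions, $\BBM_{-1,2,\ep,0}$ for the $K_{\pm 3}$- and $L_{\pm 2}$-type contributions, and $\BBM_{0,2,\ep,0}$ for the $L_{\pm 3}$-type contributions. The systematic device is to pair every occurrence of $1/A$ with an adjacent $i\xi_j$ so as to form the Riesz kernel $i\xi_j/A \in \BBM_{0,2,\ep,0}$, keeping all products in type~$2$. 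As a representative check, the $h_j$-contribution to $\CK_{\pm 1}$ yields the symbol $m_0\,L_{21}\,(i\xi_j/A)\,(\det\BL)^{-1}$, of order $0+1+0-3=-2$ and type~$2$, matching $K_{\pm 2}$; the analogous $k_j$-contribution carries the extra factor $\mu_-(A+B_-) \in \BBM_{1,2,\ep,0}$, landing in $\BBM_{-1,2,\ep,0}$ as required by $L_{\pm 2}$. The treatment of $\CK_{\pm 3}$ is identical except that the additional factor $(\mu_+B_++\mu_-B_-)^{-1} \in \BBM_{-1,1,\ep,0}$ appears; by Lemma~\ref{lemm:1}~(2) its product with any type~$2$ symbol remains type~$2$, shifting the order by $-1$ as needed. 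The extra summand $i\xi'\cdot\jump{\wh\Bk'}$ present in $\CF_-'$ (but not $\CF_+'$) enters $\CK_{-2}$ as the symbol $m_0\,(i\xi_j/A) \in \BBM_{0,2,\ep,0}$, matching $L_{-3}$.

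Applying Lemma~\ref{lemm:Rbound1} to each summand, defining $\wt\CK_{\pm i}(\la)$ so that it accepts the abstract arguments $(\nabla\Bh,\la^{1/2}\Bh,\nabla^2\Bk',\la^{1/2}\nabla\Bk',\la\Bk')$, and invoking Proposition~\ref{prop:R}~(1) to combine the finitely many $\CR$-bounded families would complete the proof. The principal obstacle is bookkeeping: for each of the $O(N)$ resulting terms one must carefully track both the multiplier order and the type, and the Riesz-kernel substitution $i\xi_j/A$ must be applied \emph{every} time a factor of $1/A$ appears, since an unpaired negative power of $A$ would fail to belong to any type~$2$ class and hence invalidate the matching with Lemma~\ref{lemm:Rbound1}.
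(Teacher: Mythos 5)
Your proposal is correct and follows essentially the same route as the paper: expand $P'$, $Q'$, $\CF_\pm'$ componentwise, absorb each $1/A$ into a Riesz kernel $i\xi_j/A$, verify via Lemma \ref{lemm:1} and Lemma \ref{lemm:2} that each resulting symbol lies in the multiplier class required by the corresponding template $K_{\pm i}(\la)$ or $L_{\pm i}(\la)$ of Lemma \ref{lemm:Rbound1}, and sum the finitely many $\CR$-bounded families with Proposition \ref{prop:R}. The paper writes out only the case $\CK_{\pm 1}(\la)$ and your representative order/type checks (including the $(\mu_+B_++\mu_-B_-)^{-1}$ factor for $\CK_{\pm3}$ and the extra $i\xi'\cdot\jump{\wh\Bk'}$ term in $\CF_-'$) agree with it.
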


\begin{proof}
We only show the case $\CK_{\pm 1}(\la)$. Note that
\begin{align*}
	&[\CK_{\pm 1}(\la)(\Bh,\Bk')](x)
		= \CF_{\xi'}^{-1}\left[m_0(\xi',\la)\frac{L_{21}}{\det\BL}Ae^{\mp B_\pm x_N}\frac{i\xi'}{A}\cdot\jump{\wh{\Bh}'}(\xi',0)\right](x') \\
	&\quad + \CF_{\xi'}^{-1}\left[m_0(\xi',\la)\frac{L_{22}}{\det\BL}Ae^{\mp B_\pm x_N}\jump{\wh{h}_N}(\xi',0)\right](x') \\
	&\quad + \mu_- \CF_{\xi'}^{-1}\left[m_0(\xi',\la)\frac{(B_-+A)L_{21}+(B_--A)L_{22}}{\det\BL}Ae^{\mp B_\pm x_N}
		\frac{i\xi'}{A}\cdot\jump{\wh{\Bk}'}(\xi',0)\right](x') \\
	&=:[\CK_{\pm 1}^1(\la)\Bh'](x)+[\CK_{\pm 1}^2(\la)h_N](x) + [\CK_{\pm 1}^3(\la)\Bk'](x).
\end{align*}
By Lemma \ref{lemm:1} and Lemma \ref{lemm:2},
\begin{equation*}
	m_0\frac{L_{21}}{\det\BL},\,m_0\frac{L_{22}}{\det\BL}\in\BBM_{-2,2,\ep,0}, \quad
	m_0\frac{(B_-+A)L_{21}+(B_--A)L_{22}}{\det\BL}\in\BBM_{-1,2,\ep,0},
\end{equation*}
which, combined with Lemma \ref{lemm:Rbound1}, furnishes that
there exist operator families $\wt{\CK}_{\pm 1}^i(\la)$ ($i=1,2,3$) with
\begin{align*}
	\wt{\CK}_{\pm 1}^1(\la)&\in\Hol(\Si_\ep,\CL(L_q(\dws)^{(N-1)(N+1)},W_q^2(\BR_\pm^N))), \\
	\wt{\CK}_{\pm 1}^2(\la)&\in\Hol(\Si_\ep,\CL(L_q(\dws)^{N+1},W_q^2(\BR_\pm^N))), \\
	\wt{\CK}_{\pm 1}^3(\la)&\in\Hol(\Si_\ep,\CL(L_q(\dws)^{(N-1)(N^2+N+1)},W_q^2(\BR_\pm^N)))
\end{align*}
such that
\begin{align*}
	\CK_{\pm 1}^1(\la)\Bh' &= \wt\CK_{\pm 1}^1(\la)(\nabla\Bh',\la^{1/2}\Bh'), \quad
	\CK_{\pm 1}^2(\la)h_N = \wt\CK_{\pm 1}^2(\la)(\nabla h_N,\la^{1/2} h_N), \\
	\CK_{\pm 1}^3(\la)\Bk' &= \wt\CK_{\pm 1}^3(\la)(\nabla^2\Bk',\la^{1/2}\nabla\Bk',\la\Bk')
\end{align*}
and
\begin{align*}
	\CR_{\CL(L_q(\dws)^{(N-1)(N+1)},L_q(\BR_\pm^N)^{N^2+N+1})}
	\Big(\Big\{\Big(\la\frac{d}{d\la}\Big)^l\Big(R_\la\wt\CK_{\pm 1}^1(\la)\Big)\mid \la\in\Si_\ep\Big\}\Big)&\leq\ga_1, \displaybreak[0] \\
	\CR_{\CL(L_q(\dws)^{N+1},L_q(\BR_\pm^N)^{N^2+N+1})}
	\Big(\Big\{\Big(\la\frac{d}{d\la}\Big)^l\Big(R_\la\wt\CK_{\pm 1}^2(\la)\Big)\mid \la\in\Si_\ep\Big\}\Big)&\leq\ga_1, \displaybreak[0] \\
	\CR_{\CL(L_q(\dws)^{(N-1)(N^2+N+1)},L_q(\BR_\pm^N)^{N^2+N+1})}
	\Big(\Big\{\Big(\la\frac{d}{d\la}\Big)^l\Big(R_\la\wt\CK_{\pm 1}^3(\la)\Big)\mid \la\in\Si_\ep\Big\}\Big)&\leq\ga_1
\end{align*}
for $l=0,1$. Thus setting
\begin{align*}
	&\wt\CK_{\pm1}(\la)(\nabla\Bh,\la^{1/2}\Bh,\nabla^2\Bk',\la^{1/2}\nabla\Bk',\la\Bk') \\
		& =\wt\CK_{\pm 1}^1(\la)(\nabla\Bh',\la^{1/2}\Bh') + \wt\CK_{\pm 1}^2(\la)(\nabla h_N,\la^{1/2} h_N)
		+\wt\CK_{\pm 1}^3(\la)(\nabla^2\Bk',\la^{1/2}\nabla\Bk',\la\Bk')
\end{align*}
implies, by Proposition \ref{prop:R}, 
that we have obtained the required operator $\wt\CK_{\pm1}(\la)$ of Lemma \ref{lemm:Rbound2}.
\end{proof}

To treat $T_{\pm J}(\la)$, we use the following lemma.

\begin{lemm}\label{lemm:Rbound3}
	Let $0<\ep<\pi/2$ and $1<q<\infty$.
	Suppose that $k_N$ is given by $k_N=-\psi_N$,
	where $\psi_N$ is the $N$th component of $\psi=\CA(\la)\wt\Bf$ $(\la\in\Si_\ep)$ defined as \eqref{sol:ws}.
	Given a multiplier $m_0\in\BBM_{0,2,\ep,0}$, we define operators $\CK_{\pm i}(\la)$ $(i=4,5,6)$ by the formulas:
	\begin{align*}
		[\CK_{\pm 4}(\la)k_N](x)
			&= \CF_{\xi'}^{-1}\left[m_0(\xi',\la)\frac{L_{21}P_N(k_N)+L_{22}Q_N(k_N)}{A\det\BL}Ae^{\mp B_\pm x_N}\right](x'), \\
		[\CK_{\pm 5}(\la)k_N](x)
			&= \CF_{\xi'}^{-1}\left[m_0(\xi',\la)\frac{\CF_{\pm N}(k_N)}{A} A \CM_\pm (\pm x_N)\right](x'), \\
		[\CK_{\pm 6}(\la)k_N](x)
			&= \CF_{\xi'}^{-1}\left[m_0(\xi',\la)\frac{\CF_{\pm N}(k_N)}{(\mu_+B_+ + \mu_-B_-)A} A e^{\mp B_\pm x_N}\right](x')
	\end{align*}
	for $\pm x_N>0$ and $\la\in\Si_\ep$.
	Then there exist operator families 
		$\wt\CK_{\pm i}(\la)\in \Hol(\Si_\ep,\CL(L_q(\dws)^N,W_q^2(\BR_\pm^N)))$
	such that
		$\CK_{\pm i}(\la)k_N = \wt\CK_{\pm i}(\la)\wt\Bf$ and
	\begin{equation*}
		\CR_{\CL(L_{q}(\dws)^N,L_q(\BR_\pm^N)^{N^2+N+1})}
		\Big(\Big\{\Big(\la\frac{d}{d\la}\Big)^l\Big(R_\la\wt\CK_{\pm i}(\la)\Big)\mid\la\in\Si_\ep\Big\}\Big) \leq \ga_1
	\end{equation*}
	for $l=0,1$ and $i=4,5,6$.
\end{lemm}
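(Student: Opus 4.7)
The plan is to use the explicit representation $k_N = -\psi_N = -(\CA(\la)\wt{\Bf})_N$ to rewrite the boundary data $\jump{\wh{k}_N}(\xi',0)$ as an explicit functional of $\wt{\Bf}$ and then to reduce the proof to the multiplier framework of Lemma \ref{lemm:Rbound1}, following closely the strategy of Lemma \ref{lemm:Rbound2}.

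First, from the Fourier formulas for $\CA_\pm(\la)$ in \eqref{sol:ws}, one writes
\begin{equation*}
\wh{\psi_{N,\pm}}(\xi',0) = \frac{1}{2\pi}\int_{-\infty}^\infty \frac{A^2|\xi|^{-2}\,\CF[\wt{f}_N](\xi) - \xi_N|\xi|^{-2}\langle\xi',\CF[\wt{\Bf}'](\xi)\rangle}{\rho_\pm\la + \mu_\pm|\xi|^2}\,d\xi_N.
\end{equation*}
Swapping this $\xi_N$-integration with the $x_N$-Fourier transform of $\wt{\Bf}$, the residue calculus at the poles $\xi_N = \pm iA$ and $\xi_N = \pm iB_\pm$ represents $\jump{\wh{k}_N}(\xi',0)=\wh{\psi_{N,-}}(\xi',0)-\wh{\psi_{N,+}}(\xi',0)$ as an integral over $y_N\in\BR$ of $\wh{\wt{\Bf}}(\xi',y_N)$ against Poisson-type kernels involving $e^{-|y_N|A}$ and $e^{-|y_N|B_\pm}$ with $\xi'$-dependent coefficients. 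The identity $B_\pm^2 - A^2 = (\rho_\pm/\mu_\pm)\la$ is used to absorb the apparent $\la^{-1}$-factors into legitimate multipliers; in particular the residue factor $1/B_\pm$ lies in $\BBM_{-1,1,\ep,0}$ by Lemma \ref{lemm:2}.

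Next, substituting this representation into $P_N(k_N)$, $Q_N(k_N)$, $\CF_{+N}(k_N)$, and $\CF_{-N}(k_N)$, and then into the defining formulas for $\CK_{\pm i}(\la)k_N$ for $i = 4, 5, 6$, each such operator decomposes as a finite sum of operators of the same form as $K_{\pm j}(\la)f$ and $L_{\pm j}(\la)g$ in Lemma \ref{lemm:Rbound1}, acting on the $y_N$-convolutions of the components of $\wt{\Bf}$ against the Poisson-type kernels just described. The multiplier memberships needed for the application of Lemma \ref{lemm:Rbound1} are then verified by Lemma \ref{lemm:1} and Lemma \ref{lemm:2}: for example $L_{21}/\det\BL, L_{22}/\det\BL \in \BBM_{-2,2,\ep,0}$, $A(B_-\!-\!A) \in \BBM_{2,2,\ep,0}$, and $B_-(B_-\!+\!A) \in \BBM_{2,2,\ep,0}$, so the required products fall into the classes required by Lemma \ref{lemm:Rbound1}.

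Finally, applying Lemma \ref{lemm:Rbound1} to each summand and combining via Proposition \ref{prop:R} yields the desired operator families $\wt\CK_{\pm i}(\la)$ together with the $\CR$-bound $\ga_1$ and their holomorphy in $\la\in\Si_\ep$. The main obstacle will be the careful bookkeeping in the residue step: two families of poles contribute, and the $\la^{-1}$-singularities from $(B_\pm^2-A^2)^{-1}$ must be shown to cancel against the kernel differences $e^{-|y_N|A}-e^{-|y_N|B_\pm}$ so that the resulting multipliers carry the correct order; only once this cancellation is made explicit do the symbols fit cleanly into $\BBM_{s,l,\ep,0}$ and does the machinery of Lemma \ref{lemm:Rbound1} apply verbatim.
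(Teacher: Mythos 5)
Your first two steps coincide with the paper's own argument: the residue computation for $\wh{\psi}_{\pm N}(\xi',0)$, the cancellation of the $(\rho_\pm\la)^{-1}$ factors through the kernels $\CM_\pm(|y_N|)$ via $\rho_\pm\la=\mu_\pm(B_\pm^2-A^2)$, and the multiplier bookkeeping with Lemma \ref{lemm:1} and Lemma \ref{lemm:2} (yielding $\Bm_\pm\in(\BBM_{-2,2,\ep,0})^N$, $\Bn_\pm\in(\BBM_{-1,2,\ep,0})^N$ and, after multiplication by $l(\xi',\la)m_0/\det\BL$, the classes $\BBM_{-2,2,\ep,0}$ and $\BBM_{-1,2,\ep,0}$) are exactly what the paper does.

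The gap is in your final reduction. After the residue step the operators acting on $\wt\Bf\in L_q(\dws)^N$ are of the form $\int_{-\infty}^{\infty}\CF_{\xi'}^{-1}\bigl[m(\xi',\la)\,Ae^{\mp B_\pm x_N}e^{-B_{\mathfrak s}|y_N|}\cdot\wh{\wt\Bf}(\xi',y_N)\bigr](x')\intd y_N$ and the analogous ones with $A\CM_{\mathfrak s}(|y_N|)$, i.e.\ volume-potential operators integrating over all $y_N$. These are \emph{not} of the form covered by Lemma \ref{lemm:Rbound1}: that lemma treats operators applied to the boundary trace $\jump{\wh f}(\xi',0)$ of data $f\in W_q^1(\dws)$ or $g\in W_q^2(\dws)$, and its conclusion expresses the $\CR$-bounds through $(\nabla f,\la^{1/2}f)$ or $(\nabla^2 g,\la^{1/2}\nabla g,\la g)$ -- quantities you do not control by $\|\wt\Bf\|_{L_q(\dws)}$, since $\wt\Bf$ is merely $L_q$. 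The correct tool for this class, and the one the paper invokes at precisely this point, is the $\CR$-boundedness lemma for such kernel operators, namely Lemma 5.6 of \cite{SS12} (see also Lemma B.2 of \cite{Saito15}); you neither cite nor prove a result of this type, so Lemma \ref{lemm:Rbound1} cannot be applied ``verbatim''. Reading your phrase ``acting on the $y_N$-convolutions of the components of $\wt\Bf$'' as a two-step scheme (first convolve, then apply a trace-type operator) does not repair this: the convolution kernel depends on $\la$ through $B_\pm$, so the auxiliary data function changes with each $\la_j$ in the randomized sums of Definition \ref{defi:R}, and one would still have to prove that $\wt\Bf\mapsto(\nabla,\la^{1/2})\,(\text{convolution})$ is an $\CR$-bounded family on $L_q$, which amounts to re-deriving the very volume-potential lemma that is missing. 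Supply that lemma (or cite \cite{SS12}) and your argument closes; without it the last step fails.
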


\begin{proof}
We only consider the case $\CK_{\pm 4}(\la)$.
First, we give some special formula of $\jump{\wh k_N}=-(\wh{\psi}_{+N}(\xi',0)-\wh{\psi}_{-N}(\xi',0))$. 
Let $\wt\Bf = (\wt{f}_1,\dots,\wt{f}_N)^T$.
Since
\begin{equation*}
\psi_{\pm N}(x)
	= \CF_{\xi}^{-1}\left[\frac{A^2}{|\xi|^2(\rho_\pm\la+\mu_\pm|\xi|^2)}\CF[\wt{f}_N](\xi)\right](x')
	-\sum_{j=1}^{N-1}\CF_{\xi}^{-1}\left[\frac{\xi_N\xi_j}{|\xi|^2(\rho_\pm\la+\mu_\pm|\xi|^2)}\CF[\wt{f}_j](\xi)\right](x),
\end{equation*}
it holds that
\begin{align*}
\wh{\psi}_{\pm N}(\xi',x_N) =&\int_{-\infty}^\infty A^2\wh{\wt{f}}_N(\xi',y_N)
	\left(\frac{1}{2\pi}\int_{-\infty}^\infty\frac{e^{i(x_N-y_N)\xi_N}}{|\xi|^2(\rho_\pm\la+\mu_\pm|\xi|^2)}\intd \xi_N\right)\intd y_N \displaybreak[0] \\
	&-\sum_{j=1}^{N-1}\int_{-\infty}^\infty\xi_j\wh{\wt{f}_j}(\xi',y_N)
	\left(\frac{1}{2\pi}\int_{-\infty}^\infty\frac{\xi_N e^{i(x_N-y_N)\xi_N}}{|\xi|^2(\rho_\pm\la+\mu_\pm|\xi|^2)}\intd \xi_N\right)\intd y_N.
\end{align*}
On the other hand, we have, by the residue theorem,
\begin{align*}
	\frac{1}{2\pi}\int_{-\infty}^\infty\frac{e^{ia\xi_N}}{|\xi|^2(\rho_\pm\la+\mu_\pm|\xi|^2)}\intd \xi_N
		&=\frac{1}{2\rho_\pm\la}\left(\frac{e^{-|a|A}}{A}-\frac{e^{-|a|B_\pm}}{B_\pm}\right), \\
	\frac{1}{2\pi}\int_{-\infty}^\infty\frac{\xi_N e^{ia \xi_N}}{|\xi|^2(\rho_\pm\la+\mu_\pm|\xi|^2)}\intd \xi_N
		&={\rm sign}(a)\frac{i}{2\rho_\pm\la}\left(e^{-|a|A}-e^{-|a|B_\pm}\right)
\end{align*}
for $a\in\BR$, where ${\rm sign}(a)=\pm 1$ when $\pm a>0$ and ${\rm sign}(a)=0$ when $a=0$.
Inserting these formulas into the above identity of $\wh{\psi}_{\pm N}(\xi',x_N)$ with $x_N=0$ yields that
\begin{align*}
	\wh{\psi}_{\pm N}(\xi',0) =&
		\int_{-\infty}^\infty\frac{A^2}{2\rho_\pm \la}\left(\frac{e^{-A|y_N|}}{A}-\frac{e^{-B_\pm|y_N|}}{B_\pm}\right)
		\wh{\wt{f}}_N(\xi',y_N)\intd y_N \\
		&-\sum_{j=1}^{N-1}\int_{-\infty}^\infty\frac{i\xi_j\,{\rm sign}(y_N)}{2\rho_\pm \la}
		\left(e^{-A|y_N|}-e^{-B_\pm |y_N|}\right)\wh{\wt{f}_j}(\xi',y_N)\intd y_N.
\end{align*}
By $\rho_\pm\la = \mu_\pm(B_\pm^2-A^2)$, we have
\begin{align*}
	\wh{\psi}_{\pm N}(\xi',0)
		=& \frac{1}{2\mu_\pm}\int_{-\infty}^\infty\frac{A}{B_\pm(B_\pm+A)}
		e^{-B_\pm |y_N|}\wh{\wt{f}}_N(\xi',y_N)\intd y_N \\
		&-\frac{1}{2\mu_\pm}\int_{-\infty}^\infty\frac{A}{B_\pm+A}
		\CM_\pm(|y_N|)\wh{\wt{f}}_N(\xi',y_N)\intd y_N \\
		&-\frac{1}{2\mu_\pm}\sum_{j=1}^{N-1}\int_{-\infty}^\infty\frac{i\xi_j}{B_\pm+A}{\rm sign}(y_N)
		\CM_\pm(|y_N|)\wh{\wt{f}_j}(\xi',y_N)\intd y_N.
\end{align*}
Thus, by Lemma \ref{lemm:1} and Lemma \ref{lemm:2},
there exist $\Bm_{\pm}\in(\BBM_{-2,2,\ep,0})^N$ and $\Bn_{\pm}\in(\BBM_{-1,2,\ep,0})^N$ such that
\begin{align*}
	&\jump{\wh k_N}(\xi',0) \\
		&=\sum_{\mathfrak{s}\in\{+,-\}}\left\{\int_{-\infty}^\infty Ae^{-B_\mathfrak{s} |y_N|}\Bm_\mathfrak{s}(\xi',\la)\cdot\wh{\wt \Bf}(\xi',y_N)\intd y_N
		+\int_{-\infty}^\infty A\CM_\mathfrak{s}(|y_N|)\Bn_\mathfrak{s}(\xi',\la)\cdot\wh{\wt \Bf}(\xi',y_N)\intd y_N\right\},
\end{align*}
which, combined with the formula of $\CK_{\pm 4}(\la)$, furnishes that
\begin{align*}
	&[\CK_{\pm 4}(\la)k_N](x)
		=\sum_{\mathfrak{s}\in\{+,-\}}\int_{-\infty}^\infty\CF_{\xi'}^{-1}
		\left[Ae^{\mp B_\pm x_N}e^{-B_\mathfrak{s}|y_N|}\frac{l(\xi',\la)m_0(\xi',\la)\Bm_\mathfrak{s}(\xi',\la)}{\det\BL}\cdot\wh{\wt\Bf}(\xi',y_N)\right](x')\displaybreak[0]\\
		&+\sum_{\mathfrak{s}\in\{+,-\}}\int_{-\infty}^\infty\CF_{\xi'}^{-1}
		\left[Ae^{\mp B_\pm x_N}\CM_\mathfrak{s}(|y_N|)\frac{l(\xi',\la)m_0(\xi',\la)\Bn_\mathfrak{s}(\xi',\la)}{\det\BL}\cdot\wh{\wt\Bf}(\xi',y_N)\right](x') 
		=:[\wt{\CK}_{\pm 4}(\la)\wt\Bf](x)
\end{align*}
with $l(\xi',\la) =\mu_-\{ L_{21}A(B_--A)+L_{22}B_-(B_-+A)\}$.
By Lemma \ref{lemm:1} and Lemma \ref{lemm:2},
\begin{equation*}
	\frac{l(\xi',\la)m_0(\xi',\la)\Bm_\pm(\xi',\la)}{\det\BL}\in(\BBM_{-2,2,\ep,0})^N, \quad
	\frac{l(\xi',\la)m_0(\xi',\la)\Bn_\pm(\xi',\la)}{\det\BL}\in(\BBM_{-1,2,\ep,0})^N,
\end{equation*}
which, combined with \cite[Lemma 5.6]{SS12} (cf. also \cite[Lemma B.2]{Saito15}), 
shows that $\wt K_{\pm 4}(\la)$ is the required operator in Lemma \ref{lemm:Rbound3}.
This completes the proof of the lemma.
%
%
\end{proof}

We apply 
Lemma \ref{lemm:Rbound1}, Lemma \ref{lemm:Rbound2}, and Lemma \ref{lemm:Rbound3} to \eqref{151012_1}
together with Proposition \ref{prop:R}, 
Lemma \ref{lemm:1}, and Lemma \ref{lemm:2}  
to see that
there exist operator families $\wt S_{\pm J}(\la)$, $\wt T_{\pm J}(\la)$ with
\begin{equation*}
	\wt{S}_{\pm J}(\la) \in\Hol(\Si_\ep,\CL(L_q(\dws)^{\CN},W_q^2(\BR_\pm^N))), \quad
	\wt T_{\pm J}(\la) \in \Hol(\Si_\ep,\CL(L_q(\dws)^N,W_q^2(\BR_\pm^N)))
\end{equation*}
such that
	$S_{\pm J}(\la)(\Bh,\Bk') =\wt{S}_{\pm J}(\la)(\nabla\Bh,\la^{1/2}\Bh,\nabla^2\Bk',\nabla\Bk',\la\Bk')$,
	$T_{\pm J}(\la)k_N = \wt{T}_{\pm J}(\la)\wt\Bf$, and
\begin{align*}
	\CR_{\CL(L_q(\dws)^{\CN},L_q(\BR_\pm)^{N^2+N+1})}
	\Big(\Big\{\Big(\la\frac{d}{d\la}\Big)^l\Big(R_\la \wt{S}_{\pm J}(\la)\Big)\mid \la\in\Si_\ep\Big\}\Big) &\leq \ga_1, \\
	\CR_{\CL(L_q(\dws)^N,L_q(\BR_\pm)^{N^2+N+1})}
	\Big(\Big\{\Big(\la\frac{d}{d\la}\Big)^l\Big(R_\la \wt{T}_{\pm J}(\la)\Big)\mid \la\in\Si_\ep\Big\}\Big) &\leq \ga_1 \quad (l=0,1),
\end{align*}
where $\CN$ is the same number as in Lemma \ref{lemm:Rbound2}.
Thanks to these properties and Proposition \ref{prop:R}, 
setting
\begin{align*}
	&S_\pm(\la)(\Bh,\Bk')=(S_{\pm 1}(\la)(\Bf,\Bk'),\dots,S_{\pm N}(\la)(\Bh,\Bk'))^T, \quad
		T_\pm(\la)k_N=(T_{\pm 1}(\la)k_N,\dots,T_{\pm N}(\la)k_N)^T, \\
	&S(\la)(\Bh,\Bk') = \left(S_+(\la)(\Bh,\Bk')\right)\chi_{\BR_+^N} + \left(S_-(\la)(\Bh,\Bk')\right)\chi_{\BR_-^N}, \quad
	T(\la)k_N = \left(T_+(\la)k_N\right)\chi_{\uhs} + \left(T_-(\la)k_N\right)\chi_{\lhs},
\end{align*}
we can construct an operator family 
	$\CB(\la)\in \Hol(\Si_\ep,\CL(L_q(\dws)^{\CN+N}),W_q^2(\dws)^N)$
such that
\begin{equation*}
	\CB(\la)(\nabla\Bh,\la^{1/2}\Bh,\nabla^2\Bk',\nabla\Bk',\la\Bk',\wt\Bf) = S(\la)(\Bh,\Bk') + T(\la)k_N,
\end{equation*}
which solves the problem \eqref{150608_5}, and 
\begin{equation*}
	\CR_{\CL(L_q(\dws)^{\CN+N},L_q(\dws)^{\wt N})}
	\Big(\Big\{\Big(\la\frac{d}{d\la}\Big)^l\Big(R_\la\CB(\la)\Big)\mid\la\in\Si_\ep\Big\}\Big) \leq \ga_1
	\quad (l=0,1).
\end{equation*}
Thus, we define an operator family $\CS_I(\la)$ as
\begin{equation*}
	\CS_I(\la)G_{\CR,\la}(\wt\Bf,\wt\Bh) = \CA(\la)\wt\Bf + \CB(\la)(\nabla\Bh,\la^{1/2}\Bh,\nabla^2\Bk',\nabla\Bk',\la\Bk',\wt\Bf)
\end{equation*}
with $\Bh = \wt\Bh-\mu\BD(\CA(\la)\wt\Bf)\Bn_0$ and $\Bk = -\CA(\la)\wt\Bf$,
which, combined with \eqref{151010_3} and Proposition \ref{prop:R},
shows that $\CS_I(\la)$ is the required operator in Theorem \ref{theo:ws2}.
This completes the proof of Theorem \ref{theo:ws2}.
\end{proof}

Since $R_\la V(g)=(V^1(\nabla g),V^2(\la^{1/2}g),V^3(\la g))$ as follows from Lemma \ref{lemm:div},
we have the following theorem by combining Theorem \ref{theo:ws2} with Lemma \ref{lemm:div} and by setting
\begin{align*}
	&Y_q = \{(\Bf,g,\Bh) \mid \Bf\in L_q(\dws)^N,g\in W_q^1(\dws)\cap \BW_q^{-1}(\ws),\Bh\in W_q^1(\dws)^N\}, \displaybreak[0] \\ 
	&\CY_q = \{(F_1,\dots,F_6) \mid F_1,F_4,F_6\in L_q(\dws)^N,F_2\in L_q(\dws)^{N^3},F_3,F_5\in L_q(\dws)^{N^2}\}.  \displaybreak[0] \\
	&G_\la(\Bf,g,\Bh) = (\Bf,V^1(\nabla g),V^2(\la^{1/2}g),V^3(\la g),\nabla\Bh,\la^{1/2}\Bh) 
		=(\Bf,R_\la V(g),\nabla\Bh,\la^{1/2}\Bh).
\end{align*}

\begin{theo}\label{theo:ws3}
	Let $1<q<\infty$, $0<\ep<\pi/2$, and $\rho_\pm$ be positive constants, and let $V$ be the same operator as in Lemma $\ref{lemm:div}$.
	Suppose that the condition $(d)$ holds.
	Then there exists an operator family 
		$\CT_I(\la) \in \Hol(\Si_\ep,\CL(\CY_q,W_q^2(\dws)^N))$
	such that $\Bu=V(g)+\CT_I(\la)G_\la(\Bf,g,\Bh)$ is a unique solution to the problem \eqref{150608_1} 
	with some pressure $\te$ for $\la\in\Si_\ep$ and $(\Bf,g,\Bh)\in Y_q$. 
	In addition, 
	\begin{equation*}
		\CR_{\CL(\CY_q,L_q(\dws)^{\wt N})}
			\Big(\Big\{\Big(\la\frac{d}{d\la}\Big)^l \Big(R_\la\CT_I(\la)\Big)\mid \la\in\Si_\ep\Big\}\Big) \leq \ga_1\quad
			(l=0,1).
	\end{equation*}
\end{theo}

\begin{proof}[Proof of Theorem \ref{theo:ws}]
Let $1<q<\infty$ and $q'=q/(q-1)$.
According to what was pointed out in Subsection \ref{sub:reduced},
we consider, as an auxiliary problem, the following weak problem:
\begin{equation}\label{weak:ws1}
	\la(g,\ph)_{\dws}+(\nabla g,\nabla\ph)_{\dws} =-(\Bf,\nabla\ph)_{\dws} \quad\text{for all $\ph\in W_{q'}^1(\ws)$,}  \quad  
		\jump{g} = <\jump{\Bh},\Bn_0> \quad \text{on $\BR_0^N$.} 
\end{equation}
Concerning this weak problem, we show the following proposition.

\begin{prop}\label{prop:w-w}
Let $0<\ep<\pi/2$ and $1<q<\infty$.
Suppose that $V$ is the same operator as in Lemma $\ref{lemm:div}$.
Then, for any $\la\in\Si_\ep$ and $(\Bf,\Bh)\in Y_{\CR,q}(\dws)$, 
the problem \eqref{weak:ws1} admits a unique solution $g\in W_q^1(\dws)\cap\BW_q^{-1}(\ws)$.
In addition, there exists an operator family 
	$\BV(\la) \in \Hol(\Si_\ep,\CL(\CY_{\CR,q}(\dws),W_q^2(\dws)^N))$
such that
\begin{equation}\label{160118_1}
	\CR_{\CL(\CY_{\CR,q}(\dws),L_q(\dws)^{\wt N})}
	\Big(\Big\{\Big(\la\frac{d}{d\la}\Big)^l\Big(R_\la\BV(\la)\Big)\mid\la\in\Si_{\ep}\Big\}\Big) \leq \ga_1
	\quad (l=0,1)
\end{equation}
and $V(g) = \BV(\la)(\Bf,\nabla\Bh,\la^{1/2}\Bh)$ for any $(\Bf,\Bh)\in Y_{\CR,q}(\dws)$,
where $g$ is the solution to \eqref{weak:ws1}. 
\end{prop}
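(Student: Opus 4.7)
The plan is to construct $\BV(\la)$ explicitly via the partial Fourier transform in the tangential variable $x'$, in the same spirit as the proof of Theorem~\ref{theo:ws2}. The guiding principle---emphasized in the introduction---is to build $V(g)$ directly, rather than $g$ itself: since $V(g)\in W_q^1(\ws)^N\cap W_q^2(\dws)^N$ by Lemma~\ref{lemm:div}, the entire analysis stays within positive Sobolev spaces and sidesteps the negative space $\BW_q^{-1}(\ws)$ to which $g$ naturally belongs.

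First I would recast \eqref{weak:ws1} in strong form: testing against $\ph\in C_0^\infty(\BR_\pm^N)$ yields the distributional identities $\la g_\pm-\De g_\pm=\di \Bf_\pm$ in $\BR_\pm^N$, while testing against a general $\ph\in W_{q'}^1(\ws)$ and formally integrating by parts in both half-spaces produces the transmission conditions $\jump{g}=-\jump{h_N}$ and $\jump{\pa_N g+f_N}=0$ on $\bdry$. Next I would apply the partial Fourier transform in $x'$ with the ansatz
\[
\wh g_\pm(\xi',x_N)=\al_\pm(\xi')e^{\mp Bx_N}+\wh g_\pm^{\mathrm{part}}(\xi',x_N),\quad B=\sqrt{\la+|\xi'|^2},
\]
where the particular part is built from the one-dimensional Green's function $(2B)^{-1}e^{-B|x_N-y_N|}$ for $B^2-\pa_N^2$. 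The two transmission conditions determine $\al_\pm$ explicitly in terms of the traces of $\Bf$ and $\Bh$ at $x_N=0$ together with $y_N$-integrals of $\wh \Bf_\pm$ against the Green kernel. Multiplying by $-i\xi_j/|\xi|^2$ in Fourier then produces the formula for $V(g)$, whose symbols decompose into products of multipliers in the classes $\BBM_{s,l,\ep,0}$ of Lemma~\ref{lemm:2}. The boundary-trace contributions fall under Lemma~\ref{lemm:Rbound1}, while the particular-solution contributions fall under the integral-operator variant used in the proof of Lemma~\ref{lemm:Rbound3}; combining these via the product rules of Lemma~\ref{lemm:1} and Proposition~\ref{prop:R} yields $\BV(\la)\in\Hol(\Si_\ep,\CL(\CY_{\CR,q}(\dws),W_q^2(\dws)^N))$ satisfying \eqref{160118_1}. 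Existence of $g$ is built into the construction, and uniqueness follows from a standard duality argument applied to the homogeneous weak problem.

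The main technical hurdle I anticipate is the particular-solution piece: the Green kernel $(2B)^{-1}e^{-B|x_N-y_N|}$ introduces an extra $y_N$-integration over $\BR_\pm$, so the associated operators are no longer pure Fourier multipliers but integral operators with multiplier-valued kernels. Handling them requires the same Fubini-and-Parseval bookkeeping as in the proof of Lemma~\ref{lemm:Rbound3}, together with splitting $e^{-B|x_N-y_N|}$ into pieces that separately fit the multiplier framework. Once this is in place, verifying the orders of the constituent symbols in $\BBM_{s,l,\ep,0}$ is routine bookkeeping parallel to Steps~3--4 of the proof of Theorem~\ref{theo:ws2}.
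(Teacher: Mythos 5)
Your overall strategy is the same as the paper's (explicit Fourier formulas for the transmission problem, then the multiplier machinery of Lemmas \ref{lemm:1}--\ref{lemm:Rbound3}), and the paper even takes a slightly simpler decomposition: it writes $g=\ph+\psi$ with $\ph=\CF^{-1}\bigl[\langle i\xi,\CF[\Bf](\xi)\rangle/(\la+|\xi|^2)\bigr]$ solving $(\la-\De)\ph=\di\Bf$ in the \emph{whole} space and $\psi_\pm=\pm\tfrac12\CF_{\xi'}^{-1}\bigl[\jump{\wh h}(\xi',0)e^{\mp Bx_N}\bigr]$ carrying the jump, so that $V(\ph)$ is a pure whole-space multiplier already covered by \eqref{151010_1}. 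Two points in your proposal, however, do not go through as stated. First, a minor one: you determine the coefficients $\al_\pm$ from ``the traces of $\Bf$ \dots at $x_N=0$,'' but $\Bf$ is only in $L_q(\dws)^N$ and has no trace; likewise the strong transmission condition $\jump{\pa_N g+f_N}=0$ is only formal. You must either first reduce to $\Bf\in C_0^\infty(\dws)^N$ by density (as the paper does) or treat the $\Bf$-part by the whole-space resolvent so that no trace of $\Bf$ ever appears.

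The genuine gap is your claim that ``the boundary-trace contributions fall under Lemma~\ref{lemm:Rbound1}.'' Whatever variant of the construction you use, applying $V_N$ to the homogeneous exponential part produces (as in the paper's computation) the term $-\tfrac12\CF_{\xi'}^{-1}\bigl[(B+A)^{-1}e^{\mp Bx_N}\jump{\wh h}(\xi',0)\bigr]$, and this term is \emph{not} covered by Lemma~\ref{lemm:Rbound1}: $(B+A)^{-1}$ lies in $\BBM_{-1,2,\ep,0}$ but not in $\BBM_{-1,1,\ep,0}$ (its $\xi'$-derivatives only gain powers of $A$, not of $|\la|^{1/2}+A$), so it is not an admissible $m_1$; and one cannot write $(B+A)^{-1}=m_2A$ with $m_2\in\BBM_{-2,2,\ep,0}$, since $1/(A(B+A))$ is not $O((|\la|^{1/2}+A)^{-2})$ as $A\to0$. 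This is precisely the manifestation of the ``negative-space'' difficulty that Proposition~\ref{prop:w-w} is designed to resolve, and the paper needs a separate device for it, namely Lemma~\ref{lemm:160118}: the trace is rewritten through the identity $g(x_N)h(0)=-\int_0^\infty \tfrac{d}{dy_N}\bigl(g(x_N+y_N)h(y_N)\bigr)\,dy_N$ and $B^2=\la+A^2$, which converts the operator into integral operators with kernels $e^{\mp Bx_N-By_N}$ and symbols of the right order, estimated by the $\CR$-boundedness results for such integral operators (the \cite[Lemma 5.4]{SS12}-type lemma invoked there). Without this step (or an equivalent Volevich-type argument) your verification of \eqref{160118_1} is incomplete, because the required bound must control $R_\la$ applied to this term in terms of $(\nabla\Bh,\la^{1/2}\Bh)$ only, and no combination of $K_{\pm1},K_{\pm2},K_{\pm3}$ from Lemma~\ref{lemm:Rbound1} achieves that.
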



\begin{proof}
We only show the existence of the $\CR$-bounded solution operator family $\BV(\la)$,
since the unique solvability of the weak problem \eqref{weak:ws1} was already mentioned in Proposition \ref{prop:weak1}.

It suffices to consider the case $\Bf\in C_0^\infty(\dws)^N$ in what follows, since $C_0^\infty(\dws)$ is dense in $L_q(\dws)$.
Then the $g$ satisfying \eqref{weak:ws1} is given by $g=\ph + \psi$ with
\begin{equation*}
	(\la-\De)\ph=\di\Bf \quad \text{in $\ws$,} \quad 
	\left\{\begin{aligned}
		&(\la-\De)\psi = 0 && \text{in $\dws$,} \\
		&\jump{\psi} = \jump{h}, \quad   
		\jump{\frac{\pa \psi}{\pa\Bn_0}} = 0 && \text{on $\BR_0^N$,}
	\end{aligned}\right.
\end{equation*}
where $h=<\Bh,\Bn_0>$ and $\pa \psi/\pa\Bn_0 = \Bn_0\cdot\nabla \psi= -\pa_N \psi$.


\noindent{\bf Step 1: Solution formulas.}
We give the exact solution formulas of $\ph$, $\psi$.
The $\ph$ is given by
\begin{equation}\label{sol:ww1}
	\ph= \CF^{-1}\left[\frac{\CF[\di\Bf](\xi)}{\la+|\xi|^2}\right](x) = \CF^{-1}\left[\frac{<i\xi,\CF[\Bf](\xi)>}{\la+|\xi|^2}\right](x).
\end{equation}

On the other hand, we rewrite the system for $\psi$ as follows:
\begin{equation}\label{160114_1}
	\left\{\begin{aligned}
		(\la-\De)\psi_\pm &= 0 && \text{in $\BR_\pm^N$,} \\
		\psi_+ - \psi_- &= \jump{h} && \text{on $\BR_0^N$,} \\
		\pa_N\psi_+-\pa_N\psi_- &= 0 && \text{on $\BR_0^N$,}
	\end{aligned}\right.
\end{equation}
where we have set $\psi_\pm = \psi\chi_{\BR_\pm^N}$.
Applying the partial Fourier transform with respect to $x'\in\tws$ to \eqref{160114_1} furnishes that  
\begin{equation*}
	\left\{\begin{aligned}
		\left\{\pa_N^2-(\la+|\xi'|^2)\right\}\wh{\psi}_\pm(\xi',x_N) &= 0, \quad \pm x_N>0, \\
		\wh\psi_+(\xi',0)-\wh\psi_-(\xi',0) &= \jump{\wh{h}}(\xi',0), \\
		(\pa_N\wh\psi_+)(\xi',0) - (\pa_N\wh\psi_-)(\xi',0) &= 0.
	\end{aligned}\right.
\end{equation*}
Solving this system as ordinary differential equations with respect to $x_N$ and setting $B=\sqrt{\la+A^2}$ with $A=|\xi'|$,
we obtain
	$\wh{\psi}_\pm (\xi',x_N) = \pm (1/2)\jump{\wh{h}}(\xi',0)e^{\mp B x_N}$ $(\pm x_N>0)$,
which implies that 
\begin{equation}\label{sol:ww2}
	\psi_\pm = \psi_\pm(x',x_N) =\pm\frac{1}{2}\CF_{\xi'}^{-1}\left[\jump{\wh{h}}(\xi',0)e^{\mp B x_N}\right](x') \quad (\pm x_N>0)
\end{equation}
solves the problem \eqref{160114_1}.
Hence, $\psi = \psi_+\chi_{\BR_+^N}+\psi_-\chi_{\BR_-^N}$. 

\noindent{\bf Step 2: Construction of $\CR$-bounded solution operator families.}
Since $V(\ph+\psi)=V(\ph)+V(\psi)$, we consider $V(\ph)$, $V(\psi)$ one by one.
First we construct a $\CR$-bounded solution operator family for $V(\ph)$.
By \eqref{sol:div} and \eqref{sol:ww1},
\begin{equation*}
	V(\ph) = \CF^{-1}\left[\frac{\xi<\xi,\CF[\Bf](\xi)>}{|\xi|^2(\la+|\xi|^2)}\right](x)=:\BV^1(\la)\Bf.
\end{equation*}
As was discussed in \eqref{151010_1}, we already know that 
\begin{align*}
	&	\BV^1(\la) \in \Hol(\Si_\ep,\CL(L_q(\ws)^N,W_q^2(\ws)^{N})), \\
	&\CR_{\CL(L_q(\ws)^N,L_q(\ws)^{\wt N})}\Big(\Big\{\Big(\la\frac{d}{d\la}\Big)^l \big(R_\la\BV^1(\la)\big)\mid \la\in\Si_\ep\Big\}\Big)\leq \ga_1
	\quad (l=0,1).
\end{align*}


Next, we consider the term $V(\psi)$.
By \eqref{sol:div}, we have, for $j=1,\dots,N-1$,  
\begin{align}\label{160117_1}
	&\wh{V_j(\psi)}(\xi',x_N) = -\int_{-\infty}^\infty i\xi_j \wh{\psi}(\xi',y_N)\left(\frac{1}{2\pi}\int_{-\infty}^\infty\frac{e^{i(x_N-y_N)\xi_N}}{|\xi|^2}\intd\xi_N\right)\intd y_N, \\
	&\wh{V_N(\psi)}(\xi',x_N) = -\int_{-\infty}^\infty  \wh{\psi}(\xi',y_N)\left(\frac{1}{2\pi}\int_{-\infty}^\infty\frac{i\xi_N e^{i(x_N-y_N)\xi_N}}{|\xi|^2}\intd\xi_N\right)\intd y_N. \nonumber
\end{align}
Since it holds, by the residue theorem, that
\begin{equation*}
	\frac{1}{2\pi}\int_{-\infty}^\infty\frac{e^{i a\xi_N}}{|\xi|^2}\intd\xi_N
		= \frac{e^{-|a|A}}{2A}, \quad
	\frac{1}{2\pi}\int_{-\infty}^\infty\frac{i\xi_N e^{ia\xi_N}}{|\xi|^2}\intd\xi_N
		=-{\rm sign}(a)\frac{e^{-|a|A}}{2} \quad (a\in\BR\setminus\{0\}),
\end{equation*}
we insert these formulas into \eqref{160117_1} in order to obtain
\begin{align*}
	&\wh{V_j(\psi)}(\xi',x_N) = -\frac{i\xi_j}{2A}\int_{-\infty}^\infty e^{-|x_N-y_N|A}\wh{\psi}(\xi',y_N)\intd y_N, \\
	&\wh{V_N(\psi)}(\xi',x_N) = \frac{1}{2}\int_{-\infty}^\infty{\rm sign}(x_N-y_N)e^{-|x_N-y_N|A}  \wh{\psi}(\xi',y_N)\intd y_N.
\end{align*} 
This combined with \eqref{sol:ww2} furnishes that
\begin{align*}
	&V_j(\psi)
		= -\CF_{\xi'}^{-1}\left[\frac{i\xi_j}{4A}\jump{\wh{h}}(\xi',0)\int_0^\infty \left(e^{-|x_N-y_N|A}-e^{-|x_N+y_N|A}\right)  e^{-By_N}\intd y_N\right](x'), \\
	&V_N(\psi)
		= \CF_{\xi'}^{-1}\left[\frac{\jump{\wh{h}}(\xi',0)}{4}\int_0^\infty \left({\rm sign}(x_N-y_N)e^{-|x_N-y_N|A} -{\rm sign}(x_N+y_N)e^{-|x_N+y_N|A}\right)
		e^{-By_N}\intd y_N\right](x').
\end{align*}
By direct calculations, we have the following lemma.

\begin{lemm}
Let $0<\ep<\pi/2$ and $\xi'\in\BR^{N-1}\setminus\{0\}$. We set
\begin{equation*}
	A=|\xi'|, \quad B=\sqrt{\la+|\xi'|^2}, \quad \CM(a) = \frac{e^{-Ba}-e^{-A a}}{B-A} \quad (\la\in\Si_\ep,\, a>0).
\end{equation*}
Then it holds that, for $\pm x_N>0$,
\begin{align*}
	&\int_0^\infty \left(e^{-|x_N-y_N|A}-e^{-|x_N+y_N|A}\right)  e^{-By_N}\intd y_N = \mp\frac{2A}{B+A}\CM(\pm x_N), \displaybreak[0] \\
	&\int_0^\infty \left({\rm sign}(x_N-y_N)e^{-|x_N-y_N|A} -{\rm sign}(x_N+y_N)e^{-|x_N+y_N|A}\right)e^{-By_N}\intd y_N \displaybreak[0] \\
	&= -\frac{2A}{B+A}\CM(\pm x_N) - \frac{2}{B+A}e^{\mp Bx_N}.
\end{align*}
\end{lemm}

This lemma yields that, for $\pm x_N>0$ and $j=1,\dots,N-1$,
\begin{align*}
	&[V_j(\psi)](x',x_N) = \pm \CF_{\xi'}^{-1}\left[\left(\frac{i\xi_j}{2A(B+A)}\right)A\CM(\pm x_N)\jump{\wh{h}}(\xi',0)\right](x'), \\
	&[V_N(\psi)](x',x_N) = -\frac{1}{2}\CF_{\xi'}^{-1}\left[\frac{A}{B+A}\CM(\pm x_N)\jump{\wh{h}}(\xi',0)\right](x')
		-\frac{1}{2}\CF_{\xi'}^{-1}\left[\frac{e^{\mp Bx_N}}{B+A}\jump{\wh{h}}(\xi',0)\right](x') \\
	&=:[V_N^1(\psi)](x',x_N)+[V_N^2(\psi)](x',x_N).
\end{align*}
By Lemma \ref{lemm:Rbound1} and $h=<\Bh,\Bn_0>$, there exist 
	$\BV_j(\la), \BV_N^1(\la)\in\Hol(\Si_\ep,\CL(L_q(\dws)^{N^2+N},W_q^2(\dws)))$
such that $V_j(\psi) = \BV_j(\la)(\nabla\Bh,\la^{1/2}\Bh)$,
$V_N^1(\psi) = \BV_N^1(\la)(\nabla\Bh,\la^{1/2}\Bh)$, and 
\begin{equation*}
	\CR_{\CL(L_q(\dws)^{N^2+N},L_q(\dws)^{2(N^2+N+1)})}
	\Big(\Big\{\Big(\la\frac{d}{d\la}\Big)^l\Big(R_\la\BV_j(\la),R_\la\BV_N^1(\la)\Big)\mid\la\in\Si_\ep\Big\}\Big)\leq \ga_1 \quad (l=0,1).
\end{equation*}

To treat $V_N^2(\psi)$, we show the following lemma.

\begin{lemm}\label{lemm:160118}
Let $0<\ep<\pi/2$ and $1<q<\infty$. We define a operator $K_\pm(\la)$ on $W_q^1(\dws)$ by the formulas:
\begin{equation*}
	 [K_\pm(\la)f](x) = \CF_{\xi'}^{-1}\left[\frac{e^{\mp Bx_N}}{B+A}\jump{\wh{f}}(\xi',0)\right](x') \quad (\pm x_N>0,\,\la\in\Si_\ep).
\end{equation*}
Then there exists an operator families 
	$\wt K_\pm(\la)\in \Hol(\Si_\ep,\CL(L_q(\dws)^{N+1},W_q^2(\BR_\pm^N)))$
such that $K_\pm(\la) f = \wt K_\pm(\la)(\nabla f,\la^{1/2}f) $ and 
	\begin{equation}\label{160206_1}
		\CR_{\CL(L_q(\dws)^{N+1},L_q(\BR_\pm^N)^{N^2+N+1})}
		\Big(\Big\{\Big(\la\frac{d}{d\la}\Big)^l\Big(R_\la\wt K_\pm(\la)\Big)\mid \la\in\Si_\ep \Big\}\Big)
		\leq \ga_1 \quad (l=0,1).
	\end{equation}
\end{lemm}

\begin{proof}
By using the relation:
$g(x_N)h(0) = -\int_0^\infty d/dy_N\left(g(x_N+y_N)h(y_N)\right)\intd y_N$  
for functions $g$, $h$ satisfying $g(x_N+y_N)h(y_N)\to 0$ as $y_N\to\infty$,
we rewrite $K_\pm(\la)f$ as
\begin{align*}
&K_\pm(\la)f = \pm \int_0^\infty \CF_{\xi'}^{-1}
\left[\frac{\la^{1/2}}{B(B+A)}e^{\mp Bx_N-B y_N}\left(\wh{\la^{1/2}f}(\xi',y_N)-\wh{\la^{1/2}f}(\xi',-y_N)\right)\right](x')\intd y_N \displaybreak[0] \\
&\quad\mp \sum_{j=1}^{N-1}
\int_0^\infty \CF_{\xi'}^{-1}
\left[\frac{i\xi_j}{B(B+A)}e^{\mp Bx_N- By_N}\left(\wh{\pa_j f}(\xi',y_N)-\wh{\pa_j f}(\xi',-y_N)\right)\right](x')\intd y_N \displaybreak[0] \\
&\quad\pm \int_0^\infty\CF_{\xi'}^{-1}\left[\frac{1}{B+A}e^{\mp Bx_N - By_N}\left(\wh{\pa_N f}(\xi',y_N) + \wh{\pa_N f}(\xi',-y_N)\right)\right](x')\intd y_N  
=: \wt K_\pm(\la)(\nabla f,\la^{1/2}f) 
\end{align*}
for $\pm x_N>0$, respectively, where we have used $B^2=\la+A^2$.
From now on, we show the estimate \eqref{160206_1}.
Noting $\la=(B+A)(B-A)$ and $B/(B+A) = 1 - A/(B+A)$, we have, for $k,l=1,\dots,N-1$ and $\pm x_N>0$,
\begin{align*}
	&\left(\pa_k\pa_l,\pa_k\pa_N,\pa_N\pa_l,\la^{1/2}\pa_k\right) \wt K_\pm(\la)(\nabla f,\la^{1/2}f) \\
		&\quad=	\pm \int_0^\infty \CF_{\xi'}^{-1}
		\left[\frac{\la^{1/2}m(\xi',\la)}{B}A e^{\mp Bx_N-B y_N}\left(\wh{\la^{1/2}f}(\xi',y_N)-\wh{\la^{1/2}f}(\xi',-y_N)\right)\right](x')\intd y_N \displaybreak[0] \\
		&\quad\mp \sum_{j=1}^{N-1}
		\int_0^\infty \CF_{\xi'}^{-1}
		\left[\frac{i\xi_j m(\xi',\la)}{B}A e^{\mp Bx_N- By_N}\left(\wh{\pa_j f}(\xi',y_N)-\wh{\pa_j f}(\xi',-y_N)\right)\right](x')\intd y_N \displaybreak[0] \\
		&\quad\pm \int_0^\infty\CF_{\xi'}^{-1}\left[m(\xi',\la) Ae^{\mp Bx_N - By_N}\left(\wh{\pa_N f}(\xi',y_N) + \wh{\pa_N f}(\xi',-y_N)\right)\right](x')\intd y_N, \displaybreak[0] \\
	&\left(\pa_N^2,\la^{1/2}\pa_N\right)\wt K_\pm(\la)(\nabla f,\la^{1/2}f) \displaybreak[0] \\
		&\quad=	\pm \int_0^\infty \CF_{\xi'}^{-1}
		\left[\left(\frac{\la^{1/2}}{B}\right)\left(n(\xi',\la) -A l(\xi',\la)\right)e^{\mp Bx_N-B y_N}
		\left(\wh{\la^{1/2}f}(\xi',y_N)-\wh{\la^{1/2}f}(\xi',-y_N)\right)\right](x')\intd y_N \displaybreak[0] \\
		&\quad\mp \sum_{j=1}^{N-1}
		\int_0^\infty \CF_{\xi'}^{-1}
		\left[\left(\frac{i\xi_j}{B}\right)\left(n(\xi',\la) -A l(\xi',\la)\right)e^{\mp Bx_N- By_N}
		\left(\wh{\pa_j f}(\xi',y_N)-\wh{\pa_j f}(\xi',-y_N)\right)\right](x')\intd y_N \displaybreak[0] \\
		&\quad\pm \int_0^\infty\CF_{\xi'}^{-1}\left[\left(n(\xi',\la) -A l(\xi',\la)\right)e^{\mp Bx_N - By_N}
		\left(\wh{\pa_N f}(\xi',y_N) + \wh{\pa_N f}(\xi',-y_N)\right)\right](x')\intd y_N, \displaybreak[0] \\
	&\la \wt K_\pm(\la)(\nabla f,\la^{1/2}f) \displaybreak[0] \\
		&\quad=	\pm \int_0^\infty \CF_{\xi'}^{-1}
		\left[\left(\frac{\la^{1/2}}{B}\right)(B-A)e^{\mp Bx_N-B y_N}\left(\wh{\la^{1/2}f}(\xi',y_N)-\wh{\la^{1/2}f}(\xi',-y_N)\right)\right](x')\intd y_N \displaybreak[0] \\
		&\quad\mp \sum_{j=1}^{N-1}
		\int_0^\infty \CF_{\xi'}^{-1}
		\left[\left(\frac{i\xi_j}{B}\right)(B-A)e^{\mp Bx_N- By_N}\left(\wh{\pa_j f}(\xi',y_N)-\wh{\pa_j f}(\xi',-y_N)\right)\right](x')\intd y_N \displaybreak[0] \\
		&\quad\pm \int_0^\infty\CF_{\xi'}^{-1}\left[(B-A)e^{\mp Bx_N - By_N}\left(\wh{\pa_N f}(\xi',y_N) + \wh{\pa_N f}(\xi',-y_N)\right)\right](x')\intd y_N,  
\end{align*}
where we have set
\begin{align*}
	m(\xi',\la) &= \left(-\frac{\xi_k\xi_l}{A(B+A)},\frac{\mp i\xi_kB}{A(B+A)},\frac{\mp i\xi_l B}{A(B+A)},\frac{i\xi_k\la^{1/2}}{A(B+A)}\right), \\
	n(\xi',\la) &=	\left(B,\mp \la^{1/2}\right), \quad l(\xi',\la) = \left(\frac{B}{B+A},\frac{\mp\la^{1/2}}{B+A}\right).
\end{align*}
Since $m(\xi',\la),\,l(\xi',\la) \in \BBM_{0,2,\ep,0}$ and $n(\xi',\la)\in \BBM_{1,1,\ep,0}$, 
applying \cite[Lemma 5.4] {SS12} with Lemma \ref{lemm:1}, Lemma \ref{lemm:2}
to the above formulas of $\wt K_\pm(\la)(\nabla f,\la^{1/2}f)$ furnishes \eqref{160206_1}.
This completes the proof of the lemma.
%
%
%
%
\end{proof}

By Lemma \ref{lemm:160118} and $h=<\Bh,\Bn_0>$, there exists 
$\BV_N^2(\la)\in\Hol(\Si_\ep,\CL(L_q(\dws)^{N^2+N},W_q^2(\dws)))$
such that $V_N^2(\psi) = \BV_N^2(\la)(\nabla\Bh,\la^{1/2}\Bh)$ and 
\begin{equation*}
\CR_{\CL(L_q(\dws)^{N^2+N},L_q(\dws)^{N^2+N+1})}
\Big(\Big\{\Big(\la\frac{d}{d\la}\Big)^l\big(R_\la\BV_N^2(\la)\big)\mid\la\in\Si_\ep\Big\}\Big)
\leq \ga_1 \quad(l=0,1).
\end{equation*}
Recalling Remark \ref{rema:div} \eqref{rema:div3}, we set, for $(H_2,H_3)\in L_q(\dws)^{N^2}\times L_q(\dws)^N$,
\begin{equation*}
	\BV^2(\la)(H_2,H_3) =
		\left(\BV_1(\la)(H_2,H_3),\dots,\BV_{N-1}(\la)(H_2,H_3),\BV_N^1(\la)(H_2,H_3)+\BV_N^2(\la)(H_2,H_3)\right)^T.
\end{equation*}
Then $\BV(\la)\BH=\BV^1(\la)H_1+\BV^2(\la)(H_2,H_3)$ with $\BH=(H_1,H_2,H_3)\in \CY_{\CR,q}(\dws)$ satisfies \eqref{160118_1}.
Moreover, for $(\Bf,\Bh)\in Y_{\CR,q}(\dws)$, $V(g) = \BV(\la)(\Bf,\nabla\Bh,\la^{1/2}\Bh)$ 
with the solution $g$ of \eqref{weak:ws1}. 
\end{proof}

We set $\BS_I(\la)\BH = \BV(\la)\BH + \CT_I(\la)\left(H_1,R_\la\BV(\la)\BH,H_2,H_3\right)$
for $\BH=(H_1,H_2,H_3)\in \CY_{\CR,q}(\dws)$.
Then,  Theorem \ref{theo:ws3} and Proposition \ref{prop:w-w}, together with Proposition \ref{prop:R},
shows that $\BS_I(\la)$ is the required operator in Theorem \ref{theo:ws}.
This completes the proof of Theorem \ref{theo:ws}.
\end{proof}


\section{Reduced Stokes resolvent equations on a bent space}\label{sec:bent}
Let $\Phi:\BR_x^N\to\BR_y^N$ be a bijection of $C^1$ class and let $\Phi^{-1}$ be its inverse map,
where subscripts $x$, $y$ denote their variables, here and subsequently.
Writing $(\nabla_x\Phi)(x) = \BA +\BB(x)$ and $(\nabla_y\Phi^{-1})(\Phi(x))=\BA_{-1}+\BB_{-1}(x)$,
we assume that $\BA$ and $\BA_{-1}$ are orthonormal matrices with constant coefficients and $\det\BA=\det\BA_{-1}=1$,
and also assume that
$\BB(x)$ and $\BB_{-1}(x)$ are matrices of functions in $W_r^1(\ws)$ with $N<r<\infty$ such that
\begin{equation}\label{M1M2}
	\|(\BB,\BB_{-1})\|_{L_\infty(\ws)} \leq M_1, \quad
	\|\nabla_x(\BB,\BB_{-1})\|_{L_r(\ws)}\leq M_2.
\end{equation}
We will choose $M_1$ small enough eventually, so that we may assume that $0<M_1\leq 1 \leq M_2$ in the following.

\begin{rema}\label{rema:AB}
Since $x = \Phi^{-1}(\Phi(x))$, we have $\BI = (\nabla_y\Phi^{-1})(\nabla_x\Phi)$.
This implies that $(\nabla_y\Phi^{-1})^{-1}=(\nabla_x\Phi)$, which is equivalent to $(\BA_{-1}+\BB_{-1}(x))^{-1}=\BA+\BB(x)$.
\end{rema}

Set $\Om_\pm = \Phi(\BR_\pm^N)$ and $\Ga = \Phi(\BR_0^N)$, and let $\wt\Bn=\wt\Bn(y)$ be the unit normal vector on $\Ga$,
which points from $\Om_+$ to $\Om_-$.
In addition, setting $\Phi^{-1}=(\Phi_{-1,1},\dots,\Phi_{-1,N})^T$, we see that $\Ga$ is represented by $\Phi_{-1,N}(y)=0$,
since $\Ga = \Phi(\{x_N=0\})= \Phi\circ\Phi^{-1}(\{y\in\ws\mid\Phi_{-1,N}(y)=0\})$ by $x_N=\Phi_{-1,N}(y)$.
This representation implies that
\begin{equation}\label{normal}
	\wt\Bn(\Phi(x)) = - \frac{\nabla_y\Phi_{-1,N}}{|\nabla_y\Phi_{-1,N}|}
		= - \frac{(A_{N1}+B_{N1}(x),\dots,A_{NN}+B_{NN}(x))^T}{(\sum_{i=1}^N(A_{Ni}+B_{Ni}(x))^2)^{1/2}}
		= \frac{(\BA_{-1}+\BB_{-1}(x))^T\Bn_0}{|(\BA_{-1}+\BB_{-1}(x))^T\Bn_0|}
\end{equation}
with $\Bn_0=(0,\dots,0,-1)^T$,
where we have set $\BA_{-1}=(A_{ij})$ and $\BB_{-1}(x)=(B_{ij}(x))$.
In particular, $\wt\Bn$ is defined on $\ws$ by \eqref{normal}.
Since $\sum_{i=1}^N(A_{Ni}+B_{Ni}(x))^2=1+\sum_{i=1}^N(2 A_{Ni}B_{Ni}(x)+B_{Ni}(x)^2)$
by the fact that $\BA_{-1}$ is a orthonormal matrix,
we see by \eqref{M1M2} and \eqref{normal} that $\|\nabla_x\wt\Bn\|_{L_r(\BR^N)}\leq C_N M_2$.
Let $\wt\mu_\pm=\wt\mu_\pm(y)$ be a viscosity coefficient that is defined on $\ws$ and satisfies conditions:
\begin{equation}\label{visco:1}
	\frac{1}{2}\mu_{\pm 1}\leq \wt\mu_\pm(y) \leq \frac{3}{2}\mu_{\pm 2} \quad (y\in\ws), \quad 
	|\wt\mu_\pm(y) - \mu_{\pm0}|\leq M_1 \quad (y\in\ws), \quad
	\|\nabla_y\wt\mu_\pm\|_{L_r(\ws)}\leq C_{r}, 
\end{equation}
where $\mu_{\pm 0}$ are some constant with $\mu_{\pm 1} \leq \mu_{\pm 0} \leq \mu_{\pm2}$, respectively,
for the same constants $\mu_{\pm 1}$, $\mu_{\pm 2}$ as in Theorem \ref{theo:main}.
In addition, we set
\begin{align}\label{visco:2}
	&\wt\mu(y)=\wt\mu_+(y)\chi_{\Om_+}(y) + \wt\mu_-(y)\chi_{\Om_-}(y), \quad 
	\wt\rho(y) = \rho_+\chi_{\Om_+}(y)+\rho_-\chi_{\Om_-}(y) \quad 
		(\rho_\pm:\text{ positive constants}), 
\end{align}
and also set $\mu_\pm(x) =\wt\mu_\pm(\Phi(x))$, $\mu(x)=\wt\mu(\Phi(x))$, $\rho(x) = \wt\rho(\Phi(x))$, and
$\mu_{0} (x) = \wt\mu_{0}(\Phi(x))$ with $\wt\mu_0(y) = \mu_{+0}\chi_{\Om_+}(y)+ \mu_{-0}\chi_{\Om_-}(y)$.
It then holds that
\begin{align}\label{visco:3}
	&\rho = \rho(x) = \rho_+\chi_{\BR_+^N}(x) + \rho_-\chi_{\BR_-^N}(x), \quad \mu_0 = \mu_0(x) = \mu_{+0}\chi_{\uhs}(x) + \mu_{-0}\chi_{\lhs}(x), \\
	&\mu(x) =  \mu_+(x)\chi_{\uhs}(x)+\mu_-(x)\chi_{\lhs}(x), \quad 
	|\mu(x)-\mu_0|\leq M_1 \quad (x\in\dws), \quad \|\nabla_x\mu\|_{L_r(\dws)}\leq C_r.\nonumber
\end{align}

First we consider the two-phase reduced Stokes equation in $\dot\Om=\Om_+\cup\Om_-$ with an interface condition:
\begin{equation}\label{reduce:2}
	\left\{\begin{aligned}
		\la\wt\Bu-\wt\rho^{\,-1}\Di\wt\BT(\wt\Bu,\wt K_I(\wt\Bu)) &=\wt\Bf && \text{in $\dot\Om$,} \\
		\jump{\wt\BT(\wt\Bu,\wt K_I(\wt\Bu))\wt\Bn} &= \jump{\wt\Bh} && \text{on $\Ga$,} \\
		\jump{\wt\Bu} &= 0 && \text{on $\Ga$.}
	\end{aligned}\right.
\end{equation}
Here $\wt\BT(\wt\Bu,\wt K_I(\wt\Bu))=\wt\mu\BD(\wt\Bu)-\wt K_I(\wt\Bu)\BI$ and
$\wt K_I(\wt\Bu)$ is a unique solution to the following weak problem:
\begin{align}
	&(\wt\rho^{\,-1}\nabla \wt K_I(\wt\Bu),\nabla\wt\ph)_{\dot{\Om}}
	=(\wt\rho^{\,-1}\Di(\wt\mu\BD(\wt\Bu))-\nabla\di\wt\Bu,\nabla\wt\ph)_{\dot{\Om}} \quad \text{for all $\wt\ph\in \wh{W}_{q'}^1(\BR_y^N)$,} \label{150824_1} \\
	&\jump{\wt K_I(\wt\Bu)} = <\jump{\wt\mu\BD(\wt\Bu)\wt\Bn},\wt\Bn> - \jump{\di\wt\Bu} \quad \text{on $\Ga$.} \label{150824_1-2}
\end{align}
We then have the following theorem.
\begin{theo}\label{theo:bent}
	Let $0<\ep<\pi/2$, $1<q<\infty$, $N<r<\infty$, and $\max(q,q')\leq r$ with $q'=q/(q-1)$.
	Suppose that \eqref{M1M2}, \eqref{visco:1}, and \eqref{visco:2} hold. 
	Let $Z_{\CR,q}(G)$ and $\CZ_{\CR,q}(G)$, with an open set $G$ of $\ws$, be defined as 
	\begin{align*}
		Z_{\CR,q}(G) &= L_q(G)^N\times W_q^1(G)^N, \\
		\CZ_{\CR,q}(G) &= \{(H_1,\dots,H_4) \mid H_1,H_3\in L_q(G)^N,\, H_2\in L_q(G)^{N^2},\, H_4\in W_q^1(G)^N\}, 
	\end{align*}
	while $\mu^* := (1/2)\min(\mu_{+1}, \mu_{-1}, \mu_{+2},\mu_{-2})$.
	Then there exist $0<M_1<\min(1,\mu^*)$, $\la_0\geq1$, and
	$\wt\BS_I(\la) \in \Hol(\Si_{\ep,\la_0},\CL(\CZ_{\CR,q}(\dot\Om), W_q^2(\dot{\Om})^N))$
	such that, for any $\la\in\Si_{\ep,\la_0}$ and $(\wt\Bf,\wt\Bh)\in Z_{\CR,q}(\dot\Om)$,
	$\wt\Bu=\wt\BS_I(\la)H_{\CR,\la}(\wt\Bf,\wt\Bh)$ is a unique solution to the problem \eqref{reduce:2}, and furthermore,
	\begin{equation}\label{150825_12}
		\CR_{\CL(\CZ_{\CR,q}(\dot\Om),L_q(\dot{\Om})^{\wt N})}
		\Big(\Big\{\Big(\la\frac{d}{d\la}\Big)^l (R_\la\wt\BS_I(\la))\mid\la\in\Si_{\ep,\la_0}\Big\}\Big) \leq\ga_2
		\quad(l=0,1)
	\end{equation}
	with some positive constant $\ga_2$.
	Here and subsequently, $\wt N = N^3+N^2+N$, $R_\la\Bu = (\nabla^2\Bu,\la^{1/2}\nabla\Bu,\la\Bu)$, and
	$H_{\CR,\la}(\wt\Bf,\wt\Bh) = (\wt\Bf,\nabla\wt\Bh,\la^{1/2}\wt\Bh,\wt\Bh)$;
	 $M_1$ is a constant depending on $N$, $q$, $r$, $\ep$, $\rho_+$ $\rho_-$, $\mu_{+ 1}$,  $\mu_{- 1}$, $\mu_{+ 2}$, and $\mu_{- 2}$;
	$\la_0$ is a constant depending on $M_2$, $N$, $q$, $r$, $\ep$, $\rho_+$ $\rho_-$, $\mu_{+ 1}$,  $\mu_{- 1}$, $\mu_{+ 2}$, and $\mu_{- 2}$;
	$\ga_2$ denotes a generic constant depending on $M_2$, $\la_0$, $N$, $q$, $r$, $\ep$, $\rho_+$ $\rho_-$, $\mu_{+ 1}$,  $\mu_{- 1}$, $\mu_{+ 2}$, and $\mu_{- 2}$.
\end{theo}

The remaining part of this section is mainly devoted to the proof of Theorem \ref{theo:bent}.
We rewrite the problem \eqref{reduce:2} as follows:
\begin{equation}\label{reduce:3}
	\left\{\begin{aligned}
		\la\wt\Bu-\wt\rho^{\,-1}\wt\mu\Di\BD(\wt\Bu)+\wt\rho^{\,-1}\nabla\wt\te-\wt\rho^{\,-1}\BD(\wt\Bu)\nabla\wt\mu
			&=\wt\Bf && \text{in $\dot\Om$,} \\
		\jump{(\wt\mu\BD(\wt\Bu)-\wt\te\BI)\wt\Bn} &= \jump{\wt\Bh} && \text{on $\Ga$,} \\
		\jump{\wt\Bu} &=0 && \text{on $\Ga$}
	\end{aligned}\right.
\end{equation}
with $\wt\te=\wt K_I(\wt\Bu)$. 
By the change of variable: $y=\Phi(x)$,
we transform the problem \eqref{reduce:3} to some problem on $\dws$ with $\Bu(x)=\wt\Bu(y)$ and $\te(x)=\wt\te(y)$.
Here we note the following fundamental relations:
\begin{align*}
	&\frac{\pa}{\pa y_j} = \sum_{k=1}^N(A_{kj}+B_{kj}(x))\frac{\pa}{\pa x_k}, \quad \nabla_y = (\BA_{-1}+\BB_{-1}(x))^T\nabla_x, \displaybreak[0] \\
	&\frac{\pa^2}{\pa y_j\pa y_k} =\sum_{l,m=1}^N A_{lj}A_{mk}\frac{\pa^2}{\pa x_l\pa x_m} 
	+\sum_{l,m=1}^N\left(A_{lj}B_{mk}(x)+A_{mk}B_{lj}(x)+B_{lj}(x)B_{mk}(x)\right)\frac{\pa^2}{\pa x_l\pa x_m} \nonumber \displaybreak[0] \\
	&+\sum_{l,m=1}^N\left(A_{lj}+B_{lj}(x)\right)\left(\frac{\pa}{\pa x_l}B_{mk}(x)\right)\frac{\pa}{\pa x_m}, \nonumber
\end{align*}
and furthermore,
\begin{align*}
	&	\De_y = \De_x +\sum_{k,l,m=1}^N\left(A_{lk}+B_{lk}(x)\right)\left(\frac{\pa}{\pa x_l}B_{mk}(x)\right)\frac{\pa}{\pa x_m} \displaybreak[0] \\
	&\quad+\sum_{k,l,m=1}^N\left(A_{lk}B_{mk}(x)+A_{mk}B_{lk}(x)+B_{lk}(x)B_{mk}(x)\right)\frac{\pa^2}{\pa x_l \pa x_m}, \nonumber \displaybreak[0] \\
	&\nabla_y\di_y\wt\Bu = (\BA_{-1}+\BB_{-1}(x))^T\nabla_x\di_x(\BA_{-1}\Bu) 
	+(\BA_{-1}+\BB_{-1}(x))^T\sum_{j,k=1}^N\nabla_x\left(B_{kj}(x)\frac{\pa}{\pa x_k} u_j\right), \displaybreak[0] \nonumber \\
	&\BD(\wt\Bu) = \nabla_x\Bu(\BA_{-1}+\BB_{-1}(x))+(\BA_{-1}+\BB_{-1}(x))^T (\nabla_x\Bu)^T. \nonumber
\end{align*}
Thus the first equation of \eqref{reduce:3} is reduced to
\allowdisplaybreaks{\begin{align*}
	&\wt\Bf = \la\wt\Bu-\frac{\wt\mu}{\wt\rho}\left(\De_y\wt\Bu+\nabla_y\di_y\wt\Bu\right)
		+\frac{1}{\wt\rho}\nabla_y\wt\te-\frac{1}{\wt\rho}\BD(\wt\Bu)\nabla_y\wt\mu  \\
	&= \la\Bu-\frac{\mu}{\rho}\left(\De\Bu+\BA_{-1}^T\nabla\di(\BA_{-1}\Bu)\right)
		+\frac{1}{\rho}\BA_{-1}^T\nabla\te
		-\frac{\mu}{\rho}\left\{
 		\sum_{k,l,m=1}^N\left(A_{lk}+B_{lk}(x)\right)\left(\frac{\pa}{\pa x_l}B_{mk}(x)\right)\frac{\pa}{\pa x_m}\Bu
 		\right. \\
		&+\sum_{k,l,m=1}^N\left(A_{lk}B_{mk}(x)+A_{mk}B_{lk}(x)+B_{lk}(x)B_{mk}(x)\right)\frac{\pa^2}{\pa x_l\pa x_m}\Bu \\
		&\left.+\BB_{-1}(x)^T\nabla\di(\BA_{-1}\Bu)
		+(\BA_{-1}+\BB_{-1}(x))^T\sum_{j,k=1}^N\nabla\left(B_{kj}(x)\frac{\pa}{\pa x_k}u_j\right)\right\}
		+\frac{1}{\rho}\BB_{-1}(x)^T\nabla\te \\
		&-\frac{1}{\rho}\left\{(\nabla\Bu)(\BA_{-1}+\BB_{-1}(x))+(\BA_{-1}+\BB_{-1}(x))^T (\nabla\Bu)^T\right\}
		(\BA_{-1}+\BB_{-1}(x))^T\nabla\mu,
\end{align*}}
and we have, by setting $\Bv=\BA_{-1}\Bu$ and $\Bf = \BA_{-1}\wt\Bf\circ\Phi$,
\begin{equation}\label{150612_1}
	\la\Bv -\rho^{-1}\mu\Di\BD(\Bv)+\rho^{-1}\nabla\te+\rho^{-1}\CF^1(\Bv)+\rho^{-1}\CP^1\nabla\te
		 =\Bf \quad \text{in $\dws$.}
\end{equation}
Here we have the following information for $\CF^1(\Bv)$ and $\CP^1$:
\begin{align}\label{151016_1}
	&\CF^1(\Bv) =  
		\mu(\CR^1\nabla^2\Bv + \CS^1\nabla\Bv) + (\CT^1\nabla\Bv)\nabla\mu, \quad
	\|(\CP^1,\CR^1)\|_{L_\infty(\ws)}\leq C_N M_1, \\
	&\|(\nabla\CP^1,\nabla\CR^1,\CS^1)\|_{L_r(\ws)}\leq C_N M_2, \quad
	\|(\CT^1,\nabla\CT^1)\|_{L_\infty(\ws)\times L_r(\ws)}\leq C_N M_2. \nonumber
\end{align}

Next we consider the interface condition of \eqref{reduce:3}. 
By \eqref{normal}, 
\begin{equation*}
	|(\BA_{-1}+\BB_{-1}(x))^T\Bn_0|\jump{\wt\Bh} = \jump{(\wt\mu\BD(\wt\Bu)-\wt\te\BI)(\BA_{-1}+\BB_{-1}(x))^T\Bn_0},
\end{equation*}
which, multiplied by $(\BA_{-1}+\BB_{-1}(x))^{-T}=\{(\BA_{-1}+\BB_{-1}(x))^T\}^{-1}$, furnishes that
\begin{align*}
	&|(\BA_{-1}+\BB_{-1}(x))^T\Bn_0|(\BA_{-1}+\BB_{-1}(x))^{-T}\jump{\wt\Bh} \\
		&= \jump{\wt\mu(\BA_{-1}+\BB_{-1}(x))^{-T}\BD(\wt\Bu)(\BA_{-1}+\BB_{-1}(x))^T\Bn_0} -\jump{\wt\te\Bn_0} \\
		&=\jump{(\mu\BD(\Bv)-\te\BI)\Bn_0}
			+\jump{\mu\{((\BA_{-1}+\BB_{-1}(x))^{-T}\BA_{-1}^T-\BI\}(\nabla\Bv)\Bn_0} + \jump{\mu(\nabla\Bv)^T\BA_{-1}\BB_{-1}(x)^T\Bn_0} \\
			&+ \jump{\mu(\BA_{-1}+\BB_{-1}(x))^{-T}\BA_{-1}^T(\nabla\Bv)
			(\BA_{-1}\BB_{-1}(x)^T+\BB_{-1}(x)\BA_{-1}^{T}+\BB_{-1}(x)\BB_{-1}(x)^T)\Bn_0}.
\end{align*}
Since $(\BA_{-1}+\BB_{-1}(x))^{-T}\BA_{-1}^T = (\BI+\BB_{-1}(x)\BA_{-1}^T)^{-T}$
and $(\BA_{-1}+\BB_{-1}(x))^{-1}=\BA +\BB(x)$ by Remark \ref{rema:AB},
it holds that, by \eqref{M1M2},
\begin{align*}
	&\|(\BA_{-1}+\BB_{-1}(\cdot))^{-T}\BA_{-1}^T-\BI\|_{L_\infty(\ws)} \leq C_N M_1, \quad
	\|\nabla\{(\BA_{-1}+\BB_{-1}(\cdot))^{-T}\BA_{-1}^T-\BI\}\|_{L_r(\ws)} \leq C_N M_2, \\
	&\|(\BA_{-1}+\BB_{-1}(\cdot))^{-T}\BA_{-1}^T\|_{L_\infty(\ws)}+
	\|\nabla\{(\BA_{-1}+\BB_{-1}(\cdot))^{-T}\BA_{-1}^T\}\|_{L_r(\ws)} \leq C_N M_2.
\end{align*}
We thus see, by setting $\Bh = |(\BA_{-1}+\BB_{-1}(x))^T\Bn_0|(\BA_{-1}+\BB_{-1}(x))^{-T}\wt\Bh\circ\Phi$, that
\begin{equation}\label{151016_5}
	\jump{(\mu\BD(\Bv)-\te\BI)\Bn_0}+\jump{\CF^2(\Bv)\Bn_0} = \jump{\Bh} \quad \text{on $\BR_0^N$,}
\end{equation}
where $\CF^2(\Bv)$ satisfies the following properties:
\begin{equation}\label{151016_6}
	\CF^2(\Bv) = \mu\CR^2\nabla\Bv, \quad \|\CR^2\|_{L_\infty(\ws)} \leq C_NM_1, \quad \|\nabla\CR^2\|_{L_r(\ws)} \leq C_N M_2.
\end{equation}

Finally, we consider the weak problem \eqref{150824_1}-\eqref{150824_1-2}.
Let $({\rm LHS})$ and $({\rm RHS})$ stand for the left-hand side and the right-hand side of \eqref{150824_1}, respectively.
Then, for any $\wt\ph\in \wh{W}_{q'}^1(\BR_y^N)$ and for $\ph(x)=\wt\ph(\Phi(x))$, we have
\begin{align}\label{151016_8}
	&\text{(LHS)} = 
		(\rho^{-1}(\BA_{-1}+\BB_{-1})^T\nabla\te,|\det\nabla\Phi|(\BA_{-1}+\BB_{-1})^T\nabla\ph)_{\dws} \\
		&\quad=(\rho^{-1}\nabla\te,\nabla\ph)_{\dws} + (\rho^{-1}\CP^2\nabla\te,\nabla\ph)_{\dws}, \nonumber \\
	&\text{(RHS)} = 
		(\rho^{-1}\mu\Di\BD(\Bv)-\rho^{-1}\CF^1(\Bv)
		-\nabla\di\Bv+\CF^3(\Bv),|\det\nabla\Phi|(\BI+\BA_{-1}\BB_{-1}(x)^T)\nabla\ph)_{\dws} \nonumber \\
		&\quad =
		(\rho^{-1}\mu\Di\BD(\Bv)-\rho^{-1}\CF^1(\Bv)-\nabla\di\Bv+\CF^3(\Bv),\nabla\ph)_{\dws}+(\CF^4(\Bv),\nabla\ph)_{\dws} \nonumber
\end{align}
for some $\CP^2$, $\CF^3(\Bv)$, and $\CF^4(\Bv)$ satisfying
\begin{align}\label{151016_7}
	&\CF^3(\Bv)= \CR^3\nabla^2\Bv + \CS^2\nabla\Bv, \quad
	\CF^4(\Bv) = \CR^4(\rho^{-1}\mu\Di\BD(\Bv)-\nabla\di\Bv-\rho^{-1}\CF^1(\Bv)+\CF^3(\Bv)), \\
	&\|(\CP^2,\CR^3,\CR^4)\|_{L_\infty(\ws)}\leq C_N M_1, \quad \|(\nabla\CP^2,\nabla\CR^3,\nabla\CR^4,\CS^2)\|_{L_r(\ws)}\leq C_N M_2. \nonumber
\end{align}
In addition, 
\begin{equation}\label{151016_9}
	\jump{\te} =<\jump{\mu\BD(\Bv)\Bn_0},\Bn_0>-\jump{\di\Bv} +\jump{\CF^5(\Bv)} \quad \text{on $\BR_0^N$,} 
\end{equation}
where $\CF^5(\Bv)$ is given by
\begin{equation}\label{151016_10}
	\CF^5(\Bv) = \mu\CR^5\nabla\Bv+\CR^6\nabla\Bv, \quad \|(\CR^5,\CR^6)\|_{L_\infty(\ws)}\leq C_NM_1, \quad
		\|(\nabla\CR^5,\nabla\CR^6)\|_{L_r(\ws)}\leq C_NM_2.
\end{equation}

Summing up \eqref{150612_1}, \eqref{151016_5}, \eqref{151016_8}, and \eqref{151016_9}, we have obtained the following system:
\begin{equation}\label{150612_2}
	\left\{\begin{aligned}
			\la\Bv-\frac{1}{\rho}\Di\BT(\Bv,\te)-\frac{\mu-\mu_0}{\rho}\Di\BD(\Bv)
			+\frac{1}{\rho}\CF^1(\Bv)+\frac{1}{\rho}\CP^1\nabla\te &= \Bf &&
			\text{in $\dws$,} \\
			\jump{\BT(\Bv,\te)\Bn_0}+\jump{(\mu-\mu_0)\BD(\Bv)\Bn_0}+\jump{\CF^2(\Bv)\Bn_0} &= \jump{\Bh} && \text{on $\BR_0^N$,} \\
			\jump{\Bv} &= 0 && \text{on $\BR_0^N$}
	\end{aligned}\right.
\end{equation}
with $\BT(\Bv,\te)=\mu_0\BD(\Bv)-\te\BI$, and also for any $\ph\in\wh{W}_{q'}^1(\ws)$
\begin{align}\label{150612_3}
	&(\rho^{-1}\nabla\te,\nabla\ph)_\dws + (\rho^{-1}\CP^2\nabla\te,\nabla\ph)_\dws \displaybreak[0] \\
		&= (\rho^{-1}\Di(\mu_0\BD(\Bv))-\nabla\di\Bv+\rho^{-1}(\mu-\mu_0)\Di\BD(\Bv)
			-\rho^{-1}\CF^1(\Bv)+\CF^3(\Bv)+\CF^4(\Bv),\nabla\ph)_{\dws}, \nonumber \displaybreak[0] \\
	&\jump{\te} = <\jump{\mu_0\BD(\Bv)\Bn_0},\Bn_0>-\jump{\di\Bv} + <\jump{(\mu-\mu_0)\BD(\Bv)\Bn_0},\Bn_0> +\jump{\CF^5(\Bv)}
		\quad \text{on $\BR_0^N$.} \label{160119_1}
\end{align}
%
%

From now on, we solve \eqref{150612_2}, \eqref{150612_3}, \eqref{160119_1}.
Let $\te_1= K_I(\Bv)$ given by the solution to \eqref{160125_1}-\eqref{160125_1-2} with $\mu=\mu_0$.
Setting $\te = K_I(\Bv)+\te_2(\Bv)$ in \eqref{150612_3}-\eqref{160119_1},
we have the weak problem for $\te_2=\te_2(\Bv)$ as follows:
\begin{align}
	&(\rho^{-1}\nabla\te_2,\nabla\ph)_\dws + (\rho^{-1}\CP^2\nabla\te_2,\nabla\ph)_\dws \label{150824_3} \\
		&\quad =(\rho^{-1}(\mu-\mu_0)\Di\BD(\Bv)-\rho^{-1}\CF^1(\Bv)+\CF^3(\Bv)+\CF^4(\Bv)-\rho^{-1}\CP^2\nabla K_I(\Bv),\nabla\ph)_\dws \nonumber \\
	&\jump{\te_2} = <\jump{(\mu-\mu_0)\BD(\Bv)\Bn_0},\Bn_0> +\jump{\CF^5(\Bv)} \quad \text{on $\BR_0^N$} \label{150824_3-2}
\end{align}
for any $\ph\in \wh{W}_{q'}^1(\ws)$.
Substituting $\te = K_I(\Bv) + \te_2(\Bv)$ in the problem \eqref{150612_2},
we have
\begin{equation}\label{150824_6}
	\left\{\begin{aligned}
		\la\Bv -\rho^{-1}\Di\BT(\Bv,K_I(\Bv)) +\CU^1(\Bv) 
			&= \Bf && \text{in $\dws$,} \\
		\jump{\BT(\Bv,K_I(\Bv))\Bn_0} +\jump{\CU^2(\Bv)\Bn_0} &= \jump{\Bh} && \text{on $\BR_0^N$,} \\
		\jump{\Bv} &= 0 && \text{on $\BR_0^N$,}
	\end{aligned}\right.
\end{equation}
where
\begin{align*}
	\CU^1(\Bv)
		&=-\rho^{-1}(\mu-\mu_0)\Di\BD(\Bv) + \rho^{-1}\CF^1(\Bv)
		+\rho^{-1}\CP^1\nabla K_I(\Bv)
		+\rho^{-1}(\BI+\CP^1)\nabla\te_2(\Bv), \\
	\CU^2(\Bv) &= (\mu-\mu_0)\BD(\Bv) + \CF^2(\Bv) -\te_2(\Bv)\BI = \CF^2(\Bv)-\{<(\mu-\mu_0)\BD(\Bv)\Bn_0,\Bn_0> +\CF^5(\Bv)\}\BI.
\end{align*}

At this point, we introduce a result about the unique solvablity of the weak problem:
\begin{align}\label{weak:pert}
	(\rho^{-1}\nabla\te,\nabla\ph)_{\dws} + (\rho^{-1}\CP^2\nabla\te,\nabla\ph)_{\dws} &= (\Bf,\nabla\ph)_{\dws} \quad
	\text{for all $\ph\in\wh{W}_{q'}^1(\ws)$,}  \\
	\jump{\te} &= \jump{g} \quad \text{on $\BR_0^N$.} \label{weak:pert-2}
\end{align}

\begin{lemm}\label{lemm:pert}
	Let $1<q<\infty$.
	Then there exists a constant $M_1\in(0,1)$ and an operator
	\begin{equation*}
		\Psi\in\CL(L_q(\dws)^N\times W_q^1(\dws), W_q^1(\dws)+\wh{W}_q^1(\ws))
	\end{equation*}
	such that, for any $\Bf\in L_q(\dws)^N$ and $g\in W_q^1(\dws)$,
	$\te=\Psi(\Bf,g)$ is a unique solution to \eqref{weak:pert}-\eqref{weak:pert-2},
	which possesses the estimate:
	$\|\nabla\te\|_{L_q(\dws)}\leq C_{N,q}(\|\Bf\|_{L_q(\dws)}+\|g\|_{W_q^1(\dws)})$
	with a positive constant $C_{N,q}$ independent of $M_2$.
\end{lemm}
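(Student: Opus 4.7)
The plan is to lift away the interface data by absorbing $g$, reduce to a homogeneous-jump problem on $\wh{W}_q^1(\ws)$, and then solve the resulting perturbed weak equation by treating $\CP^2$ as a small perturbation of the unperturbed weak transmission problem via a contraction / Neumann-series argument.

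First I would split $\te=g+\wt\te$. Since $g\in W_q^1(\dws)$, seeking $\te\in W_q^1(\dws)+\wh{W}_q^1(\ws)$ with $\jump{\te}=\jump{g}$ is equivalent to seeking $\wt\te\in\wh{W}_q^1(\ws)$ (so $\jump{\wt\te}=0$ automatically) satisfying
\begin{equation*}
	(\rho^{-1}(\BI+\CP^2)\nabla\wt\te,\nabla\ph)_{\dws}
	=(\BF,\nabla\ph)_{\dws}\quad\text{for all }\ph\in\wh{W}_{q'}^1(\ws),
\end{equation*}
where $\BF:=\Bf-\rho^{-1}(\BI+\CP^2)\nabla g\in L_q(\dws)^N$ obeys $\|\BF\|_{L_q(\dws)}\leq C_{N,q}(\|\Bf\|_{L_q(\dws)}+\|g\|_{W_q^1(\dws)})$. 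Since only $\|\CP^2\|_{L_\infty(\ws)}\leq C_N M_1$ is used here, the constant does not depend on $M_2$.

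Next I would invoke the unperturbed solvability: by Remark \ref{rema:weakDN} \eqref{rema:weakDN3}, the weak elliptic transmission problem is uniquely solvable on $\CW_q^1(\ws)=\wh{W}_q^1(\ws)$ for $\rho_\pm$, and analogously for $q'$. Let $\Psi_0\colon L_q(\dws)^N\to\wh{W}_q^1(\ws)$ be the resulting bounded solution operator, so that $\wt\te_0=\Psi_0\BG$ obeys
\begin{equation*}
	(\rho^{-1}\nabla\wt\te_0,\nabla\ph)_{\dws}=(\BG,\nabla\ph)_{\dws}\quad\text{for all }\ph\in\wh{W}_{q'}^1(\ws),
\end{equation*}
together with $\|\nabla\wt\te_0\|_{L_q(\dws)}\leq C_{N,q}\|\BG\|_{L_q(\dws)}$. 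I would then recast the perturbed equation as the fixed-point problem $\wt\te=\Psi_0(\BF-\rho^{-1}\CP^2\nabla\wt\te)$ on the Banach space $\wh{W}_q^1(\ws)$ (normed by $\|\nabla\cdot\|_{L_q(\dws)}$). Because $\|\rho^{-1}\CP^2\nabla\wt\te\|_{L_q(\dws)}\leq C_N M_1\|\nabla\wt\te\|_{L_q(\dws)}$, the associated map is a $C_{N,q}C_N M_1$-contraction, so choosing $M_1<(C_{N,q}C_N)^{-1}$ (a threshold depending only on $N$, $q$, $\rho_\pm$) yields a unique fixed point $\wt\te$ with
\begin{equation*}
	\|\nabla\wt\te\|_{L_q(\dws)}\leq\frac{C_{N,q}}{1-C_{N,q}C_N M_1}\|\BF\|_{L_q(\dws)}.
\end{equation*}
Defining $\Psi(\Bf,g):=g+\wt\te$ gives the required bounded linear operator with the claimed estimate. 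For uniqueness, any solution $\te$ of the homogeneous problem satisfies $\jump{\te}=0$ and $\nabla\te\in L_q(\dws)^N$, hence lies in $\wh{W}_q^1(\ws)$, and the identity $\te=-\Psi_0(\rho^{-1}\CP^2\nabla\te)$ with contraction constant less than $1$ forces $\nabla\te=0$.

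The step I expect to be the most delicate is confirming that the unperturbed operator $\Psi_0$ admits an $L_q$-bound independent of $M_2$; this independence is essential so that the smallness threshold for $M_1$ depends solely on $N$, $q$, $\rho_\pm$ and not on the smoothness scale $M_2$ of the diffeomorphism $\Phi$. Since $\CP^2$ enters the iteration only through $\|\CP^2\|_{L_\infty(\ws)}$, never through $\nabla\CP^2$, and since $\Psi_0$ is constructed from the constant-coefficient Fourier calculus on $\dws$ as in Proposition \ref{prop:w-w}, this independence is indeed automatic.
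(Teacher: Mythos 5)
Your argument is correct and is essentially the paper's own approach: the paper simply remarks that the lemma follows from the unique solvability of the unperturbed weak transmission problem \eqref{160125_1}--\eqref{160125_1-2} ``by the small perturbation method,'' and your lifting of the jump via $\te=g+\wt\te$ followed by the contraction/Neumann-series argument in $\wh{W}_q^1(\ws)$, using only $\|\CP^2\|_{L_\infty(\ws)}\leq C_NM_1$, is exactly that method written out in detail, with the $M_2$-independence of the threshold handled correctly.
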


\begin{proof}
Since the weak problem \eqref{160125_1}-\eqref{160125_1-2} is uniquely solvable,
we can prove Lemma \ref{lemm:pert} by the small perturbation method,
so that we may omit the detailed proof.
\end{proof}

By Lemma \ref{lemm:pert}, we have $\te_2(\Bv)=\Psi(\Bf,g)$ with
\begin{align*}
	\Bf &= \rho^{-1}(\mu-\mu_0)\Di\BD(\Bv)-\rho^{-1}\CF^1(\Bv)+\CF^3(\Bv)+\CF^4(\Bv)-\rho^{-1}\CP^2\nabla K_I(\Bv), \\
	g &= <(\mu-\mu_0)\BD(\Bv)\Bn_0,\Bn_0> + \CF^5(\Bv).
\end{align*}
We solve the problem \eqref{150824_6} by using Theorem \ref{theo:ws}.
Substituting $\Bv = \BS_I(\la)G_{\CR,\la}(\Bf,\Bh)$ in \eqref{150824_6} yields that
\begin{equation}\label{bent:pert}
	\left\{\begin{aligned}
		\la\Bv-\rho^{-1}\Di\BT(\Bv,K_I(\Bv)) &= \Bf-\CU^1(\BS_I(\la)G_{\CR,\la}(\Bf,\Bh)) && \text{in $\dws$,} \\
		\jump{\BT(\Bv,K_I(\Bv))\Bn_0} &= \jump{\Bh-\CU^2(\BS_I(\la)G_{\CR,\la}(\Bf,\Bh))\Bn_0} && \text{on $\BR_0^N$,} \\
		\jump{\Bv} &= 0 && \text{on $\BR_0^N$.}
	\end{aligned}\right.
\end{equation}
Set $\CV(\la)(\Bf,\Bh) = (\CV^1(\la)(\Bf,\Bh),\CV^2(\la)(\Bf,\Bh))$ with 
$\CV^i(\la)(\Bf,\Bh)=\CU^i(\BS_I(\la)G_{\CR,\la}(\Bf,\Bh))$ $(i=1,2)$ and 
\begin{equation*}
	Y_{\CR,q}^{\la}(\dws) = \{(\Bf,\nabla\Bh,\la^{1/2}\Bh) \mid (\Bf,\Bh)\in Y_{\CR,q}(\dws)\} \quad (\la\neq0).
\end{equation*}
Then, for each $\la\neq 0$, $\ph_\la(\Bf,\Bh):=G_{\CR,\la}(\Bf,\Bh)$
is a bijection from $ Y_{\CR,q}(\dws)$ onto $Y_{\CR,q}^\la(\dws)$.
Formally, if there is the inverse operator of $(I-\ph_\la\CV(\la)\ph_\la^{-1})$, then 
	$\Bv = \BS_I(\la)G_{\CR,\la}\ph_\la^{-1}(I-\ph_\la\CV(\la)\ph_\la^{-1})^{-1}\ph_\la(\Bf,\Bh)$
solves \eqref{150824_6} since $\ph_\la^{-1}(I-\ph_\la\CV(\la)\ph_\la^{-1})^{-1}\ph_\la=(I-\CV(\la))^{-1}$.

In what follows, we show the invertibility above and the $\CR$-boundedness of the inverse operator. 
To this end, we estimate the remainder terms on the right-hand sides of \eqref{bent:pert}.
We combine Proposition \ref{lemm:Shi13} for $\dot\Om=\dws$
with \eqref{151016_1}, \eqref{151016_6}, \eqref{151016_7}, and \eqref{151016_10} in order to obtain
\begin{align}\label{150825_1}
	&\|\CF^i(\Bv)\|_{L_q(\dws)}
		\leq
			\ga_3(M_1+\si)\|\nabla^2\Bv\|_{L_q(\dws)} + \ga_{\si,M_2}\|\nabla\Bv\|_{L_q(\dws)} \quad (i=1,3,4),  \\
	&\|\CF^i(\Bv)\|_{L_q(\dws)}
		\leq \ga_3M_1\|\nabla\Bv\|_{L_q(\dws)} \quad (i=2,5), \nonumber \\
	&\|\nabla\CF^i(\Bv)\|_{L_q(\dws)}
		\leq \ga_3(M_1+\si)\|\nabla^2\Bv\|_{L_q(\dws)} + \ga_{\si,M_2}\|\nabla\Bv\|_{L_q(\dws)} \quad (i=2,5). \nonumber \\
	&\|\CP^i\nabla K_I(\Bv)\|_{L_q(\dws)} 
		\leq\ga_3M_1 \|\nabla\Bv\|_{W_q^1(\dws)} \quad (i=1,2). \nonumber
\end{align}
Here and subsequently, $\ga_3$ is a generic constant depending, at most,
on $N$, $q$, $r$, $\rho_+$, $\rho_-$, $\mu_{+1}$, $\mu_{+2}$, $\mu_{+ 2}$, and $\mu_{- 2}$; 
$\ga_{\si,M_2}$ is a generic constant depending, at most, on $M_2$, $\si$, $N$, $q$, $r$, $\rho_+$, $\rho_-$, $\mu_{+1}$, $\mu_{+2}$, $\mu_{+ 2}$, and $\mu_{- 2}$.
In addition, by Lemma \ref{lemm:pert}, \eqref{150825_1}, and \eqref{visco:3} together with Proposition \ref{lemm:Shi13}, we have
\begin{equation}\label{150825_2}
	\|(I+\CP^1)\nabla\te_2(\Bv)\|_{L_q(\dws)}
		\leq \ga_3(M_1+\si)\|\nabla^2\Bv\|_{L_q(\dws)}+\ga_{\si,M_2}\|\nabla\Bv\|_{L_q(\dws)}.   
\end{equation}
Define operators $\BV^i(\la)$, $i=1,2$, as 
	$\BV^i(\la)\BH = \CU^i(\BS_I(\la)\BH)$ with $\BH=(H_1,H_2,H_3)\in \CY_{\CR,q}(\dws)$. 
Then we have $\CV^i(\Bf,\Bh)=\BV^i(\la)G_{\CR,\la}(\Bf,\Bh)$ and  
have, by 
Proposition \ref{lemm:multi}, 
\eqref{150825_1}, \eqref{150825_2}, and Theorem \ref{theo:ws},
\begin{align}\label{150825_3}
	&\CR_{\CL(\CY_{\CR,q}(\dws),L_q(\dws)^N)}
		\Big(\Big\{\Big(\la\frac{d}{d\la}\Big)^l\BV^1(\la)\mid \la\in\Si_{\ep,\la_0}\Big\}\Big)^{1/q}  
		\leq \ga_1\left(\ga_3(M_1+\si)+\ga_{\si,M_2}\la_0^{-1/2}\right), \\
	&\CR_{\CL(\CY_{\CR,q}(\dws),L_q(\dws)^{N^2+N})}
		\Big(\Big\{\Big(\la\frac{d}{d\la}\Big)^l\left(\nabla\BV^2(\la),\la^{1/2}\BV^2(\la)\right)\mid \la\in\Si_{\ep,\la_0}\Big\}\Big) \nonumber \\
		&\quad\leq \ga_1\left(\ga_3(M_1+\si)+\ga_{\si,M_2}\la_0^{-1/2}\right) \nonumber
\end{align}
for $l=0,1$ and for any $\la_0>0$. In fact, since $\CF^1$ is linear, we have,
for any $\la_0>0$ and for any $n\in\BN$, $\{\la_j\}_{j=1}^n\subset\Si_{\ep,\la_0}$, and  $\{\BH_j\}_{j=1}^n\subset\CY_{\CR,q}(\dws)$,
\begin{align*}
	&\Big(\int_0^1\Big\|\sum_{j=1}^n r_j(u)\CF^1(\BS_I(\la_j)\BH_j)\Big\|_{L_q(\dws)}^q\intd u\Big)^{1/q} 
	\leq\ga_3(M_1+\si)\Big(\int_0^1\Big\|\sum_{j=1}^n r_j(u)\nabla^2\BS_I(\la_j)\BH_j\Big\|_{L_q(\dws)}^q \intd u\Big)^{1/q} \displaybreak[0] \\
		&\quad +\ga_{\si,M_2}\Big(\int_0^1\Big\|\sum_{j=1}^n r_j(u)\nabla\BS_I(\la_j)\BH_j\Big\|_{L_q(\dws)}^q\intd u\Big)^{1/q} \displaybreak[0] \\
	&\leq \ga_1\big(\ga_3(M_1+\si)+\ga_{\si,M_2}\la_0^{-1/2}\big)
	\Big(\int_0^1\Big\|\sum_{j=1}^n r_j(u)\BH_j\Big\|_{L_q(\dws)}^q\intd u\Big)^{1/q}.
\end{align*}
It holds, by the linearity of $\CF^1$, that
\begin{equation*}
	\la\frac{d}{d\la}\CF^1\left(\BS_I(\la)\BH\right) = 
		\CF^1\left(\la\frac{d}{d\la}\BS_I(\la)\BH\right),
\end{equation*}
so that we have 
in the same manner as above
\begin{equation*}
	\CR_{\CL(\CY_{R,q}(\dws),L_q(\dws))}
	\Big(\Big\{\Big(\la\frac{d}{d\la}\Big)\CF^1(\BS_I(\la)(\cdot))\mid\la\in\Si_{\ep,\la_0}\Big\}\Big)
	\leq \ga_1\big(\ga_3(M_1+\si)+\ga_{\si,M_2}\la_0^{-1/2}\big).
\end{equation*}
Analogously, we can obtain estimates for $\CR$-bound of the other terms,
and thus we have \eqref{150825_3}. 
Setting $\BV(\la)\BH=(\BV^1(\la)\BH,\BV^2(\la)\BH)$ for $\BH\in\CY_{\CR,q}(\dws)$
furnishes that
\begin{align}\label{150825_5}
	&\CV(\la)(\Bf,\Bh) = \BV(\la)G_{\CR,\la}(\Bf,\Bh)\in Y_{\CR,q}(\dws) \quad \text{for $(\Bf,\Bh)\in Y_{\CR,q}(\dws)$}, \\
	&\CR_{\CL(\CY_{\CR,q}(\dws))}\Big(\Big\{\Big(\la\frac{d}{d\la}\Big)^l(G_{\CR,\la}\BV(\la)) \mid \la\in\Si_{\ep,\la_0}\Big\}\Big)
		\leq \ga_1\big(\ga_3(M_1+\si)+\ga_{\si,M_2}\la_0^{-1/2}\big) \quad (l=0,1). \nonumber
\end{align}
If we choose $\si$ and $M_1$ so small that $\ga_1\ga_3\si\leq 1/8$ and $\ga_1\ga_3M_1\leq 1/8$
and if we choose $\la_0\geq 1$ so large that $\ga_{\si,M_2}\la_0^{-1/2}\leq 1/4$,
then  we have by \eqref{150825_5}
\begin{equation}\label{150825_6}
	\CR_{\CL(\CY_{\CR,q}(\dws))}
		\Big(\Big\{\Big(\la\frac{d}{d\la}\Big)^l(G_{\CR,\la}\BV(\la)) \mid \la\in\Si_{\ep,\la_0}\Big\}\Big)\leq \frac{1}{2}
		\quad (l=0,1).
\end{equation}
Since it holds by \eqref{150825_5}, \eqref{150825_6} that 
\begin{align*}
	&\|\ph_\la\CV(\la)\ph_\la^{-1}(\Bf,\nabla\Bh,\la^{1/2}\Bh)\|_{\CY_{\CR,q}(\dws)}
	= \|G_{\CR,\la}\CV(\la)(\Bf,\Bh)\|_{\CY_{\CR,q}(\dws)}  \\
 	&= \|G_{\CR,\la}\BV(\la)G_{\CR,\la}(\Bf,\Bh)\|_{\CY_{\CR,q}(\dws)} 
	\leq \frac{1}{2}\|(\Bf,\nabla\Bh,\la^{1/2}\Bh)\|_{\CY_{\CR,q}(\dws)},
\end{align*}
there exists the inverse mapping $(I-\ph_\la\CV(\la)\ph_\la^{-1})^{-1}\in\CL(Y_{\CR,q}^\la(\dws))$ for any $\la\in\Si_{\ep,\la_0}$.
In addition, $(I-G_{\CR,\la}(\la)\BV(\la))^{-1}=\sum_{j=0}^\infty(G_{\CR,\la}\BV(\la))^j$ exists by \eqref{150825_6} and
satisfies the estimate:
\begin{equation}\label{150825_7}
	\CR_{\CL(\CY_{\CR,q}(\dws))}\Big(\Big\{\Big(\la\frac{d}{d\la}\Big)^l(I-G_{\CR,\la}\BV(\la))^{-1} \mid \la\in\Si_{\ep,\la_0}\Big\}\Big)
		\leq 2 \quad (l=0,1).
\end{equation}
If we set $\Bv = \BS_I(\la)G_{\CR,\la}\ph_\la^{-1}(I-\ph_\la\CV(\la)\ph_\la^{-1})^{-1}\ph_{\la}(\Bf,\Bh)$,
then $\Bv$ is a solution to \eqref{150824_6} as mentioned above. 
Noting that $\ph_\la\CV(\la)\ph_\la^{-1}G_{\CR,\la}(\Bf,\Bh)=G_{\CR,\la}\BV(\la)G_{\CR,\la}(\Bf,\Bh)$ by \eqref{150825_5},
we see that 
\begin{equation*}
G_{\CR,\la}\ph_\la^{-1}(I-\ph_\la\CV(\la)\ph_\la^{-1})^{-1}\ph_{\la}(\Bf,\Bh)
= \sum_{j=0}^\infty\left(\ph_\la\CV(\la)\ph_\la^{-1}\right)^jG_{\CR,\la}(\Bf,\Bh)
=(I-G_{\CR,\la}\BV(\la))^{-1}G_{\CR,\la}(\Bf,\Bh).
\end{equation*} 
Set $\CS_I(\la)=\BS_I(\la)(I-G_{\CR,\la}\BV(\la))^{-1}$, and then $\Bv=\CS_I(\la)G_{\CR,\la}(\Bf,\Bh)$ is a solution to \eqref{150824_6}
for any $\la\in\Si_{\ep,\la_0}$ and $(\Bf,\Bh)\in Y_{\CR,q}(\dws)$.
Furthermore, by \eqref{150825_7} and Theorem \ref{theo:ws}, we have
\begin{align}\label{150825_9}
	&\CS_I(\la) \in \Hol(\Si_{\ep,\la_0},\CL(\CY_{\CR,q}(\dws),W_q^2(\dws)^N)), \\
	&\CR_{\CL(\CY_{\CR,q}(\dws),L_q(\dws)^{\wt N})}
		\Big(\Big\{\Big(\la\frac{d}{d\la}\Big)^l \left(R_\la\CS_I(\la)\right) \mid \la\in\Si_{\ep,\la_0}\Big\}\Big)\leq \ga_2
		\quad (l=0,1). \nonumber
\end{align}

The uniqueness of solutions to \eqref{150824_6} can be proved in the same manner as in \cite[Section 4]{Shibata14}.

Setting $\wt\Bu=\BA_{-1}^T\Bv\circ\Phi^{-1}=[\BA_{-1}^T\CS_I(\la)G_{\CR,\la}(\Bf,\Bh)]\circ\Phi^{-1}$ and noting $\BA_{-1}^T=(\BA_{-1})^{-1}$, 
we see that $\wt\Bu$ is a unique solution to \eqref{reduce:2}.
Recall that $\Bf=\BA_{-1}\wt\Bf\circ\Phi$ and
$\Bh = |(\BA_{-1}+\BB_{-1}(x))^T\Bn_0|(\BA_{-1}+\BB_{-1}(x))^{-T}\wt\Bh\circ\Phi$,
and set $\BE(x) = |(\BA_{-1}+\BB_{-1}(x))^T\Bn_0|(\BA+\BB(x))^T$ in view of Remark \ref{rema:AB}.
Observing that
\begin{equation*}
	G_{\CR,\la}(\Bf,\Bh) = (\Bf,\nabla\Bh,\la^{1/2}\Bh) 
	=\Big(\BA_{-1}\wt\Bf\circ\Phi,(\nabla\BE(x))\wt\Bh\circ\Phi+\BE(x)[(\nabla\wt\Bh)\circ\Phi]\nabla\Phi,\BE(x)(\la^{1/2}\wt\Bh)\circ\Phi\Big),
\end{equation*}
we define, for $\BH=(H_1,H_2,H_3,H_4)\in\CZ_{\CR,q}(\dot\Om)$, an operator $\wt\BS_I(\la)$ by 
\begin{equation*}
	\wt\BS_I(\la)\BH 
		= \big[\BA_{-1}^T\CS_I(\la)(\BA_{-1}H_1\circ \Phi,
		\big(\nabla\BE(x))H_4\circ\Phi+\BE(x)(H_2\circ\Phi)\nabla\Phi,\BE(x)H_3\circ\Phi\big)\big]\circ\Phi^{-1}.
\end{equation*}
Then we can show that $\wt\BS_I(\la)$ satisfies \eqref{150825_12}
by \eqref{150825_9} and Proposition \ref{lemm:Shi13} with $\si=1$,
and also $\wt\Bu = \wt\BS_I(\la)H_{\CR,\la}(\wt\Bf,\wt\Bh)$ solves \eqref{reduce:2} uniquely.
This completes the proof of Theorem \ref{theo:bent}.


\section{A proof of Theorem \ref{theo:redu1}}\label{sec:proof}
As was discussed in Subsection \ref{sub:main}, our main result Theorem \ref{theo:main}
follows from Theorem \ref{theo:redu1},
so that we prove Theorem \ref{theo:redu1} in this section.

\subsection{Some preparations for the proof of Theorem \ref{theo:redu1}}
First we state several properties of uniform $W_r^{2-1/r}$ domain (cf. \cite[Proposition 6.1]{ES13}, \cite{KS1}).

\begin{prop}\label{prop:uniform}
	Let $N<r<\infty$ and let $\Om_\pm$ be uniform $W_r^{2-1/r}$ domains in $\ws$.
	Let $M_1$ the number given in Section $\ref{sec:bent}$.
	Then there exist constants $M_2>0$, $0<d^i<1$ $(i=1,\dots,5)$,
	at most countably many $N$-vectors of functions $\Phi_j^i\in W_r^2(\ws)^N$ $(j\in\BN$, $i=1,2,3)$,
	$x_j^1\in \Ga$, $x_j^2\in\Ga_+$, $x_j^3\in\Ga_-$, $x_j^4\in\Om_+$, and $x_j^5\in\Om_-$ such that
	the following assertions hold:
	\begin{enumerate}[$(1)$]
		\item\label{uniform1}
			The maps: $\ws\ni x \mapsto \Phi_j^i(x)\in\ws$ $(j\in\BN$, $i=1,2,3)$ are bijective such that
			$\nabla \Phi_j^i=\BA_j^i +\BB_j^i(x)$ and $\nabla(\Phi_j^i)^{-1}=\BA_{j,-1}^i+\BB_{j,-1}^i(x)$, where
			$\BA_j^i$, $\BA_{j,-1}^i$ are $N\times N$ constant orthonormal matrices and 
			$\BB_j^i(x)$, $\BB_{j,-1}^i(x)$ are $N\times N$ matrices of $W_r^1(\ws)$ functions which
			satisfy the conditions: 
			$\|(\BB_j^i,\BB_{j,-1}^i)\|_{L_\infty(\ws)}\leq M_1$ and
			$\|\nabla(\BB_j^i,\BB_{j,-1}^i)\|_{L_r(\ws)}\leq M_2$.
		\item\label{uniform2}
			$\Om = \big\{\bigcup_{i=1,2,3}\bigcup_{j=1}^\infty(\Phi_j^i(H^i)\cap B_{d^i}(x_j^i))\big\}\cup
			\big\{\bigcup_{i=4,5}\bigcup_{j=1}^\infty B_{d^i}(x_j^i)\big\}$ with
			$H^1=\ws$, $H^2=\uhs$, and $H^3=\lhs$, where
			$\Phi_j^i(\uhs)\cap B_{d^i}(x_j^i) = \Om_+\cap B_{d^i}(x_j^i)$ $(i=1,2)$,
			$\Phi_j^i(\lhs)\cap B_{d^i}(x_j^i) = \Om_- \cap B_{d^i}(x_j^i)$ $(i=1,3)$,
			$B_{d^4}(x_j^4)\subset \Om_+$, $B_{d^5}(x_j^5)\subset\Om_-$,
			and $\Phi_j^i(\BR_0^N)\cap B_{d^i}(x_j^i)=\Ga^i\cap B_{d^i}(x_j^i)$ $(i=1,2,3)$.
			Here and subsequently, we set
			$\Ga^1=\Ga$, $\Ga^2=\Ga_+$, and $\Ga^3=\Ga_-$ for the notational convenience.
		\item\label{uniform3}
			There exist $C^\infty$ functions $\zeta_j^i$ and $\wt\zeta_j^i$ $(i=1,\dots,5$, $j\in\BN)$
			such that $\|(\zeta_j^i,\wt\zeta_j^i)\|_{W_\infty^2(\ws)}\leq c_0$,
			$0\leq\zeta_j^i,\wt\zeta_j^i\leq1$,
			$\supp\zeta_j^i$, $\supp\wt\zeta_j^i\subset B_{d^i}(x_j^i)$,
			$\wt\zeta_j^i=1$ on $\supp\zeta_j^i$,
			$\sum_{i=1,\dots,5}\sum_{j=1}^\infty\zeta_j^i=1$ on $\overline\Om$, and
			$\sum_{j=1}^\infty\zeta_j^i=1$ on $\Ga^i$ $(i=1,2,3)$.
			Here $c_0$ is a positive constant depending on $M_2$, $N$, and $r$, but independent of $j\in\BN$.
		\item\label{uniform4}
			There exists a natural number $L \geq 2$ such that any $L+1$ distinct sets of
			$\{B_{d^i}(x_j^i)\mid i=1,\dots,5,\enskip j\in\BN\}$ have an empty intersection.
	\end{enumerate}
\end{prop}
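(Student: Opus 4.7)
\medskip

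\noindent\textbf{Proof plan.} The plan is to construct, for each base point on each relevant set, a ``straightening'' diffeomorphism arising from the local graph representation of Definition \ref{defi:uni}, and then to extract a covering of $\overline{\Om}$ by such coordinate balls (together with interior balls) with uniformly bounded multiplicity. The partition of unity in \eqref{uniform3} is then produced by a standard smoothing construction. Throughout, the strategy is to exploit the smallness parameters $\alpha$, $\beta$ in Definition \ref{defi:uni}: by shrinking the ball sizes $d^i$, one forces the non-constant part $\BB_j^i$ of $\nabla \Phi_j^i$ to be small in $L_\infty$.

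\medskip

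\noindent\textbf{Step 1 (local straightening map).} Fix $x_0 \in \Ga^i$ ($i=1,2,3$) and let $h \in W_r^{2-1/r}(B_\alpha'(x_0'))$ and the coordinate index $j$ be as in Definition \ref{defi:uni}. After applying an orthogonal transformation $\BO_{x_0}$ that permutes the $j$-th coordinate into the $N$-th position and a translation taking $x_0$ to the origin, I may assume the boundary is locally the graph $x_N = h(x')$ with $h(0)=0$, and after one further orthogonal change of basis in $\tws$ that $\nabla_{x'} h(0)=0$. Using a standard $W_r^{2-1/r}(B_\alpha') \to W_r^2(\ws)$ extension (composed with a cutoff supported in a small ball), extend $h$ to $H \in W_r^2(\ws)$ with $\|H\|_{W_r^2(\ws)} \leq C K$ and, crucially, $|\nabla_{x'} H(0)| = 0$. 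Define
\begin{equation*}
	\Phi_j^i(x) = \BO_{x_0}^{-1}\bigl(x' + x_{0}',\,x_N + H(x') + x_{0,N}\bigr).
\end{equation*}
Then $\nabla \Phi_j^i(x) = \BA_j^i + \BB_j^i(x)$ with $\BA_j^i$ orthonormal (the Jacobian at $x=0$, since $\nabla H(0)=0$) and
\begin{equation*}
	\|\BB_j^i\|_{L_\infty(\ws)} \leq C\|\nabla H\|_{L_\infty(\ws)}, \qquad
	\|\nabla \BB_j^i\|_{L_r(\ws)} \leq C\|\nabla^2 H\|_{L_r(\ws)} \leq C K.
\end{equation*}
By Sobolev embedding $W_r^1(\ws) \hookrightarrow C^0(\ws)$ for $r>N$ together with $\nabla H(0)=0$, one has $\|\nabla H\|_{L_\infty(B_\delta(0))} \to 0$ as $\delta \to 0$; modifying $H$ outside a small ball by a cutoff, I arrange $\|\nabla H\|_{L_\infty(\ws)} \leq M_1$ by choosing $d^i$ sufficiently small. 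Bijectivity of $\Phi_j^i$ and the decomposition of $\nabla(\Phi_j^i)^{-1}$ as $\BA_{j,-1}^i+\BB_{j,-1}^i$ with the same kind of bounds then follow from the inverse function theorem and the Neumann series $(\BA_j^i+\BB_j^i)^{-1} = (\BA_j^i)^{-1}\sum_{k\geq 0}(-\BB_j^i(\BA_j^i)^{-1})^k$, using $M_1 < 1$.

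\medskip

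\noindent\textbf{Step 2 (covering).} With the radii $d^i$ fixed by Step 1 so that (\ref{uniform1}) holds uniformly, I cover $\Ga^i$ ($i=1,2,3$) using the balls $B_{d^i/2}(y)$ centered at $y\in\Ga^i$ and apply a Vitali (or Besicovitch) selection argument in $\BR^N$ to extract at most countably many disjoint balls $B_{d^i/10}(x_j^i)$ whose five-fold dilations $B_{d^i/2}(x_j^i)$ still cover $\Ga^i$. Passing to $B_{d^i}(x_j^i)$, each boundary piece $\Ga^i$ is covered and (\ref{uniform2}) is obtained on the boundary. For the interior ($i=4,5$), cover $\Om_\pm \setminus \bigcup_{i=1,2,3}\bigcup_j B_{d^i/2}(x_j^i)$ by interior balls of fixed radius $d^{4,5}$ small enough that each is contained in $\Om_\pm$, and extract a locally finite subcover. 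The finite overlap property (\ref{uniform4}) is then a direct packing estimate in $\BR^N$: the number of balls of radius comparable to $d:=\min_i d^i$ meeting any given ball of radius $d$ is bounded by a dimensional constant $L$.

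\medskip

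\noindent\textbf{Step 3 (partition of unity).} Take a standard $C^\infty$ bump $\chi$ with $\chi \equiv 1$ on $B_{d^i/2}$, supported in $B_{d^i}$, and satisfying $\|\chi\|_{W_\infty^2} \leq c_0(d^i,N)$. Set $\wt\zeta_j^i(x) = \chi(x - x_j^i)$ and
\begin{equation*}
	\zeta_j^i(x) = \frac{\chi(x-x_j^i)}{\sum_{k,\ell}\chi(x-x_\ell^k)}.
\end{equation*}
The denominator is bounded below by $1$ on $\overline\Om$ by construction of the cover, and its $W_\infty^2$-norm is bounded by $L\|\chi\|_{W_\infty^2}$ thanks to (\ref{uniform4}), giving the uniform estimate $\|(\zeta_j^i,\wt\zeta_j^i)\|_{W_\infty^2(\ws)}\leq c_0$. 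Restricting the same construction to the boundary pieces gives $\sum_j \zeta_j^i = 1$ on $\Ga^i$.

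\medskip

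\noindent\textbf{Main obstacle.} The delicate point is Step 1: simultaneously ensuring $\|\BB_j^i\|_{L_\infty(\ws)} \leq M_1$ (this is the \emph{smallness} needed to apply Theorem \ref{theo:bent} in the localization) and $\|\nabla \BB_j^i\|_{L_r(\ws)} \leq M_2$ (this requires no smallness, only uniform boundedness), with constants \emph{independent of $j$}. The smallness cannot come from the $W_r^{2-1/r}$ norm of $h$ (which only gives $\|\nabla h\|_{L_\infty}\leq CK$), so it must come from choosing $d^i$ small and using $\nabla H(x_0')=0$ together with the Sobolev embedding $W_r^1\hookrightarrow C^0$, $r>N$. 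The uniformity across $j$ relies on the fact that the constant $K$ in Definition \ref{defi:uni} is a single constant for all $x_0 \in \pa D$.
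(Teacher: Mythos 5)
The paper does not prove this proposition at all: it is quoted from the cited references \cite{ES13} (Proposition 6.1) and \cite{KS1}, and your outline follows the same standard construction that those references use (local graph representation, flattening maps with orthonormal principal part plus a small perturbation, a Vitali-type covering with bounded overlap, and a quotient partition of unity). So the comparison is with that standard argument, and measured against it your sketch is structurally right but contains one step that fails as written, and it is exactly the step you yourself identify as the crux.

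You claim that, after the coordinate permutation $\BO_{x_0}$ and a translation, ``one further orthogonal change of basis in $\tws$'' lets you assume $\nabla_{x'}h(0)=0$. This is false: a rotation of the horizontal variables only rotates the vector $\nabla_{x'}h(0)$ inside $\BR^{N-1}$ and cannot annihilate it. With $\nabla_{x'}h(x_0')$ merely bounded (by $CK$, via Sobolev embedding), the Jacobian of your $\Phi_j^i$ at $x=0$ is $\BI$ plus a non-orthogonal rank-one matrix, so the decomposition $\nabla\Phi_j^i=\BA_j^i+\BB_j^i$ with $\BA_j^i$ orthonormal and $\|\BB_j^i\|_{L_\infty(\ws)}\leq M_1$ is not obtained, and everything downstream (Theorem \ref{theo:bent} in the localization) depends on it. The repair is standard but is where the real work sits: rotate in $\BR^N$ so that $e_N$ is aligned with the unit normal to $\Ga^i$ at $x_0$ (equivalently, take $\BA_j^i$ to be that rotation), and then either re-express the boundary as a graph over the rotated hyperplane --- an implicit-function-theorem step whose constants must be shown uniform in $x_0$, using that $\nabla h$ is uniformly bounded and H\"older continuous with exponent $1-N/r>0$ --- or build $\BB_j^i$ directly from the deviation $\nabla h(x')-\nabla h(x_0')$, whose smallness on a ball of radius $d^i$ follows from that H\"older continuity; the cutoff terms are then admissible because $|H(x')|\leq C d^i\sup_{B'}|\nabla H-\nabla H(x_0')|$, which is also why $M_2$ ends up depending on $d^i$ (hence on $M_1$) but stays finite. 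Two smaller points you should also make explicit: for $i=1$ the map must send $\BR_\pm^N$ onto $\Om_\pm$ inside $B_{d^1}(x_j^1)$ (the two-sided interface condition in assertion (2)), which uses $\Om\setminus\Ga=\Om_+\cup\Om_-$ and the separation of $\Ga$ from $\Ga_\pm$ to choose $d^1$; and your quotient $\zeta_j^i=\chi(\cdot-x_j^i)/\sum_{k,\ell}\chi(\cdot-x_\ell^k)$ need not be defined or smooth on all of $\ws$, since the denominator can vanish outside $\overline\Om$ while the numerator does not, so you must truncate or renormalize (e.g.\ compose the denominator with a smooth function equal to $1$ on $[1,\infty)$ and positive everywhere) before asserting $\zeta_j^i\in C^\infty(\ws)$ with uniform $W_\infty^2$ bounds.
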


Since $\mu_\pm(x)$ is uniformly continuous in $\ws$ as was assumed in the assumption (c),
choosing $d^i>0$ smaller, if necessary, allows us to assume that $|\mu_\pm(x)-\mu_\pm(x_j^i)|\leq M_1$ for any $x\in B_{d^i}(x_j^i)$
with $i=1,\dots,5$ and $j\in\BN$.
Moreover, after choosing $M_2$ and $d^i$ according to $M_1$ in Proposition \ref{prop:uniform},
we choose $M_2$ again so large that $\|\nabla\mu_\pm\|_{L_r(B_{d^i}(x_j^i))}\leq M_2$. 
%
%
%
%
Here and in the following, constants denoted by $C$ are independent of $j\in\BN$.
In view of \eqref{normal}, we may assume that unit normal vectors $\Bn_j^i$ to $\Ga_j^i=\Phi_j^i(\BR_0^N)$
($i=1,2,3$, $j\in\BN$) are defined on $\ws$ together with $\|\Bn_j^i\|_{L_\infty(\ws)}= 1$,
and also they satisfy, by Proposition \ref{prop:uniform} (1), the conditions: $\|\nabla\Bn_j^i\|_{L_r(\ws)}\leq CM_2$.
Note that $\Bn=\Bn_j^1$ on $B_{d^1}(x_j^1)\cap \Ga$ and points from $\Om_+$ to $\Om_-$, 
and besides, the unit outward normal $\Bn_\pm$ to $\Ga_\pm$ satisfy $\Bn_+=\Bn_j^2$ on $B_{d^2}(x_j^2)\cap \Ga_+$
and $\Bn_-=\Bn_j^3$ on $B_{d^3}(x_j^3)\cap \Ga_-$, respectively.

Summing up the above properties, we suppose in this section that
\begin{equation}\label{assump:5.1}
	\mu_{\pm 1}\leq \mu_\pm(x_j^i)\leq\mu_{\pm 2}, \quad |\mu_\pm(x)-\mu_{\pm}(x_j^i)|\leq M_1 \quad (x\in B_{d^i}(x_j^i)), \quad
	\|\nabla\mu_\pm\|_{L_r(B_{d^i}(x_j^i))}\leq M_2.
\end{equation}

Let $B_j^i=B_{d^i}(x_j^i)$ with $i=1,\dots,5$ and $j\in\BN$ for short. 
Then, by the finite intersection property stated in Proposition \ref{prop:uniform} \eqref{uniform4},
we see that, for any $s\in[1,\infty)$, there is a positive constant $C_{s,L}$ such that,
for any $f\in L_s(G)$ with an open set $G$ of $\ws$ and for $i=1,\dots,5$,
\begin{equation}\label{finite}
	\Big(\sum_{j=1}^\infty\|f\|_{L_s(G\cap B_j^i)}^s\Big)^{1/s} \leq C_{s,L}\|f\|_{L_s(G)}.
\end{equation}
In fact, when $1\leq s<\infty$,
\begin{align*}
	\sum_{j=1}^\infty \|f\|_{L_s(G\cap B_j^i)}^s = \int_{G} \Big(\sum_{j=1}^\infty \chi_{B_j^i}(x)\Big)|f(x)|^s \intd x
		\leq \Big\|\sum_{j=1}^\infty \chi_{B_j^i}\Big\|_{L_\infty(\ws)}\|f\|_{L_s(G)}^s \leq L\|f\|_{L_s(G)}^s.
\end{align*}


Next we prepare two lemmas used to construct parametrixes.

\begin{lemm}\label{prepa:1}
	Let $X$ be a Banach space and $X^*$ its dual space,
	while $\|\cdot\|_X$, $\|\cdot\|_{X^*}$, and $<\cdot,\cdot>$ be the norm of $X$, the norm of $X^*$,
	and the duality pairing between of $X$ and $X^*$, respectively.
	Let $n\in\BN$, $l=1,\dots,n$, and $\{a_l\}_{l=1}^n\subset\BC$,
	and let $\{f_j^l\}_{j=1}^\infty$ be sequences in $X^*$ and $\{g_j^l\}_{j=1}^\infty$, $\{h_j\}_{j=1}^\infty$ be sequences of positive numbers.
	Assume that there exist maps $\CN_j:X\to[0,\infty)$ such that
	\begin{equation*}
		|<f_j^l,\ph>|\leq M_3 g_j^l\CN_j(\ph) \quad (l=1,\dots,n), \quad
		\Big|\Big<\sum_{l=1}^n a_l f_j^l,\ph\Big>\Big| \leq M_3h_j\CN_j(\ph)
	\end{equation*}
	for any $\ph\in X$ with some positive constant $M_3$ independent of $j\in\BN$ and $l=1,\dots,n$.
	If
	\begin{equation*}
		\sum_{j=1}^\infty \left(g_j^l\right)^q < \infty, \quad 
		\sum_{j=1}^\infty \left(h_j\right)^q < \infty, \quad 
		\sum_{j=1}^\infty\left(\CN_j(\ph)\right)^{q'} \leq \left(M_4\|\ph\|_{X}\right)^{q'}
	\end{equation*}
	with $1<q<\infty$ and $q'=q/(q-1)$ for some positive constant $M_4$,
	then the infinite sum $f^l = \sum_{j=1}^\infty f_j^l$ exists in the strong topology of $X^*$ and
	\begin{equation}\label{160201_1}
		\|f^l\|_{X^*} \leq M_3M_4\Big(\sum_{j=1}^\infty\big(g_j^l\big)^q \Big)^{1/q}, \quad
		\Big\|\sum_{l=1}^n a_l f^l \Big\|_{X^*} \leq M_3 M_4 \Big(\sum_{j=1}^\infty\big(h_j\big)^q\Big)^{1/q}.
	\end{equation}
\end{lemm}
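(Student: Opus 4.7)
The plan is a direct Hölder-type estimate on partial sums. Fix $l\in\{1,\dots,n\}$ and set $S_N^l=\sum_{j=1}^N f_j^l\in X^*$. For any $\ph\in X$ and any $M<N$, the first hypothesis and Hölder's inequality in the index $j$ give
\begin{equation*}
\Big|\Big<S_N^l-S_M^l,\ph\Big>\Big|
\leq M_3\sum_{j=M+1}^N g_j^l\CN_j(\ph)
\leq M_3\Big(\sum_{j=M+1}^N\big(g_j^l\big)^q\Big)^{1/q}
\Big(\sum_{j=M+1}^N\big(\CN_j(\ph)\big)^{q'}\Big)^{1/q'}.
\end{equation*}
Using the hypothesis $\sum_{j=1}^\infty(\CN_j(\ph))^{q'}\leq (M_4\|\ph\|_X)^{q'}$ to bound the last factor by $M_4\|\ph\|_X$ and then taking the supremum over $\ph$ in the unit ball of $X$, we obtain
\begin{equation*}
\|S_N^l-S_M^l\|_{X^*}\leq M_3M_4\Big(\sum_{j=M+1}^\infty\big(g_j^l\big)^q\Big)^{1/q}.
\end{equation*}

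Since $\sum_j(g_j^l)^q<\infty$, the tail goes to zero as $M\to\infty$, so $\{S_N^l\}$ is Cauchy in $X^*$; hence $f^l:=\sum_{j=1}^\infty f_j^l$ exists in the strong topology of $X^*$. Letting $M=0$ and $N\to\infty$ in the same estimate yields the first bound in \eqref{160201_1}. For the second, I apply the identical argument to the partial sums $T_N:=\sum_{j=1}^N\sum_{l=1}^n a_l f_j^l$, using the second hypothesis $|\langle\sum_l a_l f_j^l,\ph\rangle|\leq M_3 h_j\CN_j(\ph)$ in place of the first; this gives
\begin{equation*}
\|T_N-T_M\|_{X^*}\leq M_3M_4\Big(\sum_{j=M+1}^\infty\big(h_j\big)^q\Big)^{1/q},
\end{equation*}
so $T_N$ converges in $X^*$, and since the limit coincides with $\sum_{l=1}^n a_l f^l$ by linearity and the already-established strong convergence of each $S_N^l$, passing to the limit yields the second estimate in \eqref{160201_1}.

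There is no real obstacle: the lemma is essentially the statement that the conditions at hand encode an $\ell^q$-$\ell^{q'}$ duality between the functionals $\{f_j^l\}$ and the ``test'' seminorms $\{\CN_j\}$, and Hölder's inequality together with absolute summability of the $q$-th powers does all the work. The only point to be careful about is that $\CN_j$ is not required to be linear (it is just a nonnegative map), but this plays no role, since the hypotheses are used only to bound $|\langle f_j^l,\ph\rangle|$ pointwise in $\ph$.
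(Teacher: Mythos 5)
Your proof is correct and follows essentially the same route as the paper, which merely records that the partial sums $F_m^l=\sum_{j=1}^m f_j^l$ are Cauchy in $X^*$ and that the estimates then follow; your H\"older-in-$j$ computation is precisely the omitted detail, and your handling of the second bound via the partial sums of $\sum_l a_l f_j^l$ and the identification of their limit with $\sum_l a_l f^l$ is also sound.
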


\begin{proof}
Let $F_m^l = \sum_{j=1}^m f_j^l$.
We can show that $\{F_m\}_{m=1}^\infty$ is a Cauchy sequence in $X^*$, which implies the existence of $f^l$.
Then the estimates \eqref{160201_1} follow immediately.
\end{proof}

The following lemma follows from Lemma \ref{prepa:1} and \eqref{finite}.

\begin{lemm}\label{prepa:2}
	Let $1<q<\infty$, $q'=q/(q-1)$, $i=1,\dots,5$, and $m\in\BN_0$.
	Let $\{f_j\}_{j=1}^\infty$ be a sequence of $W_q^m(\dot\Om)$ and
	let $\{g_j^l\}_{j=1}^\infty$ be sequences of positive numbers for $l=0,1\dots,m$.
	Assume that
	\begin{equation*}
		\sum_{j=1}^\infty \left(g_j^l\right)^q < \infty, \quad
		|(\nabla^l f_j,\ph)_{\dot\Om}| \leq M_5 g_j^l \|\ph\|_{L_{q'}(\dot\Om\cap B_j^i)} \quad 
		\text{for any $\ph\in L_{q'}(\dot\Om)$}
	\end{equation*}
	with some positive constant $M_5$ independent of $j\in\BN$ and $l=0,1\dots,m$.
	Then $f=\sum_{j=1}^\infty f_j$ exists in the strong topology of $W_q^m(\dot\Om)$ and
		$\|\nabla^l f\|_{L_q(\dot\Om)} \leq C_{q,L} M_5 (\sum_{j=1}^\infty (g_j^l)^q )^{1/q}$
	with some positive contatnt $C_{q,L}$.
\end{lemm}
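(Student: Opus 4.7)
The plan is to invoke Lemma \ref{prepa:1} separately at each derivative order $l=0,1,\dots,m$, with the Banach space $X=L_{q'}(\dot\Om)$ (so $X^{*}=L_q(\dot\Om)$) and the sequence $f_j^l=\nabla^l f_j\in L_q(\dot\Om)$. The role of the finite-intersection bound \eqref{finite} is to furnish the abstract uniform estimate on $\CN_j$ required by that lemma. Only the first conclusion of Lemma \ref{prepa:1} is needed; the linear-combination part plays no role here.

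Set $\CN_j(\ph):=\|\ph\|_{L_{q'}(\dot\Om\cap B_j^i)}$ for $\ph\in L_{q'}(\dot\Om)$. The standing hypothesis
\[
|(\nabla^l f_j,\ph)_{\dot\Om}|\leq M_5\, g_j^l\, \CN_j(\ph)
\]
matches Lemma \ref{prepa:1}'s duality bound with $M_3=M_5$, while \eqref{finite} applied with $s=q'$ gives
\[
\Big(\sum_{j=1}^\infty \CN_j(\ph)^{q'}\Big)^{1/q'} \leq C_{q',L}\|\ph\|_{L_{q'}(\dot\Om)},
\]
so $M_4:=C_{q',L}$ is admissible. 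Lemma \ref{prepa:1} then produces, for each fixed $l\in\{0,\dots,m\}$, an element $F^l\in L_q(\dot\Om)$ realized as the strong $L_q$-limit of $\sum_{j=1}^n \nabla^l f_j$ as $n\to\infty$ and satisfying
\[
\|F^l\|_{L_q(\dot\Om)} \leq M_5\, C_{q',L}\Big(\sum_{j=1}^\infty (g_j^l)^q\Big)^{1/q}.
\]
Write $f:=F^0$.

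The remaining step identifies $F^l$ with $\nabla^l f$ in the distributional sense, thereby placing $f\in W_q^m(\dot\Om)$. Fix a multi-index $\al$ with $|\al|=l$ and a scalar test function $\ph\in C_0^\infty(\dot\Om)$. Since $\supp\ph$ is compact in $\dot\Om=\Om_+\cup\Om_-$, hence disjoint from $\Ga\cup\Ga_+\cup\Ga_-$, integration by parts inside each $f_j\in W_q^m(\dot\Om)$ produces no boundary contributions, and strong $L_q$-convergence of the partial sums in both $f_j$ and $D^\al f_j$ gives
\[
((F^l)^\al,\ph)_{\dot\Om}
=\lim_{n\to\infty}\sum_{j=1}^n (D^\al f_j,\ph)_{\dot\Om}
=(-1)^l\lim_{n\to\infty}\sum_{j=1}^n (f_j,D^\al\ph)_{\dot\Om}
=(-1)^l(f,D^\al\ph)_{\dot\Om}.
\]
Hence $D^\al f=(F^l)^\al$ a.e.\ for every $|\al|=l$, so $f\in W_q^m(\dot\Om)$ with $\nabla^l f=F^l$ and the claimed estimate with $C_{q,L}:=C_{q',L}$. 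Strong convergence $\sum_{j=1}^n f_j\to f$ in $W_q^m(\dot\Om)$ is then immediate by reapplying the same estimate to the tails (replacing $g_j^l$ by $g_j^l\chi_{\{j>n\}}$) and using $\sum_{j=1}^\infty (g_j^l)^q<\infty$. The only (mildly) delicate point is the distributional identification in the previous display; because $\ph$ is supported in the open set $\dot\Om$, no genuine obstacle arises.
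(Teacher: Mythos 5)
Your proposal is correct and follows exactly the route the paper intends: the paper's (one-line) proof says the lemma "follows from Lemma \ref{prepa:1} and \eqref{finite}", and you apply Lemma \ref{prepa:1} with $X=L_{q'}(\dot\Om)$, $\CN_j(\ph)=\|\ph\|_{L_{q'}(\dot\Om\cap B_j^i)}$, and \eqref{finite} with $s=q'$ supplying $M_4$, then identify $F^l=\nabla^l f$ distributionally and get $W_q^m$-convergence from the tail estimate. The distributional identification and tail argument are exactly the details the paper leaves implicit, and they are carried out correctly.
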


\begin{rema}\label{rema:normal}
At this point, we have a remark on unit normals $\Bn$, $\Bn_+$.
We can see $\Bn$, $\Bn_+$ as vector functions defined on $\ws$ through the relations:
	$\Bn=\sum_{j=1}^\infty\zeta_j^1\Bn_j^1$, $\Bn_+ = \sum_{j=1}^\infty\zeta_j^2\Bn_j^2$.
Then it is clear that $\Bn = \Bn_j^1 $ in $B_j^1\cap \Ga$ and $\Bn_+ = \Bn_j^2$ in $B_j^2\cap \Ga_+$.
Moreover, we have $\|f\Bn\|_{L_q(\dot\Om)}\leq C\|f\|_{L_q(\dot\Om)}$ for any function $f\in L_q(\dot\Om)$.
In fact, for $f\in L_q(\dot\Om)$ and $\ph\in L_{q'}(\dot\Om)^N$, 
\begin{align*}
	|(f\Bn,\ph)_{\dot\Om}| \leq \sum_{j=1}^\infty\|f\|_{L_q(\dot\Om\cap B_j^1)}\|\ph\|_{L_{q'}(\dot\Om\cap B_j^1)}
	\leq\Big(\sum_{j=1}^\infty\|f\|_{L_q(\dot\Om\cap B_j^1)}^q\Big)^{1/q}\Big(\sum_{j=1}^\infty\|\ph\|_{L_{q'}(\dot\Om\cap B_j^1)}^{q'}\Big)^{1/q'},
\end{align*}
which, combined with \eqref{finite}, furnishes that
$|(f\Bn,\ph)_{\dot\Om}|\leq C\|f\|_{L_q(\dot\Om)}\|\ph\|_{L_{q'}(\dot\Om)}$.
This inequality implies that the required estimate holds.
Similarly, for $g\in W_q^1(\dot\Om)$, we can prove
$\|g\nabla\Bn\|_{L_q(\dot\Om)}\leq C\|g\|_{W_q^1(\dot\Om)}$, $\|g\Bn\|_{W_q^1(\dot\Om)} \leq C\|g\|_{W_q^1(\dot\Om)}$
with help of Lemma \ref{lemm:Shi13}.
It is clear that we can replace $\Bn$ by $\Bn_+$ in the above inequalities.
\end{rema}

\subsection{Local solutions}
In view of \eqref{assump:5.1}, we define local viscosity coefficients $\nu_{\pm j}^i(x)$ by
\begin{equation*}
	\nu_{\pm j}^i(x) = \left(\mu_\pm(x)-\mu_\pm(x_j^i)\right)\wt\zeta_j^i(x)+\mu_\pm(x_j^i).
\end{equation*}
Note that $M_1\leq (1/2)\min(\mu_{+1},\mu_{-1},\mu_{+2},\mu_{-2})$ as was stated in Theorem \ref{theo:bent}.
Then, 
using \eqref{assump:5.1} and setting $\mu_{\pm j}^i=\mu_\pm(x_j^i)$, we have
\begin{equation}\label{151022_2}
	\frac{1}{2}\mu_{\pm1} \leq \nu_{\pm j}^i(x)\leq \frac{3}{2}\mu_{\pm 2}, \quad
	|\nu_{\pm j}^i(x)-\mu_{\pm j}^i|\leq M_1 \quad (x\in\ws), \quad
	\|\nabla\nu_{\pm j}^i\|_{L_r(\ws)}\leq C_{M_2,r}
\end{equation}
with $\mu_{\pm 1}\leq \mu_{\pm j}^i\leq \mu_{\pm 2}$.
In fact, 
	$\|\nabla\nu_{\pm j}^i\|_{L_r(\ws)} \leq C(\|\mu_{\pm}-\mu_{\pm_j}^i\|_{L_r(B_j^i)} + \|\nabla\mu_{\pm}\|_{L_r(B_j^i)})$,
which, combined with the estimate:
\begin{align*}
	&\|\mu_{\pm}-\mu_{\pm_j}^i\|_{L_r(B_j^i)}^r 
		=\int_{B_{d^i}(0)}\Big|\mu_{\pm}(x+x_j^i)-\mu_\pm(x_j^i)\Big|^r\intd x  \\
	&= \int_{B_{d^i}(0)}\Big|\int_0^1(\nabla\mu_\pm)(\te x+ x_j^i)\cdot x\intd \te\Big|^r\intd x 
		\leq (d^i)^r \Big\|\int_0^1(\nabla\mu_\pm)(\te \cdot+ x_j^i)\intd \te\Big\|_{L_r(B_{d^i}(0))}^r \\
		&\leq (d^i)^r \Big(\int_0^1\left\|(\nabla\mu_\pm)(\te \cdot+ x_j^i)\right\|_{L_r(B_{d^i}(0))}\intd \te\Big)^r
		\leq (d^i)^r\|\nabla\mu_\pm\|_{L_r(B_j^i)}^r\Big(\int_0^1 \frac{\intd\te}{\te^{N/r}}\Big)^r,
\end{align*}
furnishes that $\|\nabla\nu_{\pm j}^i\|_{L_r(\ws)} \leq C_{M_2,r}$.
The condition \eqref{151022_2} implies that $\nu_{\pm j}^i(x)$ satisfy \eqref{visco:1}.

Set $\CH_j^0=\Phi_j^1(\ws)$,
$\CH_j^1=\CH_{+j}^1\cup\CH_{-j}^1$ ($\CH_{\pm j}^1 = \Phi_j^1(\BR_\pm^N)$),
$\CH_j^2=\Phi_j^2(\uhs)$,
$\CH_j^3=\Phi_j^3(\lhs)$,
$\CH_j^4=\CH_j^5=\ws$,
$\Ga_j^1=\Phi_j^1(\BR_0^N)$, $\Ga_j^2=\Phi_j^2(\BR_0^N)$, and $\Ga_j^3=\Phi_j^3(\BR_0^N)$ in what follows.
Let us define $\nu_j^i(x)$ and $\rho_j^i(x)$ by
\begin{equation*}
	\nu_j^i(x) = 
		\left\{\begin{aligned}
			&\nu_{+j}^1(x)\chi_{\CH_{+ j}^1}(x)+\nu_{-j}^1(x)\chi_{\CH_{-j}^1}(x), && i=1, \\ 
			&\nu_{+j}^i(x), && i=2,4, \\
			&\nu_{-j}^i(x), && i=3,5, 
		\end{aligned}\right. \quad
	\rho_j^i(x) =
		\left\{\begin{aligned}
			&\rho_+\chi_{\CH_{+ j}^1}(x)+\rho_-\chi_{\CH_{-j}^1}(x), && i=1, \\
			&\rho_+, && i=2,4, \\
			&\rho_-, && i=3,5.
		\end{aligned}\right.
\end{equation*}
We see that, for $i=1,\dots,5$ and $j\in\BN$,
\begin{equation}\label{151022_3}
	\nu_j^i(x) = \mu(x) = \mu_+(x)\chi_{\Om_+}(x) +\mu_-(x)\chi_{\Om_-}(x), 
	\ \rho_j^i(x) = \rho(x) = \rho_+\chi_{\Om_+}(x) + \rho_-\chi_{\Om_-}(x),\ x\in\supp\zeta_j^i, 
\end{equation}
because $\wt\zeta_j^i=1$ on $\supp\zeta_j^i$.
Moreover, we set $\BT_j^i(\Bu,\te)=\nu_j^i(x)\BD(\Bu)-\te\BI$.

Let $(\Bf,\Bh,\Bk) \in X_{\CR,q}(\dot\Om)$.
We consider the following problems:
\begin{equation}\label{eq:loc1}
	\left\{\begin{aligned}
		\la\Bu_j^1 -(\rho_j^1)^{-1}\Di\BT_j^1(\Bu_j^1,K_j^1(\Bu_j^1)) &= \wt\zeta_j^1\Bf && \text{in $\CH_j^1$,} \\
		\jump{\BT_j^1(\Bu_j^1,K_j^1(\Bu_j^1))\Bn_j^1} &= \wt\zeta_j^1\Bh && \text{on $\Ga_j^1$,} \\
		\jump{\Bu_j^1} &= 0 && \text{on $\Ga_j^1$,}
	\end{aligned}\right. 
\end{equation}
and furthermore,
\begin{align}
	\la\Bu_j^2 -(\rho_j^2)^{-1}\Di\BT_j^2(\Bu_j^2,K_j^2(\Bu_j^2)) &= \wt\zeta_j^2\Bf \quad \text{in $\CH_j^2$,} \quad
	\BT_j^2(\Bu_j^2,K_j^2(\Bu_j^2))\Bn_{j}^2 = \wt\zeta_j^2\Bk \quad \text{on $\Ga_j^2$,} \label{eq:loc2} \\
	\la\Bu_j^3 -(\rho_j^3)^{-1}\Di\BT_j^3(\Bu_j^3,K_j^3(\Bu_j^3)) &= \wt\zeta_j^3\Bf \quad \text{in $\CH_j^3$,} \quad
	\Bu_j^3 =0 \quad \text{on $\Ga_j^3$,} \label{eq:loc3} \\
	\la\Bu_j^4 - (\rho_j^4)^{-1}\Di\BT_j^4(\Bu_j^4,K_j^4(\Bu_j^4)) &= \wt\zeta_j^4\Bf \quad \text{in $\CH_j^4$,} \label{eq:loc4} \\
	\la\Bu_j^5 - (\rho_j^5)^{-1}\Di\BT_j^5(\Bu_j^5,K_j^5(\Bu_j^5)) &= \wt\zeta_j^5\Bf \quad \text{in $\CH_j^5$.} \label{eq:loc5}
\end{align}
Here $K_j^i(\Bu_j^i)$ ($i=1,\dots,5$, $j\in\BN$) are given as follows:
For $\Bu_j^1\in W_q^2(\CH_j^1)^N$, 
$K_j^1(\Bu_j^1)\in W_q^1(\CH_j^1) + \wh{W}_q^1(\CH_j^0)$
denotes the unique solution to the weak problem:
\begin{align}\label{151106_2}
	((\rho_j^1)^{-1}\nabla K_j^1(\Bu_j^1),\nabla\ph)_{\CH_j^1} 
		&= ((\rho_j^1)^{-1}\Di(\nu_j^1\BD(\Bu_j^1))-\nabla\di\Bu_j^1,\nabla\ph)_{\CH_j^1} \quad
		\text{for all $\ph\in \wh{W}_{q'}^1(\CH_j^0)$,} \\
	\jump{K_j^1(\Bu_j^1)} 
		&= <\jump{\nu_j^1\BD(\Bu_j^1)\Bn_j^1},\Bn_j^1>-\jump{\di\Bu_j^1} \quad \text{on $\Ga_j^1$} \label{151106_2-2}
\end{align}
with $\|\nabla K_j^1(\Bu_j^1)\|_{L_q(\CH_j^1)}\leq C\|\nabla\Bu_j^1\|_{W_q^1(\CH_j^1)}$;
For $\Bu_j^2\in W_q^2(\CH_j^2)^N$, $K_j^2(\Bu_j^2)\in W_q^1(\CH_j^2)+\wh{W}_{q,0}^1(\CH_j^2)$ denotes the unique solution
to the weak problem:
\begin{align*}
	((\rho_j^2)^{-1}\nabla K_j^2(\Bu_j^2),\nabla\ph)_{\CH_j^2}
		&= ((\rho_j^2)^{-1}\Di(\nu_j^2\BD(\Bu_j^2))-\nabla\di\Bu_j^2,\nabla\ph)_{\CH_j^2}\quad
		\text{for all $\ph\in \wh W_{q',0}^1(\CH_j^2)$,} \\
	K_j^2(\Bu_j^2) &= <\nu_j^2\BD(\Bu_j^2)\Bn_j^2,\Bn_j^2>-\di\Bu_j^2 \quad \text{on $\Ga_j^2$} 
\end{align*}
with $\|\nabla K_j^2(\Bu_j^2)\|_{L_q(\CH_j^2)}\leq C\|\nabla\Bu_j^2\|_{L_q(\CH_j^2)}$;
For $\Bu_j^i\in W_q^2(\CH_j^i)^N$ ($i=3,4,5$),
$K_j^i(\Bu_j^i)\in \wh{W}_q^1(\CH_j^i)$ denotes the unique solution to the weak problem:
\begin{equation*}
	((\rho_j^i)^{-1}\nabla K_j^i(\Bu_j^i),\nabla\ph)_{\CH_j^i}
		= ((\rho_j^i)^{-1}\Di(\nu_j^i\BD(\Bu_j^i))-\nabla\di\Bu_j^i,\nabla\ph)_{\CH_j^i}\quad
		\text{for all $\ph\in \wh{W}_{q'}^1(\CH_j^i)$}
\end{equation*}
with $\|\nabla K_j^i(\Bu_j^i)\|_{L_q(\CH_j^i)}\leq C\|\nabla \Bu_j^i\|_{L_q(\CH_j^i)}$.
%
%
%
%
%
%
%

We know  that the following properties hold for the problems \eqref{eq:loc1}-\eqref{eq:loc5}\footnote[2]
{The existence of $\CR$-bounded solution operator families $\BS_j^1(\la)$, $\BS_j^2(\la)$, $\BS_j^3(\la)$ below follow from
Theorem \ref{theo:bent} and \cite[Theorem 4.1, Theorem 4.4]{Shibata14}, respectively.
In addition, concerning $\BS_j^4(\la)$, $\BS_j^5(\la)$, we can construct such $\CR$-bounded solution operator families
with variable viscosities in $\ws$ under the same condition as \eqref{visco:1} by using Theorem \ref{theo:ws} similarly to Section \ref{sec:bent}.}:
There exist a positive constant $\la_0\geq 1$ and
operator families $\BS_j^i(\la)\in \Hol(\Si_{\ep,\la_0},\CL(\CZ_q^i(\CH_j^i),W_q^2(\CH_j^i)^N))$ with
\begin{equation*}
	\CZ_q^i(\CH_j^i) = \CZ_{\CR,q}(\CH_j^i) \quad (i=1,2), \quad \CZ_q^i(\CH_j^i) = L_q(\CH_j^i)^N \quad (i=3,4,5) 
\end{equation*}
such that, for any $\la\in\Si_{\ep,\la_0}$, 
\begin{equation}\label{defi:S_j^i}
	\Bu_j^1 = \BS_j^1(\la)H_{\CR,\la}(\wt\zeta_j^1\Bf,\wt\zeta_j^1\Bh), \quad
	\Bu_j^2 = \BS_j^2(\la)H_{\CR,\la}(\wt\zeta_j^1\Bf,\wt\zeta_j^1\Bk), \quad
	\Bu_j^i = \BS_j^i(\la)(\wt\zeta_j^i\Bf) \quad (i=3,4,5)
\end{equation}
are unique solutions to \eqref{eq:loc1}-\eqref{eq:loc5}, respectively,
where $\CZ_{\CR,q}$ and $H_{R,\la}$ are given in Theorem \ref{theo:bent}.
In addition, 
\begin{equation}\label{150701_1}
	\CR_{\CL(\CZ_q^i(\CH_j^i),L_q(\CH_j^i)^{\wt N})}\Big(\Big\{\Big(\la\frac{d}{d\la}\Big)^l
	\Big(R_\la \BS_j^i(\la)\Big)\mid \la \in \Si_{\ep,\la_0}\Big\}\Big)
	\leq \ga_{4} \quad (l=0,1)
\end{equation}
with some positive constant $\ga_4$ depending on $\la_0$, but independent of $i=1,\dots,5$ and $j\in\BN$.
Since the $\CR$-boundedness implies the usual boundedness, 
we have, by \eqref{defi:S_j^i} and \eqref{150701_1} with $l=0$,  
\begin{align}\label{151107_1}
	\|R_\la\Bu_j^1\|_{L_q(\CH_j^1)} 
		&\leq \ga_4\big(\|(\Bf,\nabla\Bh,\la^{1/2}\Bh)\|_{L_q(\dot\Om\cap B_j^1)}+\|\Bh\|_{W_q^1(\dot\Om\cap B_j^1)}\big), \\
	\|R_\la\Bu_j^2\|_{L_q(\CH_j^2)}
		&\leq \ga_4 \big(\|\Bf\|_{L_q(\dot\Om\cap B_j^2)}+\|(\nabla\Bk,\la^{1/2}\Bk)\|_{L_q(\Om_+\cap B_j^2)}+\|\Bk\|_{W_q^1(\Om_+\cap B_j^2)}\big), \nonumber \\
	\|R_\la\Bu_j^i\|_{L_q(\CH_j^i)}
		&\leq \ga_4\|\Bf\|_{L_q(\dot\Om\cap B_j^i)} \quad (i=3,4,5) \nonumber
\end{align}
for any $\la\in \Si_{\ep,\la_0}$, noting $|\la|^{-1/2}\leq\la_0^{-1/2}$.

\subsection{Construction of parametrices}
For $(\Bf,\Bh,\Bk)\in X_{\CR,q}(\dot\Om)$, we consider the two-phase reduced Stokes equations \eqref{redu-eq:1}.
By Lemma \ref{prepa:2}, together with \eqref{finite}, \eqref{151107_1}, 
we see that the infinite sum $\sum_{i=1}^5\sum_{j=1}^\infty \zeta_j^i\Bu_j^i$ exists in the strong topology of $W_q^2(\dot\Om)^N$,
so that let us define $\Bu$ by 
\begin{equation}\label{conv:u}
	\Bu = \sum_{i=1}^5\sum_{j=1}^\infty \zeta_j^i\Bu_j^i \quad \text{in $W_q^2(\dot\Om)^N$}.
\end{equation}
Then, noting \eqref{151022_3}, $\Bn=\Bn_j^1$ on $\supp\zeta_j^1\cap \Ga$, and $\Bn_+=\Bn_j^2$ on $\supp\zeta_j^2\cap \Ga_+$, we have
\begin{equation*}
	\left\{\begin{aligned}
		\la\Bu- \rho^{-1}\Di\BT(\Bu,K(\Bu)) &= \Bf- \CU^0(\la)(\Bf,\Bh,\Bk) && \text{in $\dot\Om$,} \\
		\jump{\BT(\Bu,K(\Bu))\Bn} &= \jump{\Bh} - \jump{\CU^1(\la)(\Bf,\Bh,\Bk)} && \text{on $\Ga$,} \\
		\jump{\Bu} &= 0 && \text{on $\Ga$,} \\
		\BT(\Bu,K(\Bu))\Bn_+ &= \Bk - \CU^2(\la)(\Bf,\Bh,\Bk) && \text{on $\Ga_+$,} \\
		\Bu &= 0 && \text{on $\Ga_-$,}
	\end{aligned}\right.
\end{equation*}
where we have set
\begin{align}\label{def:CV}
	\CU^i(\la)(\Bf,\Bh,\Bk) = &\CV^i(\la)(\Bf,\Bh,\Bk)+\CP^i(\la)(\Bf,\Bh,\Bk) \quad (i=0,1,2), \quad \\
	\CV^0(\la)(\Bf,\Bh,\Bk) =&
			\sum_{i=1}^5\sum_{j=1}^\infty
			(\rho_j^i)^{-1}\Big\{\zeta_j^i\Di(\nu_j^i\BD(\Bu_j^i))-\Di(\nu_j^i\BD(\zeta_j^i\Bu_j^i))\Big\}, \nonumber \\
	\CV^i(\la)(\Bf,\Bh,\Bk) =&
			\sum_{j=1}^\infty\Big\{\nu_j^i\BD(\zeta_j^i\Bu_j^i)\Bn_j^i-\zeta_j^i\nu_j^i\BD(\Bu_j^i)\Bn_j^i\Big\} \quad (i=1,2), \nonumber \\
	\CP^0(\la)(\Bf,\Bh,\Bk) =&
			\sum_{i=1}^5\sum_{j=1}^\infty(\rho_j^i)^{-1}\Big\{\nabla K (\zeta_j^i\Bu_j^i)-\zeta_j^i\nabla K_j^i(\Bu_j^i)\Big\}, \nonumber \\
	\CP^i(\la)(\Bf,\Bh,\Bk) 
			=&\sum_{j=1}^\infty \Big\{\zeta_j^i K_j^i(\Bu_j^i)\Bn_j^i - K(\zeta_j^i\Bu_j^i)\Bn_j^i\Big\} \quad (i=1,2). \nonumber
\end{align}
Here we have used the fact that 
\begin{equation}\label{160207_1}
	\nabla K(\Bu) = \sum_{i=1}^5\sum_{j=1}^\infty \nabla K(\zeta_j^i\Bu_j^i) \quad \text{in $L_q(\dot\Om)^N$}, \quad
	K(\Bu) = \sum_{j=1}^\infty K(\zeta_j^i\Bu_j^i) \quad \text{in $W_q^{1-1/q}(\Ga^i)$} \quad (i=1,2).
\end{equation}
In fact, we have the following observation: In view of Subsection \ref{sub:reduced}, we see by \eqref{151022_3} that
\begin{equation*}
	K\left(\zeta_j^i\Bu_j^i\right)=\CK\left((\rho_j^i)^{-1}\Di(\nu_j^i\BD(\zeta_j^i\Bu_j^i))-\nabla\di(\zeta_j^i\Bu_j^i),\jump{g_j^i},h_j^i|_{\Ga_+}\right),
\end{equation*}
where $\cdot\,|_{\Ga_+}$ denotes the trace to $\Ga_+$ and 
\begin{align*}
	&\left(g_j^1, h_j^1\right) = \left(<\nu_j^1\BD(\zeta_j^1\Bu_j^1)\Bn_j^1,\Bn_j^1> -\di(\zeta_j^1\Bu_j^1), 0 \right), \displaybreak[0] \\ 
	&\left(g_j^2, h_j^2\right) = \left(0,<\nu_j^2\BD(\zeta_j^2\Bu_j^2)\Bn_j^2,\Bn_j^2> -\di(\zeta_j^2\Bu_j^2) \right), \quad
	g_j^i=h_j^i =0 \quad  (i=3,4,5).
\end{align*}
On the other hand, by \eqref{151022_3} and \eqref{conv:u},
\begin{align*}
	&\rho^{-1}\Di(\mu\BD(\Bu))-\nabla\di\Bu
		= \sum_{i=1}^5\sum_{j=1}^\infty\left((\rho_j^i)^{-1}\Di(\nu_j^i\BD(\zeta_j^i\Bu_j^i))-\nabla\di(\zeta_j^i\Bu_j^i)\right) \quad \text{in $L_q(\dot\Om)^N$}, \\
	&<\jump{\mu\BD(\Bu)\Bn},\Bn> -\jump{\di\Bu} 
		= \sum_{j=1}^\infty\left(<\jump{\nu_j^1\BD(\zeta_j^1\Bu_j^1)\Bn_j^1},\Bn_j^1>-\jump{\di(\zeta_j^1\Bu_j^1)}\right) \quad \text{in $W_q^{1-1/q}(\Ga)$}, \\
	&<\mu\BD(\Bu)\Bn_+,\Bn_+> -\di\Bu 
		= \sum_{j=1}^\infty\left(<\nu_j^2\BD(\zeta_j^2\Bu_j^2)\Bn_j^2,\Bn_j^2>-\di(\zeta_j^2\Bu_j^2)\right) \quad \text{in $W_q^{1-1/q}(\Ga_+)$}. 
\end{align*}
Thus the continuity of $\CK$ implies \eqref{160207_1} and $K(\Bu)=\CK(\al,\beta,\ga)$ with $(\al,\beta,\ga)$ given by \eqref{abc}.

Now it holds that, by \eqref{defi:S_j^i},
\begin{align*}
	&\Bu 
	=\sum_{j=1}^\infty \zeta_j^1\BS_j^1(\la)
		\left(\wt\zeta_j^1\Bf,\wt\zeta_j^1(\nabla\Bh)+\la^{-1/2}(\nabla\wt\zeta_j^1)(\la^{1/2}\Bh),\wt\zeta_j^1(\la^{1/2}\Bh),\wt\zeta_j^1\Bh\right) \\
	&+\sum_{j=1}^\infty \zeta_j^2\BS_j^2(\la)\left(\wt\zeta_j^2\Bf,\wt\zeta_j^2(\nabla\Bk)+\la^{-1/2}(\nabla\wt\zeta_j^2)(\la^{1/2}\Bk),				
		\wt\zeta_j^2(\la^{1/2}\Bk),\wt\zeta_j^2\Bk\right)
	+\sum_{i=3}^5\sum_{j=1}^\infty\zeta_j^i\BS_j^i(\la)\left(\wt\zeta_j^i\Bf\right),
\end{align*}
so that we set, by $\BH=(H_1,\dots,H_7)\footnote[2]{As was mentioned in Remark \ref{rema:div} \eqref{rema:div3},
the symbols $H_1$, $H_2$, $H_3$, $H_4$, $H_5$, $H_6$, and $H_7$
are variables corresponding to $\Bf$, $\nabla\Bh$, $\la^{1/2}\Bh$, $\Bh$, $\nabla\Bk$, $\la^{1/2}\Bk$, and $\Bk$, respectively.}\in\CX_{\CR,q}(\dot\Om)$,
\begin{align}\label{op:S}
	\CS_j^1(\la) \BH
		&=\BS_j^1(\la)\big(\wt\zeta_j^1H_1,\wt\zeta_j^1H_2+\la^{-1/2}(\nabla\wt\zeta_j^1)H_3,\wt\zeta_j^1H_3,\wt\zeta_j^1H_4\big), \displaybreak[0] \\
	\CS_j^2(\la) \BH
		&=\BS_j^2(\la)\big(\wt\zeta_j^2H_1,\wt\zeta_j^2 H_5+\la^{-1/2}(\nabla\wt\zeta_j^2)H_6,\wt\zeta_j^2H_6,\wt\zeta_j^2H_7\big), \nonumber \displaybreak[0] \\
	\CS_j^i(\la) \BH
		&= \BS_j^i(\la)\big(\wt\zeta_j^i H_1\big) \quad (i=3,4,5). \nonumber
\end{align}
It then clear that $\Bu=\sum_{i=1}^5\sum_{j=1}^\infty\zeta_j^i\CS_j^i(\la)F_{\CR,\la}(\Bf,\Bh,\Bk)$.
By \eqref{150701_1} with Definition \ref{defi:R},
it holds that
\begin{equation}\label{160201_10}
	\int_0^1 \Big\|\sum_{l=1}^n r_l(u)R_{\la_l}(\zeta_j^i\CS_j^i(\la_l)\BH_l)\Big\|_{L_q(\dot\Om)}^q\intd u
		\leq\ga_4\int_0^1\Big\|\sum_{l=1}^n r_l(u)\BH_l\Big\|_{\CX_{\CR,q}(\dot\Om\cap B_j^i)}^q\intd u
\end{equation}
for any $n\in\BN$, $\{\la_l\}_{l=1}^n\subset\Si_{\ep,\la_0}$, and $\{\BH_l\}_{l=1}^n\subset \CX_{\CR,q}(\dot\Om)$.
The inequality \eqref{160201_10} with $n=1$, together with Lemma \ref{prepa:2}, yields that
the infinite sum $\sum_{j=1}^\infty\zeta_j^i\CS_j^i(\la)\BH$ exists in the strong topology of $W_q^2(\dot\Om)^N$,
so that let us define $\CT^i(\la)\BH = \sum_{j=1}^\infty\zeta_j^i\CS_j^i(\la)\BH$ for $i=1,\dots,5$.
In addition, by Lemma \ref{prepa:1},  
\begin{equation*}
	\Big\|\sum_{l=1}^n a_l R_{\la_l}\CT^i(\la_l)\BH_l\Big\|_{L_q(\dot\Om)}^q \leq
		\ga_4\sum_{j=1}^\infty\Big\|\sum_{l=1}^n a_l R_{\la_l}\Big(\zeta_j^i\CS_j^i(\la_l)\BH_l\Big)\Big\|_{L_q(\dot\Om)}^q \quad (i=1,\dots,5)
\end{equation*}
for any $n\in\BN$, $\{a_l\}_{l=1}^n\subset\BC$, $\{\la_l\}_{l=1}^n\subset\Si_{\ep,\la_0}$, and $\{\BH_l\}_{l=1}^n\subset\CX_{\CR,q}(\dot\Om)$.
The last inequality combined with \eqref{finite}, \eqref{160201_10} furnishes that, by the monotone convergence theorem,
\begin{align*}
	&\int_0^1\Big\|\sum_{l=1}^n r_l(u)R_{\la_l}\CT^i(\la_l)\BH_l\Big\|_{L_q(\dot\Om)}^q\intd u \\
	&\leq \ga_4\sum_{j=1}^\infty \int_0^1\Big\|\sum_{l=1}^n r_l(u)\BH_l \Big\|_{\CX_{\CR,q}(\dot\Om\cap B_j^i)}^q\intd u 
	\leq \ga_4  \int_0^1 \Big\|\sum_{l=1}^n r_l(u)\BH_l \Big\|_{\CX_{\CR,q}(\dot\Om)}^q \intd u,
\end{align*}
which implies that, for $i=1,\dots,5$,
\begin{equation*}
	\CT^i(\la) \in \Hol(\Si_{\ep,\la_0},\CL(\CX_{\CR,q}(\dot\Om),W_q^2(\dot\Om)^N))\footnote[3]{Holomorphic property can be proved in the same manner
	as in \cite[Proposition 5.3 (ii)]{Shibata13}.},
	\quad
	\CR_{\CL(\CX_{\CR,q}(\dot\Om),L_q(\dot\Om)^{\wt N})}
	\left(\left\{R_\la\CT^i(\la)\mid\la\in\Si_{\ep,\la_0}\right\}\right) \leq \ga_4.
\end{equation*}
Analogously, we have the $\CR$-boundedness of $\{(\la \frac{d}{d\la}) (R_\la \CT^i(\la)) \mid \la\in\Si_{\ep,\la_0}\}$
on $\CL(\CX_{\CR,q}(\dot\Om),L_q(\dot\Om)^{\wt{N}})$.
Thus, setting $\BS(\la)\BH = \sum_{i=1}^5\CT^i(\la)\BH$ yields that, by Proposition \ref{prop:R}, 
\begin{align}\label{160207_12}
	&\BS(\la)\in \Hol(\Si_{\ep,\la_0},\CL(\CX_{\CR,q}(\dot\Om),W_q^2(\dot\Om)^N)), \quad \Bu = \BS(\la)F_{\CR,\la}(\Bf,\Bh,\Bk), \\
	&\CR_{\CL(\CX_{\CR,q}(\dot\Om),L_q(\dot\Om)^{\wt N})}
	\Big(\Big\{\Big(\la\frac{d}{d\la}\Big)^l\Big(R_\la\BS(\la)\Big)\Big\}\Big) \leq \ga_4 \quad (l=0,1). \nonumber
\end{align}

\subsection{Estimates of the remainder terms $\CU^i(\la)(\Bf,\Bh,\Bk)$}
In this subsection, we prove the following lemma.

\begin{lemm}\label{lemm:CV}
	Let $\la_0$ and $\ga_4$ be the same numbers as in \eqref{150701_1}.
	Let $\CU^i(\la)$, $\CV^i(\la)$, and $\CP^i(\la)$ $(i=0,1,2)$ be the operators defined in \eqref{def:CV} and set
	\begin{align*}	
		\CU(\la)(\Bf,\Bh,\Bk) &= \CV(\la)(\Bf,\Bh,\Bk) + \CP(\la)(\Bf,\Bh,\Bk), \\
		\CV(\la)(\Bf,\Bh,\Bk) &= (\CV^0(\la)(\Bf,\Bh,\Bk),\CV^1(\la)(\Bf,\Bh,\Bk), \CV^2(\la)(\Bf,\Bh,\Bk)), \\
		\CP(\la)(\Bf,\Bh,\Bk) &= (\CP^0(\la)(\Bf,\Bh,\Bk),\CP^1(\la)(\Bf,\Bh,\Bk), \CP^2(\la)(\Bf,\Bh,\Bk)).
	\end{align*}
	Then there exists an operator family $\BU(\la)\in \Hol(\Si_{\ep,\la_1},\CL(\CX_{\CR,q}(\dot\Om)))$
	such that
	\begin{align*}
		&\CU(\la)(\Bf,\Bh,\Bk) = \BU(\la)F_{\CR,\la}(\Bf,\Bh,\Bk) \quad \text{for $(\Bf,\Bh,\Bk)\in X_{\CR,q}(\dot\Om)$}, \\ 
		&\CR_{\CL(\CX_{\CR,q}(\dot\Om))}\Big(\Big\{\Big(\la\frac{d}{d\la}\Big)^l F_{\CR,\la}\BU(\la) \mid \la \in\Si_{\ep,\la_1}\Big\}\Big)
			\leq \ga_4(\si_2+ \ga_{\si_2}\si_1 +  \ga_{\si_1}\ga_{\si_2}\la_1^{-1/2}) \quad (l=0,1) 
	\end{align*}
	for any $\si_1,\si_2>0$ and for any $\la_1\geq\la_0$.
	Here and subsequently, $\ga_{\si_1}$, $\ga_{\si_2}$ are positive constants depending on $\si_1$, $\si_2$, respectively.
\end{lemm}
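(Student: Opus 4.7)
The plan is to compute each commutator explicitly, observe that the leading second-order derivatives of $\Bu_j^i$ always cancel so that only terms containing $\nabla\zeta_j^i$ or $\nabla^2\zeta_j^i$ remain, and then sum over $j$ using Lemma \ref{prepa:1}, Lemma \ref{prepa:2} and the finite overlap property \eqref{finite}. For $\CV^0$, expanding
\[
	\BD(\zeta_j^i\Bu_j^i) = \zeta_j^i\BD(\Bu_j^i) + (\nabla\zeta_j^i)\otimes\Bu_j^i + \Bu_j^i\otimes(\nabla\zeta_j^i)
\]
and using the product rule for $\Di$, the principal part $\zeta_j^i\Di(\nu_j^i\BD(\Bu_j^i))$ cancels, leaving only terms of the schematic form $(\rho_j^i)^{-1}\nu_j^i(\nabla^2\zeta_j^i)\cdot\Bu_j^i$, $(\rho_j^i)^{-1}\nu_j^i(\nabla\zeta_j^i)\cdot(\nabla\Bu_j^i)$ and $(\rho_j^i)^{-1}(\nabla\nu_j^i)(\nabla\zeta_j^i)\cdot\Bu_j^i$. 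The analogous computation for $\CV^1$ and $\CV^2$ produces only terms of the form $\nu_j^i(\nabla\zeta_j^i)\cdot\Bu_j^i$ on the corresponding interface, which are first-order in $\Bu_j^i$.

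For the pressure commutators, I would use the observation that, since $\supp\zeta_j^i\subset B_j^i$ and $\wt\zeta_j^i=1$ on $\supp\zeta_j^i$, the identities $\rho=\rho_j^i$ and $\mu=\nu_j^i$ hold on $\supp\zeta_j^i$ by \eqref{151022_3}, and $B_j^i$ intersects at most one component of $\partial\Om$. Hence $\zeta_j^i K_j^i(\Bu_j^i)$, viewed as a function on $\Om$, satisfies a weak transmission problem with the same left-hand side as $K(\zeta_j^i\Bu_j^i)$ but with right-hand side consisting of commutator terms of the form $(\nabla\zeta_j^i)\cdot\nabla K_j^i(\Bu_j^i)$ and with boundary/interface data involving only $(\nabla\zeta_j^i)\otimes\Bu_j^i$ contractions. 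The continuity of $\CK$ from Remark \ref{rema:weakDN} \eqref{rema:weakDN2} combined with the a~priori estimate $\|\nabla K_j^i(\Bu_j^i)\|_{L_q(\CH_j^i)}\leq C\|\nabla\Bu_j^i\|_{W_q^1(\CH_j^i)}$ therefore yields
\[
	\|\nabla\bigl(K(\zeta_j^i\Bu_j^i)-\zeta_j^i K_j^i(\Bu_j^i)\bigr)\|_{L_q(\dot\Om)}
	\leq C\bigl(\|\Bu_j^i\|_{W_q^1(\dot\Om\cap B_j^i)} + \|K_j^i(\Bu_j^i)\|_{L_q(\dot\Om\cap B_j^i)}\bigr),
\]
and a parallel bound on $W_q^{1-1/q}(\Ga^i)$ for $\CP^1,\CP^2$.

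The next step is to convert the remaining first-order norms into quantities of smallness. Any term of the form $(\nabla\zeta_j^i)\cdot\nabla\Bu_j^i$ is bounded by $\la_1^{-1/2}\|\la^{1/2}\nabla\Bu_j^i\|_{L_q(\dot\Om\cap B_j^i)}$ since $|\la|\geq\la_1\geq 1$, which produces the factor $\la_1^{-1/2}$; any term involving $\nabla\nu_j^i$ or $\nabla\mu$ is controlled by Proposition \ref{lemm:Shi13} applied with parameter $\si_2$, yielding $\si_2\|\nabla^2\Bu_j^i\|_{L_q}+\ga_{\si_2}\|\nabla\Bu_j^i\|_{L_q}$; and any term involving $K_j^i(\Bu_j^i)$ itself (not its gradient) is controlled by a second application of Proposition \ref{lemm:Shi13} with parameter $\si_1$, at the cost of the constant $\ga_{\si_1}$. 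Combining these three reductions and recalling from \eqref{151107_1} that $\|R_\la\Bu_j^i\|_{L_q(\CH_j^i)}\leq\ga_4\|\BH\|_{\CX_{\CR,q}(\dot\Om\cap B_j^i)}$ with $\BH=F_{\CR,\la}(\Bf,\Bh,\Bk)$, one obtains the local estimate of the form $\ga_4(\si_2+\ga_{\si_2}\si_1+\ga_{\si_1}\ga_{\si_2}\la_1^{-1/2})\|\BH\|_{\CX_{\CR,q}(\dot\Om\cap B_j^i)}$ for each summand.

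Summation is then carried out by defining
\[
	\BU(\la)\BH = \sum_{i=1}^5\sum_{j=1}^\infty\bigl(\text{commutator pieces applied to }\CS_j^i(\la)\BH\bigr)
\]
and testing against $\ph\in\CX_{\CR,q'}(\dot\Om)$ as in Lemma \ref{prepa:1}, together with the finite overlap \eqref{finite}. Holomorphy in $\la$ and the $\CR$-bound with $l=1$ both follow by the same argument applied to $(\la\,d/d\la)\BS_j^i(\la)$, whose $\CR$-bound is uniform in $j$ by \eqref{150701_1}. The main obstacle I anticipate is the detailed verification for $\CP^0$: even though the support of $\zeta_j^i$ is contained in one patch, one must check that the weak transmission problem defining $K(\zeta_j^i\Bu_j^i)$ on $\Om$ and the one defining $K_j^i$ on $\CH_j^i$ are compatible under localization---in particular, that the jumps across $\Ga$ and the traces on $\Ga_+$ produced by the cutoff match the data slot of $\CK$ and can be absorbed into the $\si_1$-Shibata estimate without losing the $L_q$-structure. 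Once this compatibility is established, the remaining estimates are routine commutator calculus.
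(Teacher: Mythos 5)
Your treatment of the velocity commutators $\CV^0,\CV^1,\CV^2$ matches the paper's Step 1: the principal parts cancel, the remaining terms are of order at most one in $\Bu_j^i$, and after invoking \eqref{150701_1} together with Proposition \ref{lemm:multi} each such term gains a factor $\la_1^{-1/2}$ (in the paper this part needs no smallness parameters at all; the $\si_1,\si_2$ enter only through the pressure terms). The genuine gap is in your handling of $\CP^0,\CP^1,\CP^2$, and it is twofold. First, the term $(\rho_j^i)^{-1}(\nabla\zeta_j^i)K_j^i(\Bu_j^i)$ requires a \emph{local $L_q$ bound on the localized pressure itself}, not on its gradient. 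You propose to control ``any term involving $K_j^i(\Bu_j^i)$ itself'' by Proposition \ref{lemm:Shi13} with parameter $\si_1$, but that proposition estimates products $ab$ with $a\in L_r(\dot\Om)$ and $b\in W_q^1(\dot\Om)$; it says nothing about $\|K_j^i(\Bu_j^i)\|_{L_q(\CH_j^i\cap B_j^i)}$, since $K_j^i(\Bu_j^i)$ lies only in $W_q^1(\CH_j^i)+\wh{W}_q^1(\CH_j^0)$ (or $\wh W_q^1$) and carries no a priori local $L_q$ control. The paper supplies exactly this missing ingredient as Lemma \ref{lemm:K}, proved by a duality argument: one tests against a mean-zero $\wt\psi$, solves an auxiliary two-phase elliptic transmission problem $-\De\Psi=\wt\psi$ with matching jump conditions, and uses the uniform Poincar\'e inequalities (Lemma \ref{lemm:Po}) and the trace inequalities \eqref{trace}; Young's inequality then yields $\|K_j^i(\Bu)\|_{L_q(\CH_j^i\cap B_j^i)}\le \si_1\|\nabla^2\Bu\|_{L_q(\CH_j^i)}+\ga_{\si_1}\|\nabla\Bu\|_{L_q(\CH_j^i)}$, which is where both $\si_1$ and the interpolation-type structure behind $\si_2$ actually originate.

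Second, for $\nabla\bigl(K(\zeta_j^i\Bu_j^i)-\zeta_j^iK_j^i(\Bu_j^i)\bigr)$ you appeal to ``the continuity of $\CK$,'' but $\CK$ only accepts data of the form $(\al,\nabla\ph)_{\dot\Om}$ with $\al\in L_q$, plus jump and trace data; the commutator weak formulation inevitably produces pairings of $K_j^i(\Bu_j^i)$ and of $\nu_j^i\BD(\Bu_j^i)$ against $\ph$ itself and against boundary integrals over $\Ga_j^1,\Ga_j^3$. The paper converts these into admissible data by subtracting the constants $c_j^i(\ph)$ (Lemma \ref{lemm:Po}, with constants uniform in $j$), estimating the resulting functional $I_j^i$ in $\CW_q^{-1}(\Om)$, and then representing it as $(\BF(I_j^i),\nabla\ph)_\Om$ via a Hahn--Banach operator $\BF\in\CL(\CW_q^{-1}(\Om),L_q(\Om)^N)$ before feeding it to $\CK$; the interface terms $[\CB_j^i(\cdot),\ph]$ are estimated with \eqref{trace} and again Lemma \ref{lemm:K}, which is how the bound $\ga_4(\si_2+\si_1\ga_{\si_2}+\ga_{\si_1}\ga_{\si_2}\la_1^{-1/2})$ arises. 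You explicitly flag this compatibility question as ``the main obstacle I anticipate'' but leave it unresolved, and your fallback tools (continuity of $\CK$ plus Proposition \ref{lemm:Shi13}) do not close it. Your summation scheme via Lemma \ref{prepa:1}, Lemma \ref{prepa:2} and \eqref{finite} is the right one and matches the paper, but without Lemma \ref{lemm:K} and the $\BF$-representation the local estimates it is supposed to sum are not available.
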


\begin{proof}
{\bf Step 1: Case $\CV^i(\la)$.}
First we consider $\CV^0(\la)(\Bf,\Bh,\Bk)$. 
We write $\Di(\mu\BD(\ph\Bu))-\ph\Di(\mu\BD(\Bu))=\CC_1(\mu,\ph)\nabla\Bu+\CC_0(\mu,\ph)\Bu$ for any
scalar functions $\mu$, $\ph$ and for any $N$-vector function $\Bu$, where we have set
\begin{align*}
	&\CC_0(\mu,\ph)\Bu
		= <\nabla\mu,\Bu>\nabla\ph+<\nabla\mu,\nabla\ph>\Bu
		+ \mu\left\{(\nabla^2\ph)\Bu+(\De\ph)\Bu\right\}, \\
	&\CC_1(\mu,\ph)\nabla\Bu
		= \mu\left\{\BD(\Bu)\nabla\ph+(\nabla\ph)\di\Bu+(\nabla\Bu)\nabla\ph\right\}.
\end{align*}
Using the above symbols $\CC_0$, $\CC_1$ and \eqref{op:S}, we write
\begin{equation*}
	\Di(\nu_j^i\BD(\zeta_j^i\Bu_j^i))-\zeta_j^i\Di(\nu_j^i\BD(\Bu_j^i))
		=\CC_1(\nu_j^i,\zeta_j^i)\nabla\CS_j^i(\la)F_{\CR,\la}(\Bf,\Bh,\Bk)
		+\CC_0(\nu_j^i,\zeta_j^i)\CS_j^i(\la)F_{\CR,\la}(\Bf,\Bh,\Bk)
\end{equation*}
for $i=1,\dots,5$ and $j\in\BN$.
By \eqref{151022_2} and Proposition \ref{lemm:Shi13} with $\si=1$, we have, for $\BH\in\CX_{\CR,q}(\dot\Om)$,
\begin{equation*}
	\|\CC_0(\nu_j^i,\zeta_j^i)\CS_j^i(\la)\BH\|_{L_q(\dot\Om)} \leq C\|\CS_j^i(\la)\BH\|_{W_q^1(\CH_j^i)}, \quad
	\|\CC_1(\nu_j^i,\zeta_j^i)\nabla\CS_j^i(\la)\BH\|_{L_q(\dot\Om)} \leq C\|\nabla\CS_j^i(\la)\BH\|_{L_q(\CH_j^i)},
\end{equation*}
which, combined with \eqref{150701_1} and Proposition \ref{lemm:multi}, funishes that
\begin{align}\label{160207_13}
	&\int_0^1\Big\|\sum_{l=1}^n r_l(u)\CC_1(\nu_j^i,\zeta_j^i) \nabla\CS_j^i(\la_l)\BH_l\Big\|_{L_q(\dot\Om)}^q\intd u
		\leq (\ga_4\la_1^{-1/2})^q \int_0^1\Big\|\sum_{l=1}^nr_l(u) \BH_l\Big\|_{\CX_{\CR,q}(\dot\Om\cap B_j^i)}^q\intd u, \\
	&\int_0^1\Big\|\sum_{l=1}^n r_l(u)\CC_0(\nu_j^i,\zeta_j^i) \CS_j^i(\la_l)\BH_l\Big\|_{L_q(\dot\Om)}^q\intd u
		\leq (\ga_4\la_1^{-1})^q \int_0^1\Big\|\sum_{l=1}^nr_l(u) \BH_l\Big\|_{\CX_{\CR,q}(\dot\Om\cap B_j^i)}^q\intd u \nonumber
\end{align}
for any $\la_1\geq\la_0$ and for any $n\in\BN$, $\{\la_l\}_{l=1}^n\subset\Si_{\ep,\la_1}$, and $\{\BH_l\}_{l=1}^n\subset\CX_{\CR,q}(\dot\Om)$.
Define $\BV^0(\la)\BH$ as
\begin{equation*}
	\BV^0(\la)\BH
		=\sum_{i=1}^5\sum_{j=1}^\infty
		\Big\{	\CC_1(\nu_j^i,\zeta_j^i)\nabla\CS_j^i(\la)\BH
		+\CC_0(\nu_j^i,\zeta_j^i)\CS_j^i(\la)\BH \Big\} \quad \text{with $\BH\in\CX_{\CR,q}(\dot\Om)$}.
\end{equation*}
%
%
In the same manner as we have obtained \eqref{160207_12} from \eqref{160201_10},
we can prove, by \eqref{160207_13}, the following properties: 
\begin{align}\label{fin:1}
	&\BV^0(\la)\in\Hol(\Si_{\ep,\la_1},\CL(\CX_{\CR,q}(\dot\Om),L_q(\dot\Om)^N)), \quad
	\CV^0(\la)(\Bf,\Bh,\Bk) = \BV^0(\la)F_{\CR,\la}(\Bf,\Bh,\Bk) \\
	&\CR_{\CL(\CX_{\CR,q}(\dot\Om),L_q(\dot\Om)^N)}\Big(\Big\{\Big(\la\frac{d}{d\la}\Big)^l\BV^0(\la)\mid \la\in\Si_{\ep,\la_1}\Big\}\Big)
		\leq \ga_4\la_1^{-1/2} \quad (l=0,1) \nonumber
\end{align}
for any $\la_1\geq\la_0$.
Here and hereafter, $\la_1$ denotes any number satisfying $\la_1\geq\la_0$.
Analogously, we can construct operator families $\BV^i(\la)$ $(i=1,2)$ such that
\begin{align}\label{fin:2}
	&\BV^i(\la)\in\Hol(\Si_{\ep,\la_1},\CL(\CX_{\CR,q}(\dot\Om),W_q^1(\dot\Om)^N)), \quad
	\CV^i(\la)(\Bf,\Bh,\Bk) = \BV^i(\la)F_{\CR,\la}(\Bf,\Bh,\Bk), \\
	&\CR_{\CL(\CX_{\CR,q}(\dot\Om),\wt\CX_{\CR,q}(\dot\Om))}
	\Big(\Big\{\Big(\la\frac{d}{d\la}\Big)^l\big(\wt F_{\CR,\la}\BV^i(\la)\big)\mid\la\in\Si_{\ep,\la_1}\Big\}\Big) \leq \ga_4\la_1^{-1/2}
	\quad (l=0,1,i=1,2) \nonumber
\end{align}
for any $\la_1\geq\la_0$.
Here and hereafter, we set
\begin{equation*}
	\wt \CX_{\CR,q}(\dot\Om) = L_q(\dot\Om)^{N^2}\times L_q(\dot\Om)^N\times W_q^1(\dot\Om)^N, \quad
	\wt F_{\CR,\la}\Bu = (\nabla\Bu,\la^{1/2}\Bu,\Bu).
\end{equation*}

\noindent{\bf Step 2: Case $\CP^i(\la)$.}
We consider the term:
\begin{equation*}
	(\rho_j^i)^{-1}\{\nabla K(\zeta_j^i\Bu_j^i) -\zeta_j^i \nabla K(\Bu_j^i)\} 
		= (\rho_j^i)^{-1}\nabla\left(K(\zeta_j^i\Bu_j^i)-\zeta_j^i K_j^i(\Bu_j^i)\right) + (\rho_j^i)^{-1}(\nabla\zeta_j^i) K_j^i(\Bu_j^i).
\end{equation*}
We start with the following inequalities of Poincar\'e type with uniform constant,
which are proved in the same manner as in the proof of \cite[Lemma 3.4, Lemma 3.5]{Shibata13}.

\begin{lemm}\label{lemm:Po}
Let $1<q<\infty$. 
Then there exists a constant $c_1>0$, independent of $j\in\BN$, such that
\begin{align*}
	&\|\ph-c_j^1(\ph)\|_{W_q^1(\CH_j^0 \cap B_j^1)} \leq c_1 \|\nabla\ph\|_{L_{q(\CH_j^0 \cap B_j^1)}} 
		&& \text{for any $\ph\in\wh{W}_{q}^1(\CH_j^0)$,} \\
	&\|\psi-c_j^1(\psi)\|_{W_q^1(\Om\cap B_j^1)} \leq c_1\|\nabla\psi\|_{L_q(\Om\cap B_j^1)} && \text{for any $\psi\in \CW_q^1(\Om)$,} \\
	&\|\psi\|_{W_q^1(\Om\cap B_j^2)}\leq c_1\|\nabla\psi\|_{L_q(\Om\cap B_j^2)} && \text{for any $\psi\in \CW_q^1(\Om)$,} \\
	&\|\psi-c_j^i(\psi)\|_{W_q^1(\Om\cap B_j^i)} \leq c_1\|\nabla\psi\|_{L_q(\Om\cap B_j^i)} && \text{for any $\psi\in \CW_q^1(\Om)$, $i=3,4,5$.}
\end{align*}
Here $c_j^1(\ph)$ and $c_j^i(\psi)$ $(i=1,3,4,5)$ are suitable constants depending on $\ph$ and $\psi$, respectively.
\end{lemm}

To handle $(\rho_j^i)^{-1}(\nabla\zeta_j^i)K_j^i(\Bu_j^i)$, we use the following lemma.

\begin{lemm}\label{lemm:K}
Let $1<q<\infty$. Then there exists a constant $c_2$, independent of $j\in\BN$, such that
\begin{align}\label{160114_10}
	&\|K_j^1(\Bu)\|_{L_q(\CH_j^1\cap B_j^1)}
		\leq c_2\big(\|\nabla\Bu\|_{L_q(\CH_j^1)}+\|\nabla\Bu\|_{L_q(\CH_{+j}^1)}^{1-1/q}\|\nabla^2\Bu\|_{L_q(\CH_{+j}^1)}^{1/q}
		+\|\nabla\Bu\|_{L_q(\CH_{-j}^1)}^{1-1/q}\|\nabla^2\Bu\|_{L_q(\CH_{-j}^1)}^{1/q}\big), \\
	&\|K_j^i(\Bv)\|_{L_q(\CH_j^i\cap B_j^i)}
		\leq c_2\big(\|\nabla\Bv\|_{L_q(\CH_j^i)}+\de^i\|\nabla\Bv\|_{L_q(\CH_j^i)}^{1-1/q}\|\nabla^2\Bv\|_{L_q(\CH_j^i)}^{1/q}\big) \nonumber
\end{align}
for any $\Bu\in W_q^2(\CH_j^1)^N$ and for any $\Bv\in W_q^2(\CH_j^i)^N$ $(i=2,\dots,5)$, respectively,
where $\de^i$ are symbols defined by $\de^i=1$ $(i=2,3)$ and $\de^i=0$ $(i=4,5)$.
\end{lemm}

\begin{rema}
Applying Young's inequality to \eqref{160114_10}, we have
\begin{equation}\label{160114_11}
	\|K_j^i(\Bu)\|_{L_q(\CH_j^i\cap B_j^i)} \leq \si_1\|\nabla^2\Bu\|_{L_q(\CH_j^i)} + \ga_{\si_1}\|\nabla\Bu\|_{L_q(\CH_j^i)} \quad (i=1,\dots,5)
\end{equation}
for any $\si_1>0$ and for any $\Bu\in W_q^2(\CH_j^i)^N$. 
\end{rema}

\begin{proof}[Proof of Lemma \ref{lemm:K}]
We here show the case $K_j^1(\Bu)$\footnote[2]{The other cases were already proved in \cite[Lemma 5.6]{Shibata13}.}.
In the following, $C$ stands for generic constants independent of $j\in\BN$,
and recall that $\CH_j^0 = \Phi_j^1(\ws) = \ws$, $\CH_j^1 = \CH_{+ j}^1\cup\CH_{-j}^1$ ($\CH_{\pm j}^1= \Phi_j^1(\BR_\pm^N)$, respectively),
and $\Ga_j^1 = \Phi_j^1(\BR_0^N)$. 

Let $\eta_{ j}\in C_0^\infty(\CH_{ j}^0\cap B_j^1)$ in such a way that $\int_{B_j^1}\eta_{ j}\intd x =1$ and $\eta_{ j}\geq0$.
Fix $\Bu\in W_q^2(\CH_j^1)$ in what follows.
Since $K_j^1(\Bu) + c$ satisfies the weak problem \eqref{151106_2}-\eqref{151106_2-2} for any constant $c$,
we may assume that $\int_{\CH_j^1}\eta_j K_j^1(\Bu)\intd x=0$.
Given $\psi \in C_0^\infty(\CH_{j}^0\cap B_j^1)$, we define a function
by $\wt\psi=\psi -\eta_{ j}\int_{B_j^1}\psi\intd x$. Then, 
\begin{equation*}
	\wt\psi\in C_0^\infty(\CH_{ j}^0\cap B_j^1), \quad 
	\|\wt\psi\|_{L_{q'}(\CH_{ j}^0)} \leq C\|\psi\|_{L_{q'}(\CH_{j}^0)}, \quad
	\int_{\CH_{ j}^0 \cap B_j^1}\wt\psi \intd x = 0
\end{equation*}
for $q'=q/(q-1)$. 
These properties combined with Lemma \ref{lemm:Po} yields that
\begin{equation*}
	|(\wt\psi,\ph)_{\CH_{ j}^0}| = |(\wt\psi, \ph - c_{j}^1(\ph)) _{\CH_{ j}^0\cap B_j^1}| \\
		\leq\|\wt\psi\|_{L_{q'}(\CH_{j}^0\cap B_j^1)}\|\ph-c_{ j}^1(\ph)\|_{L_q(\CH_{ j}^0\cap B_j^1)}
		\leq C\|\psi\|_{L_{q'}(\CH_{ j}^0)}\|\nabla \ph\|_{L_q(\CH_{ j}^0)}
\end{equation*}
for any $\ph\in \wh{W}_{q}^1(\CH_{ j}^0)$.
Thus $\|\wt\psi\|_{\wh W_{q'}^{-1}(\CH_j^0)} \leq C\|\psi\|_{L_{q'}(\CH_j^0)}$,
where $\wh W_{q'}^{-1}(\CH_j^0)$ is the dual spaces of $\wh W_{q}^1(\CH_j^0)$. 


Let $\wh{W}_{q'}^2(\CH_{\pm j}^1)$ be function spaces defined as
$\wh{W}_{q'}^2(\CH_{\pm j}^1) = \{\te\in \wh{W}_{q'}^1(\CH_{\pm j}^1) \mid \nabla\te \in W_{q'}^1(\CH_{\pm j}^1)^N\}$, respectively.
We choose a $\Psi \in \wh{W}_{q'}^2(\CH_{+ j}^1)\cap \wh{W}_{q'}^2(\CH_{-j}^1)$ satisfying the following equations:
\begin{equation}\label{strong:1}
	-\De\Psi = \wt\psi \quad \text{in $\CH_{ j}^1$,} \quad \jump{\frac{\pa \Psi}{\pa\Bn_j^1}}=0 \quad \text{on $\Ga_j^1$},
	\quad  \jump{\rho_j^1\Psi} =0 \quad \text{on $\Ga_j^1$}
\end{equation}
and the estimate: $\|\nabla\Psi\|_{W_q^1(\CH_j^1)}
\leq C(\|\wt\psi\|_{L_{q'}(\CH_j^0)}+ \|\wt\psi\|_{\wh W_{q'}^{-1}(\CH_j^0)})\footnote[3]{
Since $\wt\psi\in\wh{W}_{q'}^{-1}(\CH_j^0)$, we can construct, by the Hahn-Banach theorem, $\om\in L_{q'}(\CH_j^0)^N$ such that
$(\wt\psi,\ph)_{\CH_j^0}=(\om,\nabla\ph)_{\CH_j^0}$ for any $\ph\in\wh{W}_{q}^{1}(\CH_j^0)$
and $\|\wt\psi\|_{\wh{W}_{q'}^{-1}(\CH_j^0)}=\|\om\|_{L_{q'}(\CH_j^0)}$.
Let $u\in\wh{W}_{q'}^1(\CH_j^0)$ be the solution of the weak elliptic transmission problem:
$((\rho_j^1)^{-1}\nabla u,\nabla\ph)_{\CH_j^1}=(\om,\nabla\ph)_{\CH_j^0}$ for any $\ph\in\wh{W}_{q}^{1}(\CH_j^0)$,
which possesses the estimate: $\|\nabla u\|_{L_{q'}(\CH_j^0)} \leq C \|\om\|_{L_{q'}(\CH_j^0)}$.
Then choosing suitable $\ph$ shows that $u$ satisfies the following strong problem:
$- (\rho_j^1)^{-1}\De u = \wt\psi$ in $\CH_j^1$, $\jump{(\rho_j^1)^{-1}\pa u/\pa\Bn_j^1}=0$ on $\Ga_j^1$, $\jump{u}=0$ on $\Ga_j^1$,
and also $u$ is a unique solution to the strong problem by the unique solvability of the weak elliptic transmission problem. 
Thus, by the standard Fourier analysis similarly to Section \ref{sec:ws} and Section \ref{sec:bent},
we have $\|\nabla^2 u\|_{L_{q'}(\CH_j^1)}\leq C\|\wt\psi\|_{L_{q'}(\CH_j^0)}$.
If we set $\Psi = (\rho_j^1)^{-1} u$, then $\Psi$ satisfies \eqref{strong:1} and the required estimates.
}$.
Then the estimates of $\wt\psi$ above yields that $\|\nabla\Psi\|_{W_{q'}^1(\CH_{ j}^1)}\leq C\|\psi\|_{L_{q'}(\CH_{j}^0)}$,
and furthermore, by Gauss's divergence theorem,
\begin{equation*}
	(\nabla\Psi,\nabla\te)_{\CH_{ j}^1} - \Big(\frac{\pa \Psi}{\pa \Bn_j^1},\jump{\te}\Big)_{\Ga_j^1} = (\wt\psi,\te)_{\CH_{j}^1}
	\quad \text{for any $\te \in W_{q}^1(\CH_j^1)+\wh{W}_{q}^1(\CH_j^0)$.}
\end{equation*}
This identity allows us to see that
\begin{align*}
	&(K_j^1(\Bu),\psi)_{\CH_j^0} = (K_j^1(\Bu),\wt\psi)_{\CH_j^0}  = (K_j^1(\Bu),\wt\psi)_{\CH_j^1}
	=(\nabla K_j^1(\Bu),\nabla\Psi)_{\CH_j^1} -\Big(\jump{K_j^1(\Bu)},\frac{\pa \Psi}{\pa\Bn_j^1}\Big)_{\Ga_j^1} \\
	&=\Big((\rho_j^1)^{-1}\nabla K_j^1(\Bu),\nabla(\rho_j^1\Psi)\Big)_{\CH_j^1}-\Big(\jump{K_j^i(\Bu)},\frac{\pa \Psi}{\pa\Bn_j^1}\Big)_{\Ga_j^1},
\end{align*}
which, combined with $\rho_j^1\Psi \in \wh{W}_{q'}^1(\CH_j^0)$ as follows from $\jump{\rho_j^1\Psi}=0$ on $\Ga_j^1$, implies that,
by \eqref{151106_2}-\eqref{151106_2-2},
\begin{equation*}
	(K_j^1(\Bu),\psi)_{\CH_j^0}
		= \Big((\rho_j^1)^{-1}\Di(\nu_j^1\BD(\Bu))-\nabla\di\Bu,\nabla(\rho_j^1\Psi)\Big)_{\CH_j^1} 
 		-\Big(<\jump{\nu_j^1\BD(\Bu)\Bn_j^1},\Bn_j^1>-\jump{\di\Bu},\frac{\pa \Psi}{\pa\Bn_j^1}\Big)_{\Ga_j^1}.
\end{equation*}
Thus, by Gauss's divergence theorem, we have
\begin{align}\label{160210_11}
	&(K_j^1(\Bu),\psi)_{\CH_j^0} = -\Big(\nu_j^1\BD(\Bu),\nabla^2\Psi\Big)_{\CH_j^1} 
		+\int_{\Ga_j^1}\jump{<\nu_j^1\BD(\Bu)\Bn_j^1,\nabla\Psi>}\intd\si \\
		&+\Big(\di\Bu,\rho_j^1\De\Psi\Big)_{\CH_j^1}
		-\int_{\Ga_j^1}\jump{<(\di\Bu)\Bn_j^1,\rho_j^1\nabla\Psi>} \intd\si
		-\Big(<\jump{\nu_j^1\BD(\Bu)\Bn_j^1},\Bn_j^1>-\jump{\di\Bu},\frac{\pa \Psi}{\pa\Bn_j^1}\Big)_{\Ga_j^1}, \nonumber
\end{align}
where $d\si$ denotes the surface element of $\Ga_j^1$.

At this point, we introduce trace inequalities as follows:
there exists a positive constant $c_3$, independent of $j\in\BN$, such that
\begin{align}\label{trace}
	&\|f_\pm\|_{L_q(\Ga_j^1)} \leq c_3\|f_\pm\|_{L_q(\CH_{\pm j}^1)}^{1-1/q}\|\nabla f_\pm\|_{L_q(\CH_{\pm j}^1)}^{1/q} 
		\quad \text{for any $f_\pm\in W_q^1(\CH_{\pm j}^1)$, respectively,} \\
	&\|f\|_{L_q(\Ga_j^i)} \leq c_3\|f\|_{L_q(\CH_j^i)}^{1-1/q}\|\nabla f\|_{L_q(\CH_j^i)}^{1/q}
		\quad \text{for any $f\in W_q^1(\CH_j^i)$, $i=2,3$,} \nonumber
\end{align}
which can be proved by 
Proposition \ref{prop:uniform} and \cite[Section 4, Proposition 16.2]{DiBenedetto02}.
These inequalities combined with \eqref{160210_11} furnish that
\begin{equation*}
	|(K_j^1(\Bu_j^1),\psi)_{\CH_j^0}| \leq  C\left(\|\nabla\Bu\|_{L_q(\CH_j^1)} 
	+ \|\nabla\Bu\|_{L_q(\CH_{+ j}^1)}^{1-1/q}\|\nabla^2\Bu\|_{L_q(\CH_{+ j}^1)}^{1/q}
	+ \|\nabla\Bu\|_{L_q(\CH_{- j}^1)}^{1-1/q}\|\nabla^2\Bu\|_{L_q(\CH_{- j}^1)}^{1/q}\right)\|\psi\|_{L_{q'}(\CH_j^0)},
\end{equation*}
which implies that the required estimate \eqref{160114_10} holds.
This completes the proof of the lemma.
\end{proof}

We consider $(\rho_j^i)^{-1}(\nabla\zeta_j^i)K_j^i(\Bu_j^i)$.
By Definition \ref{defi:R}, \eqref{160114_11}, and \eqref{150701_1},  together with the formulas \eqref{op:S},
we have, for any $n\in\BN$, $\{\la_l\}_{l=1}^n\subset\Si_{\ep,\la_1}$, and $\{\BH_l\}_{l=1}^n\subset\CX_{\CR,q}(\dot\Om)$,
\begin{equation}\label{160210_13}
	\int_0^1 \Big\|\sum_{l=1}^n r_l(u)(\nabla\zeta_j^i) K_j^i(\CS_j^i(\la_l)\BH_l)\Big\|_{L_q(\dot\Om)}^q\intd u
		\leq \{\ga_4(\si_1+\ga_{\si_1}\la_1^{-1/2})\}^q
		\int_0^1 \Big\|\sum_{l=1}^n r_l(u)\BH_l\Big\|_{\CX_{\CR,q}(\dot\Om\cap B_j^i)}^q\intd u
\end{equation}
for any $\si_1>0$ and for any $\la_1\geq\la_0$.  
Define $\BK^0(\la)\BH$ as
\begin{equation*}
	\BK^0(\la)\BH = \sum_{i=1}^5\sum_{j=1}^\infty(\rho_j^i)^{-1}(\nabla\zeta_j^i) K_j^i(\CS_j^i(\la)\BH) \quad \text{for $\BH\in\CX_{\CR,q}(\dot\Om)$.}
\end{equation*}
In the same manner as we have obtained \eqref{160207_12} from \eqref{160201_10},
we can prove, by \eqref{160210_13}, the following properties: 
%
%
%
%
\begin{align}\label{fin:3}
	&\BK^0(\la)\in\Hol(\Si_{\ep,\la_1},\CL(\CX_{\CR,q}(\dot\Om),L_q(\dot\Om)^N)), \quad 
	\BK^0(\la)F_{\CR,\la}(\Bf,\Bh,\Bk) = \sum_{i=1}^5\sum_{j=1}^\infty(\rho_j^i)^{-1}(\nabla\zeta_j^i)K_j^i(\Bu_j^i), \\
	&\CR_{\CL(\CX_{\CR,q}(\dot\Om),L_q(\dot\Om)^N)}
		\Big(\Big\{\Big(\la\frac{d}{d\la}\Big)^l \BK^0(\la) \mid \la\in\Si_{\ep,\la_1}\Big\}\Big)
		\leq \ga_4(\si_1+\ga_{\si_1}\la_1^{-1/2}) \quad (l=0,1) \nonumber
\end{align}
for any $\si_1>0$ and for any $\la_1\geq\la_0$.

Finally we consider the term: $(\rho_j^i)^{-1}\nabla(K(\zeta_j^i\Bu_j^i)-\zeta_j^i K_j^i(\Bu_j^i))$.
We define a function $\mathfrak{g}_j^i, \mathfrak{h}_j^i\in W_q^1(\dot\Om)$ by
\begin{align*}
	(\mathfrak{g}_j^1,\mathfrak{h}_j^1) &= (<\nu_j^1(\CD(\nabla\zeta_j^1)\Bu_j^1)\Bn_j^1,\Bn_j^1>-\CE(\nabla\zeta_j^1)\Bu_j^1,0), \\
	(\mathfrak{g}_j^2,\mathfrak{h}_j^2) &= (0,<\nu_j^2(\CD(\nabla\zeta_j^2)\Bu_j^2)\Bn_j^2,\Bn_j^2>-\CE(\nabla\zeta_j^2)\Bu_j^2), \quad
	(\mathfrak{g}_j^i,\mathfrak{h}_j^i) = (0,0) \quad (i=3,4,5).
\end{align*}
Here and hereafter, for the sake of simplicity, we write $\BD(\ph\Bu)-\ph\BD(\Bu)=\CD(\nabla\ph)\Bu$
and $\di(\ph\Bu)-\ph\di\Bu = \CE(\nabla\ph)\Bu$, which satisfy
	$\|\CD(\nabla\ph)\|_{W_\infty^1(\ws)}\leq C\|\nabla \ph\|_{W_\infty^1(\ws)}$, 
	$\|\CE(\nabla\ph)\|_{W_\infty^1(\ws)}\leq C\|\nabla\ph\|_{W_\infty^1(\ws)}$.
Then,
\begin{equation*}
	\mathfrak{g}_j^1 = K(\zeta_j^1\Bu_j^1)-\zeta_j^1 K_j^1(\Bu_j^1) \quad \text{on $\Ga_j^1$}, \quad
	\mathfrak{h}_j^2 = K(\zeta_j^2\Bu_j^2)-\zeta_j^2 K_j^2(\Bu_j^2) \quad \text{on $\Ga_j^2$}.
\end{equation*}
In addition, for any $\ph\in\CW_{q'}^1(\Om)$, we have
\begin{align*}
	&((\rho_j^i)^{-1}\nabla(K(\zeta_j^i\Bu_j^i)-\zeta_j^iK_j^i(\Bu_j^i)),\nabla\ph)_{\dot\Om} 
		= ((\rho_j^i)^{-1}\Di(\nu_j^i \BD(\zeta_j^i\Bu_j^i))-\nabla\di(\zeta_j^i\Bu_j^i),\nabla\ph)_{\dot\Om} \\
		&\quad-((\rho_j^i)^{-1}(\nabla\zeta_j^i)K_j^i(\Bu_j^i),\nabla\ph)_{\dot\Om} - ((\rho_j^i)^{-1}\zeta_j^i\nabla K_j^i(\Bu_j^i),\nabla(\ph-c_j^i(\ph)))_{\dot\Om} \\
		&= ((\rho_j^i)^{-1}\Di(\nu_j^i \BD(\zeta_j^i\Bu_j^i))-\nabla\di(\zeta_j^i\Bu_j^i),\nabla\ph)_{\CH_j^i}
			-((\rho_j^i)^{-1}(\nabla\zeta_j^i)K_j^i(\Bu_j^i),\nabla\ph)_{\CH_j^i} \\
			&\quad+((\rho_j^i)^{-1}\nabla K_j^i(\Bu_j^i),(\nabla\zeta_j^i)(\ph-c_j^i(\ph)))_{\CH_j^i}
			-((\rho_j^i)^{-1}\nabla K_j^i(\Bu_j^i),\nabla\{\zeta_j^i(\ph-c_j^i(\ph))\})_{\CH_j^i} \\
		&= ((\rho_j^i)^{-1}\Di(\nu_j^i \BD(\zeta_j^i\Bu_j^i))-\nabla\di(\zeta_j^i\Bu_j^i),\nabla\ph)_{\CH_j^i}
			-((\rho_j^i)^{-1}(\nabla\zeta_j^i)K_j^i(\Bu_j^i),\nabla\ph)_{\CH_j^i} \\
			&\quad +((\rho_j^i)^{-1}\nabla K_j^i(\Bu_j^i),(\nabla\zeta_j^i)(\ph-c_j^i(\ph)))_{\CH_j^i}
			-((\rho_j^i)^{-1}\Di(\nu_j^i(\BD(\Bu_j^i)))-\nabla\di\Bu_j^i,\nabla\{\zeta_j^i(\ph-c_j^i(\ph))\})_{\CH_j^i},
\end{align*}
where $c_j^i(\ph)$ are constants given in Lemma \ref{lemm:Po} for $i=1,3,4,5$ and $c_j^2(\ph)=0$.
Let $\CW_q^{-1}(\Om)$ be the dual space of $\CW_{q'}^1(\Om)$ and
$<\cdot,\cdot>_\Om$ denote the duality pairing between $\CW_q^{-1}(\Om)$ and $\CW_{q'}^1(\Om)$.
Thus, if we define $I_j^i\in \CW_q^{-1}(\Om)$ by
\begin{align*}
	&<I_j^i,\ph>_\Om
		=((\rho_j^i)^{-1}\CC_1(\nu_j^i,\zeta_j^i)\nabla\Bu_j^i+\CC_0(\nu_j^i,\zeta_j^i)\Bu_j^i,\nabla\ph)_{\CH_j^i}
			-(\nabla\{(\nabla\zeta_j^i)\cdot\Bu_j^i\}+(\nabla\zeta_j^i)\di\Bu_j^i,\nabla\ph)_{\CH_j^i} \\
			&-2((\rho_j^i)^{-1}(\nabla\zeta_j^i)K_j^i(\Bu_j^i),\nabla\ph)_{\CH_j^i} 
			 -((\rho_j^i)^{-1}(\De\zeta_j^i)K_j^i(\Bu_j^i),\ph-c_j^i(\ph))_{\CH_j^i} \\
			&+((\rho_j^i)^{-1}\nu_j^i\BD(\Bu_j^i),\nabla\{(\nabla\zeta_j^i)(\ph-c_j^i(\ph))\})_{\CH_j^i}
			-(\di\Bu_j^i,\di\{(\nabla\zeta_j^i)(\ph-c_j^i(\ph))\})_{\CH_j^i} + [\CB_j^i(\Bu_j^i),\ph],
\end{align*}
where we have set $[\CB_j^i(\Bu_j^i),\ph] = 0$ for $i=2,4,5$ and
\begin{align*}
	&[\CB_j^1(\Bu_j^1),\ph]   = (\jump{(\rho_j^1)^{-1}K_j^1(\Bu_j^1)}\Bn_j^1,(\nabla\zeta_j^1)(\ph-c_j^1(\ph)))_{\Ga_j^1} \\
		&\quad - (\jump{(\rho_j^1)^{-1}\nu_j^1\BD(\Bu_j^1)}\Bn_j^1-\Bn_j^1\jump{\di\Bu_j^1},(\nabla\zeta_j^1)(\ph-c_j^1(\ph)))_{\Ga_j^1}, \\
	&[\CB_j^3(\Bu_j^3),\ph] = ((\rho_j^3)^{-1}K_j^3(\Bu_j^3)\Bn_j^3,(\nabla\zeta_j^3)(\ph-c_j^3(\ph)))_{\Ga_j^3} \\
		&\quad - ((\rho_j^3)^{-1}\nu_j^3\BD(\Bu_j^3)\Bn_j^3-\Bn_j^3\di\Bu_j^3,(\nabla\zeta_j^3)(\ph-c_j^3(\ph)))_{\Ga_j^3}, 
\end{align*}
then we have, for $i=1,\dots,5$,
\begin{equation*}
	((\rho_j^i)^{-1}\nabla(K(\zeta_j^i\Bu_j^i)-\zeta_j^i K(\Bu_j^i)),\nabla\ph)_{\dot\Om} = <I_j^i,\ph>_{\Om} \quad \text{for any $\ph\in\CW_{q'}^1(\Om)$}.
\end{equation*}
Let $\BF$ be an element of $\CL(\CW_q^{-1}(\Om),L_q(\Om)^N)$ such that, for $\te\in \CW_q^{-1}(\Om)$,
\begin{equation*}
	<\te,\ph>_\Om = (\BF(\te),\nabla\ph)_{\Om} \quad \text{for all $\ph\in \CW_{q'}^1(\Om)$,} \quad
	\|\BF(\te)\|_{L_q(\Om)} = \|\te\|_{\CW_{q}^{-1}(\Om)}.
\end{equation*}
Such a $\BF$ can be constructed by the Hahn-Banach theorem.
Since it holds that $((\rho_j^i)^{-1}\nabla(K(\zeta_j^i\Bu_j^i)-\zeta_j^i K(\Bu_j^i)),\nabla\ph)_{\dot\Om} = (\BF(I_j^i),\nabla\ph)_{\dot\Om}$,
we see that
$\nabla(K(\zeta_j^i\Bu_j^i)-\zeta_j^i K_j^i(\Bu_j^i))$ is given by the following formula:
\begin{equation}\label{bulk:1}
	\nabla(K(\zeta_j^i\Bu_j^i)-\zeta_j^i K_j^i(\Bu_j^i)) = \nabla\CK(\BF(I_j^i),\jump{\mathfrak{g}_j^i},\mathfrak{h}_j^i|_{\Ga_+}).
\end{equation}

To see the $\CR$-boundedness, for $i=1,\dots,5$, we define operators $\CI_j^i(\la)$ by
\begin{align*}
	&<\CI_j^i(\la)\BH,\ph>_\Om = 
		((\rho_j^i)^{-1}\CC_1(\nu_j^i,\zeta_j^i)\nabla(\CS_j^i(\la)\BH)+\CC_0(\nu_j^i,\zeta_j^i)\CS_j^i(\la)\BH,\nabla\ph)_{\CH_j^i}  \\
		&-(\nabla\{(\nabla\zeta_j^i)\cdot\CS_j^i(\la)\BH\}+(\nabla\zeta_j^i)\di(\CS_j^i(\la)\BH),\nabla\ph)_{\CH_j^i}
		-2((\rho_j^i)^{-1}(\nabla\zeta_j^i)K_j^i(\CS_j^i(\la)\BH),\nabla\ph)_{\CH_j^i} \nonumber \\
		&-((\rho_j^i)^{-1}(\De\zeta_j^i)K_j^i(\CS_j^i(\la)\BH),\ph-c_j^i(\ph))_{\CH_j^i}
		+((\rho_j^i)^{-1}\nu_j^i\BD(\CS_j^i(\la)\BH),\nabla\{(\nabla\zeta_j^i)(\ph-c_j^i(\ph))\})_{\CH_j^i} \nonumber \\
		&-(\di(\CS_j^i(\la)\BH),\di\{(\nabla\zeta_j^i)(\ph-c_j^i(\ph))\})_{\CH_j^i} + [\CB_j^i(\CS_j^i(\la)\BH),\ph] \nonumber
\end{align*}
for any $\BH\in\CX_{\CR,q}(\dot\Om)$ and for any $\ph\in\CW_{q'}^1(\Om)$.
In addition, we  define operators $\CJ_j^i(\la)$ by
\begin{equation*}
	\CJ_j^i(\la)\BH = <\nu_j^i(\CD(\nabla\zeta_j^i)(\CS_j^i(\la)\BH))\Bn_j^i,\Bn_j^i> - \CE(\nabla\zeta_j^i)(\CS_j^i(\la)\BH) \quad (i=1,2).
\end{equation*}
By Lemma \ref{lemm:Po} and \eqref{trace}, we have 
\begin{align*}
	&|[\CB_j^1(\CS_j^1(\la)\BH),\ph]|
		\leq \ga_4\Big(\|K_j^1(\CS_j^1(\la)\BH)\|_{L_q(\CH_{+j}^1)}^{1-1/q}\|\nabla K_j^1(\CS_j^1(\la)\BH)\|_{L_q(\CH_{+j}^1)}^{1/q} \\
		&+\|K_j^1(\CS_j^1(\la)\BH)\|_{L_q(\CH_{-j}^1)}^{1-1/q}\|\nabla K_j^1(\CS_j^1(\la)\BH)\|_{L_q(\CH_{-j}^1)}^{1/q}
		+ \|\nabla\CS_j^1(\la)\BH\|_{L_q(\CH_{+j}^1)}^{1-1/q}\|\nabla^2\CS_j^1(\la)\BH\|_{L_q(\CH_{+j}^1)}^{1/q} \\
		&+ \|\nabla\CS_j^1(\la)\BH\|_{L_q(\CH_{-j}^1)}^{1-1/q}\|\nabla^2\CS_j^1(\la)\BH\|_{L_q(\CH_{-j}^1)}^{1/q}\Big)\|\nabla\ph\|_{L_{q'}(\Om\cap B_j^1)},
\end{align*}
which, combined with Young's inequality and \eqref{160114_11}, furnishes that
\begin{equation*}
	|[\CB_j^1(\CS_j^1(\la)\BH),\ph]|
		\leq \ga_4\Big((\si_2+\si_1\ga_{\si_2})\|\nabla^2\CS_j^1(\la)\BH\|_{L_q(\CH_j^1)} + \ga_{\si_1}\ga_{\si_2}\|\nabla\CS_j^1(\la)\BH\|_{L_q(\CH_j^1)}\Big)
		\|\nabla\ph\|_{L_{q'}(\Om\cap B_j^1)}
\end{equation*}
for any $\si_1,\si_2>0$. 
Similarly to the last inequality, we can estimate $[\CB_j^3(\CS_j^3(\la)\BH),\ph]$.
Since $[\CB_j^i(\Bu_j^i),\ph]$ are linear with respect to $\Bu_j^i$, the inequalities of $[\CB_j^i(\CS_j^i(\la)\BH),\ph]$ $(i=1,3)$ above yields that 
\begin{align}\label{160213_4}
	&\Big|<\sum_{l=1}^n a_l \CI_j^i(\la_l)\BH_l,\ph>_\Om\Big| \\
		&\leq
		\ga_4\Big\{(\si_2+\si_1\ga_{\si_2})\Big\|\sum_{l=1}^n a_l\nabla^2\CS_j^i(\la_l)\BH_l\Big\|_{L_q(\CH_j^i)}
		+ \ga_{\si_1}\ga_{\si_2}\Big\|\sum_{l=1}^n a_l \CS_j^i(\la_l)\BH_l\Big\|_{W_q^1(\CH_j^i)}\Big\}
		\|\nabla\ph\|_{L_{q'}(\Om\cap B_j^i)} \nonumber
\end{align}
with $i=1,\dots,5$ for any $\ph\in\CW_{q'}^1(\Om)$
and for any $n\in\BN$, $\{a_l\}_{l=1}^n\subset\BC$, $\{\la_l\}_{l=1}^n \subset\Si_{\ep,\la_1}$, and $\{\BH_l\}_{l=1}^n\subset\CX_{\CR,q}(\dot\Om)$.
The estimate \eqref{160213_4} with $n=1$, together with \eqref{defi:S_j^i} and \eqref{op:S}, shows that
\begin{equation*}
	|< \CI_j^i(\la)\BH,\ph>_\Om| \leq M\|\BH\|_{\CX_{\CR,q}(\dot\Om\cap B_j^i)}\|\nabla\ph\|_{L_{q'}(\Om\cap B_j^i)}
	\quad \text{for all $\ph\in\CW_{q'}^1(\Om)$}
\end{equation*}
for any $\la\in\Si_{\ep,\la_1}$ and $\BH\in\CX_{\CR,q}(\dot\Om)$ with some positive constant $M$ independent of $j\in\BN$,
which, combined with Lemma \ref{prepa:1}, furnishes that 
%
%
the infinite sum $\BI^i(\la)\BH=\sum_{j=1}^\infty \CI_j^i(\la)\BH$ exists in the strong topology of $\CW_q^{-1}(\Om)$.
In addition, by \eqref{160213_4} with H\"older's inequality and by Lemma \ref{prepa:1} again, we have
\begin{align*}
	&\Big\|\sum_{l=1}^n a_l \BI^i(\la_l)\BH_l\Big\|_{\CW_q^{-1}(\Om)}^q \\
		&\leq 2^q(\ga_4)^q\Big\{(\si_2+\si_1\ga_{\si_2})^q\sum_{j=1}^\infty\Big\|\sum_{l=1}^n a_l \nabla^2\CS_j^i(\la_l)\BH_l\Big\|_{L_q(\CH_j^i)}^q
		+ (\ga_{\si_1}\ga_{\si_2})^q\sum_{j=1}^\infty\Big\|\sum_{l=1}^n a_l \CS_j^i(\la_l)\BH_l\Big\|_{W_q^1(\CH_j^i)}^q\Big\}.
\end{align*}
This inequality combined with monotone convergence theorem, Proposition \ref{lemm:multi},
and \eqref{150701_1}, together with the formulas \eqref{op:S}, yields that,
by Definition \ref{defi:R} and \eqref{finite},
\begin{align*}
	\int_0^1 \Big\|\sum_{l=1}^n r_l(u) \BI^i(\la_l)\BH_l\Big\|_{\CW_q^{-1}(\Om)}^q\intd u
	\leq \ga_4\Big((\si_2+\si_1\ga_{\si_2})^q	+ (\ga_{\si_1}\ga_{\si_2}\la_1^{-1/2})^q\Big)
	\int_0^1\Big\|\sum_{l=1}^n r_l(u) \BH_l\Big\|_{\CX_{\CR,q}(\dot\Om)}^q\intd u
\end{align*}
for any $\si_1,\si_2>0$ and $\la_1\geq\la_0$.
Thus, setting $\BI(\la)\BH = \sum_{i=1}^5\BI^i(\la)\BH$ and using Proposition \ref{prop:R}, we have
\begin{align}\label{fin:4}
	&\BI(\la)\in\Hol(\Si_{\ep,\la_1},\CL(\CX_{\CR,q}(\dot\Om),\CW_q^{-1}(\Om))), \quad \BI(\la)F_{\CR,\la}(\Bf,\Bh,\Bk)=\sum_{i=1}^5\sum_{j=1}^\infty I_j^i , \\
	&\CR_{\CL(\CX_{\CR,q}(\dot\Om),\CW_q^{-1}(\Om))}
		\Big(\Big\{\Big(\la\frac{d}{d\la}\Big)^l \BI(\la):\la\in\Si_{\ep,\la_1}\Big\}\Big)
		\leq \ga_4 (\si_2+\si_1\ga_{\si_2}+ \ga_{\si_1}\ga_{\si_2}\la_1^{-1/2}) \quad (l=0,1). \nonumber
\end{align} 
Analogously, we can prove the existence of operator families $\BJ^1(\la)$, $\BJ^2(\la)\in\Hol(\Si_{\ep,\la_1}, \CL(\CX_{\CR,q}(\dot\Om)),W_q^1(\dot\Om))$
such that
\begin{align}\label{fin:5}
	&\BJ^1(\la)F_{\CR,\la}(\Bf,\Bh,\Bk) = \sum_{j=1}^\infty\mathfrak{g}_j^1, \quad
	\BJ^2(\la)F_{\CR,\la}(\Bf,\Bh,\Bk) = \sum_{j=1}^\infty \mathfrak{h}_j^2, \\
	&\CR_{\CL(\CX_{\CR,q}(\dot\Om),\wt\CX_{\CR,q}(\dot\Om))}
		\Big(\Big\{\Big(\la\frac{d}{d\la}\Big)^l\big(\wt F_{\CR,\la}\BJ^i(\la)\big)\mid \la\in\Si_{\ep,\la_1}\Big\}\Big) 
		\leq \ga_4 (\si_2+\si_1\ga_{\si_2}+ \ga_{\si_1}\ga_{\si_2}\la_1^{-1/2})  \nonumber 
\end{align}
with $i=1,2$ and $l=0,1$ for any $\si_1,\si_1>0$ and for any $\la_1\geq\la_0$.

In view of \eqref{bulk:1},
we define $\BL^0(\la)\BH$ as $\BL^0(\la)\BH = \nabla\CK(\BF(\BI(\la)\BH),\jump{\BJ^1(\la)\BH},\BJ^2(\la)\BH|_{\Ga_+})$
for $\BH\in\CX_{\CR,q}(\dot\Om)$.
Then, by the continuity of $\CK$, \eqref{fin:4}, \eqref{fin:5}, and Proposition \ref{prop:R}, we see that
\begin{align}\label{fin:6}
	&\BL^0(\la)\in\Hol(\Si_{\ep,\la_1},\CL(\CX_{\CR,q}(\dot\Om),L_q(\dot\Om)^N)), \quad
	\BL^0(\la)F_{\CR,\la}(\Bf,\Bh,\Bk) = \sum_{i=1}^5\sum_{j=1}^\infty \nabla\left(K(\zeta_j^i\Bu_j^i)-\zeta_j^i K_j^i(\Bu_j^i)\right), \\
	&\CR_{\CL(\CX_{\CR,q}(\dot\Om),L_q(\dot\Om)^N)}
		\Big(\Big\{\Big(\la\frac{d}{d\la}\Big)^l\BL^0(\la) \mid \la\in\Si_{\ep,\la_1}\Big\}\Big)
		\leq \ga_4 (\si_2+\si_1\ga_{\si_2}+ \ga_{\si_1}\ga_{\si_2}\la_1^{-1/2}) \quad (l=0,1) \nonumber
\end{align}
for any $\si_1,\si_2>0$ and $\la_1\geq\la_0$.
Summing up \eqref{fin:1}, \eqref{fin:2}. \eqref{fin:3}, \eqref{fin:5}, and \eqref{fin:6},
we define $\BU(\la)\BH$ as
\begin{equation*}
\BU(\la)\BH = (\BV^0(\la)\BH + \BK^0(\la)\BH+\BL^0(\la)\BH,\BV^1(\la)\BH + \BJ^1(\la)\BH,\BV^2(\la)\BH + \BJ^2(\la)\BH)
\quad \text{for $\BH\in\CX_{\CR,q}(\dot\Om)$,}
\end{equation*}
and then $\BU(\la)$ is the required operator in Lemma \ref{lemm:CV}.
This completes the proof of the lemma.
%
%
%
%
%
\end{proof}

\subsection{Proof of Theorem \ref{theo:redu1}}
In Lemma \ref{lemm:CV}, we choose $\si_1$, $\si_2$, and $\la_1$ in such a way that
$\ga_4\si_2<1/8$, $\ga_4\ga_{\si_2}\si_1<1/8$, and $\ga_4\ga_{\si_1}\ga_{\si_2}\la_1^{-1/2}<1/4$, successively,
and thus
\begin{equation*}
\CR_{\CL(\CX_{\CR,q}(\dot\Om))}\Big(\Big\{\Big(\la\frac{d}{d\la}\Big)^l F_{\CR,\la}\BU(\la) \mid \la \in\Si_{\ep,\la_1}\Big\}\Big)\leq \frac{1}{2}
\quad (l=0,1).
\end{equation*}
These inequalities imply that
\begin{equation*}
\CR_{\CL(\CX_{\CR,q}(\dot\Om))}\Big(\Big\{\Big(\la\frac{d}{d\la}\Big)^l (I-F_{\CR,\la}\BU(\la))^{-1} \mid \la \in\Si_{\ep,\la_1}\Big\}\Big)\leq 2
\quad (l=0,1).
\end{equation*}
Similarly to Section \ref{sec:bent}, setting $\BB(\la)=\BS(\la)(I-F_{\CR,\la}\BU(\la))^{-1}$ with \eqref{160207_12} yields that
$\Bu = \BB(\la)F_{\CR,\la}(\Bf,\Bh,\Bk)$ solves the problem \eqref{redu-eq:1} and $\BB(\la)$ satisfies \eqref{160211_1}.
%
%
%
%
%
The uniqueness of \eqref{redu-eq:1} follows from the solvablility of the weak elliptic transmission problem on $\CW_{q'}^1(\Om)$ for $\rho_\pm$
and the solvability of \eqref{redu-eq:1} for $q'$ in the same manner as in the proof of Theorem \ref{theo:main}.
This completes the proof of Theorem \ref{theo:redu1}.

\quad \\
\noindent{\it Acknowledgments.}
The authors gratefully acknowledge the many helpful suggestions of Professor Yoshihiro Shibata during the preparation of the paper.
This research was partly supported by JSPS Japanese-German Graduate Externship and
by unit ``Multiscale Analysis, Modeling and Simulation", Top Global University Project of Waseda University.




\end{document}